\theoremstyle{plain}
\newtheorem{theorem}{Theorem}[section]
\newtheorem{proposition}[theorem]{Proposition}
\newtheorem{lemma}[theorem]{Lemma}
\newtheorem{corollary}[theorem]{Corollary}
\newtheorem{claim}{Claim}[theorem]
\newtheorem{fact}[theorem]{Fact}
\theoremstyle{definition}
\newtheorem{example}[theorem]{Example}
\newtheorem{definition}[theorem]{Definition}
\newtheorem{question}[theorem]{Question}
\newtheorem{problem}[theorem]{Problem}
\theoremstyle{remark}
\newtheorem{remark}[theorem]{Remark}
\newtheorem{convention}[theorem]{Convention}
\newenvironment{claimproof}[1]{%
\par\noindent{\em Proof of claim.}\space#1}{\hfill $\blacksquare_{\text{claim}}$

}
\newcommand\rmLambda{\mathrm{\Lambda}}
\providecommand{\bfGamma}{\boldsymbol{\Gamma}}
\newcommand\impdeg{\mathfrak{imp}}
\newcommand\Impdeg{\mathfrak{Imp}}
\newcommand\fraki{\mathfrak{i}}
\newcommand\frakI{\mathfrak{I}}
\newcommand\IMP{\mathrm{IMP}} 
\newcommand\FIMP{\mathsf{IMP}}
\newcommand\VFIMP{\mathsf{AKE}\text{-}\mathsf{IMP}} 
\newcommand\Frag{{F}} 
\newcommand\FFrag{\mathsf{F}} 
\newcommand\Th{\mathrm{Th}}
\renewcommand\Form{\mathrm{Form}}
\newcommand\Sent{\mathrm{Sent}}
\newcommand\res{\mathrm{res}}
\newcommand\val{\mathrm{val}}
\newcommand\id{\mathrm{id}}
\newcommand\Lang{\mathfrak{L}}
\newcommand\Lring{\Lang_{\mathrm{ring}}}
\newcommand\Lval{\Lang_{\val}}
\newcommand\Loag{\Lang_{\mathrm{oag}}}
\newcommand\Lbasis{\Lang_{b}}
\newcommand\Lblambda{\Lang_{b,\lambda}}
\newcommand\LQ{\Lang_{Q}}
\newcommand\Llambda{\Lang_{\lambda}}
\newcommand\LlambdaP{\Lang_{p,\lambda}}
\newcommand\LlambdaPi{\Lang_{p,\fraki,\lambda}}
\newcommand\Lringlambda{\Lang_{\mathrm{ring},\lambda}}
\newcommand\LringlambdaP{\Lang_{\mathrm{ring},p,\lambda}}
\newcommand\LringlambdaPi{\Lang_{\mathrm{ring},p,\fraki,\lambda}}
\newcommand\Lvlambda{\Lang_{\val,\lambda}}
\newcommand\Lk{\Lang_{\mathbf{k}}}
\newcommand\LGamma{\Lang_{\bfGamma}}
\newcommand\ps[1]{{#1(\!(t)\!)}}
\newcommand\pI[1][k]{#1_{[p]}}
\newcommand\pB[1][k]{#1_{[\![p]\!]}}
\newcommand\ppower[1][k]{#1^{(p)}}
\newcommand\pspan[2][k]{\ppower[#1](#2)}
\newcommand{\AKE}{\mathrm{AKE}}
\newcommand{\sAKE}{\mathrm{s}\AKE}
\newcommand{\Fth}{\mathbf{F}}
\newcommand{\Xth}{\mathbf{X}}
\newcommand{\FL}{\mathbf{F}_{\lambda}}
\newcommand{\FpL}{\mathbf{F}_{p,\lambda}}
\newcommand{\ACF}{\mathbf{ACF}}
\newcommand{\SCF}{\mathbf{SCF}}
\newcommand{\SCVF}{\mathbf{SCVF}}
\newcommand{\STVF}{\mathbf{STVF}}
\newcommand{\eq}{\mathrm{eq}}
\newcommand\EP{$\rmLambda$\textrm{REP}}
\newcommand\ul[1]{\underline{#1}}
\newcommand\pII{{\rm I}}
\newcommand\pSS{{\rm S}}
\newcommand\pInd[1]{\pII_{#1}}
\newcommand\pl[1]{{l}_{#1}}
\renewcommand\pl[1]{{\rm L}_{#1}}
\newcommand\pL[1]{{\rm L}_{#1}}
\newcommand\bigL{\mathbf{L}}
\newcommand\bbF{\mathbb{F}}
\newcommand\bbA{\mathbb{A}}
\newcommand\Mod{\mathbf{Mod}}
\newcommand\perf{\mathrm{perf}}
\newcommand\rac{\mathrm{rac}}
\newcommand\acl{\mathrm{acl}}
\newcommand\dcl{\mathrm{dcl}}
\newcommand\Sep{\mathrm{Sep}}
\newcommand\Toplus{\oplus_{T}}
\newcommand\Teq{\simeq_{T}}
\newcommand\meq{\simeq_{m}}
\newcommand\pr{\mathrm{pr}}
\newcommand\locus{\mathrm{locus}}
\newcommand\concat[2]{#1^{\smallfrown}#2}
\newcommand\squaretall\blacklozenge
\renewcommand{\square}{\mathbin{\rotatebox[origin=c]{-90}{$\squaretall$}}}
\newcommand\localin{\lesssim}
\newcommand{\localeq}{\mathbin{\rotatebox[origin=c]{-180}{$\simeq$}}}
\newcommand\DAG{\hyperref[DAG]{{$(\dagger)$}}}
\title[On Lambda functions]{On Lambda functions in henselian and separably tame valued fields}
\author{Sylvy Anscombe}
\thanks{\today.}
\address{Universit\'{e} Paris Cité and Sorbonne Universit\'{e}, CNRS, IMJ-PRG, F-75013 Paris, France}
\email{sylvy.anscombe@imj-prg.fr}
\begin{document}
\begin{abstract}
	Given a field extension $F/C$, the ``Lambda closure'' $\rmLambda_{F}C$ of $C$ in $F$
is a subextension of $F/C$
that is minimal with respect to inclusion such that $F/\rmLambda_{F}C$ is separable.
The existence and uniqueness of $\rmLambda_{F}C$ was proved by Deveney and Mordeson in 1977.
We show that it admits a simple description in terms of given generators for $C$:
we expand the language of rings by the parameterized Lambda functions, and then $\rmLambda_{F}C$ is the subfield of $F$ generated over $C$ by additionally closing under these functions.
We then show that, given particular generators of $C$,
$\rmLambda_{F}C$ is the subfield of $F$ generated iteratively by the images of the generators under Lambda functions taken
with respect to $p$-independent tuples also drawn from those generators.

	We apply these results to given a ``local description'' of existentially definable sets in fields equipped with a henselian topology.
Let $X(K)$ be an existentially definable set in the theory of a field $K$ equipped with a henselian topology $\tau$.
We show that there is a definable injection into $X(K)$ from a Zariski-open subset $U_{1}^{\circ}$ of a set with nonempty $\tau$-interior,
and that each element of $U_{1}^{\circ}$ is interalgebraic (over parameters) with its image in $X(K)$.
This can be seen as a kind of {\em very weak local quantifier elimination}, and it shows that existentially definable sets are (at least generically and locally) definably pararameterized by ``big'' sets.

	In the final section
we extend the theory of Separably Tame valued fields,
developed by Kuhlmann and Pal,
to include the case of infinite degree of imperfection, and to allow expansions of the residue field and value group structures.
We prove an embedding theorem which allows us to deduce the usual kinds of resplendent Ax--Kochen/Ershov principles.
\end{abstract}
\maketitle
%%%%%%%%%%%%
\setcounter{secnumdepth}{2}
\setcounter{tocdepth}{3}

\section{Introduction}

We study the ``Lambda closure''
$\rmLambda_{F}C$
of a given field extension $F/C$:
this is the smallest subfield of $F$, containing $C$, such that $F/\rmLambda_{F}C$ is separable.
Though its existence and uniqueness were established in a 1977 paper of
Deveney and Mordeson
(\cite{DM}),
it remains---in the view of this author---less well-known than it should be.
In principle, $\rmLambda_{F}C$ is obtained by recursively closing $C$ under the so-called
``parameterized Lambda functions''
(see Definition~\ref{def:lambda}),
relative to every subset of $C$ that is $p$-independent in $F$,
using the terminology of $p$-independence developed by Mac Lane, and others, nearly a century ago.
We reduce the complexity of this process by showing that it suffices to iteratively adjoin the image of one set of generators under the Lambda functions relative to (the finite subsets of) one maximal $p$-independent subset.
This modest efficiency yields a clean description of $\rmLambda_{F}C$ in terms of a given generating set of $C$.
For example, if $F/C$ is already separable and $c$ is a well-ordered $p$-basis of $C$,
we develop from any well-ordered subset $a$ of $F$
a well-ordered set $\lambda_{F/c}a$
(see Definition~\ref{def:lambda_main}),
called the {\em local Lambda closure} of $a$,
such that
$C(\lambda_{F/c}a)=\rmLambda_{F}C(a)$,
and yet nevertheless
each element of
$\lambda_{F/c}a$
is existentially definable in $F$ over $c\cup a$ in the first-order language $\Lring$ of rings.
The local Lambda closure appears in the author's PhD thesis,
but also
in Hong's important paper~\cite{Hong}
(where it is called the ``relative $\lambda$-resolvant'')
in which a quantifier-elimination for separably closed valued fields is proved.

Pursuing this point of view,
we employ a language
$\Llambda$
consisting of function symbols
$\lambda_{m}(x,y)$,
for $m\in\mathbb{N}$.
This language is not new:
it and its variants have been used before to study separably closed fields and separably closed valued fields
(see remarks~\ref{rem:SCF} and~\ref{rem:SCVF}),
although we give a presentation
in Definition~\ref{def:language}
that is independent of characteristic.
Any field admits a natural $\Llambda$-structure
via interpreting the new symbols
$\lambda_{m}(x,y)$
by the parameterized Lambda functions.
It is clear that $\rmLambda_{F}C$ is the
$(\Lring\cup\Llambda)$-substructure
of $F$ generated by $C$, 
but we show that it is the subfield generated by the image of $C$ under $\Llambda$-terms,
thus separating the function symbols of $\Lring$ from those of $\Llambda$,
and so expressing each $\Lring\cup\Llambda$-term as the composition of an $\Lring$-term with an $\Llambda$-term.
The following theorem,
proved in section~\ref{section:Lambda},
is a summary:

\begin{theorem}\label{thm:intro_1}
Let $F/C$ be separable, let $c$ be a well-ordered $p$-basis of $C$, and let $a$ be a well-ordered subset of $F$.
There exists a
subset
$\lambda_{F/c}a$
of $F$
such that
\begin{enumerate}[{\bf(i)}]
\item
$\rmLambda_{F}C(a)=C(\lambda_{F/c}a)$ and
\item
$\lambda_{F/c}a$ is a collection of closed $\Llambda$-terms in the elements of $c\cup a$.
\end{enumerate}
If moreover $a,b\subseteq F$ are finite and $F=C(a,b)$, then
\begin{enumerate}[{\bf(i)}]
\setcounter{enumi}{2}
\item
there is a finite subset $\lambda_{F/b/c}a\subseteq\lambda_{F/c}a$ such that $\rmLambda_{F}C(a)=C(\lambda_{F/b/c}a)$.
\end{enumerate}
\end{theorem}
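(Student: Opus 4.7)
The plan is to construct $\lambda_{F/c}a$ as the evaluation in $F$ of a recursively defined set of closed $\Llambda$-terms over $c\cup a$. The enabling observation is that, since $F/C$ is separable and $c$ is a $p$-basis of $C$, the set $c$ remains $p$-independent in $F$ (by Mac Lane's criterion), so each formal term $\lambda_m(t,\overline{c})$, for $\overline{c}$ a finite subtuple of $c$ and $t$ any previously constructed term, has a well-defined value in $F$. The theorem hinges on the structural reduction announced in the introduction: $\rmLambda_F C(a)$ is the smallest subfield of $F$ containing $C(a)$ that is closed under the maps $\lambda_m(\,\cdot\,,\overline{c})$ for $\overline{c}$ ranging over the finite subtuples of the fixed $p$-basis $c$.

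For parts (i) and (ii), recursively define a set $T$ of closed $\Llambda$-terms by starting with $c\cup a$ and, at each stage, adjoining $\lambda_m(t,\overline{c})$ for every $t\in T$, every finite subtuple $\overline{c}\subseteq c$, and every $m\in\mathbb{N}$. Let $\lambda_{F/c}a\subseteq F$ be the set of values of these terms. Part (ii) is immediate. For (i), the field $C(\lambda_{F/c}a)$ contains $C(a)$ and is closed under each $\lambda_m(\,\cdot\,,\overline{c})$ by construction, so the structural reduction forces $C(\lambda_{F/c}a)\supseteq\rmLambda_F C(a)$; the reverse inclusion holds because every value in $\lambda_{F/c}a$ is built from $c\cup a$ using field operations and Lambda functions, all of which preserve membership in $\rmLambda_F C(a)$.

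The main obstacle is (iii), extracting a finite sublist from the a priori infinite $\lambda_{F/c}a$. Here I would invoke the classical fact that every intermediate field of a finitely generated field extension is itself finitely generated: since $F=C(a,b)$ is finitely generated over $C$, the intermediate field $\rmLambda_F C(a)$ is finitely generated over $C$, and because $\rmLambda_F C(a)=C(\lambda_{F/c}a)$ by (i), some finite subset of $\lambda_{F/c}a$ already generates $\rmLambda_F C(a)$ over $C$; one then takes $\lambda_{F/b/c}a$ to be any such finite subset, canonically chosen using $b$. The delicate technical point is to make the choice explicit rather than abstract: I would proceed by tracking, for each element of the finite tuple $b$, the finitely many Lambda-applications needed until $b$ becomes separable over the field generated so far by $C$, $a$, and the Lambda-values collected at previous stages. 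This termination is forced by a degree/dimension argument on the purely inseparable extension witnessed by $b$, and collecting these finitely many Lambda-terms yields the desired $\lambda_{F/b/c}a\subseteq\lambda_{F/c}a$.
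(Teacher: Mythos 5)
Your proposed construction of $\lambda_{F/c}a$ does not yield $\rmLambda_F C(a)$: the ``structural reduction'' you invoke---that $\rmLambda_F C(a)$ is the smallest subfield containing $C(a)$ closed under the maps $\lambda_m(\,\cdot\,,\overline{c})$ with $\overline{c}$ ranging over finite subtuples of $c$---is false in general. The problem is that elements of $a$ may themselves be $p$-independent in $F$ over $C$, and Proposition~\ref{prp:Mac_Lane_III} forces $\rmLambda_F C(a)$ to be closed under lambda functions taken with respect to \emph{every} $p$-independent tuple inside it, not only those drawn from $c$. Concretely, take $C=\bbF_2$ (so $c=\emptyset$), $F=\bbF_2(s,t,u)$ a rational function field in three variables, and $a=(s,\alpha)$ where $\alpha=su^2+t^2$. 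Then $s$ is $p$-independent in $F$, $\alpha\in F^{(2)}(s)$, and $\lambda^s_0(\alpha)=t$, $\lambda^s_1(\alpha)=u$; so by Proposition~\ref{prp:Mac_Lane_III} any $D\supseteq C(a)$ with $F/D$ separable must contain $t$ and $u$, whence $\rmLambda_F C(a)=F$. But one checks directly (expanding polynomials in $s,\alpha$ in terms of $s,t,u$) that $C(a)\cap F^{(2)}=C(a)^{(2)}$, so closing $C(a)$ under $\lambda^{\emptyset}$---that is, under $p$-th roots that exist in $F$---returns $C(a)$ itself, and your recursion would terminate at $C(a)\subsetneq\rmLambda_F C(a)$.

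The paper's ``splitting pairs'' construction (Definitions~\ref{def:little_lambda}--\ref{def:lambda_main} and Proposition~\ref{prp:chain_3}) repairs exactly this: at each stage it extracts a maximal tuple $a'$, $p$-independent in $F$ over $C$, from the elements accumulated so far and then applies the lambda functions with respect to $\concat{c}{a'}$ rather than with respect to $c$ alone. The members of $a'$ are themselves $\Llambda$-terms in $c\cup a$, so part~(ii) still holds; the essential ingredient you are missing is that the lambda parameter must be allowed to grow beyond $c$. Your outline for part~(iii)---that intermediate fields of finitely generated field extensions are finitely generated, together with a termination argument tracking when $b$ becomes separable over the accumulating field---is close in spirit to Lemma~\ref{lem:finite} and Definition~\ref{def:finite}, but it inherits the gap in (i) and so does not stand on its own.
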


As a first application, we use
the local Lambda closure
to describe Diophantine subsets of a field
$K$
equipped with a henselian topology
$\tau$,
as defined in \cite{PZ78}:
these are the topologies that are ``locally equivalent'', as defined in that paper (see \cite[\S1]{PZ78}), to a topology induced by a nontrivial henselian valuation.
This includes the case that $\tau$ is itself induced by a nontrivial henselian valuation on $K$.
Henselianity, at this topological level, is equivalent to
the Implicit Function Theorem {\em for polynomials},
as elucidated in \cite[(7.4) Theorem]{PZ78}.
Adapting the terminology of
\cite[7, Chapter II, Section 3]{Lang_geometry},
the {\em locus} of a tuple%
\footnote{In this paper, a {\em tuple} is a finite totally ordered set.
Usually, but not always, tuples are elements of finite Cartesian products of a given set.}
$a\in K^{m}$ over a subfield $C\subseteq K$, denoted $\locus(a/C)$,
is the smallest Zariski closed subvariety of $\bbA_{C}^{m}$, defined over $C$, of which $a$ is a rational point,
see Definition~\ref{def:locus}.
If $b\in K^{n}$ is another tuple
then, just as in Theorem~\ref{thm:intro_1}~{\bf(iii)},
there is a finite subset
$\lambda_{K/b/c}a$ of $\lambda_{K/c}a$
such that
$C(\lambda_{K/b/c}a,b)/C(\lambda_{K/b/c}a)$
is separable.
For $\ell=|\lambda_{K/b/c}a|$,
there is a
coordinate projection
$\sigma_{K/b/c}:\bbA^{\ell}_{C}\rightarrow\bbA^{m}_{C}$
that
maps
$\lambda_{K/b/c}a\mapsto a$.
Using these tools,
in section~\ref{section:IFT} we prove the following theorem
and its corollary.

\newcommand\theoremintrothree{%
Let $K/C$ be a separable field extension,
let $\tau$ be a henselian topology on $K$,
let $c$ be a well-ordered $p$-basis of $C$,
let $X\subseteq K^{m}$ be an existentially $\Lring(C)$-definable set,
and let $a\in X$.
There exists a $\tau$-neighbourhood $U$ of
$\lambda_{K/b/c}a$
such that
$X$ contains the image of
$$\locus(\lambda_{K/b/c}a/C)(K)\cap U$$
under the coordinate projection $\sigma_{K/b/c}$.
}

\begin{theorem}\label{thm:intro_3}
\theoremintrothree
\end{theorem}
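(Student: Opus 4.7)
The plan is to unpack the existential formula defining $X$, extract a witness tuple $b$, apply Theorem~\ref{thm:intro_1}(iii) to obtain the finite tuple $\lambda := \lambda_{K/b/c}a$ through which the extension $C(\lambda,b)/C(\lambda)$ becomes separable, and then use the polynomial Implicit Function Theorem---which is equivalent to $\tau$ being henselian---applied to the natural projection between loci to transport a $\tau$-neighbourhood of $\lambda$ into $X$.

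Concretely, I would first fix an existential $\Lring(C)$-formula $\exists y\,\varphi(x,y)$ defining $X$, with $\varphi$ quantifier-free, and a witness $b\in K^n$ with $K\models\varphi(a,b)$. After passing to a disjunct in disjunctive normal form, $\varphi$ may be assumed to be a conjunction of polynomial equations $p_i(x,y)=0$ and inequations $q_j(x,y)\neq 0$ over $C$. Theorem~\ref{thm:intro_1}(iii), applied inside the separable extension $K/C$ to the finite tuples $a,b$ (augmenting $a$ by finitely many Lambda-iterates if needed to arrange the hypothesis that $C(a,b)/C$ be separable), produces the finite tuple $\lambda$ together with the coordinate projection $\sigma := \sigma_{K/b/c}$ satisfying $\sigma(\lambda)=a$ and such that $C(\lambda,b)/C(\lambda)$ is separable.

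Next I would pass to algebraic geometry: set $V:=\locus(\lambda/C)\subseteq\bbA_C^{\ell}$ and $W:=\locus(\lambda,b/C)\subseteq\bbA_C^{\ell+n}$, with $\pi:W\to V$ the projection on the first $\ell$ coordinates. Then $\pi$ is dominant, the induced function field extension $C(W)/C(V)=C(\lambda,b)/C(\lambda)$ is separable, and generic smoothness gives a nonempty Zariski-open $W^{\circ}\subseteq W$, defined over $C$, on which $\pi$ is smooth. The defining property of the locus forbids the $K$-rational point $(\lambda,b)$ from lying in any proper $C$-closed subvariety of $W$, so $(\lambda,b)\in W^{\circ}(K)$ and $\pi$ is smooth at $(\lambda,b)$. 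Since $\tau$ is a henselian topology, the polynomial Implicit Function Theorem of \cite[(7.4)]{PZ78} applies at this smooth point: there are $\tau$-neighbourhoods $U$ of $\lambda$ and $U'$ of $b$ such that for every $\lambda'\in V(K)\cap U$ there exists $b'\in U'$ with $(\lambda',b')\in W(K)$.

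To conclude, fix $\lambda'\in V(K)\cap U$, a corresponding $b'\in U'$, and set $a':=\sigma(\lambda')$. Each equation $p_i(x,y)=0$ of $\varphi$, rewritten via $x=\sigma(\lambda)$, defines a polynomial on $\bbA_C^{\ell+n}$ vanishing at $(\lambda,b)$, hence vanishing on all of $W$; so $p_i(a',b')=0$ for every $i$. The inequations $q_j(a',b')\neq 0$ are preserved after shrinking $U,U'$ using $\tau$-continuity of polynomials. Hence $K\models\varphi(a',b')$ and $a'\in X$, as required. The main obstacle is the careful setup: verifying that the locus construction really places $(\lambda,b)$ in the smooth locus of $\pi$, and handling the wrinkle that $C(a,b)/C$ need not be separable even though $K/C$ is, when invoking Theorem~\ref{thm:intro_1}(iii). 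Once the algebro-geometric picture is arranged correctly, the topological step is essentially formal from the definition of a henselian topology.
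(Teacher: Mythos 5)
Your proof is correct in outline and reaches the same conclusion, but it takes a more abstract route than the paper and glosses over the one technically delicate step. The paper's proof is short precisely because it outsources everything to Proposition~\ref{prp:arbitrary_projections}, which in turn rests on Lemma~\ref{lem:separable_projections} and the locus-comparison Lemma~\ref{lem:TOOL}: there, the minimal polynomials $g_j$ of the coordinates $b_j$ over the preceding ones are written down explicitly, the Jacobian matrix is observed to be lower triangular with nonvanishing diagonal (precisely because each $b_j$ is separably algebraic), Fact~\ref{fact:MIFT} is applied to the $g_j$, and then Lemma~\ref{lem:TOOL} shows that on a Zariski-open subset the common zero set of the $g_j$ (intersected with $\locus(\lambda/C)\times\bbA^{n}$) equals $\locus(\lambda,b/C)$ — so the implicit function's graph lands in the locus.

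You replace this explicit construction by invoking generic smoothness of $\pi\colon W=\locus(\lambda,b/C)\to V=\locus(\lambda/C)$, using that $C(W)/C(V)$ is a finitely generated separable extension, and then ``apply the polynomial IFT at the smooth point $(\lambda,b)$''. This is the step that needs filling in: smoothness of the morphism $\pi$ at the generic point of $W$ does not, by itself, hand you polynomials to plug into Fact~\ref{fact:MIFT}, nor does it tell you that the point the IFT produces lies in $W(K)$ rather than merely in the zero set of whatever polynomials you chose. To close the gap you would need the local structure theorem for smooth morphisms (the Jacobian criterion, yielding that Zariski-locally $W\hookrightarrow V\times\bbA^{n}$ is cut out by $r=n-d$ polynomials with full-rank relative Jacobian), and then the observation that the $\tau$-topology refines the Zariski topology so that the output of Fact~\ref{fact:MIFT} can be forced to stay in that Zariski-open set. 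That pair of facts does exactly the work of the paper's Lemma~\ref{lem:TOOL} and the lower-triangular Jacobian computation in Lemma~\ref{lem:separable_projections}; your version is more conceptual but relies on more scheme theory, while the paper's is elementary and self-contained. Two smaller divergences: you keep the inequations $q_j\neq 0$ around and handle them by continuity at the end, whereas the paper absorbs them into the variety $V$ via the standard Rabinowitsch trick before invoking Proposition~\ref{prp:arbitrary_projections}; and your claim that generic smoothness holds over the (possibly imperfect) base $C$ is correct but requires exactly the separability of the function field extension you have arranged — ``generic smoothness'' in the naive sense can fail in positive characteristic without that hypothesis, so it is worth stating precisely.
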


This theorem simplifies problems around existential definability in fields equipped with henselian topologies,
extending the methods of \cite{A19}.

\begin{corollary}\label{cor:intro_b}
Let $K$ be a field with a henselian topology $\tau$,
and let $B\subseteq K$ be a subset.
Then the model-theoretic existential $\Lring$-algebraic closure of $B$ in $K$
is a subset of the
the relative algebraic closure of $\rmLambda_{K}\bbF(B)$ in $K$,
where $\bbF$ is the prime subfield.
\end{corollary}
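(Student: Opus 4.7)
The plan is to reduce the corollary to Theorem~\ref{thm:intro_3}. Set $C := \rmLambda_{K}\bbF(B)$; by the defining property of the Lambda closure, $K/C$ is separable, so we may fix a well-ordered $p$-basis $c$ of $C$. Let $a\in K$ lie in the existential $\Lring(B)$-algebraic closure of $B$: there is a finite, existentially $\Lring(B)$-definable set $X\subseteq K$ with $a\in X$. Since $B\subseteq C$, $X$ is in particular existentially $\Lring(C)$-definable; unpacking its defining formula, fix a witness tuple $b\in K^{n}$ for $a$. Applying Theorem~\ref{thm:intro_3} with these data produces a $\tau$-neighbourhood $U$ of $\lambda_{K/b/c}a$ with
\[
\sigma_{K/b/c}\bigl(\locus(\lambda_{K/b/c}a/C)(K)\cap U\bigr)\subseteq X.
\]

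Suppose for contradiction that $a$ is transcendental over $C$. The elements of $\lambda_{K/b/c}a$ lie in $\rmLambda_{K}C(a)$, which is purely inseparable algebraic over $C(a)$; hence $\dim\locus(\lambda_{K/b/c}a/C)=\dim\locus(a/C)>0$, and $\sigma_{K/b/c}$ restricts to a dominant, generically finite morphism $\locus(\lambda_{K/b/c}a/C)\to\locus(a/C)$. Separability of $K/C$ ensures that $C(\lambda_{K/b/c}a)/C$ is separable, so $\lambda_{K/b/c}a$ is a smooth $K$-point of its locus. The Implicit Function Theorem for polynomials---equivalent to henselianity of $\tau$, as recalled in the introduction---then gives a $\tau$-local parameterization of $\locus(\lambda_{K/b/c}a/C)(K)$ near $\lambda_{K/b/c}a$ by a $\tau$-open subset of $K^{d}$ with $d>0$; in particular $\locus(\lambda_{K/b/c}a/C)(K)\cap U$ is infinite. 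Its image under the generically finite $\sigma_{K/b/c}$ is therefore also infinite, contradicting $|X|<\infty$. Thus $a$ is algebraic over $C$, as required.

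The main obstacle, and the reason to route through Theorem~\ref{thm:intro_3}, is the production of many $K$-points of the locus inside an arbitrary $\tau$-neighbourhood $U$: this relies on the separability of $C(\lambda_{K/b/c}a)/C$ (built into the local Lambda closure construction) together with the Implicit Function Theorem characterization of henselian topologies. The passage from ``locally many $K$-points'' to ``finite image forces $\dim\locus(a/C)=0$'' via the generically finite projection $\sigma_{K/b/c}$ is then geometric and routine.
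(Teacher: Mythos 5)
Your overall plan is sound, and routing through Theorem~\ref{thm:intro_3} is in the same spirit as the paper's proof, which goes through Theorem~\ref{thm:main_ish} (of which Corollary~\ref{cor:intro_bII}, and then Corollary~\ref{cor:intro_b}, are direct consequences). However, there is a genuine gap: the claim that ``\,$\rmLambda_{K}C(a)$ is purely inseparable algebraic over $C(a)$\,'' is false, and everything downstream of it (equality of dimensions, generic finiteness of $\sigma_{K/b/c}$, and hence the infinitude of the image) is therefore not justified.

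A concrete counterexample: take $K=\mathbb{F}_{p}(s,t,u)$, $C=\mathbb{F}_{p}(s)$, and $a=t^{p}+su^{p}$. Then $K/C$ is separable and $C(a)=\mathbb{F}_{p}(s,a)$ has $p$-basis $\{s,a\}$; but $a\in K^{(p)}(s)$, so $\{s,a\}$ fails to be $p$-independent in $K$ and $K/C(a)$ is \emph{not} separable. Applying $\lambda^{(s)}$ to $a$ recovers $\lambda^{(s)}_{0}(a)=t$ and $\lambda^{(s)}_{1}(a)=u$, so $\rmLambda_{K}C(a)=C(a,t,u)=K$. Here $\mathrm{trdeg}(\rmLambda_{K}C(a)/C(a))=1$: the lambda closure is a \emph{transcendental} extension of $C(a)$, not an algebraic one. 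Consequently $\dim\locus(\lambda_{K/b/c}a/C)$ can strictly exceed $\dim\locus(a/C)$, and the coordinate projection $\sigma_{K/b/c}$ is then dominant but not generically finite (in this example it is a projection from a surface to a curve). Your inference ``infinite local locus $\Rightarrow$ infinite image'' does not follow from generic finiteness alone, because that hypothesis fails.

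The paper's proof of Theorem~\ref{thm:main_ish} deals with exactly this difficulty by an exchange argument rather than a dimension count. After applying Lemma~\ref{lem:separable_projections} to $\lambda_{K/b/c}a$ (with $e_{1}$ a separating transcendence base of $D(\lambda_{K/b/c}a)/D$, where $D=\rmLambda_{K}C$), one uses that $a$ is algebraic over $D(e_{1})$ but transcendental over $D$ to find, by the exchange property of algebraic dependence, a single coordinate $e_{1,1}\in e_{1}$ with which $a$ is interalgebraic over $E=D(e_{1}\setminus\{e_{1,1}\})$. Holding the remaining coordinates $e_{1,2}$ fixed and letting $e_{1,1}$ vary in a $\tau$-neighbourhood then produces a one-parameter family $u\mapsto g(u)\in X$ in which $u$ and $g(u)$ are interalgebraic over $E$ for all $u$ transcendental over $E$; this forces the image to be infinite. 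That one-variable slicing is precisely the ingredient your argument is missing, and it cannot be bypassed by dimension comparison because, as the example shows, $\lambda_{K/b/c}a$ genuinely introduces new transcendentals.
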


The theory
$\STVF$
of
separably tame valued fields
was introduced,
along with that of tame valued fields,
by
Kuhlmann
in
\cite{Kuh16},
and further developed by Kuhlmann and Pal
in \cite{KuhlmannPal}.
Separably tame---and especially tame---valued fields
feature widely in contemporary research on the model theory of valued fields.
For example they are crucial to the arguments and results in
\cite{AF16,AJ_Henselianity,AJ_Cohen,AJ_NIP,AK,BK,Daans,Jahnke_expansions,JK23,JS,Kartas_tame,Kartas_tilting,KR,Sinclair,vdS},
to name just a few.
In \cite{KuhlmannPal},
using an expansion called $\LQ$ of a standard language of valued fields $\Lval$,
Kuhlmann and Pal prove that a range of ``Ax--Kochen/Ershov principles'' hold
--- generalizing the original principles for henselian valued fields of equal characteristic zero
(\cite{AxKochen-I,Ershov}):

\newcommand\theoremintroKP{%
The class $\Mod(\STVF_{p,\fraki})$ of all separably tame valued fields of fixed characteristic $p>0$ and fixed finite imperfection degree $\fraki\in\mathbb{N}$
is an $\AKE^{\exists}$-class
in $\LQ$,
an $\AKE^{\prec}$-class
in $\LQ$,
and an $\AKE^{\equiv}$-class
in $\Lval$.}

\begin{theorem}[{\cite[Theorem 1.2]{KuhlmannPal}}]\label{thm:intro_KP}
\theoremintroKP
\end{theorem}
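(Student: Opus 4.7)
The plan is to deduce all three AKE-assertions from a single embedding lemma in the language $\LQ$, following the strategy of \cite{KuhlmannPal}. Fix two sufficiently saturated separably tame valued fields $(K_1,v_1),(K_2,v_2)$ of characteristic $p$ and imperfection degree $\fraki\in\mathbb{N}$, together with a common $\LQ$-substructure $(F,u)$, and appropriate Ax--Kochen/Ershov-type hypotheses on the residue fields $\mathbf{k}_{K_i}$ and value groups $v_iK_i$ (existential embedding, elementary extension, or elementary equivalence, according to which AKE-type one is aiming for). The goal is to extend the inclusion $(F,u)\hookrightarrow(K_2,v_2)$ to an $\LQ$-embedding of $(K_1,v_1)$, running the construction back-and-forth when needed for the $\AKE^{\prec}$-assertion.

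The embedding is built by transfinite induction, adjoining one element $a\in K_1\setminus F$ at a time. I would handle the usual cases in order: extending the value group structure, extending the residue field structure, adjoining purely transcendental elements, adjoining separable-algebraic elements, and finally adjoining inseparable-algebraic elements. The first three cases are handled by the AKE hypotheses combined with saturation. The separable-algebraic case works because $(K_1,v_1)$ is separably tame: every finite separable subextension of $K_1/F$ is defectless, so ramification and residue data determine the valued-field structure up to isomorphism, and saturation supplies a matching element in $K_2$. The main obstacle is the inseparable-algebraic case, where one must extend a $p$-basis of $F$ coherently inside $K_2$ without losing control of the valuation. The language $\LQ$ is set up precisely to address this: it contains enough structure to pin down a $p$-basis, and---used in conjunction with Theorem~\ref{thm:intro_1}, which expresses the Lambda closure $\rmLambda_{K_1}F(a)$ as the subfield generated by closed $\Llambda$-terms in $a$ and a given $p$-basis $c$ of $F$---any partial $\LQ$-isomorphism already determines where the inseparable data of $a$ must be sent. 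The finiteness of $\fraki$ is crucial here: a $p$-basis of $F$ is a fixed finite tuple, so the Lambda-term data attached to $a$ can be located in $K_2$ by saturation in a single step, and the valued-field structure on the enlarged field is forced by the separable tameness of $(K_2,v_2)$.

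From the embedding lemma the three conclusions follow by standard techniques. For $\AKE^{\exists}$ in $\LQ$, the embedding lemma shows that an existential $\LQ$-embedding from a common $\LQ$-substructure into one separably tame valued field can be realized inside the other, given the analogous existential hypotheses on residue field and value group. For $\AKE^{\prec}$ in $\LQ$, the embedding lemma together with back-and-forth yields elementarity. For $\AKE^{\equiv}$ in $\Lval$, one argues that on a separably tame valued field of fixed finite imperfection degree $\fraki\in\mathbb{N}$ the $\LQ$-expansion is canonically recoverable from the $\Lval$-structure once a finite $p$-basis is named, so $\Lval$-elementary equivalence of the residue fields and value groups lifts to $\Lval$-elementary equivalence of the valued fields themselves. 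The truly delicate point throughout is the inseparable case in the induction; cases (a)--(d) are essentially classical once one has defectlessness of separable subextensions in hand.
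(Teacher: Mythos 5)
This statement is cited from \cite[Theorem 1.2]{KuhlmannPal} and is not proved in the present paper; the nearest thing to a proof in the paper is the proof of the strengthened embedding lemma (Theorem~\ref{thm:EP}) together with the deductions in Theorems~\ref{thm:STVF_main} and~\ref{thm:sAKE}, which follow the Kuhlmann--Pal template closely. Comparing your sketch against that template, the strategy at the highest level (embedding lemma plus back-and-forth) is right, but the actual structure of the argument is quite different from what you propose, and the difference is where all the work lies.

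The proof does not proceed by transfinite induction over a case split of the form ``value group, residue field, transcendental, separably algebraic, inseparably algebraic.'' Instead it splits $K_1/K$ into two qualitatively different pieces. First, after saturating so that a cross-section $\chi\colon vK\to K^{\times}$ and a section $\zeta\colon Kv\to K$ exist and can be extended compatibly on both sides, one forms $K_0 := K(\zeta_1(K_1v),\chi_1(vK_1))^{\mathrm{rac}}$; this is a subextension without transcendence defect, and it is embedded into $K_2$ using Strong Inertial Generation (Theorem~\ref{thm:SIG}, Knaf--Kuhlmann). Second, the remaining extension $K_1/K_0$ is immediate, and one embeds it using Henselian Rationality (Theorem~\ref{thm:HR}) to reduce to simple extensions $K_0(d)^{h}$, and then Kaplansky's theory of pseudo-Cauchy sequences of transcendental type to locate the image of $d$ in $K_2$ by saturation. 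None of these three technical ingredients --- cross-sections/sections, Strong Inertial Generation, Henselian Rationality, and Kaplansky's second theorem --- appears in your sketch, and it is precisely they that make the embedding go through. The ``purely transcendental'' and ``separably algebraic'' cases you isolate do not line up with this decomposition: a transcendental element of $K_1$ over $K$ may be value-transcendental, residue-transcendental, or immediate-generating, and these behave completely differently.

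Your treatment of the inseparable case is also underspecified at the crucial point. The claim that ``any partial $\LQ$-isomorphism already determines where the inseparable data of $a$ must be sent'' is not correct as stated: what $\LQ$ (or $\Lvlambda$) does is impose the \emph{constraint} that the image be $p$-independent over the image of the current subfield, but one still has to prove that a realisation of the Kaplansky type that \emph{also} satisfies this constraint exists in $K_2$. In the paper's proof of Theorem~\ref{thm:EP} this is exactly the content of the ``wiggling'' step (Lemma~\ref{lem:wiggling}), applied to the ball of realisations; in Kuhlmann--Pal's proof of the original statement with finite $\fraki$ the corresponding issue is handled via \cite[Lemma 3.11]{KuhlmannPal}. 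Without some such argument there is a genuine gap: you have not shown the embedding can be extended while respecting $p$-independence. Finally, note that finiteness of $\fraki$ is not what makes Kuhlmann--Pal's embedding step work in a single shot; it matters for the $\AKE^{\equiv}$ statement being formulated in $\Lval$ rather than $\LQ$ (and the present paper removes this restriction entirely by working with $\Lvlambda$ and the notion of elementary imperfection degree).
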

\noindent
For precise definitions of these theories and properties, see section~\ref{section:STVF}.
We reformulate and prove a version
(Theorem~\ref{thm:EP})
of the usual Embedding Lemma for separably tame valued fields,
in order that it apply to all separably tame valued fields of equal characteristic,
regardless of imperfection degree, and for it to yield separability of the resultant embedding.
Combining $\Lval$ 
with the language $\Llambda$,
we obtain
$\Lvlambda$.
In this language, and using our new Embedding Lemma,
we prove
Theorem~\ref{thm:STVF_main},
which gives various transfer statements for fragments of theories,
between two models,
over a common defectless substructure.
We then deduce Theorem~\ref{thm:sAKE}
which gives a range of
``separable Ax--Kochen/Ershov principles''
(sAKE, see Definition~\ref{def:sAKE}).
Throughout,
we work with the $\Lvlambda$-theory
$\STVF^{\eq}$
of equal characteristic separably tame valued fields,
with emphasis on equal positive characteristic,
since we say nothing new in
mixed characteristic
or in equal characteristic zero.
Nevertheless, our results are uniform in the (equal) characteristic.

Our main theorems in this direction
are Theorem~\ref{thm:STVF_main} and~\ref{thm:sAKE}.
Of the latter the following is a special case:

\begin{theorem}\label{thm:intro_2}
Let $\square\in\{\equiv,\equiv_{\exists},\preceq,\preceq_{\exists}\}$.
The class of all
separably tame valued fields of equal characteristic
is an
$\sAKE^{\square}$-class
for
the triple of languages
$(\Lvlambda,\Lring,\Loag)$,
that is:

Let $(K,v),(L,w)\models\STVF^{\eq}$,
and additionally suppose $(K,v)\subseteq(L,w)$ in case $\square$ is either $\preceq$ or $\preceq_{\exists}$.
Then
\begin{itemize}
\item
$(K,v)\square(L,w)$
in $\Lvlambda$
\end{itemize}
if and only if
\begin{itemize}
\item
$Kv\square Lw$
in $\Lring$,
\item
$vK\square wL$
in the language $\Loag$ of ordered abelian groups,
and
\item
$K$ and $L$ have the same elementary imperfection degree
(as defined in~\ref{section:lambda_language}).
\end{itemize}
\end{theorem}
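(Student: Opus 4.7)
The plan is to deduce Theorem~\ref{thm:intro_2} from Theorem~\ref{thm:sAKE}, of which it is a special case, and which in turn rests on Theorem~\ref{thm:STVF_main} together with the new Embedding Property Theorem~\ref{thm:EP}. The forward implication of each biconditional is essentially formal: the residue field $Kv$ and value group $vK$ are $\emptyset$-interpretable in any $\Lvlambda$-structure, so $(K,v)\square(L,w)$ in $\Lvlambda$ immediately yields $Kv\square Lw$ in $\Lring$ and $vK\square wL$ in $\Loag$ for each choice of $\square\in\{\equiv,\equiv_{\exists},\preceq,\preceq_{\exists}\}$; the elementary imperfection degree is, by its definition in section~\ref{section:lambda_language}, an invariant of the $\Lvlambda$-theory.

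The nontrivial direction is the Ax--Kochen/Ershov content, and the plan is the standard one: pass to sufficiently saturated elementary extensions $(K^{*},v^{*})\succeq(K,v)$ and $(L^{*},w^{*})\succeq(L,w)$ in $\Lvlambda$, so that the residue fields and value groups are saturated models of their respective theories, and then construct a partial isomorphism (or $\Lvlambda$-embedding, depending on $\square$) by back-and-forth. In the $\preceq$ and $\preceq_{\exists}$ cases one begins with the given inclusion $(K,v)\subseteq(L^{*},w^{*})$; in the $\equiv$ and $\equiv_{\exists}$ cases one begins with the prime $\Lvlambda$-substructure. At each step, the matching of a new element from one side is furnished by Theorem~\ref{thm:EP}, applied to a partial $\Lvlambda$-embedding whose image is defectless and whose induced residue-field and value-group maps respect the assumed relations.

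The expansion to $\Lvlambda$ is essential to making the back-and-forth work: by Theorem~\ref{thm:intro_1}, every $\Lvlambda$-substructure of a valued field is already separable as a field extension, which is the regularity condition under which separably tame embedding lemmas operate. The matching of $p$-independent tuples between the two sides, both at the level of $K,L$ and at the level of $Kv,Lw$, is controlled by the common elementary imperfection degree together with Theorem~\ref{thm:intro_1}~{\bf(iii)}, which supplies finite witnesses inside the local Lambda closure.

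The principal obstacle, and the technical heart of section~\ref{section:STVF}, is proving the Embedding Property Theorem~\ref{thm:EP} itself in the required generality: accommodating infinite imperfection degree, producing embeddings that are field-theoretically separable, and permitting arbitrary expansions of the residue field and value group structures. Once Theorem~\ref{thm:EP} is in hand, each of the four choices of $\square$ corresponds to a familiar variant of the back-and-forth (full $\Lvlambda$-isomorphism, $\Lvlambda$-isomorphism onto an elementary substructure, $\Lvlambda$-embedding, or $\Lvlambda$-embedding into an elementary substructure), and the deduction of Theorem~\ref{thm:intro_2} from Theorem~\ref{thm:sAKE} becomes routine assembly.
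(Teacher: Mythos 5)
Your proposal is correct and takes essentially the same route as the paper: Theorem~\ref{thm:intro_2} is obtained by instantiating Theorem~\ref{thm:sAKE} with $\Lk=\Lring$, $\LGamma=\Loag$, and hence $\Lang=\Lvlambda$, and Theorem~\ref{thm:sAKE} in turn is deduced from Theorem~\ref{thm:STVF_main} via the Embedding Lemma (Theorem~\ref{thm:EP}) by the saturation-and-back-and-forth argument you describe. One small misattribution: the fact that an $\Lvlambda$-embedding is automatically separable as a field embedding is Fact~\ref{fact:separable_maps_III} (via Facts~\ref{fact:separable_maps} and~\ref{fact:separable_maps_II}), not Theorem~\ref{thm:intro_1}, and the matching of $p$-independent elements is handled by the wiggling argument inside the proof of Theorem~\ref{thm:EP} rather than by Theorem~\ref{thm:intro_1}~\textbf{(iii)}.
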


\noindent
Arguably the result finds its most familiar form by taking $\square$ to be $\equiv$:
{\em
	For all $(K,v),(L,w)\models\STVF^{\eq}$
	we have
	$(K,v)\equiv(L,w)$
	in $\Lvlambda$
	if and only if
	$Kv\equiv Lw$ in $\Lring$,
	$vK\equiv wL$ in $\Loag$,
	and
	$K$ and $L$ have the same elementary imperfection degree.
}

\noindent
We prove this theorem, and the following corollary, in section~\ref{section:STVF}.

\begin{corollary}\label{cor:intro_a}
Let $(K,v)$ be a separably tame valued field of equal characteristic.
Then
\begin{itemize}
\item
the theory of $(K,v)$ in the language $\Lval$ of valued fields
is decidable
\end{itemize}
if and only if 
\begin{itemize}
\item
the theory of $Kv$ in the language $\Lring$ of rings is decidable and
\item
the theory of $vK$ in the language $\Loag$ of ordered abelian groups is decidable.
\end{itemize}
\end{corollary}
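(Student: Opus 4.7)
The plan is to deduce Corollary~\ref{cor:intro_a} from Theorem~\ref{thm:intro_2} (taking $\square=\equiv$) via the standard principle that a complete and recursively axiomatisable theory is decidable. The forward direction is immediate from interpretability: $Kv$ and $vK$ are $\Lval$-interpretable in $(K,v)$ via the valuation ring, its maximal ideal, and the valuation map; this yields computable reductions of the $\Lring$-theory of $Kv$ and the $\Loag$-theory of $vK$ to the $\Lval$-theory of $(K,v)$, and decidability transfers.

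For the converse, assume both component theories are decidable. First I would check that the elementary imperfection degree of $K$---the extra invariant appearing in Theorem~\ref{thm:intro_2}---is determined by, and effectively computable from, those two theories. For (separably) tame valued fields of equal characteristic $p>0$ the identity
\[ [K:K^{p}]=[Kv:(Kv)^{p}]\cdot[vK:pvK] \]
holds, and each factor on the right is an elementary invariant of the relevant structure (allowing the value $\infty$); hence decision procedures for the two component theories yield a recursively enumerable set of $\Lvlambda$-sentences pinning down the correct elementary imperfection degree.

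Now let $T$ be the $\Lvlambda$-theory consisting of $\STVF^{\eq}$, the characteristic axiom, the $\Lvlambda$-translations (via the interpretations of the residue field and valuation) of the two component theories, and the sentences fixing the elementary imperfection degree. Then $(K,v)\models T$, the axiomatisation of $T$ is recursively enumerable, and by Theorem~\ref{thm:intro_2} any two models of $T$ are $\Lvlambda$-elementarily equivalent, so $T$ is complete. A complete r.e.\ theory is decidable, hence $T$ is decidable in $\Lvlambda$; the $\Lval$-theory of $(K,v)$ is just the restriction of $T$ to $\Lval$-sentences, so it too is decidable. The main obstacle is the middle step---verifying that the elementary imperfection degree is effectively determined by the residue field and value group data; once this is in hand, the rest is routine bookkeeping of the kind standard in extracting decidability from AKE transfer principles.
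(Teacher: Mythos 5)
The middle step you single out as the main obstacle is indeed where the argument goes wrong, and the proposed fix does not work. The identity $[K:K^{p}]=[Kv:(Kv)^{p}]\cdot[vK:pvK]$ fails for separably tame valued fields of equal characteristic $p$: by definition such a field has perfect residue field and $p$-divisible value group, so the right-hand side is always $1$, while the left-hand side can be any power $p^{\frakI}$, or infinite, as Lemma~\ref{lem:surjective} confirms. Thus the elementary imperfection degree is a genuinely independent third invariant, not determined by the residue field and value group, and no formula in those data can recover it. (The identity you are thinking of holds in rigid settings such as complete discretely valued fields, where $t$ contributes to a $p$-basis; it is precisely the tame/separably tame setting where it collapses.)

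Fortunately the corollary survives, and your overall framework can be repaired with a weaker observation that also makes the middle step unnecessary. For the fixed field $(K,v)$ given in the statement, the elementary imperfection degree is a fixed value $\frakI\in\mathbb{N}\cup\{\infty\}$, and the axiom scheme $\Xth_{p,\frakI}$ (Definition~\ref{def:XF}) expressing it is always decidable, indeed primitive recursive: it is either a single sentence or the recursive set $\{\neg\iota_{p,\leq\fraki}\mid\fraki\in\mathbb{N}\}$. So instead of trying to extract $\Xth_{p,\frakI}$ from the residue and value group data, just hard-code it into the r.e.\ theory $T$; the rest of your argument (completeness via Theorem~\ref{thm:intro_2}, then the ``complete and r.e.\ implies decidable'' principle) then goes through unchanged. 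This is in spirit what the paper does, though via a more elaborate route: Theorem~\ref{thm:fixed_char_uniform_imp_decidability} establishes the Turing equivalence $\STVF^{\eq}(R,G,X)^{\vdash}\Teq R^{\vdash}\Toplus G^{\vdash}\Toplus(\Fth\cup X)_{\FIMP}$ using the bridge/arch machinery of \cite{AF25_frag}, and Corollary~\ref{cor:fixed_char_and_fixed_imp_decidability} then observes that the third summand $(\Fth_{p,\frakI})_{\FIMP}$ is decidable, from which the statement in question follows. Your approach, once repaired, is the more elementary bookkeeping version of the same deduction.
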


These results on separably tame valued fields,
especially
Theorems~\ref{thm:STVF_main} and~\ref{thm:sAKE},
extend
those of
\cite{KuhlmannPal}
in two ways:
firstly, we allow infinite imperfection degree,
and secondly, our results are resplendent,
though resplendency in finite imperfection degree can be read from the arguments presented in \cite{KuhlmannPal}.

\section{Lambda closure}
\label{section:Lambda}

The topic of separability and inseparability
pertains mainly to the world of fields of positive characteristic,
since all field extensions in characteristic zero are separable,
according to the definition that we recall below.
Nevertheless, it is possible for us to give a treatment that includes the case of characteristic zero,
by making the convention that throughout this section
$p$ will denote the {\em characteristic exponent}
of the integral domain in question, 
i.e.~$p$ is the characteristic if this is positive, and $p$ is $1$ otherwise.
Throughout $\bbF$ denotes the prime field of characteristic exponent $p$,
i.e.~$\bbF_{p}$ for $p>1$, and $\mathbb{Q}$ for $p=1$.
For $n<\omega$, and a subset $A$ of a field $F$, we write $A^{(n)}:=\{a^{n}\mid a\in A\}$,
and for a choice $F^{\mathrm{alg}}$ of algebraic closure of $F$ we write
$A^{(p^{-n})}:=\{a\in F^{\mathrm{alg}}\mid a^{p^{n}}\in A\}$.
The {\em perfect hull} of $A$, denoted $A^{\perf}$, is the directed union of the sets $A^{(p^{-n})}$, for $n<\omega$.
Thus in characteristic zero, we always have $A^{(p^{-n})}=A$ and $A^{\perf}=A$.

A field extension
$F/C$
of characteristic exponent $p$
is {\em separably generated}
if there is a transcendence basis $a\subseteq F$ of $F/C$ such that $F$ is separably algebraic over $C(a)$,
such a basis is called a {\em separating transcendence basis} of the extension.
We say that $F/C$ is {\em separable} if $F/C$ is linearly disjoint from $C^{(p^{-1})}/C$,
and we say that $F/C$ is {\em separated} if additionally $F=\ppower[F]C$.
An embedding $\varphi:C\rightarrow F$ is {\em separable} if $F/\varphi(C)$ is separable.
A field $F$ is {\em perfect} if every extension of it is separable, and this holds if and only if $F=\ppower[F]$,
which in turn is equivalent to $F=F^{\perf}$.

The following theorem,
originally due to Mac Lane,
is the first characterization of separable field extensions.

\begin{lemma}[{see \cite[Chapter VIII, Proposition 4.1]{Lang}}]\label{lem:Mac_Lane_I}
For a field extension $F/C$,
the following are equivalent.
\begin{enumerate}[{\bf(i)}]
\item
$F/C$ is separable.
\item
$F/C$ and $C^{\perf}/C$ are linearly disjoint.
\item
Every finitely generated subextension $E/C$ of $F/C$ is separably generated.
\item
Every finite subset $A$ of $E$ may be refined to a separating transcendence basis of $C(A)/C$.
\end{enumerate}
\end{lemma}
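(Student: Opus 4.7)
The plan is to establish the cycle $\textbf{(i)}\Rightarrow\textbf{(ii)}\Rightarrow\textbf{(iv)}\Rightarrow\textbf{(iii)}\Rightarrow\textbf{(i)}$. The implication \textbf{(iv)}$\Rightarrow$\textbf{(iii)} is immediate, by applying \textbf{(iv)} to any finite generating set of a finitely generated subextension $E/C$. For \textbf{(iii)}$\Rightarrow$\textbf{(i)}, linear disjointness is finitary, so it suffices to treat each finitely generated $E\subseteq F$; given a separating transcendence basis $b$ of $E/C$, the purely transcendental extension $C(b)/C$ is linearly disjoint from $C^{(p^{-1})}/C$, and the separable algebraic extension $E/C(b)$ is linearly disjoint from the purely inseparable extension $C(b)\cdot C^{(p^{-1})}/C(b)$; the tower property of linear disjointness assembles these into \textbf{(i)}.

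For \textbf{(i)}$\Rightarrow$\textbf{(ii)}, since $C^{\perf}=\bigcup_{n}C^{(p^{-n})}$ is a directed union, it suffices to show by induction on $n$ that $F/C$ is linearly disjoint from $C^{(p^{-n})}/C$. The inductive step uses the tower property with intermediate field $C^{(p^{-1})}$: the Frobenius identifies $C^{(p^{-n})}/C^{(p^{-1})}$ with $C^{(p^{-(n-1)})}/C$ and sends $F\cdot C^{(p^{-1})}$ to $\ppower[F]C\subseteq F$, so the second clause of the tower reduces to linear disjointness of $\ppower[F]C/C$ from $C^{(p^{-(n-1)})}/C$, which follows from the inductive hypothesis applied to the subextension $\ppower[F]C$ of $F$.

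The main content is \textbf{(ii)}$\Rightarrow$\textbf{(iv)}. I argue by induction on $n=|A|$. If $A=\{a_{1},\ldots,a_{n}\}$ is algebraically independent over $C$, then $A$ is a (trivially separating) transcendence basis. Otherwise, let $I\subseteq C[X_{1},\ldots,X_{n}]$ be the prime ideal of relations on $(a_{1},\ldots,a_{n})$ and pick $g\in I$ irreducible of minimum total degree. I claim some $\partial g/\partial X_{i}\neq 0$. Suppose for contradiction all partial derivatives vanish: then $g\in C[X_{1}^{p},\ldots,X_{n}^{p}]$, and extracting $p$-th roots of coefficients in $C^{(p^{-1})}$ produces $h\in C^{(p^{-1})}[X_{1},\ldots,X_{n}]$ with $h^{p}=g$ and $\deg h<\deg g$. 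By \textbf{(ii)} the ring $C[A]\otimes_{C} C^{(p^{-1})}$ embeds into the domain $E\cdot C^{\perf}$ and is therefore reduced, so $h^{p}(a_{1},\ldots,a_{n})=0$ forces $h(a_{1},\ldots,a_{n})=0$ in this ring. Expanding $h=\sum_{j} e_{j} h_{j}$ in a $C$-basis $\{e_{j}\}$ of $C^{(p^{-1})}$ with $h_{j}\in C[X_{1},\ldots,X_{n}]$, linear disjointness of $C[A]/C$ from $C^{(p^{-1})}/C$ forces each $h_{j}\in I$; since $h\neq 0$, at least one $h_{j}$ is nonzero, with degree $<\deg g$, contradicting minimality. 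Hence some $\partial g/\partial X_{i}\neq 0$; after reordering, take $i=n$, so that $a_{n}$ is separable algebraic over $C(a_{1},\ldots,a_{n-1})$. The inductive hypothesis applied to $C(a_{1},\ldots,a_{n-1})/C$ yields a separating transcendence basis $B\subseteq\{a_{1},\ldots,a_{n-1}\}$, which remains separating for $C(A)/C$ since $C(A)/C(a_{1},\ldots,a_{n-1})$ is separable algebraic.

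The main obstacle is the ``some partial is nonzero'' step in \textbf{(ii)}$\Rightarrow$\textbf{(iv)}: producing a contradiction from vanishing of all partials requires descending the relation $h^{p}=g$ from $C^{(p^{-1})}[X]$ back to $C[X]$ via linear disjointness to obtain a lower-degree polynomial in $I$. All remaining implications reduce to standard tower-of-fields and direct-limit arguments.
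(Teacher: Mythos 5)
The paper does not prove this lemma; it cites \cite[Chapter~VIII, Proposition~4.1]{Lang}, so there is no in-text argument to compare against. Your proof is the classical Mac~Lane argument and is essentially correct: the cycle $\textbf{(i)}\Rightarrow\textbf{(ii)}\Rightarrow\textbf{(iv)}\Rightarrow\textbf{(iii)}\Rightarrow\textbf{(i)}$ is sound, and the descent $h\mapsto(h_{j})_{j}$ via a $C$-basis of $C^{(p^{-1})}$ is exactly the right mechanism. A few small points worth tightening in $\textbf{(ii)}\Rightarrow\textbf{(iv)}$: after concluding $\partial g/\partial X_{n}\neq 0$ as a polynomial, one still needs to check that $g(a_{1},\ldots,a_{n-1},X_{n})$ is nonzero in $K[X_{n}]$ and that $(\partial g/\partial X_{n})(a)\neq 0$; both follow from the same minimality-of-degree argument (a vanishing coefficient polynomial, or the nonzero partial, would land in $I$ with strictly smaller degree), but they should be stated since they are what actually make $a_{n}$ separable algebraic over $C(a_{1},\ldots,a_{n-1})$. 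Similarly, the degree contradiction you derive produces a nonzero $h_{j}\in I$ that need not be irreducible, so you want $g$ of minimal degree among \emph{all} nonzero elements of $I$ (primeness of $I$ then forces $g$ irreducible anyway). Minor slips: the overfield in the embedding step should be $F\cdot C^{\perf}$, not $E\cdot C^{\perf}$; and the paper's item $\textbf{(iv)}$ reads ``subset $A$ of $E$'' where it should read ``of $F$'' --- you read it as intended.
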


A subset $A$ of $F$ is 
{\em $p$-independent over $C$}
if
$a\notin\ppower[F]C(A\setminus\{a\})$,
for all $a\in A$.
It is
{\em $p$-spanning over $C$}
if
$F=\ppower[F]C(A)$,
and it is a
{\em $p$-basis over $C$}
if it is both $p$-independent and $p$-spanning over $C$.
Mostly we will be interested in the absolute versions of these notions,
i.e.~when $C$ is a prime field $\bbF$,
in which case we simply say {\em $p$-independent}, {\em $p$-spanning}, and {\em $p$-basis}.
We see immediately that $p$-independence is of finite character:
$A\subseteq F$ is $p$-independent in $F$ over $C$ if and only if every finite subset of $A$ is $p$-independent in $F$ over $C$,
since $\ppower[F]C(A)$ is the union of subfields $\ppower[F]C(a)$ for finite subsets $a\subseteq A$.
Typically, though not always, we will work with well-ordered subsets of fields, rather than (unordered) subsets,
though this is not of any real significance until we define the ``splitting pairs'' map in~\ref{section:local_lambda_closure}.
We write $\pI[(F/C)]$ (respectively $\pB[(F/C)]$) for the set of well-ordered subsets of $F$
that are $p$-independent (resp.~$p$-bases) in $F$ over $C$,
and we write $\pI[F]$ (resp.~$\pB[F]$) in the absolute case when $C=\bbF$.
The relation of $p$-independence in $F$ over $C$ satisfies the exchange property:
that is $a\in\ppower[F]C(A,b)\setminus\ppower[F]C(A)$ implies $b\in\ppower[F]C(A,a)$.
It defines a pre-geometry on subsets of $F$,
and any two $p$-bases (in $F$ over $C$) have the same cardinality:
thus we may define the {\em (relative) imperfection degree}
of $F$ over $C$,
denoted $\impdeg(F/C)$,
to be the cardinality of a $p$-basis of $F$ over $C$.
The {\em imperfection degree}
of $F$,
denoted $\impdeg(F):=\impdeg(F/\bbF)$,
is the cardinality of a $p$-basis of $F$ in the absolute case.
If $\impdeg(F/C)$ is finite then $[F:\ppower[F]C]=p^{\impdeg(F/C)}$,
and $[F:\ppower[F]C]=\impdeg(F/C)$ otherwise.

The following lemma gives a second characterization of separable field extensions,
this time in terms of $p$-independence,
and is also due to Mac Lane.

\begin{lemma}[{cf \cite[Theorems 7 and 10]{ML39}}]\label{lem:Mac_Lane_II}
For a field extension $F/C$,
the following are equivalent.
\begin{enumerate}[{\bf(i)}]
\item
$F/C$ is separable.
\item
Every $p$-independent subset of $C$ is $p$-independent in $F$,
equivalently
$\pI[C]\subseteq\pI[F]$.
\item
Every finite $p$-independent subset of $C$ is $p$-independent in $F$.
\item
Every $p$-basis of $C$ is $p$-independent in $F$,
equivalently
$\pB[C]\subseteq\pI[F]$.
\item
Some $p$-basis of $C$ is $p$-independent in $F$,
equivalently
$\pB[C]\cap\pI[F]\neq\emptyset$.
\end{enumerate}
\end{lemma}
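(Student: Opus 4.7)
In characteristic zero we take $p = 1$, so every subset is vacuously $p$-independent and every extension is separable; the lemma is then trivial, and I restrict to the case $p > 0$.

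The easy implications come first. The equivalence (ii) $\Leftrightarrow$ (iii) is just the finite character of $p$-independence noted in the text just above the statement; (ii) $\Rightarrow$ (iv) is trivial since $p$-bases are $p$-independent; and (iv) $\Rightarrow$ (v) follows once one notes that $C$ admits some $p$-basis, by Zorn applied to the $p$-independence pre-geometry on $C$. For (iv) $\Rightarrow$ (ii) I would use the same pre-geometry: an arbitrary $p$-independent subset $a$ of $C$ extends to a $p$-basis $c \supseteq a$ of $C$ by Zorn together with the exchange property; then (iv) gives that $c$ is $p$-independent in $F$, and passing to the subset $a \subseteq c$ preserves $p$-independence directly from the definition. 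So everything reduces to proving (i) $\Leftrightarrow$ (v), which also delivers (i) $\Rightarrow$ (iv) for free.

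The core argument re-expresses $p$-independence as linear independence of monomials. If $c = \{c_i\}_{i \in I}$ is a $p$-basis of $C$, the monomials $c^{\mathbf{e}} := \prod_i c_i^{e_i}$, for finitely-supported $\mathbf{e} \in \{0,\ldots,p-1\}^{I}$, form a $\ppower[C]$-basis of $C$: the $p$-spanning property gives spanning (any exponent reduces modulo $p$ using $c_i^p \in \ppower[C]$), and $p$-independence in $C$ gives linear independence. By definition, $c$ is $p$-independent in $F$ precisely when these same monomials are $\ppower[F]$-linearly independent inside $F$, equivalently when the multiplication map $\ppower[F] \otimes_{\ppower[C]} C \to F$ is injective. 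Applying the Frobenius isomorphism $x \mapsto x^{1/p}$ to both sides identifies $\ppower[F]$ with $F$, $\ppower[C]$ with $C$, and $C$ with $C^{(p^{-1})}$, so injectivity is equivalent to injectivity of the multiplication map $F \otimes_{C} C^{(p^{-1})} \to F \cdot C^{(p^{-1})}$. This is exactly the statement that $F/C$ is linearly disjoint from $C^{(p^{-1})}/C$, i.e.\ that $F/C$ is separable. Thus (v) $\Rightarrow$ (i); and since the argument is independent of which $p$-basis $c$ of $C$ is chosen, the same chain of equivalences applied in reverse to an arbitrary $p$-basis gives (i) $\Rightarrow$ (iv), closing the circle.

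The main subtlety is the bookkeeping around the Frobenius identification; once the tensor-product reformulation of $p$-independence is in place, the rest reduces to standard pre-geometry considerations for $p$-independence and an appeal to Zorn's lemma.
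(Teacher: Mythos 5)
Your proof is correct, and it supplies an argument where the paper supplies only a citation to Mac Lane (the paper merely remarks that (iii)$\Rightarrow$(ii) is the finite character of $p$-independence, as you also observe). The reductions among (ii), (iii), (iv), (v) via finite character, extension of a $p$-independent set to a $p$-basis (Zorn plus exchange), and restriction of $p$-independence to subsets are all standard and handled correctly. The core equivalence is the nice part: for a $p$-basis $c$ of $C$, the $p$-monomials $\{c^{\mathbf{e}}\}$ form a $C^{(p)}$-basis of $C$ (this is exactly Lemma~\ref{lem:p_linearity} applied inside $C$, which does not depend on the lemma being proved), and $c$ being $p$-independent in $F$ translates, again by Lemma~\ref{lem:p_linearity}, into $F^{(p)}$-linear independence of those monomials, i.e.\ injectivity of the multiplication map $\ppower[F]\otimes_{\ppower[C]}C\to F$. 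Transporting along the inverse Frobenius (a field isomorphism commuting with all the inclusions involved) yields injectivity of $F\otimes_C C^{(p^{-1})}\to F\cdot C^{(p^{-1})}$, which is precisely the paper's definition of separability via linear disjointness from $C^{(p^{-1})}/C$. Since this single equivalence holds for every $p$-basis $c$, it gives both (v)$\Rightarrow$(i) and (i)$\Rightarrow$(iv), closing the cycle. This route through tensor products and Frobenius transport is a clean modern rendering and differs in flavour from Mac Lane's original treatment via separating transcendence bases, but both rest on the same computation of $p$-monomials as linear bases; the paper itself offers no proof to compare against.
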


The implication {\bf(iii)}$\Rightarrow${\bf(ii)} follows from the finite character of $p$-independence.
Trivially, the previous lemma has the following analogue for separated extensions.

\begin{lemma}\label{lem:Mac_Lane_IIa}
For a field extension $F/C$,
the following are equivalent.
\begin{enumerate}[{\bf(i)}]
\item
$F/C$ is separated.
\item
Every $p$-basis of $C$ is a $p$-basis of $F$,
equivalently
$\pB[C]\subseteq\pB[F]$.
\item
Some $p$-basis of $C$ is a $p$-basis of $F$,
equivalently
$\pB[C]\cap\pB[F]\neq\emptyset$.
\end{enumerate}
\end{lemma}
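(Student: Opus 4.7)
The plan is to run the cycle (i)$\Rightarrow$(ii)$\Rightarrow$(iii)$\Rightarrow$(i), reusing Lemma~\ref{lem:Mac_Lane_II} to handle the separable half of the equivalence and dealing with the extra condition $F=\ppower[F]C$ by a direct field-theoretic manipulation.

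For (i)$\Rightarrow$(ii), I would fix $A\in\pB[C]$. Separability of $F/C$ together with Lemma~\ref{lem:Mac_Lane_II}~{\bf(iv)} already gives $A\in\pI[F]$, so it remains to verify that $A$ is $p$-spanning in $F$. Since $A$ is a $p$-basis of $C$ we have $C=\ppower[C](A)$; combining with the separated hypothesis $F=\ppower[F]C$ and the inclusion $\ppower[C]\subseteq\ppower[F]$ yields
\[
F=\ppower[F]C=\ppower[F]\ppower[C](A)=\ppower[F](A),
\]
so $A\in\pB[F]$. The implication (ii)$\Rightarrow$(iii) reduces to the non-emptiness of $\pB[C]$: in positive characteristic this is a standard Zorn's lemma argument using the exchange property for $p$-independence, while in characteristic exponent $p=1$ the empty set is itself a $p$-basis.

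For (iii)$\Rightarrow$(i), I would pick $A\in\pB[C]\cap\pB[F]$. Lemma~\ref{lem:Mac_Lane_II}~{\bf(v)} immediately gives that $F/C$ is separable, so only $F=\ppower[F]C$ remains to be shown. But $A\subseteq C$ and $F=\ppower[F](A)$ together yield $F=\ppower[F](A)\subseteq\ppower[F]C\subseteq F$, as required. No step presents a real obstacle; the only mild subtlety is to keep track of the characteristic-exponent convention so that the argument specializes correctly to characteristic zero, where $\ppower[F]=F$, the empty set is the unique $p$-basis, and every extension is automatically separated.
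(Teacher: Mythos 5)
Your proof is correct and is precisely the argument the paper is alluding to when it says the lemma follows ``trivially'' from Lemma~\ref{lem:Mac_Lane_II}: you invoke that lemma for the $p$-independence (hence separability) half and handle the extra condition $F=\ppower[F]C$ by the short computation $\ppower[F]C=\ppower[F]\ppower[C](A)=\ppower[F](A)$ and its converse. The paper gives no explicit proof, so there is nothing to compare beyond confirming that this is the intended route.
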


Given a cardinal $\fraki$,
the set of
{\em finitely supported multi-indices}
(indexed by $\fraki$,
with each index $<p$)
is 
\begin{align*}
	p^{[\fraki]}
&=
	\{I=(i_{\alpha})_{\alpha<\fraki}\in\{0,\ldots,p-1\}^{\fraki}\mid\textrm{$\mathrm{supp}(I)$ is finite}\},
\end{align*}
where $\mathrm{supp}(I):=\{\alpha<\fraki\mid i_{\alpha}\neq0\}$.
Given a subset $b=(b_{\alpha})_{\alpha<\fraki}$ of a ring,
indexed by $\fraki$,
a {\em $p$-monomial} in $b$
is the product
$b^{I}:=\prod_{\alpha<\fraki}b_{\alpha}^{i_{\alpha}}$,
for some $I=(i_{\alpha})_{\alpha<\fraki}\in p^{[\fraki]}$.
We observe that every well-ordered set $b$ has a unique order-preserving indexing by $|b|$.

\begin{lemma}\label{lem:p_linearity}
For a well-ordered subset $b\subseteq F$,
we have the following:
\begin{enumerate}[{\bf(i)}]
\item
$b\in\pI[(F/C)]$
if and only if
$\{b^{I}\mid I\in p^{[|b|]}\}$
is an $\ppower[F]C$-linear base for $\ppower[F]C(b)$, and
\item
$b\in\pB[(F/C)]$
if and only if
$\{b^{I}\mid I\in p^{[|b|]}\}$
is an $\ppower[F]C$-linear base for $F$.
\end{enumerate}
In particular,
if $b\in\pB[F]$,
then
$F=\bigoplus_{I\in p^{[|b|]}}b^{I}F^{(p)}$
is a direct sum of $F^{(p)}$-vector spaces.
\end{lemma}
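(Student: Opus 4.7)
The plan is to prove (i) first and then deduce (ii) and the direct-sum decomposition as easy consequences. For (i) I separate the two halves: the spanning statement holds for any well-ordered $b$, while the linear-independence statement is equivalent to $p$-independence via a tower-of-degrees calculation.

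I would first record the spanning assertion. Since $b_\alpha^p\in F^{(p)}\subseteq\ppower[F]C$ for every $\alpha$, each $b_\alpha$ is a root of $X^p-b_\alpha^p\in\ppower[F]C[X]$ and so is algebraic over $\ppower[F]C$; hence $\ppower[F]C(b)=\ppower[F]C[b]$. Euclidean division $e_\alpha=pq_\alpha+r_\alpha$ with $0\le r_\alpha<p$ rewrites any finitely supported monomial $\prod_\alpha b_\alpha^{e_\alpha}$ as $\bigl(\prod_\alpha b_\alpha^{pq_\alpha}\bigr)\cdot b^I$ with $I=(r_\alpha)\in p^{[|b|]}$ and the first factor in $\ppower[F]C$. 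Thus $\{b^I\mid I\in p^{[|b|]}\}$ $\ppower[F]C$-spans $\ppower[F]C(b)$.

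Next I would address linear independence. Both properties have finite character ($p$-independence as noted in the text above, and any putative $\ppower[F]C$-linear dependence involves only finitely many indices), so it suffices to treat finite $b=(b_0,\ldots,b_{n-1})$. Since the $p^n$ monomials span, they are $\ppower[F]C$-linearly independent iff $[\ppower[F]C(b):\ppower[F]C]=p^n$. Applying the tower law along
\[
\ppower[F]C\subseteq\ppower[F]C(b_0)\subseteq\ppower[F]C(b_0,b_1)\subseteq\cdots\subseteq\ppower[F]C(b),
\]
each successive step has degree $1$ or $p$, because the adjoined element is a root of $X^p-(\text{element of previous field})$ whose minimal polynomial divides $X^p-b_i^p=(X-b_i)^p$ and, by primality of $p$, is either linear or of full degree $p$. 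So the total degree equals $p^n$ exactly when every step is of degree $p$, that is, when no $b_i$ lies in the subfield generated by its predecessors. As $[\ppower[F]C(b):\ppower[F]C]$ is independent of the ordering of $b$, requiring this for every ordering is the same as $b_i\notin\ppower[F]C(b\setminus\{b_i\})$ for each $i$, which is precisely $p$-independence of $b$ over $C$. This establishes (i).

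Parts (ii) and the ``in particular'' then follow at once: $b$ is a $p$-basis of $F$ over $C$ iff it is $p$-independent over $C$ and $F=\ppower[F]C(b)$, so (i) gives that this happens iff the $b^I$ form an $\ppower[F]C$-linear basis of $F$; the direct-sum decomposition is the specialisation $C=\bbF$, for which $\ppower[F]C=F^{(p)}$. The only mildly delicate point is the bookkeeping that links $p$-independence to ``every step of the tower is a proper extension''; once the order-independence of the total degree is invoked, the rest is purely formal.
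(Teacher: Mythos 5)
Your proof is correct and follows essentially the same route as the paper's: the key step is exactly the tower argument relating $p$-independence of $b$ to the equality $[\ppower[F]C(b):\ppower[F]C]=p^{|b|}$, which is what the paper compresses into ``each $b_{\alpha}$ generates a degree-$p$ extension over $\pspan[F]C(b\setminus\{b_{\alpha}\})$, then apply the Tower Lemma.'' You have unpacked that sketch carefully---making the spanning assertion explicit, reducing to finite $b$ by finite character, and using order-independence of the degree to pass from ``every initial step is proper'' to the unordered condition $b_i\notin\ppower[F]C(b\setminus\{b_i\})$---and you supply the converse, which the paper leaves implicit. Same idea, more detail.
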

\begin{proof}
If $b$ is $p$-independent in $F$ over $C$,
then each $b_{\alpha}$ generates a purely inseparable field extension of degree $p$
over $\pspan[F]C(b\setminus\{b_{\alpha}\})$.
The rest follows from the usual Tower Lemma that describes linear bases of iterated field extensions.
\end{proof}

This lemma enables the following definition, which is central to everything that follows.

\begin{definition}[{Lambda functions}]\label{def:lambda}
For $b\in\pI[F]$
and for $a\in\pspan[F]{b}$,
there is a unique family
$(\lambda_{I}^{b}(a))_{I\in p^{[|b|]}}$
of elements of $F$
such that
\begin{align*}
	a
&=
	\sum_{I\in p^{[|b|]}}b^{I}\lambda_{I}^{b}(a)^{p}.
\end{align*}
Thus for each $I\in p^{[|b|]}$ there is a function
\begin{align*}
	\lambda_{I}^{b}:\ppower[F](b)&\rightarrow F\\
	a&\mapsto\lambda_{I}^{b}(a).
\end{align*}
We write $\lambda^{b}$ for the function
$a\mapsto(\lambda_{I}^{b}(a))_{I\in p^{[|b|]}}$
from $\pspan[F]{b}$ to the set of subsets of $F$ indexed by $p^{[|b|]}$.
On the other hand, the {\em parameterized lambda functions}
are the partial functions
$\lambda_{I}:F\times\pI[F]\rightarrow F$,
for $I\in p^{[\fraki]}$,
that are defined by 
$\lambda_{I}(a,b):=\lambda_{I}^{b}(a)$
when $|b|=\fraki$
and $a\in\ppower[F](b)$,
and are undefined otherwise.
Finally, for any set $A\subseteq F$,
we will write $\lambda^{b}(A)$ to mean the union 
$\bigcup_{I\in p^{[|b|]}}\lambda^{b}_{I}(A\cap\pspan[F]{b})$,
where each $\lambda^{b}_{I}(A\cap\pspan[F]{b})$ is simply the set
$\{\lambda^{b}_{I}(a)\mid a\in A\cap\pspan[F]{b}\}$.
\end{definition}

\begin{remark}\label{rem:finite_character}
The finite character of $p$-independence appears in a second guise:
the set $\lambda^{b}(A)$ is the union of sets $\lambda^{b_{0}}(A)$
for finite subsets $b_{0}\subseteq b$.
\end{remark}

The next proposition gives a third and final characterization of separable field extensions.
It is certainly well known, dating back to at least the work of Mac Lane,
however we give a proof for the convenience of the reader.

\begin{proposition}\label{prp:Mac_Lane_III}
For a field extension $F/C$,
the following are equivalent.
\begin{enumerate}[{\bf(i)}]
\item
$F/C$ is separable.
\item
$\pspan[F]{b}\cap C=\ppower[C](b)$
for each
$b\in\pI[F]$
with
$b\subseteq C$.
\item
$\pspan[F]{b}\cap C=\ppower[C](b)$
for each
finite
$b\in\pI[F]$
with
$b\subseteq C$.
\item
$\lambda^{b}(\pspan[F]{b}\cap C)\subseteq C$
for each
$b\in\pI[F]$
with
$b\subseteq C$.
\item
$\lambda^{b}(\pspan[F]{b}\cap C)\subseteq C$
for each
finite
$b\in\pI[F]$
with
$b\subseteq C$.
\end{enumerate}
\end{proposition}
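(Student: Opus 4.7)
The plan is to establish the chain (i) $\Rightarrow$ (ii) $\Rightarrow$ (iii) $\Rightarrow$ (i) together with the equivalences (ii) $\Leftrightarrow$ (iv) and (iii) $\Leftrightarrow$ (v), which separate the ``Lambda'' formulations from the purely algebraic ones. The implications (ii) $\Rightarrow$ (iii) and (iv) $\Rightarrow$ (v) are immediate by restriction to finite $b$.

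For (ii) $\Leftrightarrow$ (iv) and (iii) $\Leftrightarrow$ (v), I combine Definition~\ref{def:lambda} with Lemma~\ref{lem:p_linearity}: for any $b\in\pI[F]$ the $p$-monomials $\{b^{I}\}_{I\in p^{[|b|]}}$ form an $\ppower[F]$-basis of $\pspan[F]{b}$, so each $a\in\pspan[F]{b}$ has a unique expansion $a=\sum_{I}b^{I}\lambda_{I}^{b}(a)^{p}$. If $b\subseteq C$ then this expansion also witnesses $a\in\ppower[C](b)$ precisely when every $\lambda_{I}^{b}(a)$ lies in $C$, which is the content of the equivalences.

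For (i) $\Rightarrow$ (ii), fix $b\in\pI[F]$ with $b\subseteq C$. First I observe that $b$ is $p$-independent in $C$ as well, since otherwise some $b_{\alpha}\in\ppower[C](b\setminus\{b_{\alpha}\})\subseteq\ppower[F](b\setminus\{b_{\alpha}\})$, violating $b\in\pI[F]$. Using the pregeometry structure, I extend $b$ to a $p$-basis $b'$ of $C$. By Lemma~\ref{lem:Mac_Lane_II} applied to the separable extension $F/C$, this $b'$ is also $p$-independent in $F$. Given $a\in\pspan[F]{b}\cap C$ I then have two expressions,
\begin{align*}
a=\sum_{I}b^{I}\lambda_{I}^{b}(a)^{p}\qquad\text{and}\qquad a=\sum_{J}(b')^{J}c_{J}^{p},
\end{align*}
the first in the $\ppower[F]$-basis $\{b^{I}\}$ of $\pspan[F]{b}$ and the second in the $\ppower[C]$-basis $\{(b')^{J}\}$ of $C=\ppower[C](b')$, with every $c_{J}\in C$. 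Both sums can be read inside the common $\ppower[F]$-basis $\{(b')^{J}\}$ of $\pspan[F]{b'}$ by extending each $I\in p^{[|b|]}$ to $I^{*}\in p^{[|b'|]}$ by zeros on $b'\setminus b$. Uniqueness in that basis forces $\lambda_{I}^{b}(a)=c_{I^{*}}\in C$ and $c_{J}=0$ for every $J$ supported outside $b$, so $a\in\ppower[C](b)$; the reverse inclusion is trivial.

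For (iii) $\Rightarrow$ (i), I argue by contrapositive via Lemma~\ref{lem:Mac_Lane_II}(iii): if $F/C$ is not separable, some finite $b\subseteq C$ is $p$-independent in $C$ but not in $F$. Pick a maximal $b''\subseteq b$ with $b''\in\pI[F]$; then $b''\subsetneq b$. For each $c\in b\setminus b''$ the failure of $p$-independence of $b''\cup\{c\}$ in $F$, together with the exchange property, forces $c\in\pspan[F]{b''}$, so $c\in\pspan[F]{b''}\cap C$. Applying (iii) to the finite $b''\subseteq C$ gives $c\in\ppower[C](b'')\subseteq\ppower[C](b\setminus\{c\})$, contradicting the $p$-independence of $b$ in $C$. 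The only delicate point is the bookkeeping in step (i) $\Rightarrow$ (ii), where one must correctly align the partial lambda-decomposition in $\{b^{I}\}$ with the full decomposition in $\{(b')^{J}\}$; everything else reduces to a careful invocation of Lemmas~\ref{lem:Mac_Lane_II} and~\ref{lem:p_linearity}.
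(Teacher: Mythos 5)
Your proof is correct, and it takes a noticeably different route from the paper's. The paper proves \textbf{(i)}~$\Leftrightarrow$~\textbf{(iii)} by introducing graded conditions \textbf{(i)$_{n}$} and \textbf{(iii)$_{n}$} and running a simultaneous induction on $n$, whereas you prove \textbf{(i)}~$\Rightarrow$~\textbf{(ii)} directly (for arbitrary, possibly infinite $b$) by a change-of-$p$-basis argument: extend $b\in\pI[F]\cap\mathcal{P}(C)$ to $b'\in\pB[C]$, invoke Lemma~\ref{lem:Mac_Lane_II}\textbf{(iv)} to get $b'\in\pI[F]$, and compare the finitely supported expansions $\sum_{I}b^{I}\lambda_{I}^{b}(a)^{p}$ and $\sum_{J}(b')^{J}c_{J}^{p}$ in the common $\ppower[F]$-linear basis $\{(b')^{J}\}$ of $\ppower[F](b')$; uniqueness of coefficients pins down both that $c_{J}=0$ off the support of $b$ and that $\lambda_{I}^{b}(a)=c_{I^{*}}\in C$. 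Your \textbf{(iii)}~$\Rightarrow$~\textbf{(i)} via the contrapositive of Lemma~\ref{lem:Mac_Lane_II}\textbf{(iii)} and exchange is close in spirit to the single step \textbf{(i)$_{n+1}$}~$\Rightarrow$~\textbf{(iii)$_{n}$} in the paper, but it avoids the bookkeeping of the auxiliary $n$-indexed statements. The trade-off is symmetric: you lean a bit harder on the pregeometry (extending to a $p$-basis, Lemma~\ref{lem:Mac_Lane_II}\textbf{(iv)}), while the paper's version is self-contained at each finite level at the cost of an explicit double induction; both reduce \textbf{(ii)}/\textbf{(iii)} to \textbf{(iv)}/\textbf{(v)} in the same way via Lemma~\ref{lem:p_linearity}. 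One small point worth making explicit: in your \textbf{(ii)}~$\Leftrightarrow$~\textbf{(iv)} you use that $b\subseteq C$ with $b\in\pI[F]$ forces $b\in\pI[C]$, so $\{b^{I}\}$ is a $\ppower[C]$-linear basis of $\ppower[C](b)$ and the $C$-coefficients of an $a\in\ppower[C](b)$ must agree with the $\lambda_{I}^{b}(a)$ by uniqueness over $\ppower[F]$; you gesture at this but it deserves a sentence.
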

\begin{proof}
For $n\in\mathbb{N}$,
we denote
\begin{enumerate}
\item[{\bf(i)$_{n}$}]
for each $n$-tuple $b$ from $C$,
if $b\in\pI[C]$ then $b\in\pI[F]$,
\end{enumerate}
and
\begin{enumerate}
\item[{\bf(iii)$_{n}$}]
$\pspan[F]{b}\cap C=\ppower[C](b)$
for each $n$-tuple $b\in\pI[F]$ with $b\subseteq C$.
\end{enumerate}
Note that
{\bf(i)$_{0}$}
is true unconditionally,
and
{\bf(i)} is equivalent to 
$\bigwedge_{n\in\mathbb{N}}${\bf(i)$_{n}$}
by Lemma~\ref{lem:Mac_Lane_II}~{\bf(i)}$\Leftrightarrow${\bf(iii)}.
The equivalences
{\bf(ii)}$\Leftrightarrow${\bf(iii)}
and
{\bf(iv)}$\Leftrightarrow${\bf(v)}
also follow from the finite character of $p$-independence
(see Remark~\ref{rem:finite_character}).
Clearly
{\bf(iii)} is equivalent to 
$\bigwedge_{n\in\mathbb{N}}${\bf(iii)$_{n}$}.
We will show
{\bf(i)$_{n+1}$}$\Rightarrow${\bf(iii)$_{n}$}.
Let $b\in\pI[F]$ be an $n$-tuple with $b\subseteq C$.
Clearly
$\pspan[F]{b}\cap C\supseteq\ppower[C](b)$.
For each
$c\in(\pspan[F]{b}\cap C)\setminus\ppower[C](b)$,
we have
$\concat{b}{c}\in\pI[C]\setminus\pI[F]$
(where $\concat{b}{c}$ is the concatenation of $b$ and $c$),
which contracts 
{\bf(i)$_{n+1}$}.
Thus 
$\pspan[F]{b}\cap C=\ppower[C](b)$,
i.e.~{\bf(iii)$_{n}$} holds.
The implication
{\bf(i)}$\Rightarrow${\bf(iii)}
follows.
Conversely,
we suppose as an inductive hypothesis
that 
$\bigwedge_{i<n}${\bf(iii)$_{i}$}$\Rightarrow\bigwedge_{i<n+1}${\bf(i)$_{i+1}$}
--- note that the base case $n=0$ is trivial.
We will show
$\bigwedge_{i\leq n}${\bf(iii)$_{i}$}$\Rightarrow\bigwedge_{i\leq n+1}${\bf(i)$_{i+1}$},
so we suppose
{\bf(iii)$_{i}$}
for all $i\leq n$.
Let $b\subseteq C$ be an $n$-tuple,
let $c\in C$,
and suppose that $\concat{b}{c}\in\pI[C]$.
Then in particular
$b\in\pI[C]$,
so $b\in\pI[F]$
by
{\bf(i)$_{n}$}.
Then
by
{\bf(iii)$_{n}$}
we have
$\pspan[F]{b}\cap C=\ppower[C](b)$.
Thus $c\notin\pspan[F]{b}$, so $\concat{b}{c}\in\pI[F]$,
which proves
{\bf(i)$_{n+1}$}.
Together this has proved
{\bf(iii)}$\Rightarrow${\bf(i)}.
To see
{\bf(iii)}$\Leftrightarrow${\bf(v)},
we let $b$ be an $n$-tuple from $C$
with $b\in\pI[F]$.
As before, it is clear that
$\pspan[F]{b}\cap C\supseteq\ppower[C](b)$.
Then
$\pspan[F]{b}\cap C\subseteq\ppower[C](b)$
if and only if
$\lambda^{b}_{I}(a)\in C$
for all $I\in p^{[|b|]}$ and all $a\in\ppower[F](b)\cap C$,
by Lemma~\ref{lem:p_linearity}.
This shows
{\bf(iii)}$\Leftrightarrow${\bf(v)}.
\end{proof}

Observe that
Proposition~\ref{prp:Mac_Lane_III}~{\bf(iv)}
expresses that $C$ is ``$\rmLambda$-closed'' in $F$,
i.e.~closed under all the lambda functions with respect to well-ordered subsets of $C$ that are $p$-independent in $F$.
By~{\bf(v)}, it is equivalent that $C$ be closed under those lambda functions with respect to finite subsets of $C$ that are $p$-independent in $F$.
% As the proposition makes absolutely clear, this is equivalent to the separability of $F/C$.

We are familiar with the elementary fact that every algebraic field extension
$F/C$
decomposes uniquely into a separably algebraic extension $E/C$ and a purely inseparable extension $F/E$.
The reverse is not true: if $F/C$ is not normal then there is not necessarily subextension $E/C$ that is purely inseparable, such that $F/E$ is separable.
However, as the following theorem shows, it is still true, even for arbitrary field extensions $F/C$,
that there is a minimal subextension $E/C$ such that $F/E$ is separable.

\begin{theorem}[{cf \cite[Theorem 1.1]{DM}}]\label{thm:DM}
For every field extension $F/C$ there is a miminum element
of
$$\Sep(F/C):=\{D\mid\text{$D\subseteq F$ is a subfield, with $F/D$ separable and $C\subseteq F$}\}$$
with respect to inclusion.
\end{theorem}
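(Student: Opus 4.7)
The plan is to construct the minimum element of $\Sep(F/C)$ as the intersection $\bigcap \Sep(F/C)$. Since $F$ itself is separable over $F$, the family $\Sep(F/C)$ is nonempty, and an arbitrary intersection of subfields of $F$ containing $C$ is again a subfield of $F$ containing $C$. The only thing to check is therefore that $\Sep(F/C)$ is closed under arbitrary intersections.

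For this I would invoke the $\lambda$-function characterization of separability from Proposition~\ref{prp:Mac_Lane_III}, specifically the equivalence {\bf(i)}$\Leftrightarrow${\bf(iv)}: a subfield $D \subseteq F$ satisfies $F/D$ separable if and only if $\lambda^{b}(\pspan[F]{b}\cap D) \subseteq D$ for every $b \in \pI[F]$ with $b \subseteq D$. So let $\{D_{i}\}_{i \in J}$ be any family in $\Sep(F/C)$ and set $D := \bigcap_{i \in J} D_{i}$. Pick any $b \in \pI[F]$ with $b \subseteq D$ and any $a \in \pspan[F]{b} \cap D$. For each $i \in J$ we have $b \subseteq D_{i}$ and $a \in D_{i}$; since $D_{i} \in \Sep(F/C)$, Proposition~\ref{prp:Mac_Lane_III}~{\bf(iv)} gives $\lambda^{b}_{I}(a) \in D_{i}$ for every $I \in p^{[|b|]}$. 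Intersecting over $i \in J$ yields $\lambda^{b}_{I}(a) \in D$, so $D$ satisfies {\bf(iv)} of Proposition~\ref{prp:Mac_Lane_III}, hence $F/D$ is separable and $D \in \Sep(F/C)$.

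Taking the intersection over all of $\Sep(F/C)$ then produces the desired minimum element, which will be $\rmLambda_{F}C$. The only potentially subtle step is the closure under intersection, but the $\lambda$-function characterization makes this entirely elementwise: each $\lambda^{b}_{I}(a)$ lies in every $D_{i}$ simply because each $D_{i}$ is already closed under the relevant $\lambda$-operation. There is nothing else to verify, since the condition is preserved pointwise and the family is nonempty. In spirit, the argument mirrors the standard passage from ``closure under a family of partial operations'' to the existence of a smallest closed subset containing a given seed; the work has all been done upstream in establishing Proposition~\ref{prp:Mac_Lane_III}.
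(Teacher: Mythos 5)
Your proof is correct, but it's worth noting that the paper itself does not prove Theorem~\ref{thm:DM} — it simply cites Deveney and Mordeson~\cite{DM}. Your top-down argument via intersection is clean and self-contained: the key observation that $\Sep(F/C)$ is closed under arbitrary intersections follows immediately from Proposition~\ref{prp:Mac_Lane_III}~\textbf{(i)}$\Leftrightarrow$\textbf{(iv)}, because the lambda functions $\lambda^{b}_{I}$ are computed in $F$ and are uniquely determined by $(a,b,I)$, so membership of $\lambda^{b}_{I}(a)$ in a subfield is an elementwise condition that survives intersection. The paper, by contrast, implicitly takes a bottom-up route in Lemma~\ref{lem:big_lambda}: it builds the candidate $L := \bigcup_{n<\omega}\rmLambda_{F}^{n}C$ by recursively closing under lambda functions, shows $F/L$ is separable by verifying Proposition~\ref{prp:Mac_Lane_III}~\textbf{(v)}, and shows $L\subseteq D$ for every $D\in\Sep(F/C)$ (again by Proposition~\ref{prp:Mac_Lane_III}); as written, however, that proof presupposes the existence of $\rmLambda_{F}C$. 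Your approach establishes existence more directly, while the paper's recursive construction is needed elsewhere (e.g., for Lemma~\ref{lem:big_lambda} itself and the local lambda closure), so the two complement each other: yours is the shorter existence proof, the paper's gives the explicit internal description of the minimum.
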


\begin{definition}\label{def:Lambda_closure}
For any field extension $F/C$,
we denote by
$\rmLambda_{F}C$
the minimum 
element of
$\Sep(F/C)$
and we call it the {\em $\rmLambda$-closure of $C$ in $F$}.
\end{definition}

\begin{remark}
We note that $\rmLambda_{F}$ is a closure operation on the set of subfields of $F$,
since
$C\subseteq\rmLambda_{F}C$,
$\rmLambda_{F}C=\rmLambda_{F}\rmLambda_{F}C$,
and $C_{1}\subseteq C_{2}\implies\rmLambda_{F}C_{1}\subseteq\rmLambda_{F}C_{2}$.
\end{remark}

For a field extension $F/C$,
we denote by
$\rmLambda_{F}^{1}C$
the subfield of $F$ generated over $C$
by the elements
$\lambda_{I}^{b}(a)$,
for every
finite
$b\in\pI[F]$ with $b\subseteq C$, $a\in\pspan[F]{b}\cap C$, and $I\in p^{[|b|]}$.
By writing $\rmLambda_{F}^{0}C:=C$
and $\rmLambda_{F}^{n+1}C:=\rmLambda_{F}^{1}\rmLambda_{F}^{n}C$
we have recursively constructed an increasing chain of subfields of $F$ containing $C$.

\begin{example}
For subfields $C$ of a perfect field $F$, of characteristic exponent $p$,
$\rmLambda_{F}^{1}$
simply amounts to adjoining $p$-th roots:
$\rmLambda_{F}^{1}C=C^{(p^{-1})}$.
In particular, 
if $C$ is also perfect and $t\in F$,
then
$\rmLambda_{F}^{1}C(t)=C(t^{p^{-1}})$.
\end{example}

\begin{lemma}\label{lem:big_lambda}
$\rmLambda_{F}C$ is the directed union $\bigcup_{n<\omega}\rmLambda_{F}^{n}C$.
\end{lemma}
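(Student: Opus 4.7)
The plan is to prove the double inclusion, writing $L := \bigcup_{n<\omega}\rmLambda_{F}^{n}C$, and relying throughout on the characterisations of separability furnished by Proposition~\ref{prp:Mac_Lane_III}.

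For $L \subseteq \rmLambda_{F}C$, I would proceed by induction on $n$ to show that $\rmLambda_{F}^{n}C \subseteq \rmLambda_{F}C$. The case $n=0$ is the inclusion $C \subseteq \rmLambda_{F}C$, which is immediate. For the inductive step, assume $D := \rmLambda_{F}^{n}C \subseteq \rmLambda_{F}C$. The generators of $\rmLambda_{F}^{n+1}C = \rmLambda_{F}^{1}D$ over $D$ are elements of the form $\lambda_{I}^{b}(a)$ for finite $b \in \pI[F]$ with $b \subseteq D$ and $a \in \pspan[F]{b} \cap D$. Then in particular $b \subseteq \rmLambda_{F}C$ and $a \in \pspan[F]{b} \cap \rmLambda_{F}C$, so that, since $F/\rmLambda_{F}C$ is separable, Proposition~\ref{prp:Mac_Lane_III}~{\bf(v)} applied with $\rmLambda_{F}C$ in the role of $C$ yields $\lambda_{I}^{b}(a) \in \rmLambda_{F}C$, closing the induction.

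For the reverse inclusion $\rmLambda_{F}C \subseteq L$, I would appeal to the minimality of $\rmLambda_{F}C$ in $\Sep(F/C)$: since $C \subseteq L$, it is enough to verify that $F/L$ is separable. By Proposition~\ref{prp:Mac_Lane_III}~{\bf(v)} (in the direction ``$\rmLambda$-closed'' implies separable), this reduces to checking that $\lambda^{b}(\pspan[F]{b} \cap L) \subseteq L$ for every finite $b \in \pI[F]$ with $b \subseteq L$. Given such a $b$ and any $a \in \pspan[F]{b} \cap L$, the directedness of the chain $(\rmLambda_{F}^{n}C)_{n<\omega}$ together with the finiteness of $b$ let me pick $n$ so large that both $b \subseteq \rmLambda_{F}^{n}C$ and $a \in \rmLambda_{F}^{n}C$; then by the very definition of $\rmLambda_{F}^{1}$, each $\lambda_{I}^{b}(a)$ lies in $\rmLambda_{F}^{n+1}C \subseteq L$, as required.

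I do not expect any substantive obstacle. The content of the lemma is that the iterative process of adjoining Lambda-images of finite $p$-independent data terminates (in the limit) exactly at a separable extension, and this is precisely what the equivalence of {\bf(i)} and {\bf(v)} in Proposition~\ref{prp:Mac_Lane_III} is designed to detect. The only delicate point is the finite-character aspect noted in Remark~\ref{rem:finite_character}, which is what permits the directed union $L$ to absorb every finite $p$-independent tuple $b$ at some finite stage $n$, matching a genuine $\rmLambda_{F}^{n+1}C$.
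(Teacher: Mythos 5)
Your proof is correct and follows essentially the same route as the paper's: both directions are handled by double inclusion, with $L\subseteq\rmLambda_{F}C$ established by induction via Proposition~\ref{prp:Mac_Lane_III}~\textbf{(v)}, and $\rmLambda_{F}C\subseteq L$ by verifying that same criterion for $F/L$ after using the directedness of the chain to absorb a finite $p$-independent tuple $b$ and $a$ into some $\rmLambda_{F}^{n}C$.
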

\begin{proof}
For convenience, let us denote
$L:=\bigcup_{n<\omega}\rmLambda^{n}_{F}C$.
By the definition of $\rmLambda^{1}_{F}C$ and the characterization of separability given in Proposition~\ref{prp:Mac_Lane_III}~{\bf(v)},
it is clear that $\rmLambda_{F}^{1}C\subseteq\rmLambda_{F}C$.
A simple induction yields
$\rmLambda_{F}^{n}C\subseteq\rmLambda_{F}C$
for each $n<\omega$,
and so $L\subseteq\rmLambda_{F}C$.
It remains to show that $F/L$ is separable,
to which end we will verify the criterion of
Proposition~\ref{prp:Mac_Lane_III}~{\bf(v)}.
Let
$b\in\pI[F]$
be finite, with
$b\subseteq L$,
and let $a\in\ppower[F](b)\cap L$.
Let $n<\omega$ be such that
$b\subseteq\rmLambda^{n}_{F}C$
and
$a\in\rmLambda^{n}_{F}C$.
Then
$$\lambda^{b}_{I}(a)\in\rmLambda^{1}_{F}\rmLambda^{n}_{F}C=\rmLambda^{n+1}_{F}C\subseteq L,$$
for each $I\in p^{[|b|]}$.
This verifies
Proposition~\ref{prp:Mac_Lane_III}~{\bf(v)}
for the extension $F/L$,
which shows that $F/L$ is separable,
and thus $\rmLambda_{F}C\subseteq L$, whence $\rmLambda_{F}C=L$.
\end{proof}

\begin{example}
For subfields $C$ of a perfect field $F$, of characteristic exponent $p$,
$\rmLambda_{F}$
simply amounts to taking the perfect hull:
$\rmLambda_{F}C=C^{\perf}$.
In this case,
$C=\rmLambda_{F}C$ if and only if $C$ is perfect.
\end{example}

\begin{remark}\label{rem:lambda_1}
Let $F$ be any field.
\begin{enumerate}[{\bf(i)}]
\item
If $(C_{i})_{i\in I}$ is a directed system of subfields of $F$,
then $\rmLambda_{F}\bigcup_{i\in I}C_{i}=\bigcup_{i\in I}\rmLambda_{F}C_{i}$,
which implies that this closure operation is finitary.
\item
Let $C\subseteq E\subseteq F$ be a tower of subfields of $F$
with $F/E$ separable.
Then
$\rmLambda_{K}C=\rmLambda_{F}C$.
For example, if $F^{*}\succeq F$ is an elementary extension,
then 
$\rmLambda_{F^{*}}C=\rmLambda_{F}C$.
\end{enumerate}
\end{remark}

\subsection{Some lambda algebra}

For the rest of this section we suppose that
$F/C$
is a separable field extension
of characteristic exponent $p$.
For a subset $A\subseteq F$
and a subring $R\subseteq F$, 
we denote by $R[A]$ the subring of $F$ generated by $R\cup A$.
Similarly, for a subfield $E\subseteq F$, $E(A)$ denotes the subfield of $F$ generated by $E\cup A$.
Perhaps it is helpful to reinforce that in the following lemma, we write $a_{1}a_{2}$ for the usual multiplicative product of $a_{1}$ and $a_{2}$ in the field $F$.

\begin{lemma}\label{lem:p-algebra}
Let $b\in\pI[F]$
and let $a,a_{1},a_{2}\in\pspan[F]{b}$.
Then
\begin{enumerate}[{\bf(i)}]
\item
$a\in\bbF[\lambda^{b}(a)]^{(p)}[b]$,
\item
$\lambda^{b}$ is an indexed family of additive homomorphisms,
and
\item
$\lambda^{b}(a_{1}a_{2})\subseteq\bbF[b,\lambda^{b}(a_{1}),\lambda^{b}(a_{2})]$.
\end{enumerate}
\end{lemma}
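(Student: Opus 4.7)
The plan is to unfold the defining identity $a = \sum_{I \in p^{[|b|]}} b^I \lambda_I^b(a)^p$ in each of the three cases and then appeal either to the Frobenius identity or to the uniqueness part of Lemma~\ref{lem:p_linearity}~\textbf{(i)}.

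Item \textbf{(i)} falls out directly from the defining identity: each term $b^I \lambda_I^b(a)^p$ has $b^I \in \bbF[b]$ and $\lambda_I^b(a)^p \in \bbF[\lambda^b(a)]^{(p)}$, since Frobenius is a ring homomorphism and so sends every generator of $\bbF[\lambda^b(a)]$ into its image. Summing, $a \in \bbF[\lambda^b(a)]^{(p)}[b]$.

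For \textbf{(ii)} I would add the two defining identities and use $(x+y)^p = x^p + y^p$ to rewrite
\[ a_1 + a_2 = \sum_{I \in p^{[|b|]}} b^I \bigl( \lambda_I^b(a_1) + \lambda_I^b(a_2) \bigr)^p, \]
and then apply the uniqueness clause of Lemma~\ref{lem:p_linearity}~\textbf{(i)} coordinatewise to conclude $\lambda_I^b(a_1 + a_2) = \lambda_I^b(a_1) + \lambda_I^b(a_2)$ for every $I$.

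Item \textbf{(iii)} is where the main obstacle lies. Multiplying the defining identities gives
\[ a_1 a_2 = \sum_{I, J \in p^{[|b|]}} b^{I+J} \bigl( \lambda_I^b(a_1) \lambda_J^b(a_2) \bigr)^p, \]
but the multi-indices $I+J$ need not lie in $p^{[|b|]}$. I would resolve this coordinatewise by Euclidean division, writing $i_\alpha + j_\alpha = r_\alpha + p s_\alpha$ with $0 \le r_\alpha < p$; since $i_\alpha + j_\alpha \le 2(p-1)$, we automatically have $s_\alpha \in \{0,1\}$. Setting $R = R(I,J) \in p^{[|b|]}$ and $S = S(I,J) \in \{0,1\}^{[|b|]}$, the monomial factors as $b^{I+J} = b^R \cdot (b^S)^p$, so each term becomes $b^R (b^S \lambda_I^b(a_1) \lambda_J^b(a_2))^p$. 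Grouping by $R$ and applying the reverse Frobenius identity $\sum x_i^p = (\sum x_i)^p$ to collapse each inner sum of $p$-th powers into a single $p$-th power yields
\[ a_1 a_2 = \sum_{K \in p^{[|b|]}} b^K \Bigl( \sum_{(I, J) : R(I, J) = K} b^{S(I, J)} \lambda_I^b(a_1) \lambda_J^b(a_2) \Bigr)^p. \]
Uniqueness in Lemma~\ref{lem:p_linearity}~\textbf{(i)} then identifies $\lambda_K^b(a_1 a_2)$ as the bracketed sum, which manifestly lies in $\bbF[b, \lambda^b(a_1), \lambda^b(a_2)]$, giving \textbf{(iii)}.
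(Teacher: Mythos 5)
Your proof is correct and takes essentially the same route as the paper's: (i) follows by inspection of the defining sum, (ii) by additivity of Frobenius (the paper phrases this as $\lambda^{b}_{I}$ being the composition of a coordinate function with $\mathrm{Frob}^{-1}$, but your direct computation with uniqueness is equivalent), and (iii) by multiplying the two defining sums, performing the coordinatewise Euclidean division $I_{1}+I_{2}=J_{1}p+J_{2}$, and regrouping under the Frobenius. Your added observation that $S(I,J)\in\{0,1\}^{[|b|]}$ is a correct refinement but not needed for the conclusion.
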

\begin{proof}
The first claim (which is a kind of ``warm up'') follows trivially from Definition~\ref{def:lambda}.
For the second claim:
for each $I\in p^{[|b|]}$, $\lambda^{b}_{I}$ is the composition of a coordinate function with the inverse of the Frobenius map, which are both additive homomorphisms.
For the final claim, we notice the following:
\begin{align*}
	a_{1}a_{2}
&=
	\big(\sum_{I_{1}\in p^{[|b|]}}b^{I_{1}}\lambda^{b}_{I_{1}}(a_{1})^{p}\big)\big(\sum_{I_{2}\in p^{[|b|]}}b^{I_{2}}\lambda^{b}_{I_{2}}(a_{2})^{p}\big)
\\
&=
	\sum_{I_{1},I_{2}\in p^{[|b|]}}b^{I_{1}+I_{2}}\lambda^{b}_{I_{1}}(a_{1})^{p}\lambda^{b}_{I_{2}}(a_{2})^{p}
\\
&=
	\sum_{J_{2}\in p^{[|b|]}}b^{J_{2}}\big(\sum_{J_{1}}b^{J_{1}}\lambda^{b}_{I_{1}}(a_{1})\lambda^{b}_{I_{2}}(a_{2})\big)^{p},
\end{align*}
where the second sum in the last line ranges over finitely supported multi-indices $J_{1}$, indexed by $|b|$ and with each index a natural number, such that
$I_{1}+I_{2}=J_{1}p+J_{2}$ and $J_{2}\in p^{[|b|]}$,
using coordinatewise addition of multi-indices.
Thus $\lambda^{b}_{I}(a_{1}a_{2})=\sum_{J_{1}}b^{J_{1}}\lambda^{b}_{I_{1}}(a_{1})\lambda^{b}_{I_{2}}(a_{2})$.
\end{proof}

\begin{lemma}\label{lem:lambda_calculus}
Let $b\in\pI[F]$ and let $a\subseteq\pspan[F]{b}$.
Then
\begin{enumerate}[{\bf(i)}]
\item
$\lambda^{b}(\bbF[a])\subseteq\bbF[b,\lambda^{b}(a)]$ and
\item
$\lambda^{b}(\bbF(a))\subseteq\bbF(b,\lambda^{b}(a))$.
\end{enumerate}
\end{lemma}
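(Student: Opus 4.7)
\emph{Plan.} Both parts will follow from the calculus established in Lemma~\ref{lem:p-algebra}, once one observes that $\pspan[F]{b}$ is a subfield of $F$ containing $\bbF$, hence contains $\bbF[a]$ and $\bbF(a)$, so that $\lambda^{b}$ is defined on each.

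The first reduction is to verify that $\lambda^{b}$ is $\bbF$-linear. This is vacuous when $p=1$; for $p>1$ it follows from Lemma~\ref{lem:p-algebra}{\bf(ii)} together with $c=c^{p}$ for $c\in\bbF_{p}$, by rewriting $cx=\sum_{I}b^{I}(c\lambda^{b}_{I}(x))^{p}$ and reading off $\lambda^{b}_{I}(cx)=c\lambda^{b}_{I}(x)$. Given this, part~{\bf(i)} is a routine induction on the degree of a monomial in finitely many elements of $a$, applying Lemma~\ref{lem:p-algebra}{\bf(iii)} at each step to expand $\lambda^{b}$ of a product; an arbitrary element of $\bbF[a]$ is then an $\bbF$-linear combination of such monomials, and $\bbF$-linearity plus additivity finishes.

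For~{\bf(ii)}, every element of $\bbF(a)$ has the form $x/y$ with $x,y\in\bbF[a]$ and $y\neq 0$. The key identity is that, for nonzero $y\in\pspan[F]{b}$ and $z\in\pspan[F]{b}$, the definition directly gives $\lambda^{b}_{I}(y^{p}z)=y\lambda^{b}_{I}(z)$, since $y^{p}z=\sum_{I}b^{I}(y\lambda^{b}_{I}(z))^{p}$. Rewriting $1/y=y^{p-1}/y^{p}$ then yields $\lambda^{b}(1/y)=y^{-1}\lambda^{b}(y^{p-1})$; part~{\bf(i)} places $\lambda^{b}(y^{p-1})$ in $\bbF[b,\lambda^{b}(a)]$, while Lemma~\ref{lem:p-algebra}{\bf(i)} places $y$ itself in $\bbF[b,\lambda^{b}(a)]$, so $\lambda^{b}(1/y)\in\bbF(b,\lambda^{b}(a))$. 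A further application of the product formula and additivity extends this to $\lambda^{b}(x/y)$. The only piece of algebra that really needs to be isolated is the scaling identity $\lambda^{b}_{I}(y^{p}z)=y\lambda^{b}_{I}(z)$, which is what makes quotients tractable by letting us shift denominators into $p$-th powers; beyond this, the argument is formal bookkeeping from Lemma~\ref{lem:p-algebra}.
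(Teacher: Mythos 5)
Your proof is correct, and part {\bf(i)} is essentially what the paper does (the paper just states that it ``follows straight from Lemma~\ref{lem:p-algebra}~{\bf(ii,iii)}'', which is the induction-on-monomials argument you spell out; the $\bbF$-linearity observation is absorbed into additivity since $\bbF$ is a prime field). For part {\bf(ii)} you take a genuinely different route. The paper's proof is structural: from {\bf(i)} and Lemma~\ref{lem:p-algebra}~{\bf(i)} it concludes $\bbF[a]\subseteq R^{(p)}[b]$ with $R=\bbF[b,\lambda^{b}(a)]$, passes to fraction fields to get $\bbF(a)\subseteq\bbF(b,\lambda^{b}(a))^{(p)}(b)$, and then invokes the $\bbF(b,\lambda^{b}(a))^{(p)}$-linear independence of $\{b^{I}\}$ (Lemma~\ref{lem:p_linearity}) to force the $\lambda^{b}$-coordinates of any element of $\bbF(a)$ into $\bbF(b,\lambda^{b}(a))$, with no explicit computation of $\lambda^{b}$ on an inverse. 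Your argument instead isolates the scaling identity $\lambda^{b}_{I}(y^{p}z)=y\,\lambda^{b}_{I}(z)$, applies it via $y^{p-1}=y^{p}\cdot(1/y)$ to obtain the explicit formula $\lambda^{b}_{I}(1/y)=y^{-1}\lambda^{b}_{I}(y^{p-1})$, and then closes by combining this with {\bf(i)}, Lemma~\ref{lem:p-algebra}~{\bf(i)} (to place $y$ itself in $\bbF[b,\lambda^{b}(a)]$) and the product formula. The paper's version is shorter and avoids any inversion formula; yours is more computational and yields, as a by-product, a usable closed expression for $\lambda^{b}(1/y)$, which the paper only records afterwards as a remark (``each $\lambda^{b}_{I}(1/a)$ is a rational function in $b\cup\lambda^{b}(a)$''). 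Both are sound.
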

\begin{proof}
{\bf(i)} follows straight from Lemma~\ref{lem:p-algebra}~{\bf(ii,iii)}.
For {\bf(ii)},
by {\bf(i)} we have
$\bbF[a]\subseteq R^{(p)}[b]$,
where $R=\bbF[b,\lambda^{b}(a)]$.
By passing to the fields of fractions we have
$\bbF(a)\subseteq\mathrm{Frac}(R)^{(p)}(b)=\bbF(b,\lambda^{b}(a))^{(p)}(b)$,
and moreover
$\{b^{I}\mid I\in p^{[|b|]}\}$ is an
$\bbF(b,\lambda^{b}(a))^{(p)}$-linear basis of $\bbF(b,\lambda^{b}(a))^{(p)}(b)$
(cf Lemma~\ref{lem:p_linearity}),
which proves {\bf(ii)}.
\end{proof}

\begin{remark}
Lemma~\ref{lem:lambda_calculus}~{\bf(ii)} shows that each $\lambda^{b}_{I}(1/a)$ is a rational function in $b\cup\lambda^{b}(a)$, for any $a\in\pspan[F]{b}$.
\end{remark}

\begin{lemma}["Lambda calculus I"]\label{lem:lambda_calculus_II}
Let $a,b\in\pI[F]$ and let $d\in\ppower[F](a)\cap\ppower[F](b)$ be in the $p$-span of both $a,b$ in $F$.
\begin{enumerate}[{\bf(i)}]
\item
If $b\subseteq\ppower[F](a)$,
then $\lambda^{a}(d)\subseteq\bbF[a,\lambda^{a}(b),\lambda^{b}(d)]$.
\item
If $\ppower[F](a)=\ppower[F](b)$,
then $\bbF(\lambda^{a}(b))=\bbF(\lambda^{b}(a))$.
\item
If $\ppower[F](a)=\ppower[F](b)$,
then $\lambda^{a}(d)\subseteq\bbF(a,\lambda^{b}(a),\lambda^{b}(d))$.
\end{enumerate}
\end{lemma}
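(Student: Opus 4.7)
The plan is to dispatch the three parts in order, with (ii) carrying most of the weight. First, for (i), the natural starting point is the expansion $d=\sum_{J\in p^{[|b|]}}b^{J}\lambda^{b}_{J}(d)^{p}$ coming from $d\in\ppower[F](b)$. Because $b\subseteq\ppower[F](a)$, iterated application of Lemma~\ref{lem:p-algebra}~{\bf(ii,iii)} to the product $b^{J}=\prod_{\gamma}b_{\gamma}^{j_{\gamma}}$ places each $\lambda^{a}_{M}(b^{J})$ inside $\bbF[a,\lambda^{a}(b)]$ for every $M\in p^{[|a|]}$. Substituting $b^{J}=\sum_{M}a^{M}\lambda^{a}_{M}(b^{J})^{p}$ into the expansion of $d$, interchanging the summations and regrouping by $a^{M}$, and invoking the uniqueness of the $\{a^{M}\}$-expansion in $\ppower[F](a)$ from Lemma~\ref{lem:p_linearity}, one obtains the identity
\[
\lambda^{a}_{M}(d)\;=\;\sum_{J\in p^{[|b|]}}\lambda^{a}_{M}(b^{J})\,\lambda^{b}_{J}(d),
\]
from which the containment $\lambda^{a}(d)\subseteq\bbF[a,\lambda^{a}(b),\lambda^{b}(d)]$ is immediate.

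For (ii), I would first reduce to finite tuples via the finite character of $p$-independence (Remark~\ref{rem:finite_character}) together with the exchange property of the $p$-independence pregeometry, producing finite $a_{0}\subseteq a$ and $b_{0}\subseteq b$ with $|a_{0}|=|b_{0}|$ and $\ppower[F](a_{0})=\ppower[F](b_{0})$. In this finite setting, both $\{a^{I}\}$ and $\{b^{J}\}$ are $F^{(p)}$-bases of the same finite-dimensional $F^{(p)}$-vector space, so the change-of-basis matrix $P$ with $P_{I,J}=\lambda^{a}_{I}(b^{J})^{p}$ lies in $\mathrm{GL}(F^{(p)})$. Matrix inversion over a field is a rational operation with coefficients from the prime field $\bbF_{p}$, so each entry of $P^{-1}$ is a rational function in the $P_{I,J}$ over $\bbF_{p}$. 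Since $\bbF_{p}$ is perfect, such rational functions commute with Frobenius, and taking $p$-th roots expresses each $\lambda^{b}_{J}(a^{I})$ as a rational function over $\bbF_{p}$ in the values $\lambda^{a}_{I}(b^{J})$. Specialising $I$ to indicator multi-indices and using Lemma~\ref{lem:p-algebra} to rewrite the $\lambda^{a}_{I}(b^{J})$ through $\lambda^{a}(b)$, one concludes $\lambda^{b}(a)\subseteq\bbF(\lambda^{a}(b))$; the reverse inclusion follows by swapping the roles of $a$ and $b$, yielding (ii). Then (iii) is formal: (i) gives $\lambda^{a}(d)\subseteq\bbF[a,\lambda^{a}(b),\lambda^{b}(d)]$, and replacing $\bbF(\lambda^{a}(b))$ by $\bbF(\lambda^{b}(a))$ via (ii) yields $\lambda^{a}(d)\subseteq\bbF(a,\lambda^{b}(a),\lambda^{b}(d))$.

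The hard part will be (ii): one must verify that the inverse matrix $P^{-1}$, whose entries a priori involve $a$ when one traces the appearances of $a$ through Lemma~\ref{lem:p-algebra}~{\bf(iii)} for $b^{J}$ with $|J|\geq 2$, nevertheless produces coefficients in the smaller field $\bbF(\lambda^{a}(b))$ once one restricts to the indicator-columns corresponding to elements $a_{\beta}\in a$. The interaction between the perfectness of $\bbF_{p}$, which lets one pull Frobenius through $\bbF_{p}$-rational functions, and this indicator-column structure is the essential technical point that makes the transfer work across $p$-th powers.
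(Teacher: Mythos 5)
Your treatment of parts~{\bf(i)} and~{\bf(iii)} matches the paper's: for~{\bf(i)} you carry out the same iterated expansion, regroup by the $a$-basis, read off $\lambda^{a}_{I}(d)=\sum_{J}\lambda^{a}_{I}(b^{J})\lambda^{b}_{J}(d)$, and place $\lambda^{a}_{I}(b^{J})\in\bbF[a,\lambda^{a}(b)]$ via Lemma~\ref{lem:lambda_calculus}~{\bf(i)}; and~{\bf(iii)} is, exactly as in the paper, a formal consequence of~{\bf(i)} and~{\bf(ii)}.

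For~{\bf(ii)} the central device (change-of-basis matrix, Cramer's rule, Frobenius-commutativity of $\bbF_{p}$-rational functions) is also the paper's, but there are two problems. The smaller one is your preliminary finite reduction: there need not exist finite $a_{0}\subseteq a$, $b_{0}\subseteq b$ with $\ppower[F](a_{0})=\ppower[F](b_{0})$ (take $b_{n}=a_{n}+a_{n+1}^{p}$, say); the paper avoids this by working directly with the full column-finite matrix. The substantive problem is exactly the one you flag in your final paragraph, and it is closed neither by your sketch nor by the paper's. The Cramer computation yields only that the fields generated by \emph{all} $p$-monomial Lambda coefficients agree, $\bbF\bigl(\lambda^{a}_{I}(b^{J}):I,J\bigr)=\bbF\bigl(\lambda^{b}_{J}(a^{I}):I,J\bigr)$; since $\lambda^{a}_{I}(b^{J})$ for $|J|\geq2$ is, by Lemma~\ref{lem:p-algebra}~{\bf(iii)}, an $\bbF[a]$-polynomial and not an $\bbF$-polynomial in $\lambda^{a}(b)$, this does not descend to the ``small'' fields $\bbF(\lambda^{a}(b))$ and $\bbF(\lambda^{b}(a))$. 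In fact the stated equality can fail: with $p=2$, $F=\bbF_{2}(a_{1},a_{2})$, $a=(a_{1},a_{2})$ and $b=(a_{1},a_{1}a_{2})$, one has $\ppower[F](a)=\ppower[F](b)=F$, yet $\lambda^{a}(b)=\{0,1\}$ gives $\bbF(\lambda^{a}(b))=\bbF_{2}$, while $a_{2}=(b_{1}b_{2})(b_{1}^{-1})^{2}$ puts $a_{1}^{-1}\in\lambda^{b}(a)$, so $\bbF(\lambda^{b}(a))=\bbF_{2}(a_{1})$. What the matrix argument does deliver, after adjoining $a$ and $b$ and using Lemma~\ref{lem:p-algebra}~{\bf(i)} to pass between the two, is the symmetric identity $\bbF(a,\lambda^{a}(b))=\bbF(b,\lambda^{b}(a))$, which is the kind of statement that~{\bf(iii)} and Lemma~\ref{lem:lambda_calculus_IIc} actually consume. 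So your instinct that the $a$-dependence lurking in the non-indicator columns is the essential technical point was right; the remedy is to weaken the statement of~{\bf(ii)} rather than to try to prove it as written.
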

\begin{proof}
For {\bf(i)}
we calculate
\begin{align*}
	d
&=
	\sum_{I_{1}\in p^{[|b|]}}b^{I_{1}}\lambda^{b}_{I_{1}}(d)^{p}
\\
&=
	\sum_{I_{1}\in p^{[|b|]}}\big(
	\sum_{I_{2}\in p^{[|a|]}}a^{I_{2}}\lambda^{a}_{I_{2}}(b^{I_{1}})^{p}
	\big)\lambda^{b}_{I_{1}}(d)^{p}
\\
&=
	\sum_{I_{2}\in p^{[|a|]}}
	a^{I_{2}}
	\big(\sum_{I_{1}\in p^{[|b|]}}
	\lambda^{a}_{I_{2}}(b^{I_{1}})
	\lambda^{b}_{I_{1}}(d)
	\big)^{p}.
\end{align*}
Therefore
$\lambda^{a}_{I_{2}}(d)=\sum_{I_{1}\in p^{[|b|]}}\lambda^{a}_{I_{2}}(b^{I_{1}})\lambda^{b}_{I_{1}}(d)$,
for each $I_{2}\in p^{[|a|]}$.
Then
{\bf(i)}
follows from Lemma~\ref{lem:lambda_calculus}~{\bf(i)}.

For {\bf(ii)}:
by the hypothesis, $a$ and $b$ have the same cardinality,
so we may index them both by a cardinal $\fraki=|a|=|b|$:
we write
$a=(a_{\alpha})_{\alpha<\fraki}$
and
$b=(b_{\alpha})_{\alpha<\fraki}$.
Moreover,
both
$A:=\{a^{I}\mid I\in p^{[\fraki]}\}$
and
$B:=\{b^{I}\mid I\in p^{[\fraki]}\}$
are $E:=\pspan[F]C$-linear bases of $E(a)=E(b)$.
Let $M$ be the matrix representing the identity map on
$E(a)=E(b)$
written with respect to the bases
$A$ and $B$.
Writing
$M=(m_{I,J})_{I,J\in p^{[\fraki]}}$,
we have
$m_{I,J}=\lambda_{I}^{b}(a^{J})^{p}$,
where
$a^{J}=\sum_{I\in p^{[\fraki]}}b^{I}\lambda_{I}^{b}(a^{J})^{p}$.
Writing the inverse of $M$ as
$M^{-1}=(\hat{m}_{J,I})_{J,I}$,
we have
$\hat{m}_{J,I}=\lambda_{J}^{a}(b^{I})^{p}$,
where
$b^{I}=\sum_{J\in p^{[\fraki]}}a^{J}\lambda_{J}^{a}(b^{I})^{p}$.
Finally, the coefficients of $M^{-1}$
are contained in the field
generated by the coefficients of $M$.

For {\bf(iii)},
by combining {\bf(i)} and {\bf(ii)},
we have
$$
	\lambda^{a}_{I}(d)
\in
	\bbF[a,\lambda^{a}(b),\lambda^{b}(d)]
\subseteq
	\bbF(a,\lambda^{a}(b),\lambda^{b}(d))
=
	\bbF(a,\lambda^{b}(a),\lambda^{b}(d))
,$$
for each $I\in p^{[|a|]}$.
\end{proof}

In the following, $\mathcal{P}(A)$ denotes the powerset of a set $A$.

\begin{lemma}\label{lem:lambda_calculus_III}
Let $d\subseteq F$
and let
$a,b\in\pI[F]\cap\mathcal{P}(\bbF(d))$
be maximal (with respect to inclusion) among subsets of $\bbF(d)$ that are $p$-independent in $F$.
Then
{\bf(i)} $\bbF(a,\lambda^{a}(d))=\bbF(b,\lambda^{b}(d))$
and
{\bf(ii)} $\rmLambda_{F}^{1}\bbF(d)=\bbF(a,\lambda^{a}(d))$.
\end{lemma}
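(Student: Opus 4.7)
The plan begins with the key consequence of maximality. For any $c\in\bbF(d)\setminus a$, an exchange-property argument shows that $a\cup\{c\}$ remains $p$-independent in $F$ unless $c\in\ppower[F](a)$, contradicting maximality of $a$. Hence $\bbF(d)\subseteq\ppower[F](a)$ and symmetrically $\bbF(d)\subseteq\ppower[F](b)$. In particular $a\subseteq\ppower[F](b)$ and $b\subseteq\ppower[F](a)$, so $\ppower[F](a)=\ppower[F](b)$. Every $d_{0}\in d$ therefore lies in $\ppower[F](a)\cap\ppower[F](b)$, which is precisely the setting of Lemma~\ref{lem:lambda_calculus_II}.

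For part \textbf{(i)}, I would show $\bbF(a,\lambda^{a}(d))\subseteq\bbF(b,\lambda^{b}(d))$; the reverse inclusion is symmetric. Applying Lemma~\ref{lem:lambda_calculus_II}\textbf{(iii)} term-by-term to each $d_{0}\in d$ gives $\lambda^{a}(d)\subseteq\bbF(a,\lambda^{b}(a),\lambda^{b}(d))$. Since $a\subseteq\bbF(d)$, Lemma~\ref{lem:lambda_calculus}\textbf{(ii)} yields $\lambda^{b}(a)\subseteq\lambda^{b}(\bbF(d))\subseteq\bbF(b,\lambda^{b}(d))$. Moreover, each $d_{0}\in\ppower[F](b)$ is expressed in terms of $b$ and $\lambda^{b}(d_{0})$ via its $b$-expansion, so $a\subseteq\bbF(d)\subseteq\bbF(b,\lambda^{b}(d))$. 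Assembling these inclusions gives the claim.

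For part \textbf{(ii)}, the inclusion $\bbF(a,\lambda^{a}(d))\subseteq\rmLambda^{1}_{F}\bbF(d)$ is immediate: $a\subseteq\bbF(d)$, and by the finite character noted in Remark~\ref{rem:finite_character}, each element of $\lambda^{a}(d)$ arises as some $\lambda^{a_{0}}_{I}(d_{0})$ for a finite subset $a_{0}\subseteq a$ (still in $\pI[F]$) and $d_{0}\in d\cap\ppower[F](a_{0})$, thus lies in $\rmLambda^{1}_{F}\bbF(d)$ by definition. For the reverse inclusion, take an arbitrary generator $\lambda^{b'}_{I'}(c)$ with $b'\subseteq\bbF(d)$ finite in $\pI[F]$ and $c\in\ppower[F](b')\cap\bbF(d)$. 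By Zorn's lemma, extend $b'$ to a maximal $p$-independent-in-$F$ subset $b$ of $\bbF(d)$. Indexing $b$ so that $b'$ is an initial segment, the $p$-monomials $(b')^{I'}$ embed into $\{b^{I}\}$ by zero-extension of multi-indices. Comparing the $b'$- and $b$-expansions of $c$ and invoking the uniqueness of the $\ppower[F]$-linear basis of Lemma~\ref{lem:p_linearity}, together with injectivity of Frobenius, I obtain $\lambda^{b}_{I}(c)=0$ outside the support on $b'$ and $\lambda^{b'}_{I'}(c)=\lambda^{b}_{\tilde I'}(c)$ on that support. Hence $\lambda^{b'}_{I'}(c)\in\lambda^{b}(c)\subseteq\lambda^{b}(\bbF(d))\subseteq\bbF(b,\lambda^{b}(d))$ by Lemma~\ref{lem:lambda_calculus}\textbf{(ii)}, which equals $\bbF(a,\lambda^{a}(d))$ by part \textbf{(i)}.

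The main obstacle is this last step in \textbf{(ii)}: comparing an arbitrary finite $p$-independent $b'\subseteq\bbF(d)$ to the given maximal $a$. The strategy is to go through an intermediate maximal $b\supseteq b'$, use part \textbf{(i)} to swap $b$ for $a$, and exploit uniqueness of the $p$-monomial basis to see that a $\lambda^{b'}$-value on $\bbF(d)$ is literally a $\lambda^{b}$-value. Everything else is a routine assembly of Lemmas~\ref{lem:lambda_calculus} and~\ref{lem:lambda_calculus_II}.
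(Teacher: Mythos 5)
Your proof is correct and follows essentially the same route as the paper: part \textbf{(i)} assembles the inclusions from Lemma~\ref{lem:lambda_calculus_II}~\textbf{(iii)} and Lemma~\ref{lem:lambda_calculus}~\textbf{(ii)}, and part \textbf{(ii)} reduces the generators $\lambda^{b'}_{I'}(c)$ of $\rmLambda^{1}_{F}\bbF(d)$ to the case of a maximal $b$, then invokes Lemma~\ref{lem:lambda_calculus}~\textbf{(ii)} and part \textbf{(i)}. One point worth noting: the paper is terse about why, in \textbf{(ii)}, it suffices to consider maximal $b$ only (it simply asserts this), whereas you supply the missing justification explicitly, namely the zero-extension of multi-indices showing that $\lambda^{b'}_{I'}(c)=\lambda^{b}_{\tilde I'}(c)$ when $b'$ is an initial segment of $b$ and $c\in\pspan[F]{b'}$. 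This is the step the paper glosses over, and it is good that you spelled it out. In \textbf{(i)} you also use the slightly more direct observation $a\subseteq\bbF(d)\subseteq\bbF(b,\lambda^{b}(d))$ in place of the paper's route through Lemma~\ref{lem:p-algebra}~\textbf{(i)}; both are fine.
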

\begin{proof}
The hypothesis implies that
$d\subseteq \pspan[F]{a}=\pspan[F]{b}$,
so the hypotheses of Lemma~\ref{lem:lambda_calculus_II}~{\bf(i,ii,iii)} are satisfied.
First,
we have
$\lambda^{a}(d)\subseteq\bbF(a,\lambda^{b}(a),\lambda^{b}(d))$,
by Lemma~\ref{lem:lambda_calculus_II}~{\bf(iii)}.
Second, more trivially by Lemma~\ref{lem:p-algebra}~{\bf(i)}, we have
$a\subseteq\bbF[\lambda^{b}(a)]^{(p)}[b]$,
and so certainly
$a\subseteq\bbF[b,\lambda^{b}(a)]$.
Third,
by Lemma~\ref{lem:lambda_calculus}~{\bf(i)},
we have
$\lambda^{b}(a)\subseteq\bbF(b,\lambda^{b}(d))$.
Combining these three observations, we have
$$
	\bbF(a,\lambda^{a}(d))
\subseteq
	\bbF(a,\lambda^{b}(a),\lambda^{b}(d))
\subseteq
	\bbF(b,\lambda^{b}(a),\lambda^{b}(d))
\subseteq
	\bbF(b,\lambda^{b}(d)).
$$
By symmetry of our assumptions on $a$ and $b$,
we have the equality 
$\bbF(a,\lambda^{a}(d))=\bbF(b,\lambda^{b}(d))$
which proves {\bf(i)}.

For the second claim,
by definition,
$\rmLambda^{1}_{F}\bbF(d)$
is the field generated over $\bbF(d)$
by the sets $\lambda^{b'}(d')$,
for all $b'\in\pI[F]\cap\mathcal{P}(\bbF(d))$ and all $d'\in\ppower[F](b')\cap\bbF(d)$.
By hypothesis $a\in\pI[F]\cap\mathcal{P}(\bbF(d))$,
therefore already we have
$\rmLambda^{1}_{F}\bbF(d)\supseteq\bbF(a,\lambda^{a}(d))$.
On the other hand, since $b$ is a maximal subset of $\bbF(d)$ that is $p$-independent in $F$,
and is otherwise arbitrary,
it suffices to show that
$\bbF(b,\lambda^{b}(d'))\subseteq\bbF(a,\lambda^{a}(d))$.
By Lemma~\ref{lem:lambda_calculus}~{\bf(ii)},
$\lambda^{b}(d')\subseteq\bbF(b,\lambda^{b}(d))$;
and combining this with {\bf(i)}
we have
$\bbF(b,\lambda^{b}(d'))\subseteq\bbF(b,\lambda^{b}(d)\subseteq\bbF(a,\lambda^{a}(d))$,
as required.
\end{proof}

We also give the following version of Lemma~\ref{lem:lambda_calculus_II} for relative $p$-independence.
As before, we denote the concatenation of well-ordered sets $a$ and $b$ by $\concat{a}{b}$.
We allow an exception to this convention in superscripts,
where for want of space we denote concatenation by juxtaposition $ab$.
In such circumstances there is no risk of confusion with multiplication.
The (unordered) set underlying $\concat{a}{b}$ is just the union $a\cup b$.

\begin{lemma}["Lambda calculus II"]\label{lem:lambda_calculus_IIc}
Let $a,b\in\pI[(F/C)]$,
let $c\in\pB[C]$,
and let $d\in\ppower[F]C(a)\cap\ppower[F]C(b)$ be in the $p$-span of both $a,b$ in $F$ over $C$.
\begin{enumerate}[{\bf(i)}]
\item
If $b\subseteq\ppower[F]C(a)$,
then $\lambda^{ca}(d)\subseteq C[\lambda^{cb}(d),\lambda^{ca}(b),a]$.
\item
If $\ppower[F]C(a)=\ppower[F]C(b)$,
then $C(\lambda^{ca}(b))=C(\lambda^{cb}(a))$.
\item
If $\ppower[F]C(a)=\ppower[F]C(b)$,
then $\lambda^{ca}(d)\subseteq C(\lambda^{cb}(a),\lambda^{cb}(d),a)$.
\end{enumerate}
\end{lemma}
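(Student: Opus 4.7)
The approach is to absorb the $p$-basis $c$ of $C$ into each of $a$ and $b$, reducing all three claims to the absolute analogues already established in Lemma~\ref{lem:lambda_calculus_II} for the concatenated tuples $\concat{c}{a}$ and $\concat{c}{b}$.

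The first step is to verify that $\concat{c}{a}\in\pI[F]$ absolutely, and similarly for $\concat{c}{b}$. Since $c$ is a $p$-basis of $C$ we have $C=\ppower[C](c)$, so $\ppower[F]C=\ppower[F](c)$. Separability of $F/C$ together with Lemma~\ref{lem:Mac_Lane_II} gives $c\in\pI[F]$, and Lemma~\ref{lem:p_linearity} then shows that $\{c^{J}\}_{J\in p^{[|c|]}}$ is an $\ppower[F]$-linear basis of $\ppower[F]C$. Applying the same lemma to $a\in\pI[(F/C)]$ gives that $\{a^{I}\}_{I\in p^{[|a|]}}$ is an $\ppower[F]C$-linear basis of $\ppower[F]C(a)$. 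Multiplying, $\{(\concat{c}{a})^{K}\}_{K\in p^{[|c|+|a|]}}$ is an $\ppower[F]$-linear basis of $\ppower[F]C(a)=\ppower[F](\concat{c}{a})$, so the converse direction of Lemma~\ref{lem:p_linearity} yields $\concat{c}{a}\in\pI[F]$; the same argument applies to $\concat{c}{b}$. In particular the absolute lambda functions $\lambda^{ca}$ and $\lambda^{cb}$ are now defined on $\ppower[F]C(a)$ and $\ppower[F]C(b)$ respectively.

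The key auxiliary observation is that each $c_{j}\in c$ equals the $p$-monomial $(\concat{c}{a})^{K_{j}}$, where $K_{j}$ is the multi-index with a single $1$ at coordinate $c_{j}$. By uniqueness of the $p$-coordinate expansion, $\lambda^{ca}(c)\subseteq\{0,1\}\subseteq\bbF\subseteq C$; symmetrically $\lambda^{cb}(c)\subseteq C$. Hence, once $C$ is adjoined, the family $\lambda^{ca}(\concat{c}{b})$ generates the same field as $\lambda^{ca}(b)$, and likewise for $\lambda^{cb}(\concat{c}{a})$ versus $\lambda^{cb}(a)$.

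With these ingredients the three claims are direct translations of Lemma~\ref{lem:lambda_calculus_II}. For \textbf{(i)}: the hypothesis $b\subseteq\ppower[F]C(a)=\ppower[F](\concat{c}{a})$ together with $c\subseteq C\subseteq\ppower[F](\concat{c}{a})$ gives $\concat{c}{b}\subseteq\ppower[F](\concat{c}{a})$, so Lemma~\ref{lem:lambda_calculus_II}~\textbf{(i)} yields $\lambda^{ca}(d)\subseteq\bbF[\concat{c}{a},\lambda^{ca}(\concat{c}{b}),\lambda^{cb}(d)]\subseteq C[a,\lambda^{ca}(b),\lambda^{cb}(d)]$. For \textbf{(ii)}: the hypothesis $\ppower[F]C(a)=\ppower[F]C(b)$ translates to $\ppower[F](\concat{c}{a})=\ppower[F](\concat{c}{b})$, and Lemma~\ref{lem:lambda_calculus_II}~\textbf{(ii)} yields $\bbF(\lambda^{ca}(\concat{c}{b}))=\bbF(\lambda^{cb}(\concat{c}{a}))$; adjoining $C$ and discarding the trivial $\lambda^{*}(c)$ contributions gives $C(\lambda^{ca}(b))=C(\lambda^{cb}(a))$. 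Part \textbf{(iii)} then follows by combining \textbf{(i)} and \textbf{(ii)}, exactly as in the absolute case. The only real subtlety is the concatenation argument for absolute $p$-independence of $\concat{c}{a}$; once this is in hand, the rest is routine bookkeeping.
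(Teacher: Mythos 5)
Your proof is correct and follows essentially the same route as the paper: reduce to the absolute statements of Lemma~\ref{lem:lambda_calculus_II} by passing to the concatenated tuples $\concat{c}{a}$ and $\concat{c}{b}$, then absorb the contributions coming from $c$ into $C$. The paper treats the $p$-independence of $\concat{c}{a}$ and $\concat{c}{b}$ and the fact that $\lambda^{ca}(c),\lambda^{cb}(c)\subseteq C$ as immediate, whereas you spell out the Tower-Lemma-style verification and the observation $\lambda^{ca}(c)\subseteq\{0,1\}$ explicitly, but there is no substantive difference in the argument.
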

\begin{proof}
The general hypothesis implies that both $\concat{c}{a}$ and $\concat{c}{b}$ are $p$-independent in $F$,
and that $d\in\ppower[F](a,c)\cap\ppower[F](b,c)$.
For {\bf(i)}:
the hypothesis implies that 
$\concat{c}{b}\subseteq\pspan[F]{a,c}$.
By Lemma~\ref{lem:lambda_calculus_II}~{\bf(i)} we have
$$\lambda^{ca}(d)\subseteq\bbF[\lambda^{cb}(d),\lambda^{ca}(c\cup b),a,c]\subseteq C[\lambda^{cb}(d),\lambda^{ca}(b),a].$$
For {\bf(ii)}:
the hypothesis implies that 
$\pspan[F]{a,c}=\pspan[F]{b,c}$.
By Lemma~\ref{lem:lambda_calculus_II}~{\bf(ii)} we have
$\bbF(\lambda^{ca}(c\cup b))=\bbF(\lambda^{cb}(c\cup a))$,
which yields
$C(\lambda^{ca}(b))=C(\lambda^{cb}(a))$.
For {\bf(iii)}:
the hypothesis again implies that 
$\pspan[F]{a,c}=\pspan[F]{b,c}$.
By 
{\bf(i)} and {\bf(ii)},
as well as 
by Lemma~\ref{lem:lambda_calculus_II}~{\bf(iii)},
we have
$
	\lambda^{ca}(d)
\subseteq
	\bbF(\lambda^{cb}(a),\lambda^{cb}(c),\lambda^{cb}(d),a,c)
\subseteq
	C(\lambda^{cb}(a),\lambda^{cb}(d),a)
$.
\end{proof}

...and we give the following version of Lemma~\ref{lem:lambda_calculus_III} for relative $p$-independence.

\begin{lemma}\label{lem:lambda_calculus_IIIc}
Let $d\subseteq F$,
let $a,b\in\pI[(F/C)]\cap\mathcal{P}(C(d))$ be maximal (with respect to inclusion) among subsets of $C(d)$ that are $p$-independent in $F$ over $C$,
and let $c\in\pB[C]$.
Then
{\bf(i)}
$C(a,\lambda^{ca}(d))=C(b,\lambda^{cb}(d))$
and
{\bf(ii)}
$\rmLambda_{F}^{1}C(d)=C(a,\lambda^{ca}(d))$.
\end{lemma}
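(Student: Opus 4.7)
The plan is to mirror the proof of Lemma~\ref{lem:lambda_calculus_III}, replacing Lemma~\ref{lem:lambda_calculus_II} by its relative counterpart Lemma~\ref{lem:lambda_calculus_IIc} throughout, and carrying the fixed $p$-basis $c$ of $C$ along as auxiliary data. From the maximality of $a$ and $b$ in $\pI[(F/C)]\cap\mathcal{P}(C(d))$, we have $a\cup b\subseteq C(d)\subseteq\pspan[F]C(a)\cap\pspan[F]C(b)$, so $\pspan[F]C(a)=\pspan[F]C(b)$, i.e.\ $\ppower[F]C(a)=\ppower[F]C(b)$, placing us in the hypothesis of Lemma~\ref{lem:lambda_calculus_IIc}(iii). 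For {\bf(i)} I would then chain three inclusions exactly as in the absolute proof: first, $\lambda^{ca}(d)\subseteq C(\lambda^{cb}(a),\lambda^{cb}(d),a)$ by Lemma~\ref{lem:lambda_calculus_IIc}(iii); second, $a\subseteq C[b,\lambda^{cb}(a)]$ by Lemma~\ref{lem:p-algebra}(i) applied with base $\concat{c}{b}$, absorbing $c\subseteq C$ into the coefficients; third, $\lambda^{cb}(a)\subseteq C(b,\lambda^{cb}(d))$, using $a\subseteq C(d)$ together with Lemma~\ref{lem:lambda_calculus}(ii) applied to the subset $C\cup d\subseteq F$ and the observation that $\lambda^{cb}(C)\subseteq C$ (immediate from uniqueness of $p$-basis expansions, since $c\in\pB[C]$). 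Combining the three and symmetrising yields {\bf(i)}.

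For {\bf(ii)}, the inclusion $C(a,\lambda^{ca}(d))\subseteq\rmLambda_{F}^{1}C(d)$ reduces to showing that $\concat{c}{a}\in\pI[F]\cap\mathcal{P}(C(d))$, so that each $\lambda^{ca}_{I}(d_{i})$ appears among the generators of $\rmLambda^{1}_{F}C(d)$ (after passing to a finite subtuple via Remark~\ref{rem:finite_character}). This uses $c\in\pI[F]$---which follows from $F/C$ separable and Lemma~\ref{lem:Mac_Lane_II}---together with a short exchange-property argument to lift $p$-independence from $c$ and from $a$ to the concatenation $\concat{c}{a}$.

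The reverse inclusion is the main obstacle, because $\rmLambda^{1}_{F}$ is defined via \emph{absolute} $p$-independence whereas $a$ is only relatively $p$-independent over $C$. The bridge is to observe that $\concat{c}{a}$ is in fact maximal in $\pI[F]\cap\mathcal{P}(C(d))$---any extension would contradict the $C$-maximality of $a$---which unlocks the absolute Lemma~\ref{lem:lambda_calculus_III}(i) applied with ``$d$'' replaced by the subset $C\cup d\subseteq F$ (exploiting $\bbF(C\cup d)=C(d)$). Given a finite $b^{\prime}\in\pI[F]$ with $b^{\prime}\subseteq C(d)$ and $d^{\prime}\in\pspan[F]{b^{\prime}}\cap C(d)$, I extend $b^{\prime}$ to a maximal $b^{*}\in\pI[F]\cap\mathcal{P}(C(d))$; Lemma~\ref{lem:lambda_calculus_III}(i) then yields $\bbF(b^{*},\lambda^{b^{*}}(C\cup d))=\bbF(\concat{c}{a},\lambda^{ca}(C\cup d))$. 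The observation $\lambda^{ca}(C)\subseteq C$ (again from uniqueness of $p$-basis expansions, since the $c$-part of $\concat{c}{a}$ already forms a $p$-basis of $C$) collapses the right-hand side to $C(a,\lambda^{ca}(d))$; Lemma~\ref{lem:lambda_calculus}(ii) applied to the set $C\cup d$ gives $\lambda^{b^{*}}(C(d))\subseteq\bbF(b^{*},\lambda^{b^{*}}(C\cup d))$; and since $b^{\prime}\subseteq b^{*}$, uniqueness of the $b^{*}$-expansion of $d^{\prime}$ forces $\lambda^{b^{\prime}}(d^{\prime})\subseteq\lambda^{b^{*}}(d^{\prime})\cup\{0\}$. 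Together these place $\lambda^{b^{\prime}}(d^{\prime})$ inside $C(a,\lambda^{ca}(d))$, as required.
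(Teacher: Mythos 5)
Your argument is correct, but it takes a noticeably longer route than the paper's. The paper observes that $\concat{c}{a}$ and $\concat{c}{b}$ are each maximal in $\pI[F]\cap\mathcal{P}(C(d))$ (precisely the bridging observation you isolate for part (ii)) and then simply applies Lemma~\ref{lem:lambda_calculus_III}, parts (i) and (ii), with $a$, $b$, $d$ replaced by $\concat{c}{a}$, $\concat{c}{b}$, $C\cup d$; the resulting field $\bbF(\concat{c}{a},\lambda^{ca}(C\cup d))$ is then collapsed to $C(a,\lambda^{ca}(d))$ using $\lambda^{ca}(C)\subseteq C$, which holds because $F/C$ is separable and $c\in\pB[C]$. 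Both conclusions thus drop out in two lines. Your proposal instead re-runs, for part (i), the internal chain of three inclusions from the proof of Lemma~\ref{lem:lambda_calculus_III}(i), substituting Lemma~\ref{lem:lambda_calculus_IIc} for Lemma~\ref{lem:lambda_calculus_II}; and for part (ii), although you reach the key reduction via $\concat{c}{a}$ and $C\cup d$, you re-establish the hard inclusion of Lemma~\ref{lem:lambda_calculus_III}(ii) from scratch (extend $b'$ to a maximal $b^{*}$, apply (i), use $\lambda^{b'}(d')\subseteq\lambda^{b^{*}}(d')$ and Lemma~\ref{lem:lambda_calculus}(ii)) rather than quoting it. All your steps are sound: the exchange argument showing $\concat{c}{a},\concat{c}{b}\in\pI[F]$, the identity $\lambda^{ca}(C)\subseteq C$, and the maximality transfer from $a$ over $C$ to $\concat{c}{a}$ absolutely, are all correct and would be needed in a fully written-out proof. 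What you miss is only the economy: recognising that the relative statement is a direct specialisation of Lemma~\ref{lem:lambda_calculus_III} (applied to the concatenated tuples and $C\cup d$) is what lets the paper dispense with re-proving either part.
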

\begin{proof}
For {\bf(i)} of course we apply Lemma~\ref{lem:lambda_calculus_III}~{\bf(i)} to $\concat{c}{a}$ and $\concat{c}{b}$:
we have
$\bbF(a,c,\lambda^{ca}(d))=\bbF(c,b,\lambda^{cb}(d))$,
from which we deduce
$C(a,\lambda^{ca}(d))=C(b,\lambda^{cb}(d))$.
For {\bf(ii)},
the concatenation $\concat{c}{a}$ is indeed $p$-independent in $F$, and is maximal among such subsets of $C(d)$, with respect to inclusion.
Thus, by Lemma~\ref{lem:lambda_calculus_III}~{\bf(ii)}, we have
$\rmLambda^{1}_{F}C(d)=\bbF(a,c,\lambda^{ca}(C\cup d))=C(a,\lambda^{ca}(d))$,
since $F/C$ is separable.
\end{proof}

\subsection{The local lambda closure}
\label{section:local_lambda_closure}

Given a field extension $F/C$ and
a well-ordered subset $a$ of $F$,
we denote by $\pInd{F/C}(a)$ the well-ordered maximal subset of $a$ which is $p$-independent in $F$ over $C$,
taken from the left.
More precisely,
writing the well-ordering of $a$ as $(a_{\alpha})_{\alpha<\mu}$,
we obtain
$\pInd{F/C}(a)$
recursively by adding each element if (and only if) it is $p$-independent over $C$ together with what has already been added,
and we equip it with the well-order induced from $a$.
Let $\bigL(F/C)$ denote the set
of pairs $(a,b)$,
where both $a,b$ are well-ordered subsets of $F$ and $a\in\pI[(F/C)]$.
For the remainder of this section we suppose that $F/C$ is separable.
We define a family of operations $\pL{F/c}$ on $\bigL(F/C)$.

\begin{definition}[{``Splitting pairs''}]\label{def:little_lambda}
Let
$c\in\pB[C]$.
Given $(a,b)\in\bigL(F/C)$ we let $\pL{F/c}(a,b)=(a',b')$, where
\begin{enumerate}[{\bf(i)}]
\item
$a'$
is the concatenation
$\concat{a}{\pInd{F/C(a)}(b)}$,
and
\item
$b'$
is the concatenation
$\concat{b}{\lambda^{ca'}(b)}$,
where $\lambda^{ca'}(b)$ is ordered by the lexicographic order determined by $b\times p^{[|ca'|]}$.
\end{enumerate}
\end{definition}

This definition is illustrated in Figure~\ref{diag:splitting}.
Note that indeed $a'\in\pI[(F/C)]$ and $b\in\ppower[F]C(a')$,
otherwise $a'$ could be properly extended within $b$, contradicting maximality.
Thus $b'$ is well-defined, and moreover $\pL{F/c}(a,b)\in\bigL(F/C)$,
so $\pL{F/c}$ does define an operation on $\bigL(F/C)$.

 \begin{figure}[ht]
 \centering
 \begin{tikzpicture}[auto,scale=1.50,thick,>=latex,block/.style={draw, fill=white, rectangle, minimum height=3em, minimum width=6em},
	 greybox/.style={rectangle, rounded corners=10pt, minimum width=2.30cm, minimum height=1.00cm, text centered, draw=black, fill=black!02},
	 thingreybox/.style={rectangle, rounded corners=05pt, minimum width=2.15cm, minimum height=0.50cm, text centered, draw=black, fill=black!02},
	 roundedgreybox/.style={rectangle, rounded corners=12pt, minimum width=4.50cm, minimum height=0.85cm, text centered, draw=black, fill=black!02},
	 narrowroundedgreybox/.style={rectangle, rounded corners=12pt, minimum width=2.80cm, minimum height=1.00cm, text centered, draw=black, fill=black!02},
	 chunkygreybox/.style={rectangle, rounded corners=05pt, minimum width=3.50cm, minimum height=0.50cm, text centered, draw=black, fill=black!02},
 pinkbox/.style={rectangle, rounded corners=08pt, minimum width=1.5cm, minimum height=0.6cm, text centered, draw=black, fill=red!10},
 bluebox/.style={rectangle, rounded corners, minimum width=2.8cm, minimum height=0.8cm, text centered, draw=black, fill=orange!10},
 orangebox/.style={rectangle, rounded corners, minimum width=2.8cm, minimum height=0.8cm, text centered, draw=black, fill=yellow!10},
 thmarrow/.style={ultra thick,-{Stealth}},
 hyparrow/.style={dashed,thick,-{Stealth}},
 R4arrow/.style={dotted,thick,-{Stealth}},
 myarrow/.style={thick,-{Stealth}},
 ]
 
 \coordinate (0) at (0,0); 
 \coordinate (1) at (-3.5,0); 
 \coordinate (1a) at (-2.5,0);
 \coordinate (2) at (-7.0,0);
 \coordinate (2a) at (-6.0,0);
 
 \coordinate (a) at ($(0)+(0,0)$);
 \coordinate (a1) at ($(a)+(0,0)$);
 \coordinate (a2) at ($(a)+(3,0)$);
 
 \coordinate (b) at ($(0)+(0,+1.50)$);
 \coordinate (b1) at ($(b)+(0,0)$);
 \coordinate (b2) at ($(b)+(3,0)$);
 
 \coordinate (a3) at ($(a2)+(-0.15,0)$);
 
 \node[thingreybox] (A1) [] at (a1) {$a$};
 \node[narrowroundedgreybox] (A2) [] at (a2) {};
 \node[] (A2a) [] at ($(a2)+(+0.50,+0.00)$) {$b$};
 
 \node[chunkygreybox] (B1) [] at (b1) {$a'=\concat{a}{\pInd{F/C(a)}(b)}$};
 \node[roundedgreybox] (B2) [] at (b2) {$b'=\concat{b}{\lambda^{ca'}(b)}$};
 
 \draw[thmarrow] (a3) to node[below] {} (B1);
 \draw[thmarrow] (A1) to node[below] {} (B1);
 \draw[thmarrow] (A2) to node[right] {$\lambda^{ca'}$} (B2);
 
 \node[pinkbox] (A3) [] at (a3) {$\pInd{F/C(a)}(b)$};
 \end{tikzpicture}
 \caption{The ``splitting pairs'' map $(a,b)\mapsto\pL{F/c}(a,b)$}
 \label{diag:splitting}
 \end{figure}
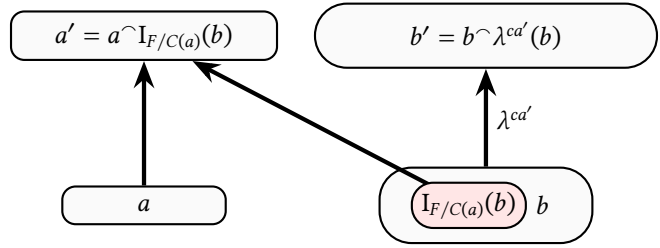

\begin{remark}\label{rem:splitting}
We emphasize that $\pL{F/c}(a,b)$ is obtained from $(a,b)$ simply by repartitioning and reordering, and just one application of $\lambda^{ca'}$.
\end{remark}

Denote by $\preceq_{1}$ the partial order on well-ordered subsets of $F$ by writing $a\preceq_{1}a'$ if $a$ is an initial segment of $a'$,
and let $\preceq$ be the partial order on $\bigL(F/C)$ given by $(a,b)\preceq(a',b')$ if and only if
both $a\preceq_{1}a'$ and $b\preceq_{1}b'$.
In the following lemma we continue to write
$\pL{F/c}(a,b)=(a',b')$.

\begin{lemma}\label{lem:chain_1}
For any
$(a,b)\in\bigL(F/C)$
we have
$(a,b)\preceq\pL{F/c}(a,b)$.
Moreover
$a'$ $p$-spans $C(a,b)$ in $F$ over $C$,
i.e.~$C(a,b)\subseteq\ppower[F]C(a')$,
and 
$\rmLambda_{F}^{1}C(a,b)=C(a',\lambda^{ca'}(b))=C(\pL{F/c}(a,b))$.
\end{lemma}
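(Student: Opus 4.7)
The relation $(a,b) \preceq \pL{F/c}(a,b)$ is immediate from Definition~\ref{def:little_lambda}: both $a'$ and $b'$ are right-concatenations onto $a$ and $b$ respectively. The substance lies in the two field equalities, and my plan is to reduce them to Lemma~\ref{lem:lambda_calculus_IIIc}~{\bf(ii)} by identifying $a'$ as a maximal subset of $C(a,b)$ that is $p$-independent in $F$ over $C$.

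For this I would first verify $a' \in \pI[(F/C)]$ by combining the hypothesis $a \in \pI[(F/C)]$ with the defining property $\pInd{F/C(a)}(b) \in \pI[(F/C(a))]$ via the standard pregeometry fact that a set $p$-independent over $C$ together with a set $p$-independent over $C(a)$ is $p$-independent over $C$. Second, maximality of $\pInd{F/C(a)}(b)$ inside $b$ over $C(a)$ forces every element of $b$ to lie in $\ppower[F]C(a, \pInd{F/C(a)}(b)) = \ppower[F]C(a')$, so $C(a,b) \subseteq \ppower[F]C(a')$, which simultaneously proves the $p$-spanning claim and rules out any proper $p$-independent enlargement of $a'$ inside $C(a,b)$.

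With $a'$ so identified, applying Lemma~\ref{lem:lambda_calculus_IIIc}~{\bf(ii)} to $d := a \cup b$ yields $\rmLambda_F^1 C(a,b) = C(a', \lambda^{ca'}(a \cup b))$. The $a$-contribution collapses into $C(a')$: each $a_\alpha \in a \subseteq ca'$ already equals the $p$-monomial $(ca')^{e_\alpha}$ (for $e_\alpha$ the indicator multi-index), so by the uniqueness in Lemma~\ref{lem:p_linearity} we have $\lambda^{ca'}(a_\alpha) \subseteq \{0,1\}$ in positive characteristic (the degenerate case $p = 1$ being trivial), and thus $\rmLambda_F^1 C(a,b) = C(a', \lambda^{ca'}(b))$. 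Finally, since $b \subseteq \ppower[F]C(a')$ each $b_\beta = \sum_I (ca')^I \lambda^{ca'}_I(b_\beta)^p$ is a polynomial in $ca' \cup \lambda^{ca'}(b)$, so the $b$-part of $b' = \concat{b}{\lambda^{ca'}(b)}$ is redundant and $C(a', \lambda^{ca'}(b)) = C(a', b') = C(\pL{F/c}(a,b))$. The main work sits in the identification of $a'$ as a maximal $p$-independent subset of $C(a,b)$ over $C$; once that is in hand, the rest is routine unwinding of the lambda-algebra lemmas already established.
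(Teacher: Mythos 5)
Your argument is correct and follows the paper's route: identify $a'$ as a maximal $p$-independent subset of $C(a,b)$ over $C$, invoke Lemma~\ref{lem:lambda_calculus_IIIc}~{\bf(ii)}, and then absorb the redundant $b$-portion of $b'$ using the defining equation $b_{\beta}=\sum_{I}(ca')^{I}\lambda^{ca'}_{I}(b_{\beta})^{p}$. The one place you do genuine extra work is in explaining why Lemma~\ref{lem:lambda_calculus_IIIc}~{\bf(ii)} applied to $d=a\cup b$ yields $C(a',\lambda^{ca'}(b))$ rather than $C(a',\lambda^{ca'}(a\cup b))$: your observation that each $a_{\alpha}$ is itself a $p$-monomial $(ca')^{e_{\alpha}}$, forcing $\lambda^{ca'}(a)\subseteq\{0,1\}$ by the uniqueness in Lemma~\ref{lem:p_linearity}, is exactly the tacit step the paper leaves unsaid. (The maximality check for $a'$ that you reprove is also needed, but in the paper it lives in the remark immediately after Definition~\ref{def:little_lambda} rather than inside the proof itself.) No errors.
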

\begin{proof}
Inspecting the definition of $\pL{F/c}$,
it is clear that $(a,b)\preceq(a',b')$
and $b\subseteq C(a',\lambda^{ca'}(b))$.
This shows that
$C(a',\lambda^{ca'}(b))=C(a',b')=C(\pL{F/c}(a,b))$.
By Lemma~\ref{lem:lambda_calculus_IIIc}~{\bf(ii)},
$\rmLambda_{F}^{1}C(a,b)=C(a',\lambda^{ca'}(b))$.
\end{proof}

\begin{definition}\label{def:partial_lambda}
For a well-ordered subset
$a$
of $F$,
we let
$(\pl{F/c}^{n}(a))_{n<\omega}$
be the sequence given recursively by
$\pl{F/c}^{0}(a)=(\emptyset,a)$ and
$\pl{F/c}^{n+1}(a)=\pL{F/c}(\pl{F/c}^{n}(a))$.
We also write
$\pl{F/c}^{n}(a)=(\pl{F/c}^{n,\pII}(a),\pl{F/c}^{n,\pSS}(a))$.
\end{definition}

\begin{lemma}\label{lem:chain_2}
For a well-ordered subset
$a$
of $F$,
we have
$\pl{F/c}^{n}(a)\preceq\pl{F/c}^{n+1}(a)$.
Moreover
$\pl{F/c}^{n+1,I}(a)$ $p$-spans $C(\pl{F/c}^{n}(a))$ over $C$
and
$\rmLambda^{1}_{F}C(\pl{F/c}^{n}(a))=C(\pl{F/c}^{n+1}(a))$.
\end{lemma}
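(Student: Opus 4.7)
The plan is to reduce the lemma directly to Lemma~\ref{lem:chain_1} applied to the pair $\pl{F/c}^{n}(a)$. To do so, I first need to confirm that $\pl{F/c}^{n}(a)$ really lies in $\bigL(F/C)$, so that Lemma~\ref{lem:chain_1} is applicable. This is a quick induction on $n$: the base case $\pl{F/c}^{0}(a)=(\emptyset,a)$ is in $\bigL(F/C)$ since $\emptyset\in\pI[(F/C)]$ trivially; and for the inductive step, since $\pL{F/c}$ is an operation on $\bigL(F/C)$ (as observed immediately after Definition~\ref{def:little_lambda}, using the maximality of the $p$-independent part chosen inside $b$), we conclude $\pl{F/c}^{n+1}(a)=\pL{F/c}(\pl{F/c}^{n}(a))\in\bigL(F/C)$.

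Once this is established, I apply Lemma~\ref{lem:chain_1} to $(a'',b''):=\pl{F/c}^{n}(a)$. Writing $\pL{F/c}(a'',b'')=(a''',b''')$, that lemma yields three things: first $(a'',b'')\preceq(a''',b''')$, which by the definition of $\preceq$ is exactly the first assertion $\pl{F/c}^{n}(a)\preceq\pl{F/c}^{n+1}(a)$; second, $a'''$ $p$-spans $C(a'',b'')$ over $C$, which translates into the claim that $\pl{F/c}^{n+1,\pII}(a)$ $p$-spans $C(\pl{F/c}^{n}(a))$ over $C$; and third $\rmLambda_{F}^{1}C(a'',b'')=C(\pL{F/c}(a'',b''))$, which gives the final claim $\rmLambda^{1}_{F}C(\pl{F/c}^{n}(a))=C(\pl{F/c}^{n+1}(a))$.

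There is no real obstacle: the whole content is already packaged into Lemma~\ref{lem:chain_1}, and the only thing to verify is the inductive preservation of the invariant ``first coordinate is $p$-independent in $F$ over $C$''. The mild subtlety, if any, is purely bookkeeping — keeping track of which coordinate of $\pL{F/c}$'s output corresponds to the index $\pII$ versus $\pSS$ in Definition~\ref{def:partial_lambda}, and noting that $C(\pl{F/c}^{n+1}(a))=C(a''',b''')$ since by construction $b'''$ is a concatenation containing $b''$, so $(a''',b''')$ generates the same field over $C$ as $(a''',\lambda^{ca'''}(b''))$.
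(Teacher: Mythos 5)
Your proof is correct and takes exactly the route the paper does: the paper's own proof consists solely of the sentence ``This follows immediately from Lemma~\ref{lem:chain_1}.'' Your extra step of checking inductively that $\pl{F/c}^{n}(a)\in\bigL(F/C)$ (so that Lemma~\ref{lem:chain_1} applies) is a reasonable thing to spell out, but it was already observed after Definition~\ref{def:little_lambda} that $\pL{F/c}$ is an operation on $\bigL(F/C)$, which makes the induction routine.
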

\begin{proof}
This follows immediately from Lemma~\ref{lem:chain_1}.
\end{proof}

\begin{definition}\label{def:lambda_main}
Let
$\lambda_{F/c}(a)=(\lambda_{F/c}^{\pII}a,\lambda_{F/c}^{\pSS}a)$
be the direct limit of this chain of pairs
$(\pl{F/c}^{n}(a))_{n<\omega}$,
where $\pl{F/c}^{n}(a)=(\pl{F/c}^{n,\pII}(a),\pl{F/c}^{n,\pSS}(a))$,
and write
$\lambda_{F/c}a=\lambda_{F/c}^{\pII}a\cup\lambda_{F/c}^{\pSS}a$.
We call $\lambda_{F/c}a$
the
{\em local lambda closure}
of $a$, with respect to $c$.
\end{definition}

Observe that
$\lambda_{F/c}(a)\in\bigL(F/C)$,
since $\bigL(F/C)$ is closed under unions of chains with respect to $\preceq$.

\begin{proposition}\label{prp:chain_3}
For a well-ordered subset
$a$
of $F$,
we have
\begin{enumerate}[{\bf(i)}]
\item
$\rmLambda_{F}C(a)=C(\lambda_{F/c}a)$,
\item
$\lambda_{F/c}^{\pII}a$
is a $p$-basis
of both
$C(\lambda_{F/c}^{\pII}a)$
and
$C(\lambda_{F/c}a)$,
\item
$C(\lambda_{F/c}a)/C(\lambda_{F/c}^{\pII}a)$ is separated,
and
\item
both
$F/C(\lambda_{F/c}^{\pII}a)$ 
and
$F/C(\lambda_{F/c}a)$
are separable.
\end{enumerate}
\end{proposition}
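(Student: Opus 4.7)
The plan is to prove (i) by a quick induction, then reduce (ii)--(iv) to the single intermediate claim that $c\cup\lambda_{F/c}^{\pII}a$ is an absolute $p$-basis of both $E:=C(\lambda_{F/c}^{\pII}a)$ and $K:=C(\lambda_{F/c}a)$.

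\smallskip

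For part (i), I would show by induction on $n<\omega$ that $\rmLambda_{F}^{n}C(a)=C(\pl{F/c}^{n}(a))$: the base $n=0$ is trivial since $\pl{F/c}^{0}(a)=(\emptyset,a)$, and the successor step is exactly Lemma~\ref{lem:chain_2}. Taking the union over $n$ and applying Lemma~\ref{lem:big_lambda} gives $\rmLambda_{F}C(a)=C(\lambda_{F/c}a)$.

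\smallskip

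The $p$-independence of $c\cup\lambda_{F/c}^{\pII}a$ in $F$ is routine: $\lambda_{F/c}^{\pII}a\in\pI[(F/C)]$ by the construction of $\pL{F/c}$ together with the finite character of $p$-independence, and $c\in\pI[F]$ by Lemma~\ref{lem:Mac_Lane_II} applied to the separable extension $F/C$. The $p$-spanning of $K$ is the core step. For each $x\in\pl{F/c}^{n,\pSS}(a)=:b'_{n}$, abbreviating $a'_{n+1}:=\pl{F/c}^{n+1,\pII}(a)$, the defining identity of the parameterized Lambda functions gives
\begin{align*}
	x=\sum_{I\in p^{[|ca'_{n+1}|]}}(ca'_{n+1})^{I}\,\lambda^{ca'_{n+1}}_{I}(x)^{p},
\end{align*}
and, crucially, each coefficient $\lambda^{ca'_{n+1}}_{I}(x)$ is collected into $b'_{n+1}\subseteq K$ by the construction of the splitting pairs map. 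Each coefficient thus lies in $\ppower[K]$, while each monomial $(ca'_{n+1})^{I}\in\bbF[c,\lambda_{F/c}^{\pII}a]\subseteq E$, so $x\in\ppower[K]E$. Combined with $C\subseteq E$ and $\lambda_{F/c}^{\pII}a\subseteq E$, every generator of $K$ already lies in the field $\ppower[K]E$, whence $K=\ppower[K]E$; the $p$-spanning of $E$ is immediate.

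\smallskip

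Part (ii) is then exactly the intermediate claim. For (iii), the $p$-basis $c\cup\lambda_{F/c}^{\pII}a$ of $E$ is $p$-independent in $K\subseteq F$, so $K/E$ is separable by Lemma~\ref{lem:Mac_Lane_II}; combined with $K=\ppower[K]E$, this yields that $K/E$ is separated. For (iv), $F/K$ is separable by the minimality in the definition of $\rmLambda_{F}C(a)$, and $F/E$ is separable by the same application of Lemma~\ref{lem:Mac_Lane_II} to the $p$-basis $c\cup\lambda_{F/c}^{\pII}a$ of $E$. The main obstacle I foresee is the $p$-spanning step above: Lemma~\ref{lem:chain_1} alone gives only the weaker containment $C(\pl{F/c}^{n}(a))\subseteq\ppower[F]C(a'_{n+1})$ inside $F$, and the crucial strengthening to ``inside $K$'' comes precisely from the design of $\pL{F/c}$, which records every intermediate Lambda-value back into the second coordinate.
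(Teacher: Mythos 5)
Your proof is correct and follows essentially the same approach as the paper's. The $p$-spanning step you spell out---observing that each $\lambda^{ca'_{n+1}}_{I}(x)$ is recorded into $b'_{n+1}\subseteq K$ by the design of the splitting map, so that $K=\ppower[K]E$ rather than merely $K\subseteq\ppower[F]E$---is precisely the content packed into the paper's terse line that $\rmLambda_{F}^{1}C(\pl{F/c}^{n}(a))\subseteq C(\pl{F/c}^{n+1}(a))$, and you have identified the key point correctly.
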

\begin{proof}
By induction and Lemma~\ref{lem:chain_1},
we have 
$\rmLambda_{F}^{n}C(a)=C(\pl{F/c}^{n}(a))$,
for all $n<\omega$.
Then
{\bf(i)}
follows from
Lemma~\ref{lem:big_lambda}.
For {\bf(ii)}:
$\lambda_{F/c}^{\pII}a$ is a directed union of sets $p$-independent in $F$ over $C$,
so it also $p$-independent in $F$ over $C$.
This implies already that it is a $p$-basis of $C(\lambda_{F/c}^{\pII}a)$ over $C$.
It remains to see that
$\lambda_{F/c}^{\pSS}a\subseteq\ppower[C(\lambda_{F/c}^{\pSS}a)](\lambda_{F/c}^{\pII}a)$.
But $\lambda_{F/c}^{\pSS}a$ is then directed union of $\pl{F/c}^{n,\pSS}(a)$
and $\rmLambda_{F}^{1}C(\pl{F/c}^{n}(a))\subseteq C(\pl{F/c}^{n+1}(a))$.
This shows that $\lambda_{F/c}^{\pII}a$ is a $p$-basis of $C(\lambda_{F/c}a)$ over $C$.
Together with Lemma~\ref{lem:Mac_Lane_II},
this also proves {\bf(iii)} and {\bf(iv)}.
\end{proof}

\begin{lemma}\label{lem:finite}
For a well-ordered subset $a$ of $F$, and a finite subset $b\subseteq F$,
there exists $n_{1}\in\mathbb{N}$ such that $b$ is separable over $C(\pl{F/c}^{n_{1}}(a))$.
If $a$ is also finite, then there is $n_{2}\in\mathbb{N}$ such that $b$ is separable over $C(\pl{F/c}^{n_{2}}(a))$,
where $\pl{F/c}^{n_{2}}(a)$ is constructed with respect to any well-ordering on $a$.
\end{lemma}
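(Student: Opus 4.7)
The plan is to invoke Proposition~\ref{prp:chain_3}~{\bf(iv)}, which gives that $F/C(\lambda_{F/c}a)$ is separable, combined with the finite character of separability for finitely generated extensions. Since $C(\lambda_{F/c}a)$ is the directed union $\bigcup_{n<\omega}C(\pl{F/c}^{n}(a))$ (as $\lambda_{F/c}a=\bigcup_{n<\omega}\pl{F/c}^{n}(a)$ by Definition~\ref{def:lambda_main}), any finite witness for separability will appear at some finite stage.

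For the first part, $C(\lambda_{F/c}a)(b)/C(\lambda_{F/c}a)$ is separable, being an intermediate subextension of the separable extension $F/C(\lambda_{F/c}a)$. Being finitely generated, Lemma~\ref{lem:Mac_Lane_I}~{\bf(iv)} refines $b$ to a separating transcendence basis $b_{0}\subseteq b$. Each $\beta\in b\setminus b_{0}$ then has a separable minimal polynomial $p_{\beta}\in C(\lambda_{F/c}a)(b_{0})[X]$, whose finitely many coefficients are rational expressions in $b_{0}$ together with some finite subset $S\subseteq\lambda_{F/c}a$. Directedness furnishes $n_{1}<\omega$ with $S\subseteq\pl{F/c}^{n_{1}}(a)$, so each $p_{\beta}\in C(\pl{F/c}^{n_{1}}(a))(b_{0})[X]$. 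The set $b_{0}$ remains algebraically independent over the smaller field $C(\pl{F/c}^{n_{1}}(a))$, and each $\beta\in b\setminus b_{0}$ is still a root of the separable polynomial $p_{\beta}$; thus $b_{0}$ is a separating transcendence basis of $C(\pl{F/c}^{n_{1}}(a))(b)/C(\pl{F/c}^{n_{1}}(a))$, giving the desired separability.

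For the second part, the key observation is that the choices of $b_{0}\subseteq b$ and $S\subseteq\rmLambda_{F}C(a)$ above depend only on $b$ and the field $\rmLambda_{F}C(a)=C(\lambda_{F/c}a)$, not on any particular well-ordering $\sigma$ of $a$: indeed, $\rmLambda_{F}C(a)$ is uniquely determined by Theorem~\ref{thm:DM}. So fix $b_{0}$ and $S$ once and for all. For each of the finitely many well-orderings $\sigma$ of the finite set $a$, directedness yields $n^{\sigma}<\omega$ with $S\subseteq C(\pl{F/c}^{n^{\sigma}}(a))$ when $\pl{F/c}^{n^{\sigma}}(a)$ is constructed using $\sigma$. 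Taking $n_{2}:=\max_{\sigma}n^{\sigma}$ and replaying the first-part argument yields $b$ separable over $C(\pl{F/c}^{n_{2}}(a))$ for every well-ordering $\sigma$. The main subtlety is precisely the fact that the finite data $(b_{0},S)$ can be chosen independently of $\sigma$, which relies on the well-ordering-independence of $\rmLambda_{F}C(a)$.
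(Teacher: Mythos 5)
Your proof is correct and takes essentially the same approach as the paper: both choose a separating transcendence basis $b_0=b_1\subseteq b$ of $b$ over $C(\lambda_{F/c}a)$ (using Proposition~\ref{prp:chain_3} and Lemma~\ref{lem:Mac_Lane_I}), descend to a finite stage of the chain $(C(\pl{F/c}^{n}(a)))_{n}$ by a finite-character argument, and handle the finite-$a$ case by taking the maximum over the finitely many well-orderings. The only difference is bookkeeping: the paper tracks stabilisation of transcendence degree and field degree, whereas you track the finitely many elements of $\rmLambda_{F}C(a)$ appearing among the coefficients of the minimal polynomials; the two criteria are equivalent and your version perhaps makes the well-ordering independence in the second part slightly more transparent.
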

\begin{proof}
First note that there is $m\in\mathbb{N}$ such that
$\mathrm{trdeg}(b/C(\pl{F/c}^{m}(a)))=\mathrm{trdeg}(b/C(\lambda_{F/c}a))$.
Let $b_{1}\subseteq b$ be a separating transcendence basis of $b$ over $C(\lambda_{F/c}(a))$.
Now choose $n_{1}\geq m$ such that
$[C(\pl{F/c}^{n_{1}}(a),b):C(\pl{F/c}^{n_{1}}(a),b_{1})]=[C(\lambda_{F/c}a,b):C(\lambda_{F/c}a,b_{1})]$.
It follows that $C(\pl{F/c}^{n_{1}}(a),b)$ is separably algebraic over $C(\pl{F/c}^{n_{1}}(a),b_{1})$.
We let $n_{2}$ be the maximum value of $n_{1}$,
taken across the finitely many different well-orderings of $a$.
\end{proof}

\begin{definition}\label{def:finite}
Let $a,b$ be two finite subsets of $F$.
By the previous lemma, we may choose $n\in\mathbb{N}$
minimal such that
$b$ is separable over $C(\pl{F/c}^{n}(a))$,
taken with respect to any well-ordering of $a$.
Denote
$\lambda_{F/b/c}a:=\pl{F/c}^{n,I}(a)\cup\pl{F/c}^{n,S}(a)$.
\end{definition}

\begin{remark}\label{rem:sigma}
The meaning of Lemma~\ref{lem:finite} is:
when $a,b$ are both finite subsets of $F$,
say $(a,b)\in F^{m+n}$,
there is a bound on the depth of the recursive process required in the construction of 
the minimal extension $C(\lambda_{F/b/c}a)$ of $C(a)$ over which $b$ is separable, 
when this processes is effected with respect to any well-ordering of $a$, and with respect to a fixed $c\in\pB[C]$.
Moreover, the resulting well-ordered set $\lambda_{F/b/c}a$
is a finite tuple.
We denote
$\ell:=|\lambda_{F/b/c}a|\in\mathbb{N}$.
Then $a$ is the image of $\lambda_{F/b/c}a$ under a coordinate projection
$\sigma_{F/b/c}:\bbA_{C}^{\ell}\rightarrow\bbA_{C}^{m}$.
In particular, this applies when $F/C(a)$ is finitely generated.
\end{remark}

\begin{remark}\label{rem:lambda_2}
Let $F$ be any field.
For any subfield $C\subseteq F$
we have
$|\rmLambda_{F}C|=|C|$.
To see this, we first note that if $C$ is finite then $C$ is perfect, so already $\rmLambda_{F}C=C$.
Now, let $a$ be a well-ordered generating set for $C$.
Then $\lambda_{F/\emptyset}a$ is a countable direct limit of tuples
$\pl{F/\emptyset}^{n}(a)$,
and
we see that there is a finite-to-one map from
$\lambda^{a'}(b)$
to $b$,
using the notation of Definition~\ref{def:little_lambda}.
Thus 
$|\pl{F/\emptyset}^{n}(a)|\leq\aleph_{0}\cdot|a|\leq|C|$.
Arguing inductively,
we have
$|\lambda_{F/\emptyset}c|\leq|C|$,
and thus $|\rmLambda_{F}C|=|C|$.
This should be compared with \cite[Lemma 8]{A19} in which it was shown that $|\rmLambda_{F}C|\leq\aleph_{0}$ for $|C|\leq\aleph_{0}$.
\end{remark}

\subsection{The lambda language}
\label{section:lambda_language}

The {\em elementary imperfection degree} (or {\em Ershov degree/invariant})
of a field extension $F/C$ is
$\Impdeg(F/C)=\impdeg(F/C)$ if 
$\impdeg(F)$ is finite;
or it is symbolically infinite,
i.e.~$\Impdeg(F/C)=\infty$,
if $\impdeg(F)$ is infinite.
As with the usual imperfection degree,
we write $\Impdeg(F):=\Impdeg(F/\bbF)$ for the absolute case.
We are only ever going to consider $\Impdeg(F/C)$ when $F/C$ is separable,
in which case $\impdeg(F)=\impdeg(F/C)+\impdeg(C)$.

There are several common languages used to study imperfect fields,
usually construed as expansions of $\Lring$.
When dealing with fields of finite (or, at least, bounded) imperfection degree $\fraki\in\mathbb{N}$,
it may suffice first to expand the language by constants for a choice of $p$-basis,
obtaining 
$\Lbasis=\Lring\cup\{b_{1},\ldots,b_{\fraki}\}$,
then as a second step further adjoining function symbols for the elements of $\lambda^{b}$
to get
$\Lblambda=\Lring\cup\{b_{1},\ldots,b_{\fraki}\}\cup\{\lambda^{b}_{I}\mid I\in p^{[\fraki]}\}$.
The disadantages of this approach are clear:
it artificially distinguishes a $p$-basis and
it depends on both $p$ and $\fraki$.
Moreover,
when the imperfection degree is infinite this approach breaks down:
if we expand a field $F$ be constants for an infinite $p$-basis,
there are certainly elementary extensions $F\preceq F^{*}$ in which those constants no longer form a $p$-basis.

Another common approach is to adjoin $n$-ary relation symbols,
which are to be interpreted as the relation of $p$-independence,
i.e.~%
$\LQ=\Lring\cup\{Q_{n}(x_{1},\ldots,x_{n})\mid n<\omega\}$.
This language is better suited to the study of infinite (or unbounded) imperfection degree,
and it does not artificially distinguish one $p$-basis.
In the corresponding second step, we might then adjoin
{\em all}
of the lambda maps
corresponding to any $p$-independent tuple:
these are essentially the parameterized lambda functions of
Definition~\ref{def:lambda}.
This potentially vast collection of maps gives rise to syntactically complex terms,
with compositions of $\Lring$-terms and
deeply nested lambda maps,
with respect to differing $p$-independent tuples.
This formal complexity hides a much simpler structure.

\begin{definition}\label{def:language_p}
For $p$ a prime number,
we let 
$\LlambdaP$
be the language with signature
$$\{l_{I}(x,\underline{y})\mid I\in p^{[n]},\underline{y}=(y_{i})_{i<n},n<\omega\}$$
consisting of a family of $(1+n)$-ary function symbols indexed by $p^{[n]}$,
with variables $(x,y_{0},\ldots,y_{n-1})$, for $n<\omega$ a natural number.
Any field $F$ of characteristic $p$,
which may be enriched already with additional structure,
can be viewed naturally as an $\LlambdaP$-structure:
we let the interpretation of $l_{I}$ be the function $\lambda_{I}$,
extended to be zero where it was not before defined:
\begin{align*}
(a,b)\longmapsto\left\{
	\begin{array}{ll}
	\lambda^{b}_{I}(a)&a\in\pspan[F]{b},b=(b_{i})_{i<n}\in\pI[F]
	\\
	0&\text{else},
	\end{array}
\right.
\end{align*}
for each $I\in p^{[n]}$.
If $F$ is an $\Lang$-structure, expanding a field of characteristic $p$,
then $\bar{F}$ will denote the natural $\Lang\cup\LlambdaP$-expansion of $F$.
\end{definition}

For each $p\in\mathbb{P}$,
let $\chi_{p}$ be the $\Lring$-sentence
\begin{align*}
	\underbrace{1+\ldots+1}_{\text{$p$ times}}=0,
\end{align*}
and,
for each $\fraki\in\mathbb{N}$,
let $\iota_{p,\leq\fraki}$ be the $\Lring$-sentence
\begin{align*}
\exists b=(b_{0},\ldots,b_{\fraki-1})\;
\forall a\;
\exists y=(y_{I})_{I\in p^{[\fraki]}}\;:\;
x=\sum_{I\in p^{[\fraki]}}b^{I}y_{I}^{p},
\end{align*}
and let $\iota_{p,\fraki}$ be the $\Lring$-sentence
\begin{align*}
\left\{
	\begin{array}{ll}
		\iota_{p,\leq0}
		&
		\text{if $\fraki=0$},
\\
		(\iota_{p,\leq\fraki}\wedge\neg\iota_{p,\leq\fraki-1})
		&
		\text{if $\fraki>0$}.
	\end{array}
\right.
\end{align*}

\begin{definition}\label{def:XF}
For $p\in\mathbb{P}\cup\{0\}$ we define
\begin{align*}
	\Xth_{p}
&:=
\left\{
\begin{array}{ll}
	\{\chi_{p}\}
&
	\text{if $p>0$, and}
\\
	\{\neg\chi_{\ell}\mid\ell\in\mathbb{P}\}
&
	\text{if $p=0$}.
\end{array}
\right.
\end{align*}
For $p\in\mathbb{P}$ and $\frakI\in\mathbb{N}\cup\{\infty\}$ we define
\begin{align*}
	\Xth_{p,\frakI}
&:=
\left\{
\begin{array}{ll}
	\Xth_{p}\cup\{\iota_{p,\frakI}\}
&
	\text{if $\frakI<\infty$, and}
\\
	\Xth_{p}\cup\{\neg\iota_{p,\leq\fraki}\mid\fraki\in\mathbb{N}\}
&
	\text{if $\frakI=\infty$.}
\end{array}
\right.
\end{align*}
Let
\begin{align*}
\begin{array}{cllc}
	\Fth_{p,\frakI}
&:=&
	\Fth\cup\Xth_{p,\frakI}
	&
	\text{for $(p,\frakI)\in\mathbb{P}\times(\mathbb{N}\cup\{\infty\})$, and}
\\
	\Fth_{0}
&:=&
	\Fth\cup\Xth_{0}.
\end{array}
\end{align*}
Then $\Fth_{p,\frakI}$ is the $\Lring$-theory of fields of characteristic $p>0$
and of elementary imperfection degree $\frakI$,
and $\Fth_{0}$
is the theory of fields of characteristic zero.
These subscripts will be similarly applied to other theories of fields $T$
in languages $\Lang\supseteq\Lring$:
we write
$T_{0}=T\cup\Xth_{0}$, 
$T_{p}=T\cup\Xth_{p}$, 
and
$T_{p,\frakI}=T\cup\Xth_{p,\frakI}$, 
for $(p,\frakI)\in\mathbb{P}\times(\mathbb{N}\cup\{\infty\})$.
\end{definition}

These sentences
and theories
will be important
to our work on theories of separably tame valued fields,
see~\ref{section:resplendent_STVF}.

\begin{definition}\label{def:theories_p}
Let $\FpL$ be the $\LringlambdaP$-theory extending $\Fth_{p}$
by axioms that ensure the new function symbols are
interpreted suitably as the parameterized lambda maps.
For any $\Lring$-theory $T$ of fields of characteristic $p$,
we let $T_{\lambda(p)}:=T\cup\FpL$ be the natural $\LringlambdaP$-theory extending $T$.
\end{definition}

\begin{fact}\label{fact:separable_maps}
We have already pointed out that each $F\models\Fth_{p}$ admits a natural expansion $\bar{F}$ to an $\LringlambdaP$-structure.
In fact, $\bar{F}$ is the unique $\LringlambdaP$-structure expanding $F$ to a model of $\FpL$,
and the extra structure of $\bar{F}$ is $\Lring$-definable in $F$.
A ring morphism $\varphi:E\rightarrow F$ between fields of characteristic $p$ is separable
if and only if
$\varphi$ is an $\LringlambdaP$-embedding $\bar{E}\rightarrow\bar{F}$.
This gives an isomorphism of categories between the category of fields of characteristic $p$, equipped with separable field embeddings,
and the category of $\LringlambdaP$-structures that are models of $\FpL$, equipped with $\LringlambdaP$-embeddings.
\end{fact}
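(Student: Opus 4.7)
The plan is to verify each of the four assertions in turn: uniqueness of the expansion $\bar{F}$, $\Lring$-definability of the new structure, the separability equivalence, and the resulting isomorphism of categories.

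Uniqueness and $\Lring$-definability are essentially built into Definition~\ref{def:theories_p}. The axioms of $\FpL$ force each $l_I$ to be interpreted as the parameterized lambda function extended by zero outside its natural domain, so at most one $\LringlambdaP$-expansion of $F$ can satisfy $\FpL$. For $\Lring$-definability, I would observe that ``$b \in \pI[F]$'' for an $n$-tuple $b$ is expressed by a conjunction over $j$ of universal $\Lring$-formulas asserting $b_j \notin \ppower[F](b \setminus \{b_j\})$, while ``$a \in \pspan[F]{b}$'' is dually $\exists$-definable; when both hold, Lemma~\ref{lem:p_linearity} guarantees uniqueness of the $p$-monomial coefficients, so each graph $\{(a,b,c) \mid c = l_I(a,b)\}$ is $\Lring$-definable in $F$.

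For the forward direction of the separability equivalence, I would assume $F/\varphi(E)$ is separable and fix a tuple $b$ from $E$ together with an element $a \in E$. If $b \in \pI[E]$, Lemma~\ref{lem:Mac_Lane_II} gives $\varphi(b) \in \pI[F]$, and Proposition~\ref{prp:Mac_Lane_III}(iii) then forces $\pspan[F]{\varphi(b)} \cap \varphi(E) = \ppower[\varphi(E)](\varphi(b))$; hence $a \in \ppower[E](b)$ if and only if $\varphi(a) \in \ppower[F](\varphi(b))$, and applying $\varphi$ to the defining equation for $\lambda^b_I(a)$ together with uniqueness yields $\lambda^{\varphi(b)}_I(\varphi(a)) = \varphi(\lambda^b_I(a))$. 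If instead $b$ is $p$-dependent in $E$, a polynomial witness transports along $\varphi$ to show $\varphi(b)$ is $p$-dependent in $F$, so both values are $0$ by convention.

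For the reverse direction, I would suppose $\varphi : \bar{E} \to \bar{F}$ is an $\LringlambdaP$-embedding and prove by induction on $n$ that $\varphi(b) \in \pI[F]$ whenever $b \in \pI[E]$ has length $n$; Lemma~\ref{lem:Mac_Lane_II}(iii) then delivers separability of $F/\varphi(E)$. The crucial inductive step rests on the observation that, with $b' = (b_1, \ldots, b_{n-1})$ already $p$-independent in both $E$ and $F$, the condition $b_n \notin \ppower[E](b')$ is captured by the quantifier-free $\LringlambdaP$-inequality $b_n \neq \sum_{I \in p^{[n-1]}} b'^I \, l_I(b_n, b')^p$, which $\varphi$ must preserve; the image inequality then witnesses $\varphi(b_n) \notin \ppower[F](\varphi(b'))$. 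Once both directions are in hand, the isomorphism of categories follows automatically: objects correspond via $F \mapsto \bar{F}$ with inverse the forgetful functor restricted to models of $\FpL$, morphisms correspond via the established equivalence, and functoriality is trivial because both assignments act as the identity on underlying maps. The main technical point is the reverse direction just sketched; the encoding of $p$-independence by a quantifier-free $\LringlambdaP$-formula requires some care around the ``extended by zero'' convention in the degenerate cases, but this is routine case analysis rather than substantive difficulty.
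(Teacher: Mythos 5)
The paper presents this as a Fact without proof, treating the verifications as routine. Your argument is correct and is the natural one: uniqueness and $\Lring$-definability are immediate from Definition~\ref{def:theories_p} and Lemma~\ref{lem:p_linearity}; the forward direction uses Lemma~\ref{lem:Mac_Lane_II} to get $\varphi(b)\in\pI[F]$ and then Proposition~\ref{prp:Mac_Lane_III}~{\bf(iii)} to see that $a\in\pspan[E]{b}$ iff $\varphi(a)\in\pspan[F]{\varphi(b)}$, after which applying $\varphi$ to the defining equation and invoking uniqueness handles the nontrivial case and the zero convention handles the rest; and the reverse direction correctly encodes $p$-independence by the quantifier-free $\LringlambdaP$-inequality $b_n\neq\sum_{I\in p^{[n-1]}}b'^{I}\,l_I(b_n,b')^{p}$, which an $\LringlambdaP$-embedding must preserve. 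One step you leave implicit: deducing $\varphi(b)\in\pI[F]$ from $\varphi(b')\in\pI[F]$ together with $\varphi(b_n)\notin\pspan[F]{\varphi(b')}$ uses the exchange property of $p$-independence (which the paper records as making $p$-independence a pregeometry); this is a standard one-line closure-operator fact and not a substantive gap.
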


\begin{remark}
The language $\LlambdaP$
is suitable for studying fields of infinite imperfection degree,
and even allows a uniform approach to fields of varying elementary imperfection degree.
Nevertheless,
if we are willing to focus on fields of imperfection degree bounded by some {\em finite} $\fraki\in\mathbb{N}$,
we may instead work with a more closely adapted sublanguage
$\LlambdaPi\subseteq\LlambdaP$
which has the signature
$$\{l_{I}(x,\underline{y})\mid I\in p^{[\fraki]},\underline{y}=(y_{0},\ldots,y_{\fraki-1})\},$$
consisting of those $(1+\fraki)$-ary function symbols from $\LlambdaP$ that are indexed by $I\in p^{[\fraki]}$.
Any field $F$ of characteristic $p$
can be viewed naturally as an $\LlambdaPi$-structure
by taking the reduct of the natural $\LlambdaP$-structure.
Moreover, for many purposes,
if $T$ is a theory of fields of characteristic $p$ with imperfection degree bounded by some $\fraki\in\mathbb{N}$,
the role played by $T_{\lambda(p)}$
may be adequately played by its 
$\LringlambdaPi=\Lring\cup\LlambdaPi$-reduct.
\end{remark}

It is also possible to remove the dependence on $p$ in the languages and theories introduced above,
albeit rather inelegantly.

\begin{definition}[Uniformity in $p$]\label{def:language}
Let $\Llambda$ be the language
with signature
\begin{align*}
	\{l_{p,I}(x,\underline{y})\mid p\in\mathbb{P},I\in p^{[n]},\underline{y}=(y_{i})_{i<n},n<\omega\},
\end{align*}
which amounts to the disjoint union of the signatures of $\LlambdaP$, for $p$ any prime number.
Now, any field $F$ may be expanded to an $\Lang\cup\Llambda$-structure $\tilde{F}$:
let the interpretation of $l_{p,I}$ be the following function
\begin{align*}
(a,b)\longmapsto\left\{
	\begin{array}{ll}
	\lambda^{b}_{I}(a)&a\in\pspan[F]{b},b=(b_{i})_{i<n}\in\pI[F],p=\mathrm{char}(F),
	\\
	0&\text{else}.
	\end{array}
\right.
\end{align*}
Thus if $F$ is of characteristic zero, all these function symbols are interpreted by the zero function.
\end{definition}

\begin{definition}\label{def:theories}
We let $\FL$ be the $\Lringlambda=(\Lring\cup\Llambda)$-theory extending $\Fth$ by axioms that ensure the new function symbols are suitably interpreted as the parameterized lambda maps, when $p$ is the (positive) characteristic, and as the zero function, otherwise.
In this way we may naturally extend any theory $T$ of fields (possibly enriched to $\Lang$-structures for some $\Lang\supseteq\Lring$) to a theory $T_{\lambda}$ of $(\Lang\cup\Llambda)$-structures.
\end{definition}

\begin{fact}\label{fact:separable_maps_II}
The analogue of Fact~\ref{fact:separable_maps} applies to $\Llambda$:
each $F\models\Fth$ admits a natural expansion $\tilde{F}\models\Th(F)_{\lambda}$ to an $\Lringlambda$-structure,
and the extra structure of $\tilde{F}$ is $\Lring$-definable in $F$.
\end{fact}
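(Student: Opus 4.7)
The plan is to deduce this from Fact~\ref{fact:separable_maps} by case analysis on $\mathrm{char}(F)$, exploiting the fact that, by Definition~\ref{def:language}, the signature of $\Llambda$ is the disjoint union, over primes $p$, of the signatures of $\LlambdaP$. Consequently an expansion of a field $F$ to an $\Lringlambda$-structure is determined independently by its interpretation of the family $\{l_{p,I}\}_{I}$ for each prime $p$, and we may treat one prime at a time.

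First consider the case $\mathrm{char}(F)=0$. Definition~\ref{def:language} specifies that every $l_{p,I}$ is then interpreted by the zero function, so the natural expansion $\tilde{F}$ is unambiguous. The axioms of $\FL$ (Definition~\ref{def:theories}) must be written so as to stipulate exactly this in characteristic zero, for example by conditional axioms of the form $\neg\chi_{p}\to(\forall x,\underline{y})\,(l_{p,I}(x,\underline{y})=0)$ for each prime $p$. Hence $\tilde{F}\models\Th(F)_{\lambda}$, and each new symbol is $\Lring$-definable, being interpreted by the constantly zero term.

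Next consider the case $\mathrm{char}(F)=p>0$. For each prime $q\neq p$ the symbols $l_{q,I}$ are again interpreted by zero, and are handled exactly as above. For the symbols $l_{p,I}$, observe that the $\LringlambdaP$-reduct of $\tilde{F}$ coincides with the structure $\bar{F}$ of Definition~\ref{def:language_p}. Applying Fact~\ref{fact:separable_maps}, $\bar{F}$ is the unique $\LringlambdaP$-expansion of $F$ that is a model of $\FpL$, and its additional structure is $\Lring$-definable in $F$. Combining the two parts, $\tilde{F}\models\Th(F)\cup\FL=\Th(F)_{\lambda}$, and every new symbol of $\Lringlambda$ has $\Lring$-definable interpretation in $F$.

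No real obstacle arises; the proof is essentially a bookkeeping reduction to the fixed-characteristic statement. The only point that must be verified with care is that the axioms of $\FL$ can be formulated uniformly in characteristic by conjoining, for each prime $p$, a conditional axiom $\chi_{p}\to\psi_{p}$ (where $\psi_{p}$ axiomatizes the parameterized lambda functions as in $\FpL$) together with a conditional axiom $\neg\chi_{p}\to(l_{p,I}\equiv 0)$. This clean organization makes the above case analysis run transparently, and ensures that the reduct to $\LringlambdaP$ of any model of $\FL$ of characteristic $p$ is a model of $\FpL$, which is the content actually invoked from Fact~\ref{fact:separable_maps}.
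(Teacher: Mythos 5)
Your proof is correct and takes precisely the route the paper intends: the paper gives no explicit argument for this Fact, merely invoking the analogy with Fact~\ref{fact:separable_maps}, and your case split on $\mathrm{char}(F)$ together with the observation that the $\LringlambdaP$-reduct of $\tilde{F}$ is $\bar{F}$ is exactly the bookkeeping the author leaves implicit. Your side remark on how $\FL$ should be axiomatized (conjuncts of the form $\chi_{p}\to\psi_{p}$ and $\neg\chi_{p}\to(l_{p,I}\equiv 0)$) is consistent with the informal description in Definition~\ref{def:theories} and is a sensible way to make the reduction airtight.
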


\begin{remark}[Separably closed fields]\label{rem:SCF}
As mentioned above,
the languages $\LringlambdaP$ are not new,
and have been used to study the theory of separably closed fields,
alongside the languages $\Lbasis$ and $\LQ$.
Let
$\SCF$
be the theory of
separably closed
fields
in the language
$\Lring$
of rings.
In \cite{Ershov},
Ershov showed that the completions of $\SCF$ are
$\SCF_{0}:=\ACF_{0}$
and
$\SCF_{p,\frakI}$
for $p\in\mathbb{N}$ and $\frakI\in\mathbb{N}\cup\{\infty\}$.
Later Wood, in \cite{Wood}, showed that
each $\SCF_{p,\frakI}$ is stable, not superstable.
In \cite{Delon82},
Delon showed that
$\SCF$
is model complete in $\Lbasis$, when the imperfection degree is finite,
and in $\LQ$ in general.
Moreover, Delon showed that $\SCF$ admits quantifier elimination in $\LringlambdaP$,
Still later, 
\cite{CCSSW}
studied types in $\SCF$ using the language $\LringlambdaP$.
As already mentioned,
Hong used a variant of this language to prove quantifier elimination for $\SCVF_{p,\frakI}$ in $\Lvlambda$,
see \cite{Hong}.
In a different direction, Srour
has shown
in \cite{Srour}
that,
$\SCF_{p,\frakI}$ is equational in $\Lblambda$,
for $\frakI<\infty$.
\end{remark}

\begin{proof}[{Proof of Theorem~\ref{thm:intro_1}}]
We recall the setting of the theorem:
$F/C$ is a separable field extension,
$c\in\pB[C]$,
and $a$ is any well-ordered subset of $F$.
In Definition~\ref{def:lambda_main}
we already have given the definition of
$\lambda_{F/c}a$
as the union of two well-ordered subsets of $F$,
$\lambda_{F/c}^{I}a$
and
$\lambda_{F/c}^{S}a$,
where
the former is $p$-independent in $F$,
and the latter is a subset of
$C(\lambda_{F/c}^{S}a)^{(p)}(\lambda_{F/c}^{I}a)$.

The first claim {\bf(i)}
was proved in Proposition~\ref{prp:chain_3}~{\bf(i)},
and we saw in Lemma~\ref{lem:finite}
that,
when $F=C(a,b)$ is finitely generated over $C$,
we may replace
$\lambda_{F/c}a$
with a finite tuple
$\lambda_{F/b/c}a$,
as defined in Definition~\ref{def:finite},
which proves {\bf(iii)}.

For {\bf(ii)}, we observe that the recursive construction of $\lambda_{F/c}a$,
given in Definition~\ref{def:little_lambda},
constructs $a'$ and $b'$ using simply repartitioning of tuples, and one application of the function $\lambda^{ca'}$ to $b$.
Thus means that $b'$ is the union of $b$ and the interpretation of $l_{p,I}(b_{i},\concat{c}{a'})$, for elements $b_{i}$ of $b$ and for $I\in p^{[n]}$ for $n=|\concat{c}{a'}|$.
In particular, $b'$ is formed from $\Llambda$-terms in the elements of the tuples $a'$, $b$, and $c$.
\end{proof}

\section{T-henselianity, the Implicit Function Theorem, and existentially definable sets}
\label{section:IFT}

In this section we study sets defined by existential formulas in the language $\Lring$ of rings,
allowing parameters,
in fields equipped with a henselian topology,
in the sense of Prestel and Ziegler
\cite{PZ78},
though in that paper such topological fields are called ``$t$-henselian'' and the topologies are identified with any corresponding filters of neighbourhoods of $0$.
To this end,
throughout this section
we suppose the following:
\begin{enumerate}
\item[\DAG]
\label{DAG}
{\bf $K/C$ is a separable field extension
and $K$ admits a henselian topology
$\tau$}.
\end{enumerate}
Note that henselian topologies are in particular ``$V$-topologies'',
which means that they are induced by valuations or absolute values,
by \cite{KowalskyDurbaum,Fleischer}.
If $K$ is not separably closed then it admits at most one henselian topology;
moreover, if $\tau$ is such a topology on $K$, then there is a uniformly existentially $\Lring$-definable family of subsets of $K$ that forms a basis for the filter of $\tau$-neighbourhoods of $0$,
see e.g.~\cite[(7.11) Remark]{PZ78}, as corrected in \cite[Lemma]{Prestel}.

\begin{remark}\label{rem:motivation}
In algebraically closed fields $K$, for example in $K=\mathbb{C}$ or $\bbF_{p}^{\mathrm{alg}}$,
all infinite definable subsets of $1$-dimensional affine space $\bbA^{1}$ (i.e.~of $K$) are cofinite,
and so are in particular Zariski open.
In real closed fields, for example in $K=\mathbb{R}$, 
all infinite definable subsets of $\bbA^{1}$ are finite unions of intervals,
at least one of which must be nontrivial.
Thus they have nonempty interior in the order topology.
In $p$-adically closed fields, for example in the fields $K=\mathbb{Q}_{p}$ of $p$-adic numbers and finite extensions thereof,
all infinite definable subsets of $\bbA^{1}$ have nonempty interior in the $p$-adic topology.
Macintyre, in his survey article \cite{Macintyre},
comments on the fact that all infinite definable subsets of $\bbA^{1}$ in local fields of characteristic zero have nonempty interior,
implicitly raising the same question for local fields of positive characteristic.
For such a field $K=\ps{\bbF_{q}}$
it is clear that $K^{(p)}$ is an infinite definable subset with empty interior in the $t$-adic topology.
Thus any reasonable analogue of Macintyre's observation for local fields of positive characteristic must take into account at least additive and multiplicative cosets of
$t^{p^{n}}$-adically open subsets of $K^{(p^{n})}$, taken inside $K$.
However, as the following example shows, even these sets are not rich enough.
\end{remark}

\begin{example}\label{ex:bad_interior}
There is an existentially $\Lval(t)$-definable subset of $K=\ps{\bbF_{p}}$
which is neither an additive coset nor a multiplicative coset in $K$ of a subset of $K^{(p^{n})}$ with nonempty interior in the $t^{p^{n}}$-adic topology, for any $n<\omega$.
For example, if $p\neq2$, the set $X=\{x^{p}+tx^{2p}\mid x\in K\}$ is existentially $\Lring(t)$-definable,
and yet it is easy to see that the sets $aX+b$, for $a\neq 0$, have empty $t^{p}$-adic interior.
\end{example}

For any constructible set $U\subseteq\bbA_{C}^{n}$,
i.e.~a Boolean combination of subvarieties of $\bbA_{C}^{n}$ defined over $C$,
we denote by $U(K)$ the set of $K$-rational points of $U$.
For a set (or tuple) $f\subseteq C[X_{0},\ldots,X_{n-1}]$ of polynomials, let $Z(f)\subseteq\bbA_{C}^{n}$ be the affine variety defined by the vanishing of $f$.

\begin{definition}\label{def:locus}
For an $m$-tuple $a$ from $K$,
the {\em locus}
of $a$ over $C$,
denoted $\locus(a/C)$,
is the smallest Zariski closed subset of $\bbA_{C}^{m}$,
defined over $C$,
of which $a$ is an $K$-rational point.
Equivalently,
$\locus(a/C)$
is the affine variety defined by the vanishing
of the polynomials in the prime ideal
$I_{a}:=\{f\in C[x]\;|\;f(a)=0\}\unlhd C[x]$,
where $x$ is an $m$-tuple of variables.
\end{definition}

\begin{problem}\label{problem}
Let $a\in K^{m}$ and $b\in K^{n}$ be tuples.
Let $\pr_{x}:\bbA^{m+n}\rightarrow\bbA^{m}$ be the projection on to the first $m$-coordinates.
Our aim is to
locally describe the projection:
$\pr_{x}:\locus(a,b/C)(K)\rightarrow\pr_{x}\locus(a/C)(K)$.
That is, we want to describe the image of the map
\begin{align*}
	\pr_{x}:\locus(a,b/C)(K)\cap U\rightarrow\locus(a/C)(K)\cap\pr_{x}U,
\end{align*}
for some $\tau$-neighbourhood $U$ of $(a,b)$.
\end{problem}

The main tool we use is the Implicit Function Theorem for polynomials which, in the following form, is equivalent to henselianity of the topology (subject to being a $V$-topology).

\begin{fact}[{``Implicit Function Theorem'', cf \cite[(7.4) Theorem]{PZ78}}]\label{fact:IFT}
Let $(F,\tau)$ be a V-topological field.
Then $(F,\tau)$ is t-henselian if and only if,
for every $f\in K[X_{0},\ldots,X_{m-1},Y]$
and every $(a_{0},\ldots,a_{m-1},b)\in K^{m+1}$
such that $f(a_{0},\ldots,a_{m-1},b)=0$
and $D_{Y}f(a_{0},\ldots,a_{m-1},b)\neq0$,
there are $\tau$-open sets $U\subseteq K^{m}$
and $V\subseteq K$ such that
\begin{enumerate}[{\bf(i)}]
	\item $(a_{0},\ldots,a_{m-1},b)\in U\times V$ and
	\item $Z(f)\cap(U\times V)$ is the graph of a continuous function $U\rightarrow V$.
\end{enumerate}
\end{fact}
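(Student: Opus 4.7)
The plan is to prove the equivalence in both directions, using Hensel's lemma (in its V-topological guise) for the forward implication and a universal polynomial trick for the converse. The statement is a ``fact'' here but the standard proof from \cite{PZ78} is natural enough to reconstruct.

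For the forward direction, I would fix $f$ and $(a,b)$ as in the hypothesis and view $f$ as a one-variable polynomial in $Y$ whose coefficients are polynomial functions of $X = (X_{0},\ldots,X_{m-1})$. Since $f(a,b) = 0$ and $D_{Y}f(a,b) \neq 0$, the element $b$ is a simple root of the univariate polynomial $f(a,Y) \in K[Y]$. Continuity of polynomial evaluation in the V-topology ensures that for $a' \in K^{m}$ sufficiently close to $a$, the coefficients of $f(a',Y)$ are $\tau$-close to those of $f(a,Y)$, and in particular $D_{Y}f(a',b)$ remains nonzero and $f(a',b)$ becomes $\tau$-small. Then t-henselianity, in its form as a parametrized Hensel's lemma for V-topological fields, yields a unique root $\phi(a')$ of $f(a',Y)$ in a prescribed $\tau$-neighbourhood $V$ of $b$, for all $a'$ in a suitable $\tau$-neighbourhood $U$ of $a$. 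The uniqueness part of Hensel's lemma both ensures $Z(f) \cap (U \times V)$ is exactly the graph of $\phi$, and (by shrinking $U,V$) yields continuity of $\phi$.

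For the reverse direction, I would deduce from IFT the characterization of t-henselianity as continuous dependence of simple roots on coefficients. Given a univariate polynomial $g(Y) = \sum_{i=0}^{n} c_{i} Y^{i} \in K[Y]$ with a simple root $b$, apply IFT to the ``universal'' polynomial $f(X_{0},\ldots,X_{n},Y) = \sum_{i=0}^{n} X_{i} Y^{i}$ at the point $(c_{0},\ldots,c_{n},b)$: here $f(c,b) = g(b) = 0$ and $D_{Y}f(c,b) = g'(b) \neq 0$. The IFT then delivers $\tau$-open $U \ni c$ and $V \ni b$ such that for each $c' \in U$---i.e.\ for each polynomial $g'$ whose coefficients are $\tau$-close to those of $g$---there is a unique root of $g'$ in $V$, depending continuously on $c'$. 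This is exactly the simple-root lifting property that characterizes t-henselianity among V-topological fields in \cite{PZ78}. One must separately note that the V-topological hypothesis is already assumed on both sides of the equivalence, so it is only the lifting property that needs to be recovered.

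The main obstacle is really bookkeeping rather than conceptual: on the forward side, one needs a sufficiently uniform parametric Hensel's lemma for V-topologies, so that the neighbourhood $U$ can be chosen to work simultaneously for all $a' \in U$; this requires working with the explicit basis of $\tau$-neighbourhoods of $0$ coming from the V-topology (as in \cite[\S7]{PZ78}) and using quantitative estimates on the size of $f(a',b)$ relative to $D_{Y}f(a',b)^{2}$. On the reverse side, the non-trivial content is the translation between IFT and the Prestel--Ziegler definition of t-henselianity, which requires one to verify that the universal-polynomial construction above indeed matches their axioms. Neither step is deep, but together they carry the weight of the equivalence.
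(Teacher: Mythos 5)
The paper does not prove this statement: it is labelled a Fact and cited to \cite[(7.4) Theorem]{PZ78}, with no argument supplied in the text. There is therefore no ``paper's own proof'' for me to compare your reconstruction against. That said, your sketch does reproduce the natural shape of the Prestel--Ziegler argument: the forward direction by a parametrized Hensel-type root lifting with quantitative control of $f(a',b)$ against $D_{Y}f(a',b)$, and the converse by a universal-polynomial specialization.

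One place where your reverse direction is a bit underspecified, and where the bookkeeping matters. The Prestel--Ziegler definition of t-henselianity for a V-topological field is \emph{not} stated as ``simple roots depend continuously on coefficients''; it is stated (roughly) as: for each degree $n$ there is a $\tau$-neighbourhood $U$ of $0$ such that every polynomial $Y^{n+1}+Y^{n}+a_{n-1}Y^{n-1}+\cdots+a_{0}$ with all $a_{i}\in U$ has a zero in $K$. To recover this from IFT, the universal polynomial you should apply the theorem to is the specific family $f(X_{0},\ldots,X_{n-1},Y)=Y^{n+1}+Y^{n}+X_{n-1}Y^{n-1}+\cdots+X_{0}$, at the base point $(0,\ldots,0,-1)$: one has $f(0,\ldots,0,-1)=0$ and $D_{Y}f(0,\ldots,0,-1)=(-1)^{n}\neq 0$, so IFT produces a $\tau$-open $U\ni 0$ and $V\ni -1$ and a continuous section, which in particular gives the required zero. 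Your phrasing (``a polynomial $g$ with a simple root $b$'') is more general than what is needed, and quietly passes over the reduction to polynomials of this specific normalized form. You do flag the translation step, so this is a matter of completing the bookkeeping rather than a wrong idea; but if the proposal were to be written out in full, this is the spot that would need to be pinned down carefully against the definitions in \cite[\S 7]{PZ78}.
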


\begin{remark}
The topological-algebraic statements in this section,
including Facts~\ref{fact:IFT} and~\ref{fact:MIFT},
Lemma~\ref{lem:separable_projections},
and Proposition~\ref{prp:arbitrary_projections},
have analogues in enriched settings,
where for example
we work with a field equipped with analytic functions,
with respect to which it satisfies the appropriate Implicit Function Theorem.
Suitably adapted notions of ``henselianity'' and of ``locus'' are needed, as well as a more careful treatment of separability and inseparability.
We postpone discussion of this topic for future work.
\end{remark}

The following is a form of the multidimensional Implicit Function Theorem for polynomials,
as given in \cite[\S4]{Kuhlmann_book},
but restated for our context of fields equipped with henselian topologies.

\begin{fact}[{Cf \cite[\S4]{Kuhlmann_book}}]\label{fact:MIFT}
Recall our standing assumption \DAG.
Let $f_{1},\ldots,f_{n}\in K[X_{1},\ldots,X_{l}]$ with $n<l$.
Define the Jacobian
\begin{align*}
	\tilde{J}
&:=
\begin{pmatrix}
\frac{\partial f_{1}}{\partial X_{l-n+1}}
&
\ldots
&
\frac{\partial f_{1}}{\partial X_{l}}
\\
\vdots
&
\ddots
&
\vdots
\\
\frac{\partial f_{n}}{\partial X_{l-n+1}}
&
\ldots
&
\frac{\partial f_{n}}{\partial X_{l}}
\end{pmatrix},
\end{align*}
where $\tfrac{\partial}{\partial X_{i}}$ is the formal derivative with respect to $X_{i}$.
Assume that
$f_{1},\ldots,f_{n}$
admit a common zero
$a=(a_{1},\ldots,a_{l})\in K^{l}$
and that
$\det\tilde{J}(a)\neq0$.
Then there are some
$U_{1},\ldots,U_{l}\in\tau$
with $a\in\prod_{i=1}^{l}U_{i}$
such that for all
$(a_{1}',\ldots,a_{l-n}')\in\prod_{i=1}^{l-n}U_{i}$
there exists a unique
$(a_{l-n+1}',\ldots,a_{l}')\in\prod_{i=l-n+1}^{l}U_{i}$
such that
$(a_{1}',\ldots,a_{l}')$
is a common zero of
$f_{1},\ldots,f_{n}$.
\end{fact}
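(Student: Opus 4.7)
The plan is to prove this by induction on $n$, with Fact~\ref{fact:IFT} as the base case. For $n = 1$ the hypothesis reduces to $\partial f_1/\partial X_l(a) \neq 0$, and Fact~\ref{fact:IFT} applied with $Y = X_l$ and the remaining $l-1$ coordinates as parameters gives the conclusion directly; the uniqueness of $a'_l$ comes from the graph description of $Z(f_1)$.

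For $n \geq 2$ I would first normalize by a linear change of basis of the tuple $(f_i)$. Since $\det \tilde{J}(a) \neq 0$, the matrix $M := \tilde{J}(a)^{-1}$ exists, and the tuple $M \cdot (f_1,\ldots,f_n)^{\top}$ has the same common zero set and Jacobian $I_n$ at $a$. So we may assume $\partial f_i/\partial X_{l-n+j}(a) = \delta_{ij}$; in particular $\partial f_n/\partial X_l(a) = 1$ and $\partial f_i/\partial X_l(a) = 0$ for $i < n$. Applying Fact~\ref{fact:IFT} to $f_n$ with distinguished variable $X_l$ then gives $\tau$-open neighborhoods $V_1,\ldots,V_l$ of the coordinates of $a$ and a continuous implicit function $\phi \colon \prod_{i<l} V_i \to V_l$ realizing $Z(f_n) \cap \prod V_i$ as a graph. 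A chain-rule computation at $(a_1,\ldots,a_{l-1})$, exploiting the vanishing of $\partial f_i/\partial X_l(a)$ for $i < n$, shows that the substituted functions $\tilde{f}_i(x_1,\ldots,x_{l-1}) := f_i(x_1,\ldots,x_{l-1},\phi(x_1,\ldots,x_{l-1}))$ ($i < n$) have formal Jacobian $I_{n-1}$ with respect to $X_{l-n+1},\ldots,X_{l-1}$, so the geometry is perfectly poised for recursion.

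The main obstacle is that the reduced functions $\tilde{f}_i$ are continuous but no longer polynomial, so the inductive hypothesis does not literally apply. I would sidestep this by avoiding the substitution altogether and instead solving the original polynomial system directly via a Newton-type iteration in the $V$-topology induced by $\tau$: for each parameter tuple $(a'_1,\ldots,a'_{l-n}) \in \prod_{i \leq l-n} V_i$, define $\xi^{(0)} := (a_{l-n+1},\ldots,a_l)$ and $\xi^{(k+1)} := \xi^{(k)} - \tilde{J}(\xi^{(k)})^{-1} \cdot (f_1,\ldots,f_n)^{\top}(a'_1,\ldots,a'_{l-n},\xi^{(k)})$. Continuity and invertibility of $\tilde{J}$ near $a$ make the iteration well-defined in a small enough neighborhood, and $t$-henselianity of $\tau$ (equivalent to Fact~\ref{fact:IFT}) is precisely what forces the iterates to converge to a genuine common zero in $K$ rather than merely in the $\tau$-completion, while uniqueness in the neighborhood follows from the contractive behavior of the iteration together with local invertibility of $\tilde{J}$. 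Verifying that the univariate Prestel--Ziegler theorem is strong enough to drive this multivariate Newton scheme --- i.e., that one-dimensional topological henselianity actually implies a genuine multidimensional Hensel's lemma --- is the delicate step, and it is precisely what is established in~\cite[\S4]{Kuhlmann_book}, which we would invoke.
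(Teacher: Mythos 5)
Your proposal takes a genuinely different and much more laborious route than the paper, and it has a gap in the final step. The paper disposes of this in two sentences: Kuhlmann's book \cite[\S4]{Kuhlmann_book} already proves the multidimensional implicit function theorem for topologies \emph{induced by a nontrivial henselian valuation}; the general $t$-henselian case then follows because $(K,\tau)$ is by definition only \emph{locally equivalent} (in the sense of \cite[\S1]{PZ78}) to such a topological field, and the statement of the fact is a \emph{local sentence}, so it transfers across local equivalence. No induction on $n$, no normalisation of the Jacobian, no Newton iteration.

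The gap in your argument is precisely the transfer step you elide. You say the ``delicate step'' is whether one-dimensional $t$-henselianity implies a multidimensional Hensel's lemma, ``and it is precisely what is established in \cite[\S4]{Kuhlmann_book}''. But Kuhlmann's \S4 works with valued fields, not with arbitrary $t$-henselian topological fields; a $t$-henselian topology need not be induced by a henselian valuation, only locally equivalent to one. So Kuhlmann's result does not apply to $(K,\tau)$ directly, and your Newton iteration is being run in a topology for which you have not justified convergence (and in a valued field that is not complete, convergence of Newton iteration is exactly what henselianity must be invoked to supply). What makes the citation legitimate is the observation that the conclusion is expressible as a local sentence --- that is the whole content of the proof, and it is missing from your writeup. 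Your initial induction sketch also correctly identifies, and does not resolve, the obstacle that the substituted functions $\tilde f_i$ are no longer polynomial, so that route is abandoned rather than completed; that is fine as honest scaffolding, but it means the proposal as a whole rests entirely on the unaddressed transfer.
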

\begin{proof}
The analogous statement for the particular case 
of topologies induced by nontrivial henselian valuations is given in
\cite[\S4]{Kuhlmann_book}.
This version follows because $(K,\tau)$ is locally equivalent to a topological field $(L,\tau_{w})$, where $\tau_{w}$ is the topology induced by a nontrivial henselian valuation $w$ on $L$,
and the statement of this lemma is expressed by a local sentence
(in the sense of \cite[\S1]{PZ78}).
\end{proof}

For brevity, for Zariski constructible sets $A,B\subseteq\bbA_{C}^{n}$,
and a neighbourhood $U$ in a topology on $K^{n}$,
we write $A\localin_{U}B$ to mean that $A(K)\cap U\subseteq B(K)\cap U$,
and we write $A\localeq_{U}B$ to mean both $A\localin_{U}B$ and $B\localin_{U}A$.
The following lemma is just a basic fact of algebraic geometry, which we restate and prove in our language, for lack of a suitable reference.

\begin{lemma}\label{lem:TOOL}
Let $F/C$ be a field extension and let $a=(a_{j})_{j<n}\in F^{n}$.
For each $j<n$, choose $g_{j}\in C[X_{0},\ldots,X_{j}]$ and $h_{j}\in C[X_{0},\ldots,X_{j-1}]$
such that
if $a_{j}$ is algebraic over $C(a_{0},\ldots,a_{j-1})$ then
\begin{align*}
	\frac{g_{j}(a_{0},\ldots,a_{j-1},X_{j})}{h_{j}(a_{0},\ldots,a_{j-1})}
\end{align*}
is its minimal polynomial, otherwise
if $a_{j}$ is transcendental over $C(a_{0},\ldots,a_{j-1})$ then $g_{j}=0$ and $h_{j}=1$ are constant polynomials.
\begin{enumerate}[{\bf(i)}]
\item
For any finitely many polynomials $(f_{i})_{i<l}\subseteq C[X_{0},\ldots,X_{n-1}]$ with $f_{i}(a)=0$
there is a Zariski open set $U\subseteq F^{n}$, with $a\in U$, such that
\begin{align*}
	Z(g_{0},\ldots,g_{n-1})
&\localin_{U}
	Z(f_{0},\ldots,f_{l-1}).
\end{align*}
\item
There exists a Zariski open set $V\subseteq F^{n}$, with $a\in V$, such that for each $j\in\{0,\ldots,n\}$ we have
\begin{align*}
	\locus(a/C)
&\localeq_{V}
	\locus(a_{0},\ldots,a_{j-1}/C)\cap Z(g_{j},\ldots,g_{n-1}).
\end{align*}
In particular, taking $j=0$, we have
$\locus(a/C)\localeq_{V}Z(g_{0},\ldots,g_{n-1})$.
\end{enumerate}
\end{lemma}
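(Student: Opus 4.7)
My plan is to derive both parts from a single algebraic identity: setting $H := \prod_{j<n} h_j \in C[X_0, \ldots, X_{n-1}]$, the ideals $(g_0, \ldots, g_{n-1})$ and $I_a$ coincide after inverting $H$. Geometrically, on the Zariski open set $U_0 := \{H \neq 0\}$, which contains $a$ since each $h_j(a_0, \ldots, a_{j-1})$ is nonzero by construction, this says $Z(g_0, \ldots, g_{n-1}) \cap U_0 = \locus(a/C) \cap U_0$.

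To prove this identity, I would induct on $n$ via the chain of rings
\[
R_j := C[X_0, \ldots, X_{j-1}][1/H_j] \,/\, (g_0, \ldots, g_{j-1}), \qquad H_j := \prod_{k<j} h_k,
\]
showing at each stage that the natural map $\pi_j : R_j \to F$ given by $X_k \mapsto a_k$ is an isomorphism onto the subring $C[a_0, \ldots, a_{j-1}][1/H_j(a_0, \ldots, a_{j-1})]$ of $F$. The case $j=0$ is immediate. For the inductive step, note that $R_{j+1} = R_j[X_j][1/h_j]/(g_j)$: if $a_j$ is transcendental over $C(a_0, \ldots, a_{j-1})$ then $g_j = 0$ and $h_j = 1$, and $\pi_{j+1}$ is injective by transcendence. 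If $a_j$ is algebraic with monic minimal polynomial $\mu_j$ of degree $d$ over $C(a_0, \ldots, a_{j-1}) = \mathrm{Frac}(R_j)$, then the defining identity $g_j(a_0, \ldots, a_{j-1}, X_j) = h_j(a_0, \ldots, a_{j-1}) \mu_j(X_j)$ shows that the leading coefficient of $g_j$, viewed in $R_j[X_j]$, equals $h_j$; after inverting $h_j$ we get $(g_j) = (\mu_j)$ with $\mu_j \in R_j[1/h_j][X_j]$ monic. Hence $R_{j+1}$ is free of rank $d$ over $R_j[1/h_j]$ with basis $\{1, X_j, \ldots, X_j^{d-1}\}$, whose $\pi_{j+1}$-image $\{1, a_j, \ldots, a_j^{d-1}\}$ is linearly independent over $\mathrm{Frac}(R_j)$, forcing $\pi_{j+1}$ to be an isomorphism.

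Part (i) then follows immediately with $U := U_0$: any $f_i$ with $f_i(a) = 0$ belongs to $I_a$, so $\locus(a/C) \subseteq Z(f_i)$, hence $Z(g_0, \ldots, g_{n-1}) \cap U = \locus(a/C) \cap U \subseteq Z(f_i)$. For part (ii), take $V := U_0$; since $H_j \mid H$ we have $V \subseteq \{H_j \neq 0\}$, so applying the identity to the subtuple $(a_0, \ldots, a_{j-1})$ yields $Z(g_0, \ldots, g_{j-1}) \cap V = \locus(a_0, \ldots, a_{j-1}/C) \cap V$, where both sides are viewed inside $\bbA^n_C$ via the coordinate projection. Combined with $Z(g_0, \ldots, g_{n-1}) = Z(g_0, \ldots, g_{j-1}) \cap Z(g_j, \ldots, g_{n-1})$ and the identity for $a$ itself, this gives the claimed $\localeq_V$ equivalence, with $j = 0$ recovering the final assertion.

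The main obstacle is the algebraic-case inductive step: one must verify that $\mu_j$, which a priori lies only in $\mathrm{Frac}(R_j)[X_j]$, in fact descends to a monic polynomial in $R_j[1/h_j][X_j]$. This is precisely what the choice of denominator $h_j$ is designed to achieve. Notably, the argument uses no separability hypothesis on $F/C$, consistent with the generality of the lemma's statement.
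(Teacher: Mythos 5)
Your proof is correct, but it takes a genuinely different route from the paper's. The paper proves~\textbf{(i)} by induction on $n$: it uses Noetherianity to pick finite generators of $I_a$, applies the inductive hypothesis to them, and then treats a new coordinate by an explicit polynomial manipulation --- dividing each $f_i(a,Y)$ by the minimal polynomial $g_n(a,Y)/h_n(a)$ and clearing denominators to get $p_ig_n=q_ih_nf_i+r_i$ with $q_i(a)\neq0$, $r_i(a)=0$, then checking the inclusion pointwise on an open set $U$ that depends on the $f_i$ (through the $q_i$). Part~\textbf{(ii)} is deduced afterwards by applying~\textbf{(i)} to generators of each $I_{(a_0,\ldots,a_{j-1})}$ and intersecting the resulting open sets. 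You instead prove the stronger commutative-algebra fact that $(g_0,\ldots,g_{n-1})$ and $I_a$ become equal after inverting $H=\prod_{j}h_j$, via the chain of localization isomorphisms $R_j\cong C[a_0,\ldots,a_{j-1}][1/H_j(a)]$; both~\textbf{(i)} and~\textbf{(ii)} then fall out on the single uniform open set $\{H\neq0\}$, independent of the $f_i$. Your approach is more conceptual and gives a cleaner, stronger conclusion. One small imprecision: the phrase ``the leading coefficient of $g_j$, viewed in $R_j[X_j]$, equals $h_j$'' tacitly assumes $\deg_{X_j}g_j=\deg\mu_j$; the lemma only requires $g_j(a_0,\ldots,a_{j-1},X_j)/h_j(a_0,\ldots,a_{j-1})=\mu_j$, so $g_j$ may carry higher powers of $X_j$ whose coefficients vanish at $(a_0,\ldots,a_{j-1})$. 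Those extra coefficients die in $R_j$ precisely by the inductive isomorphism, so the argument survives, but the sentence should be reworded to say that all $X_j$-coefficients of $g_j$ above degree $\deg\mu_j$ vanish in $R_j$ and the $\deg\mu_j$-coefficient equals $h_j$ there.
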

\begin{proof}
To prove {\bf(i)}:
we proceed by induction on $n$.
The base case $n=0$ is vacuous.
As an inductive hypothesis, we suppose that the statement of the lemma holds for some $n<\omega$.
Let $a\in F^{n}$.
By Noetherianity, 
$I_{a}=\{f\in C[X]\mid f(a)=0\}$ is a finitely generated deal of $C[X]$.
Let $f'=(f'_{h})_{h<k}$ be a choice of generators,
so $Z(f')=\locus(a/C)$.
By the inductive hypothesis applied to these generators $f'$,
there is a Zariski-open set $U'\subseteq F^{n}$, with $a\in U'$, such that
$Z(g_{0},\ldots,g_{n-1})\localin_{U'}Z(f')$.
Since anyway $\locus(a/C)\subseteq Z(g_{0},\ldots,g_{n-1})$,
we have
$Z(g_{0},\ldots,g_{n-1})\localeq_{U'}Z(f')\localeq_{U'}\locus(a/C)$. 
Let $b\in F$ be any other element.
Let $f=(f_{i})_{i<l}\subseteq I_{a,b}$ be finitely many polynomials such that $f_{i}(a,b)=0$.
Let $i<l$.
Case {\bf(a)}:
suppose first that $f_{i}(a,Y)$ is the zero polynomial in $Y$.
Then $f_{i}(X,Y)=f_{i}(X)\in I_{a}$.
Case {\bf(b)} is the more interesting case:
suppose that $f_{i}(a,Y)$ is not the zero polynomial in $Y$.
This implies that $b$ is algebraic over $C(a)$, and in particular that $g_{n}(a,Y)$ is nonzero.
Since $g_{n}(a,Y)/h_{n}(a)$ is the minimal polynomial of $b$ over $C(a)$,
there exist $p_{i}\in C[X,Y]$ and $q_{i}\in C[X]$ such that
$q_{i}(a)\neq0$
and
\begin{align*}
	\frac{p_{i}(a,Y)g_{n}(a,Y)}{q_{i}(a)h_{n}(a)}&=f_{i}(a,Y).
\end{align*}
This means that there exists $r_{i}\in C[X]$ such that
$r_{i}(a)=0$
and
$p_{i}g_{n}=q_{i}h_{n}f_{i}+r_{i}$ in $C[X,Y]$.
Let
\begin{align*}
	U=U'\setminus Z(h_{n},q_{i}\mid\text{$i<l$ is in case {\bf(b)}})(F),
\end{align*}
be the complement in $U'$ of the vanishing locus of these denominator polynomials,
as a Zariski open subset of $F^{n+1}$ with $(a,b)\in U$.
It remains to verify that
$Z(g_{0},\ldots,g_{n})\localin_{U}Z(f)$.
Let $(a',b')\in Z(g_{0},\ldots,g_{n})(F)\cap U$,
and $i<l$.
This implies that $a'\in\locus(a/C)(F)$.
If $i$ is in case {\bf(a)} then $f_{i}\in I_{a}$, so already $f_{i}(a',b')=0$.
Otherwise if $i$ is in case {\bf(b)} we have
$r_{i}(a')=0$,
$q_{i}(a')\neq0$,
and $h_{n}(a')\neq0$.
Then it follows from $g_{n}(a',b')=0$ that also in this case we have $f_{i}(a',b')=0$.
Thus $(a',b')\in Z(f_{0},\ldots,f_{l-1})(F)$,
which that proves
$Z(g_{0},\ldots,g_{n})\localin_{U}Z(f)$,
finishing the induction.

To prove {\bf(ii)}:
let $j\in\{0,\ldots,n\}$
and notice that $\locus(a_{0},\ldots,a_{j-1}/C)\subseteq Z(g_{0},\ldots,g_{j-1})$.
Applying {\bf(i)} to a set of generators $f$ for $I_{(a_{0},\ldots,a_{j-1})}$, we have
$Z(g_{0},\ldots,g_{j-1})\localeq_{V_{j}}\locus(a_{0},\ldots,a_{j-1}/C)$
for some Zariski open $V_{j}\subseteq F^{j}$.
Finally let
$V:=\bigcap_{j=0}^{n}V_{j}\times F^{n-j}$.
It is easy to see that this set $V$ is a Zariski open subset of $F^{n}$ with $a\in V$,
and that satisfies the statement of the lemma.
\end{proof}

The next lemma improves on \cite[Lemma 19]{A19} by allowing arbitrary separable extensions $C(a,b)/C(a)$,
instead of assuming that $a$ is a separating transcendence basis of $C(a,b)/C$.

\begin{lemma}[{Separable projection of loci}]\label{lem:separable_projections}
Recall our standing assumption \DAG.
Let $(a,b)\in K^{m+n}$
and suppose that $C(a,b)/C(a)$ is separable.
Let $b_{1}\subseteq b$ a separating transcendence base of $C(a,b)/C(a)$
[which exists by Lemma~\ref{lem:Mac_Lane_I}],
let $b_{2}:=b\setminus b_{1}$,
and let $n=n_{1}+n_{2}$ be the corresponding partition of $n$.
There exists
$\tau$-neighbourhoods
$U$, $V_{1}$, and $V_{2}$
of $a$, $b_{1}$, and $b_{2}$,
respectively,
such that
\begin{align*}
	\locus(a,b/C)(K)\cap(U\times V_{1}\times V_{2})
\end{align*}
is the graph of a continuous function
\begin{align*}
	f:(\locus(a/C)(K)\times K^{n_{1}})\cap(U\times V_{1})&\rightarrow V_{2}.
\end{align*}
In particular, the projection of
$\locus(a,b/C)(K)$
onto the first $m$ coordinates contains 
$\locus(a/C)(K)\cap U$.
\end{lemma}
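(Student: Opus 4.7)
The plan is to realise $\locus(a,b/C)$ locally as the zero set of $n_2$ polynomials whose Jacobian in the $b_2$-variables is invertible at $(a,b)$, and then to invoke the multidimensional Implicit Function Theorem.

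First I would set up minimal polynomials adapted to the filtration $C \subseteq C(a) \subseteq C(a, b_1) \subseteq C(a, b)$. For each $j < n_2$, separability of $C(a, b_1, b_{2, \leq j})/C(a, b_1, b_{2, <j})$ supplies polynomials $g_{m+n_1+j} \in C[X, Y_1, Y_{2, \leq j}]$ and $h_{m+n_1+j} \in C[X, Y_1, Y_{2, <j}]$ whose quotient, evaluated at $(a, b_1, b_{2, <j})$, is the minimal polynomial of $b_{2,j}$ over $C(a, b_1, b_{2, <j})$; separability forces $\partial g_{m+n_1+j}/\partial Y_{2,j}$ to be nonzero at $(a, b_1, b_2)$. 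For the first $m + n_1$ coordinates I would choose $g_i, h_i$ as in Lemma~\ref{lem:TOOL}; in particular $g_{m+k} = 0$ and $h_{m+k} = 1$ for $k < n_1$, since $b_1$ is algebraically independent over $C(a)$. Applying Lemma~\ref{lem:TOOL}(ii) at stage $m + n_1$ and using that $\locus(a, b_1/C) = \locus(a/C) \times \bbA_C^{n_1}$ (again by the algebraic independence of $b_1$ over $C(a)$), I obtain a Zariski open $W \subseteq \bbA_C^{m+n}$ with $(a,b) \in W$ such that
\begin{equation*}
\locus(a,b/C) \;\localeq_W\; (\locus(a/C) \times \bbA_C^{n_1}) \cap Z(g_{m+n_1}, \ldots, g_{m+n_1+n_2-1}).
\end{equation*}

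Next I would apply Fact~\ref{fact:MIFT} to the $n_2$ polynomials $g_{m+n_1}, \ldots, g_{m+n_1+n_2-1}$ in the $m + n_1 + n_2$ variables $(X, Y_1, Y_2)$, differentiating with respect to the final block $Y_2$. Since $g_{m+n_1+j}$ depends on $Y_2$ only through $Y_{2, \leq j}$, the relevant Jacobian at $(a, b_1, b_2)$ is lower triangular with nonzero diagonal entries $\partial g_{m+n_1+j}/\partial Y_{2,j}(a, b_1, b_2)$, hence invertible. Fact~\ref{fact:MIFT} then yields $\tau$-neighbourhoods $U_0, V_1, V_2$ of $a, b_1, b_2$ and an implicit function $F \colon U_0 \times V_1 \to V_2$ such that, on $U_0 \times V_1 \times V_2$, the common vanishing of $g_{m+n_1}, \ldots, g_{m+n_1+n_2-1}$ is exactly the graph of $F$; continuity of $F$ can be extracted either from the standard proof of MIFT or by a recursive application of Fact~\ref{fact:IFT} to the triangular system, coordinate by coordinate. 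Shrinking $U_0 \times V_1 \times V_2 \subseteq W$ and renaming the first factor $U$ gives the sought-after neighbourhoods.

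Finally I would verify the graph identity and deduce the projection statement. If $(a', b_1') \in (\locus(a/C)(K) \times K^{n_1}) \cap (U \times V_1)$, then $(a', b_1', F(a', b_1'))$ lies in $(\locus(a/C) \times \bbA_C^{n_1}) \cap Z(g_{m+n_1}, \ldots, g_{m+n_1+n_2-1}) \cap W$, hence in $\locus(a, b/C)(K)$ by the local equivalence; conversely, the same local equivalence together with the uniqueness in Fact~\ref{fact:MIFT} shows that any $(a', b_1', b_2') \in \locus(a, b/C)(K) \cap (U \times V_1 \times V_2)$ satisfies $a' \in \locus(a/C)(K)$ and $b_2' = F(a', b_1')$. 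The concluding ``in particular'' falls out by projecting the graph and observing that $V_1$ is nonempty. The main obstacle is the bookkeeping around Lemma~\ref{lem:TOOL}(ii): the $g_i$ must be chosen compatibly with the coordinate filtration so that the Jacobian acquires triangular form, and one must record that separability is precisely what makes the diagonal entries nonzero at $(a,b)$; without separability, these derivatives could vanish and the Implicit Function Theorem argument would collapse.
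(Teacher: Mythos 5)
Your proposal follows essentially the same route as the paper's proof: you assemble minimal-polynomial data along the filtration, reduce to the vanishing locus of the $n_2$ polynomials via Lemma~\ref{lem:TOOL}{\bf(ii)}, observe the lower-triangular Jacobian with nonzero diagonal (this is exactly where separability enters), invoke Fact~\ref{fact:MIFT}, and shrink into the Zariski-open set $W$. The only variation is cosmetic: you invoke Lemma~\ref{lem:TOOL}{\bf(ii)} at stage $m+n_1$ together with the identity $\locus(a,b_1/C)=\locus(a/C)\times\bbA^{n_1}_C$, whereas the paper invokes it at stage $m$ and then discards the trivial $g_j$ for $j<n_1$; both land in the same place. (Minor note: your displayed local equivalence should read $(\locus(a/C)\times\bbA^n_C)\cap Z(\dots)$ for the dimensions to match.) You also flag the continuity of the implicit function, which Fact~\ref{fact:MIFT} as stated does not assert; the paper passes over this point silently, so your remark on extracting continuity coordinate-by-coordinate from Fact~\ref{fact:IFT} is a reasonable way to close a gap that the paper leaves implicit.
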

We say that the projection
$\locus(a,b/C)(K)\rightarrow\locus(a/C)(K)$
is ``{locally surjective}''.
\begin{proof}
We adapt the proof of \cite[Lemma 19]{A19},
though in the context of henselian topologies as in Fact~\ref{fact:MIFT}.
Let $X=(X_{i})_{i<m}$ and $Y=(Y_{j})_{j<n}$ be tuples of variables, of lengths $m$ and $n$, to correspond to $a$ and $b$, respectively.
We reorder $b$ and $Y$, if necessary, so that the variables $(Y_{0},\ldots,Y_{n_{1}-1})$ correspond to $b_{1}$ and whereas $(Y_{n_{1}},\ldots,Y_{n-1})$ correspond to $b_{2}$.
For each $j\in\{n_{1},\ldots,n-1\}$,
let
$g_{j}\in C[X,Y_{j_{0}}\mid j_{0}\leq j]$
and
$h_{j}\in C[X,Y_{j_{0}}\mid j_{0}<j]$
be such that
$h_{j}(a,b_{0},\ldots,b_{j-1})\neq0$
and
\begin{align*}
	\frac{g_{j}(a,b_{0},\ldots,b_{j-1},Y_{j})}{h_{j}(a,b_{0},\ldots,b_{j-1})}
\end{align*}
is the minimal polynomial of $b_{j}$ over $C(a,b_{j_{0}}\mid j_{0}<j)$.
For $j<n_{1}$, we let $g_{j}$ be the zero polynomial, and we set $h_{j}=1$ (or even leave it undefined).
We are in exactly the setting of Lemma~\ref{lem:TOOL}~{\bf(ii)},
so by that lemma
there is a Zariski open set $W\subseteq K^{m+n}$
such that
\begin{align*}
	\locus(a,b/C)
\localeq_{W}
	\locus(a/C)\times Z(g_{0},\ldots,g_{n-1})
\end{align*}
% Let $f=(f_{h})_{h<k}$ be a generating set for $I_{a}$, as in the proof of Lemma~\ref{lem:TOOL},
% so that
% $\locus(a/C)=Z(f)$
% and thus
% \begin{align*}
% 	\locus(a,b/C)
% \localeq_{W}
% 	Z(f_{0},\ldots,f_{k-1},g_{0},\ldots,g_{n-1})
% \end{align*}
Clearly $Z(g_{0},\ldots,g_{n-1})=Z(g_{n_{1}},\ldots,g_{n-1})$.
Thus
\begin{align*}
	\locus(a,b/C)
\localeq_{W}
	(\locus(a/C)\times\bbA_{C}^{n})\cap Z(g_{n_{1}},\ldots,g_{n-1}).
\end{align*}
For each $j\in\{n_{1},\ldots,n-1\}$,
since $b_{j}$ is separably algebraic over $C(a,b_{0},\ldots,b_{j-1})$,
we have
both
$g_{j}(a,b_{0},\ldots,b_{j})=0$
and
$\tfrac{\partial}{\partial Y_{j}}g_{j}(a,b_{0},\ldots,b_{j})\neq0$.
Again we define
the Jacobian
\begin{align*}
	\tilde{J}
&:=
\begin{pmatrix}
\frac{\partial g_{n_{1}}}{\partial Y_{n_{1}}}
&
\ldots
&
\frac{\partial g_{n_{1}}}{\partial Y_{n-1}}
\\
\vdots
&
\ddots
&
\vdots
\\
\frac{\partial g_{n-1}}{\partial Y_{n_{1}}}
&
\ldots
&
\frac{\partial g_{n-1}}{\partial Y_{n-1}}
\end{pmatrix}
\end{align*}
and observe that $\tilde{J}(a,b)$ is a lower triangular matrix with no zeroes on the diagonal, thus $\det\tilde{J}(a,b)\neq0$.
By the Implicit Function Theorem for polynomials, Fact~\ref{fact:MIFT},
there are
$\tau$-neighbourhoods
$U\subseteq K^{m}$ of $a$,
$V_{1}\subseteq K^{n_{1}}$ of $b_{1}$,
and
$V_{2}\subseteq K^{n_{2}}$ of $b_{2}$,
such that
\begin{align*}
	Z(g_{n_{1}},\ldots,g_{n})(K)
	\cap(U\times V_{1}\times V_{2})
\end{align*}
is the graph of a continuous function
$U\times V_{1}\rightarrow V_{2}$.
In fact, by continuity,
and since $\tau$ refines the Zariski topology (on $K$-rational points of affine space),
we may even ensure that
$U\times V_{1}\times V_{2}\subseteq W$.
Simply intersecting with
the $K$-rational points of
$\locus(a/C)\times\bbA_{C}^{n}$,
we have that
\begin{align*}
	\locus(a,b/C)(K)
	\cap(U\times V_{1}\times V_{2})
\end{align*}
is the graph of a continuous function
$(\locus(a/C)(K)\times K^{n_{1}})\cap(U\times V_{1})\rightarrow V_{2}$.
\end{proof}

In Proposition~\ref{prp:arbitrary_projections}, to address Problem~\ref{problem}, we apply our assembled ingredients to describe projections of loci having removed the assumption that $C(a,b)/C(a)$ is separable.
Under our standing assumption \DAG,
and following Remark~\ref{rem:sigma},
we denote by
$\sigma_{K/b/c}:\bbA_{C}^{\ell}\rightarrow\bbA_{C}^{m}$
the coordinate projection
that maps
$\lambda_{K/b/c}a\mapsto a$,
where
$c\in\pB[C]$,
$(a,b)\in K^{m+n}$,
and $\ell=|\lambda_{K/b/c}a|$.

\begin{proposition}[{Arbitrary projection of loci}]\label{prp:arbitrary_projections}
Recall our standing assumption \DAG.
Let $c\in\pB[C]$
and let $(a,b)\in K^{m+n}$.
Let $b_{1}\subseteq b$ a separating transcendence base of $C(\lambda_{K/b/c}a,b)/C(\lambda_{K/b/c}a)$
[which exists by Lemma~\ref{lem:Mac_Lane_I}],
let $b_{2}:=b\setminus b_{1}$,
and let $n=n_{1}+n_{2}$ be the corresponding partition of $n$.
There exist $\tau$-neighbourhoods $U$, $V_{1}$, and $V_{2}$ of $\lambda_{K/b/c}a$, $b_{1}$, and $b_{2}$, respectively
such that
\begin{enumerate}[{\bf(i)}]
\item
$\locus(a,b/C)(K)$
contains the image 
of the graph of a continuous function
\begin{align*}
	f:(\locus(\lambda_{K/b/c}a/C)(K)\times K^{n_{1}})\cap(U\times V_{1})&\rightarrow V_{2},
\end{align*}
under the coordinate projection
$\sigma_{K/b/c}\times\id^{n}:\bbA_{C}^{\ell+n}\rightarrow\bbA_{C}^{m+n}$,
and
\item
the image of the projection
$\pr_{m}:\locus(a,b/C)(K)\rightarrow\locus(a/C)(K)$
onto the first $m$ coordinates contains 
the image of
$\locus(\lambda_{K/b/c}a/C)(K)\cap U$
under the coordinate projection
$\sigma_{K/b/c}:\bbA_{C}^{\ell}\rightarrow\bbA_{C}^{m}$.
\end{enumerate}
\end{proposition}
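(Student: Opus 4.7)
The plan is to reduce to Lemma~\ref{lem:separable_projections} by replacing $a$ with its local Lambda-closure tuple $\lambda_{K/b/c}a$, using the fact that the coordinate projection $\sigma_{K/b/c}$ transports loci forward into loci.

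First I would note that, by the very definition of $\lambda_{K/b/c}a$ (Definition~\ref{def:finite}), the extension $C(\lambda_{K/b/c}a,b)/C(\lambda_{K/b/c}a)$ is separable, so Lemma~\ref{lem:separable_projections} applies to the tuple $(\lambda_{K/b/c}a,b)\in K^{\ell+n}$ over $C$, with the given choice of separating transcendence base $b_{1}\subseteq b$. This directly produces $\tau$-neighbourhoods $U$, $V_{1}$, $V_{2}$ of $\lambda_{K/b/c}a$, $b_{1}$, $b_{2}$ respectively, such that
\[
\locus(\lambda_{K/b/c}a,b/C)(K)\cap(U\times V_{1}\times V_{2})
\]
is the graph of a continuous function $f\colon(\locus(\lambda_{K/b/c}a/C)(K)\times K^{n_{1}})\cap(U\times V_{1})\to V_{2}$.

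The next step is the purely algebraic observation that the coordinate projection $\Sigma:=\sigma_{K/b/c}\times\id^{n}\colon\bbA_{C}^{\ell+n}\to\bbA_{C}^{m+n}$ maps $\locus(\lambda_{K/b/c}a,b/C)(K)$ into $\locus(a,b/C)(K)$. Indeed, for any $g\in C[X,Y]$ with $g(a,b)=0$, the pullback $g\circ\Sigma$ is a polynomial in $C[X',Y]$ vanishing at $(\lambda_{K/b/c}a,b)$; hence it vanishes on the whole locus $\locus(\lambda_{K/b/c}a,b/C)$, which is equivalent to saying $\Sigma$ sends that locus into $Z(g)$. Taking the intersection over all such $g$ yields the claim.

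Combining these two points gives {\bf(i)}: the image under $\Sigma$ of the graph of $f$ is contained in $\locus(a,b/C)(K)$. For {\bf(ii)}, pick any $y_{1}\in V_{1}$ (which is nonempty, since $b_{1}\in V_{1}$); then for every $a'\in\locus(\lambda_{K/b/c}a/C)(K)\cap U$ the point $(a',y_{1},f(a',y_{1}))$ lies in the graph of $f$, so
\[
(\sigma_{K/b/c}(a'),\,y_{1},\,f(a',y_{1}))\ \in\ \locus(a,b/C)(K),
\]
and projecting onto the first $m$ coordinates yields $\sigma_{K/b/c}(a')$, witnessing that it lies in the image of $\pr_{m}\colon\locus(a,b/C)(K)\to\locus(a/C)(K)$.

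I do not expect a serious obstacle: the substantive topological and differential work is already packaged in Lemma~\ref{lem:separable_projections} and, through it, in Fact~\ref{fact:MIFT}. The point of the proposition is precisely to outsource the inseparability to the Lambda-closure construction so that the multidimensional Implicit Function Theorem can be applied at $(\lambda_{K/b/c}a,b)$ rather than at $(a,b)$. The only thing to be careful about is the bookkeeping with the coordinate projection $\Sigma$ and verifying functoriality of loci along it, but this is a one-line polynomial pullback argument as above.
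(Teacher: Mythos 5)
Your proposal is correct and follows essentially the same route as the paper: apply Lemma~\ref{lem:separable_projections} to the separable extension $C(\lambda_{K/b/c}a,b)/C(\lambda_{K/b/c}a)$, then push forward along the coordinate projection $\sigma_{K/b/c}\times\id^{n}$. The one place where you are more explicit than the paper is in verifying, via the polynomial pullback argument, that the coordinate projection carries $\locus(\lambda_{K/b/c}a,b/C)$ into $\locus(a,b/C)$ --- the paper packages this into the commutativity of the square in Figure~\ref{fig:1} and a ``local surjectivity'' remark, whereas you write out both the functoriality of loci and the choice of $y_{1}\in V_{1}$ witnessing surjectivity; this fills in a step the paper treats as self-evident, and is a minor stylistic difference rather than a different proof.
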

\begin{figure}
\begin{align*}
  	\xymatrix@=4.0em{
	\locus(a,b/C)
        \ar@{->}@/^0.00pc/[d]_{\pr_{m}}
        %\ar@{->}@/^1.00pc/[r]^{\lambda_{K/b/c}}
&
	\locus(\lambda_{K/b/c}a,b/C)
        \ar@{->}@/^0.00pc/[d]_{\pr_{\ell}}
        \ar@{->}@/^0.00pc/[l]^{\sigma_{K/b/c}\times\id^{n}}
\\
 	\locus(a/C)
&
 	\locus(\lambda_{K/b/c}a/C)
        \ar@{->}@/^0.00pc/[l]^{\sigma_{K/b/c}}
  	}
\end{align*}
\caption{Illustration of Proposition~\ref{prp:arbitrary_projections}}
\label{fig:1}
\end{figure}
\begin{proof}
Forgetting the matter of the $K$-rational points for a moment,
the projection
$\pr_{m}:\locus(a,b/C)\rightarrow\locus(a/C)$
(restricted from the coordinate projection $\bbA_{C}^{m+n}\rightarrow\bbA_{C}^{m}$)
is associated to the extension
of function fields
$C(a,b)/C(a)$.
Observe that if this extension is separable,
we may directly apply
Lemma~\ref{lem:separable_projections},
which gives the statement of the proposition, since in this case $\lambda_{K/b/c}a=a$.
In general,
at least we have that the extension
$C(\lambda_{K/b/c}a,b)/C(\lambda_{K/b/c}a)$
is always separable,
by
Lemma~\ref{lem:finite}
and
Definition~\ref{def:finite}.
This extension is associated to
the projection
$$\pr_{\ell}:\locus(\lambda_{K/b/c}a,b/C)\rightarrow\locus(\lambda_{K/b/c}a/C),$$
which is restricted from $\bbA_{C}^{\ell+n}\rightarrow\bbA_{C}^{\ell}$.
This projection, together with
the projection
$$\sigma_{K/b/c}:\locus(\lambda_{K/b/c}a/C)\rightarrow\locus(a/C),$$
itself restricted from $\bbA_{C}^{\ell}\rightarrow\bbA_{C}^{m}$,
naturally forms the commutative square
illustrated in Figure~\ref{fig:1}.
Of course the same diagram makes sense and commutes when we restrict our attention to the $K$-rational points.
Applying 
Lemma~\ref{lem:separable_projections}
to the right-hand side of the square,
we obtain the $\tau$-neighbourhoods
$U$, $V_{1}$, and $V_{2}$
such that
\begin{align*}
	\locus(\lambda_{K/b/c}a,b/C)(K)\cap(U\times V_{1}\times V_{2})
\end{align*}
is the graph of a continuous function
\begin{align*}
	f:(\locus(\lambda_{K/b/c}a/C)(K)\times K^{n_{1}})\cap(U\times V_{1})&\rightarrow V_{2}.
\end{align*}
Claim {\bf(i)}
follows by applying $\sigma_{K/b/c}\times\id^{n}$.
Next we observe that the image of 
$$\pr_{m}:\locus(a,b/C)(K)\rightarrow\locus(a/C)(K)$$
must contain the image of
$\locus(\lambda_{K/b/c}a,b/C)(K)$
under the composition
$\pr_{m}\circ(\sigma_{K/b/c}\times\id^{n})$.
Since the square of maps commutes,
$\pr_{m}(\locus(a,b/C)(K))$
contains the image of
$\locus(\lambda_{K/b/c}a/C)(K)$
under the composition
$\sigma_{K/b/c}\times\pr_{\ell}$.
By ``local surjectivity'',
the latter contains the image of
$\locus(\lambda_{K/b/c}a/C)\cap U$
under $\sigma_{K/b/c}$,
which proves {\bf(ii)}.
\end{proof}

\begin{proof}[{Proof of Theorem~\ref{thm:intro_3}}]
Let $X\subseteq K^{m}$ be a existentially $\Lring(C)$-definable,
as in the statement of the theorem.
By standard reductions in the first-order theory of fields,
$X$ is the projection onto $K^{m}$ of the set of $K$-rational points of an affine subvariety $V\subseteq\bbA_{C}^{m+n}$,
i.e.~$\pr_{m}V(K)=X$.
For $a\in X$, there exists $b\in K^{n}$ such that $(a,b)\in V(K)$,
and therefore
$\locus(a,b/C)\subseteq V$
and
$\pr_{m}\locus(a,b/C)(K)\subseteq\pr_{m}V(K)=X$.
By Proposition~\ref{prp:arbitrary_projections}~{\bf(ii)},
there exists a $\tau$-neighbourhood $U$
such that
$\pr_{m}\locus(a,b/C)(K)$
contains the image of
$\locus(\lambda_{K/b/c}a/C)(K)\cap U$
under the coordinate projection
$\sigma_{K/b/c}$.
\end{proof}

\begin{remark}\label{rem:strengths_and_shortcomings}
We briefly comment informally on the strengths and shortcomings of Proposition~\ref{prp:arbitrary_projections}.
It is clear that this result is not anything like as powerful as a true quantifier elimination result.
Indeed, quantifier elimination cannot possibly hold at this generality,
i.e.~in the theory of {\em all} henselian nontrivially valued fields of a fixed characteristic $p$.
Even relative to theories of value group and residue field, we do not have a complete theory,
for example by the counterexample developed in \cite{Kuh01}.
Nevertheless, the above Proposition shows that,
at least locally in the henselian topology,
around a sufficiently generic point
(since $(a,b)$ is a generic point of $\locus(a,b/C)$),
the image of a coordinate projection
on the $K$-rational points
is exactly the 
$K$-rational points of
a set defined by a conjunction of atomic $\Lringlambda$-formulas from the type of $a$ over $C$.
This may be seen as giving some kind of normal form for subsets of henselian valued fields that are defined by existential $\Lring$-formulas,
at least locally around a given point.
\end{remark}

In the present article, the principal application of Proposition~\ref{prp:arbitrary_projections} is the following theorem.

\begin{theorem}\label{thm:main_ish}
Let $K$ be a field equipped with a henselian topology $\tau$,
let $C\subseteq K$ be a subfield,
and
let $A\subseteq K$ be a subset defined by an existential $\Lring$-formula with parameters from $C$.
Exactly one of {\bf(i)} or {\bf(ii)} holds:
	\begin{enumerate}[{\bf(i)}]
		\item
		$A$ is a finite subset of the relative algebraic closure of $\Lambda_{K}C$ in $K$,
		\item
		$A$ is infinite and there is a definable injection 
		$g:U_{1}^{\circ}\rightarrow A$,
		perhaps involving extra parameters,
		where $U_{1}^{\circ}$ is a non-empty Zariski open subset of a $\tau$-neighbourhood $U_{1}$,
		such that each element of $U_{1}^{\circ}$ is interalgebraic with its image under $g$
		over the parameters.
	\end{enumerate}
	Moreover, if $C\subseteq K^{(p^{\infty})}$,
	then
	exactly one of
	{\bf(i)} and {\bf(ii')}
	holds
	where:
	\begin{enumerate}[{\bf(i')}] 
	\setcounter{enumi}{1}
		\item
		there exists $m<\omega$ such that $A$ contains the $K^{(p^{m})}$-points of a nonempty $\tau$-open set.
	\end{enumerate}
\end{theorem}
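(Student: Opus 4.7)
The plan is to apply Proposition~\ref{prp:arbitrary_projections} after suitable preparation. First I would replace $C$ by $\rmLambda_K C$: this makes $K/C$ separable, preserves the existential $\Lring(C)$-definability of $A$, and leaves the relative algebraic closure in {\bf(i)} unchanged since $\rmLambda_K\rmLambda_K C = \rmLambda_K C$. Fix $c \in \pB[C]$ and, by the standard reduction of existential formulas to projections of affine varieties, write $A = \pr_1 V(K)$ for some affine variety $V \subseteq \bbA_C^{1+n}$.

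The clauses {\bf(i)} and {\bf(ii)} are mutually exclusive via the finite/infinite split. If $A$ is finite, each element has finite $\mathrm{Aut}(K/C)$-orbit and is thus algebraic over $C \subseteq \rmLambda_K C$, giving {\bf(i)}. Otherwise $A$ is infinite, and I would select $a \in A$ with $a \notin \acl_K(\rmLambda_K C)$: such an $a$ must exist, for otherwise an irreducible-component analysis of $V$ combined with the Implicit Function Theorem (Fact~\ref{fact:MIFT}) would force $\pr_1 V$ to be zero-dimensional and $A$ finite. With witness $b \in K^n$ such that $(a,b) \in V(K)$, Proposition~\ref{prp:arbitrary_projections}~{\bf(ii)} yields a $\tau$-neighbourhood $U$ of $\lambda_{K/b/c}a$ with $\sigma_{K/b/c}(\locus(\lambda_{K/b/c}a/C)(K) \cap U) \subseteq A$.

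By Lemma~\ref{lem:p-algebra}~{\bf(i)} applied inductively along Definitions~\ref{def:little_lambda} and~\ref{def:finite}, each coordinate of $\lambda_{K/b/c}a$ beyond $a$ lies in $C(a)^{\perf}$; so $C(\lambda_{K/b/c}a)/C(a)$ is purely inseparable, and the coordinate projection $\sigma_{K/b/c}\colon\locus(\lambda_{K/b/c}a/C)\to\locus(a/C)$ is injective on $K$-points by uniqueness of $p$-power roots in characteristic $p$. Setting $U_1 := \locus(\lambda_{K/b/c}a/C)(K) \cap U$, $U_1^\circ := U_1$ (passed to the Zariski-open smooth locus if necessary), and $g := \sigma_{K/b/c}|_{U_1^\circ}$, one obtains the required definable injection into $A$; interalgebraicity of each $\lambda \in U_1^\circ$ with $g(\lambda)$ over $C$ is immediate since $\lambda \in C(g(\lambda))^{\perf}$.

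For the moreover clause, assume $C \subseteq K^{(p^\infty)}$. I would trace through Definition~\ref{def:little_lambda}, noting that each iteration introduces only $p^{-1}$-th roots of elements in prior $p$-spans, and that the hypothesis guarantees these roots lie in $K$. By Lemma~\ref{lem:finite}, the recursion depth to construct $\lambda_{K/b/c}a$ is bounded by some $n_2 \in \mathbb{N}$, so every coordinate of $\lambda_{K/b/c}a$ lies in $K \cap C(a)^{(p^{-m})}$ for $m := n_2$. Raising to the $p^m$-th power then identifies $\sigma_{K/b/c}(\locus(\lambda_{K/b/c}a/C)(K) \cap U)$ as containing all $K^{(p^m)}$-points of a nonempty $\tau$-open subset of $\bbA^1_K$, yielding {\bf(ii')}. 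The principal obstacles are the nonexistence claim in the infinite case (which uses the henselian topology essentially via the IFT to rule out a zero-dimensional $\pr_1 V$) and, for {\bf(ii')}, controlling the uniform bound $m$ and upgrading the open-image on the locus to a genuine $K^{(p^m)}$-statement on $A$.
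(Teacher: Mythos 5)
The proposal has a serious gap at its core step: the claim that ``each coordinate of $\lambda_{K/b/c}a$ beyond $a$ lies in $C(a)^{\perf}$'' is false, and Lemma~\ref{lem:p-algebra}~{\bf(i)} is being read in the wrong direction. That lemma expresses $a$ as a polynomial in $b$ and $\lambda^{b}(a)^{p}$; it does not put $\lambda^{b}(a)$ inside the perfect hull of $\bbF(a,b)$, let alone $C(a)$. In general the parameterized Lambda images are {\em not} purely inseparable over $C(a)$: take $K=\bbF_{p}(a_{1},c)$ with $p>2$, $a_{2}:=c^{p}a_{1}+c^{p^{2}}$, and $a=(a_{1},a_{2})$. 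Then $\lambda^{a_{1}}_{1}(a_{2})=c$, yet $c^{p}$ satisfies the separable quadratic $X^{2}+a_{1}X-a_{2}$ over $\bbF_{p}(a_{1},a_{2})$, so $c\notin\bbF_{p}(a_{1},a_{2})^{\perf}$. Consequently $C(\lambda_{K/b/c}a)/C(a)$ need not be purely inseparable, the coordinate projection $\sigma_{K/b/c}$ is not injective on $K$-points of the locus, and the proposed $g$ does not yield the required injection. The same flaw infects the ``moreover'' clause, whose reduction to $p$-power roots again presumes the perfect-hull containment.

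The paper's construction of $g$ is genuinely different and avoids this trap: it splits $\lambda_{K/b/c}a$ into a separating transcendence basis $e_{1}$ and a tail $e_{2}$, uses the Implicit Function Theorem to get a continuous $f$ with $(u,e_{1,2})\mapsto f(u,e_{1,2})$ on a neighbourhood, then defines $g(u)=\sigma_{K/b/c}(u,e_{1,2},f(u,e_{1,2}))$ with $u$ ranging over a $\tau$-neighbourhood $U_{1}$ of the single distinguished coordinate $e_{1,1}\in K$. This gives the right domain (a subset of $K$, not of $K^{\ell}$) and interalgebraicity is obtained over $E=D(e_{1,2})$ --- the ``extra parameters'' in the theorem --- not over $C$ alone as you assert. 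You also gloss over the step in which the paper passes to an $\aleph_{0}$-saturated $K^{*}$ to produce an element of $A^{*}$ transcendental over $D=\rmLambda_{K}C$ when $A$ is infinite; the ``irreducible-component analysis'' you invoke does not by itself rule out $A$ being an infinite subset of the relative algebraic closure of $D$, and this saturation argument is essential. Finally, the argument that finite $A$ consists of $C$-algebraic elements by ``finite $\mathrm{Aut}(K/C)$-orbit'' is not sound reasoning in an arbitrary field $K$; the paper obtains {\bf(i)} in the finite case only as the residue of the contrapositive (a transcendental element would force {\bf(ii)} and hence $A$ infinite).
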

\begin{proof}
By our hypotheses and the usual reductions in the first-order theory of fields,
there is a positive quantifier-free $\Lring$-formula $\varphi(x,y,z)$ 
and a $z$-tuple $c_{0}\subseteq C$ of parameters such that
the formula
$\exists y\;\varphi(x,y,c_{0})$
defines the set $A$ in $K$.
Denote $D:=\Lambda_{K}C$.
As a special case, we first suppose that 
there exists $a\in A$ that is transcendental over $D$.
There exists $b\in K^{n}$
such that
$K\models\varphi(a,b,c_{0})$.
It follows that
$\pr_{1}(\locus(a,b/D)(K))\subseteq A$.
Let $c\in\pB[D]$
and
observe that $\lambda_{K/b/c}a$ is a finite tuple,
by Theorem~\ref{thm:intro_1}~{\bf(iii)}.
Decompose
$\lambda_{K/b/c}a=\concat{{e_{1}}}{{e_{2}}}$
by choosing $e_{1}$ to be a separating transcedence basis of $D(\lambda_{K/b/c}a)/D$,
by Lemma~\ref{lem:Mac_Lane_I}~{\bf(iv)}.
By Lemma~\ref{lem:separable_projections},
there are $\tau$-neighbourhoods $U,V$
such that
\begin{align*}
	\locus(\lambda_{K/b/c}a/D)\cap(U\times V)
\end{align*}
is the graph of a continuous function
\begin{align*}
	f:\locus(e_{1}/D)\cap U\rightarrow V.
\end{align*}
We now repeat an argument used in
\cite[Lemma 22]{A19}:
Since $e_{1}$ is algebraically independent over $C$, and $a$ is algebraically dependent on $D(e_{1})$ but transcendental over $D$,
there exists a singleton $e_{1,1}\in e_{1}$
such that
$e_{1,1}$ and $a$ are interalgebraic over $E:=D(e_{1,2})$,
where
$e_{1,2}:=e_{1}\setminus\{e_{1,1}\}$.
By reordering $e_{1}$ if necessary
we may even suppose
$e_{1}=\concat{{e_{1,1}}}{{e_{1,2}}}$.
Let $N\in\mathbb{N}$ be such that the interalgebraicity of $e_{1,1}$ and $a$ over $E$ is witnessed by a polynomial $h\in E[X,Y]$ of degree at most $N$
with coefficients consisting of $\Lringlambda$-terms in $c_{0}\cup e_{1,2}$
of complexity at most $N$ (for any reasonable notion of the complexity of terms).
Let $U_{1}$ be a $\tau$-neighbourhood of $e_{1,1}$
and $U_{2}$ be a $\tau$-neighbourhood of $e_{1,2}$,
chosen such that $U_{1}\times U_{2}\subseteq U$.
Then $f$ restricts to a continuous function
\begin{align*}
	\locus(e_{1,1},e_{1,2}/D)\cap(U_{1}\times U_{2})\rightarrow V.
\end{align*}
It follows that for the continuous map
\begin{align*}
	g:U_{1}&\rightarrow\pr_{1}(\locus(a,b/E))\\
	u&\mapsto\sigma_{K/b/c}(u,e_{1,2},f(u,e_{1,2})),
\end{align*}
whenever $u\in U_{1}$ is transcendental over $E$,
$E(u,g(u))$ is isomorphic to $E(e_{1,1},a)$,
over $E$, via $(u,g(u))\mapsto(e_{1,1},a)$.
In particular $u$ and $g(u)\in\pr_{x}(\locus(a,b/C))\subseteq A$
are interalgebraic over $E$, and this is also witnessed by the polynomial $h$.
When restricted to the subset of $U_{1}$ consisting of those $u$ trancendental over $E$, $g$ is even a bijection.
By compactness both the bijectivity and the interalgebraicity hold for a Zariski-open subset $U_{1}^{\circ}$ of $U_{1}$.
In particular, since $K$ is infinite, so is the image of $g$.
This proves that {\bf(ii)} holds in this special case.

Next suppose simply that $A$ is infinite.
Then passing to an $\aleph_{0}$-saturated extension $K^{*}\succeq K$, the set $A^{*}$ defined in $K^{*}$ by $\exists y\;\varphi(x,y,c_{0})$
contains an element $a$ that is transcendental over $D$.
We apply the argument of the previous paragraph to $A^{*}$ in $K^{*}$,
and observe that the conclusion is elementary, thus also holds for $A$ in $K$.
This proves that {\bf(ii)} holds.
If $A$ is both finite and contains no elements transcendental over $D$, then {\bf(i)} certainly holds.
This proves the dichotomy.

Finally, we suppose $C\subseteq K^{(p^{\infty})}$,
whence $D=C^{\perf}\subseteq K^{(p^{\infty})}\subseteq(K^{*})^{(p^{\infty})}$.
Observe that
$\Lambda_{K^{*}}D(a)=D(a^{p^{-m_{1}}})$,
where $m_{1}$ is the unique natural number such that $a\in(K^{*})^{(p^{m_{1}})}\setminus(K^{*})^{(p^{m_{1}+1})}$,
or is infinity if $a\in K^{(p^{\infty})}$.
It follows that there is a (possibly different) natural number $m$ such that $b$ is separable over
$D(a^{p^{-m}})$.
By Lemma~\ref{lem:separable_projections},
there exists a $\tau$-neighbourhood $U_{1}$ of $a^{p^{-m}}$ such that
$\locus(a^{p^{-m}}/D)(K)\cap U_{1}\subseteq\pr_{1}(\locus(a^{p^{-m}},b/D))$.
Since we have supposed $a$ to be transcendental over $D$,
$\locus(a^{p^{-m}}/D)(K)=K$,
and therefore it follows that
$U_{1}^{(p^{m})}\subseteq\pr_{1}(\locus(a,b/D)(K))\subseteq X$.
Finally we note that $a\in U_{1}^{(p^{m})}$
and there is a $\tau$-neighbourhood $U_{1}'$ of $a$ such that
$U_{1}'\cap K^{(p^{m})}\subseteq U_{1}^{(p^{m})}$,
which proves that {\bf(ii')} holds.
\end{proof}

We denote the
existential $\Lring$-algebraic closure
(in the model-theoretic sense)
of $A\subseteq K$
by $\acl_{\exists}^{K}A$.
This is the union of those finite subsets of $K$ that are definable by an existential $\Lring$-formula,
with parameters from $A$.
Similarly, we denote the
existential $\Lring$-definable closure
(again in the model-theoretic sense)
by $\dcl_{\exists}^{K}A$.

\begin{corollary}\label{cor:intro_bII}
Let $K$ be a field with a henselian topology $\tau$,
and let $B\subseteq K$ be a subset.
Then
\begin{enumerate}[{\bf(i)}]
\item
$\dcl_{\exists}^{K}(B)$
contains $\rmLambda_{K}\bbF(B)$
and is contained in 
the (field theoretic) relative algebraic closure of $\rmLambda_{K}\bbF(B)$ in $K$,
and
\item
$\acl^{K}_{\exists}(B)$
is equal to 
the (field theoretic) relative algebraic closure of $\rmLambda_{K}\bbF(B)$ in $K$.
\end{enumerate}
\end{corollary}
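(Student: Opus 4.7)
The plan is to combine the iterative description of $\rmLambda_{K}\bbF(B)$ from Lemma~\ref{lem:big_lambda} with the dichotomy of Theorem~\ref{thm:main_ish}, reducing both parts of the corollary to short definability arguments.

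For the inclusion $\rmLambda_{K}\bbF(B)\subseteq\dcl_{\exists}^{K}(B)$ in part~(i), the key observation I would record first is that each parameterized Lambda function is existentially $\Lring$-definable: for $b=(b_{0},\ldots,b_{n-1})\in\pI[K]$ and $a\in\pspan[K]{b}$, by Lemma~\ref{lem:p_linearity} the value $\lambda_{I}^{b}(a)$ is the unique $y\in K$ witnessing
$$\exists\,(y_{J})_{J\in p^{[n]}}\;:\;a=\sum_{J\in p^{[n]}}b^{J}y_{J}^{p}\;\wedge\;y_{I}=y$$
(uniqueness of the $y_{J}$ uses injectivity of Frobenius on $K$), so $\lambda^{b}_{I}(a)\in\dcl_{\exists}^{K}(\{a\}\cup b)$. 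Since $\bbF(B)\subseteq\dcl_{\exists}^{K}(B)$ (as $\dcl_{\exists}^{K}$ is closed under atomic $\Lring$-definable functions), a straightforward induction on $n$ using Lemma~\ref{lem:big_lambda} then yields $\rmLambda_{K}^{n}\bbF(B)\subseteq\dcl_{\exists}^{K}(B)$ for each $n$, whence $\rmLambda_{K}\bbF(B)=\bigcup_{n<\omega}\rmLambda_{K}^{n}\bbF(B)\subseteq\dcl_{\exists}^{K}(B)$.

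For the remaining inclusion in~(i) and the first inclusion in~(ii), I would apply Theorem~\ref{thm:main_ish}: any $\alpha\in\acl_{\exists}^{K}(B)$ lies in a finite subset of $K$ that is existentially $\Lring(\bbF(B))$-definable, so the infinite case~{\bf(ii)} of the dichotomy cannot apply, forcing $\alpha$ to lie in the relative algebraic closure of $\rmLambda_{K}\bbF(B)$ in $K$; in particular this covers $\alpha\in\dcl_{\exists}^{K}(B)$ by taking the relevant finite set to be $\{\alpha\}$. For the converse inclusion in~(ii), given $\alpha\in K$ algebraic over $\rmLambda_{K}\bbF(B)$ with minimal polynomial $X^{n}+c_{n-1}X^{n-1}+\ldots+c_{0}$, each coefficient $c_{i}\in\rmLambda_{K}\bbF(B)$ is by part~(i) uniquely existentially $\Lring(B)$-definable, say by a formula $\psi_{i}(y;\bar{b}_{i})$ with $\bar{b}_{i}$ from $B$. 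Then
$$\exists y_{0},\ldots,y_{n-1}\;:\;\bigwedge_{i<n}\psi_{i}(y_{i};\bar{b}_{i})\;\wedge\;x^{n}+\sum_{i<n}y_{i}x^{i}=0$$
is an existential $\Lring(B)$-formula defining exactly the finite set of roots of the minimal polynomial of $\alpha$, so $\alpha\in\acl_{\exists}^{K}(B)$.

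Since the substantive content is packaged in Theorem~\ref{thm:main_ish}, the argument is essentially bookkeeping. The only genuine subtlety is the existential definability of the Lambda functions, which rests on uniqueness in the $p$-basis expansion (Lemma~\ref{lem:p_linearity}) together with injectivity of Frobenius; I would also note that no separability hypothesis on $K/\bbF(B)$ is needed, since Theorem~\ref{thm:main_ish} is stated for an arbitrary subfield of $K$.
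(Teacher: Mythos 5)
Your proof is correct and follows essentially the same route as the paper's: both directions rest on Theorem~\ref{thm:main_ish} for the containment in the relative algebraic closure, and on the existential $\Lring$-definability of the Lambda closure (together with transitivity of $\dcl_{\exists}$) for the reverse inclusions. The only cosmetic difference is that the paper cites Theorem~\ref{thm:intro_1} for $\rmLambda_{K}\bbF(B)\subseteq\dcl_{\exists}^{K}(B)$, whereas you unpack the needed existential definability directly from Lemma~\ref{lem:p_linearity} and Lemma~\ref{lem:big_lambda}, and you instantiate the paper's abstract ``finite existentially $C$-definable set'' by the root set of the minimal polynomial.
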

\begin{proof}
In any field $F$, with $B\subseteq F$, we have $\rmLambda_{F}\bbF(B)\subseteq\dcl_{\exists}^{F}(B)$,
for example by Theorem~\ref{thm:intro_1}, so in particular this holds for $F=K$.
Now, let $A\subseteq K$ be a subset defined by an existential $\Lring$-formula with parameters from $C:=\Lambda_{K}\bbF(B)$.
If any element of $A$ is transcendental over $C$ then $A$ is infinite, by Theorem~\ref{thm:main_ish}.
In particular, $\acl_{\exists}^{K}(B)$ is a subset of the relative algebraic closure of $C$ in $K$,
proving {\bf(i)}.
Conversely, every element $a$ of the relative algebraic closure of $C$ in $K$ is contained in a finite set $A$ that is definable over $C$
by an existential $\Lring$-formula $\varphi(x,c)$, with parameters $c$ from $C$.
Since $C$ itself is a subset of $\dcl_{\exists}^{K}(B)$,
as we have already established,
both $\varphi$ and $c$ may be replaced by another existential $\Lring$-formula $\psi(x,b)$ with parameters $b$ from $B$,
such that $A$ is defined by $\psi(x,b)$ in $K$.
This proves {\bf(ii)}.
\end{proof}

Corollary~\ref{cor:intro_b} follows immediately.

\section{Separably tame valued fields}
\label{section:STVF}

The main aim of this section is to
extend the account of separably tame valued fields,
as developed by Kuhlmann and Pal
(\cite{KuhlmannPal})
to allow infinite imperfection degree.
For this we will make (rather mild) use of 
Lambda closure $\rmLambda_{F}$
from section~\ref{section:Lambda}.
Let
$\Lring=\{+,\times,-,0,1\}$
be the language of rings,
let $\Loag$ be the language of ordered abelian groups,
and let
$\Lval$
be the three-sorted language of valued fields
with sorts $\mathbf{K}$, $\mathbf{k}$, and $\bfGamma$.
The first two are endowed with $\Lring$, and the last with $\Loag$,
moreover there is a symbol for the valuation map from $\mathbf{K}$ to $\bfGamma$,
and for the residue map from $\mathbf{K}$ to $\mathbf{k}$.

\begin{remark}\label{rem:notation}
In this section we write $K,K_{i},\ldots,$ etc.,
for expansions of valued fields.
The valuation will be usually be denoted by $v$,
with subscripts or other decorations used to indicate to which valued field the valuation belongs,
e.g.~$v_{i}$ is the valuation from $K_{i}$.
Likewise $\Gamma_{i}=v_{i}K_{i}$ and $k_{i}=K_{i}v_{i}$ are the value group and residue field, respectively, of $K_{i}$.
\end{remark}

In the present section,
we will follow a convention that differs from that of section~\ref{section:Lambda},
in which $p$ played the role of the characteristic exponent.
From now on, we are concerned with valued fields $K$ of equal characteristic,
and $p$ will always represent the characteristic (of both $K$ and of its residue field $k$), and $\hat{p}$ the corresponding characteristic exponent:

\begin{convention}\label{convention:characteristic_exponent}
For $p\in\mathbb{P}\cup\{0\}$, we let $\hat{p}=p$ if $p\in\mathbb{P}$, and $\hat{p}=1$ if $p=0$.
\end{convention}

\begin{remark}\label{rem:alternative_languages}
When formalizing valued fields in model theory,
we have the usual choice of alternative languages.
Instead of $\Lval$,
we might use
a one-sorted language
$\Lval^{1}:=\Lring\cup\{O\}$, where $O$ is a unary predicate symbol, intended to be interpreted by the valuation ring;
or we might use a two-sorted language $\Lval^{2}$ with sorts $\mathbf{K}$ and $\bfGamma$, and with a function symbol from $\mathbf{K}$ to $\bfGamma$.
For the results of this paper, the precise choice of language of valued fields does not matter.
For example, both Theorem~\ref{thm:intro_2} and Corollary~\ref{cor:intro_a}
remain true when replacing $\Lval$ by another language $\Lang$,
provided that the $\Lval$- and $\Lang$-structures are biinterpretable,
and that the interpretations of both the value group and residue field are by both existential and universal formulas.
This latter condition ensures that, for example, existential $\Lring$-sentences in the theory of the residue field are interpreted by existential sentences in the $\Lang$-theory of the valued field.
In particular, these conditions hold for the languages $\Lval^{1}$ and $\Lval^{2}$.
\end{remark}

We denote by
$\Lvlambda=\Lval\cup\Llambda$
the expansion of $\Lval$ by symbols for the parameterized lambda functions,
uniform across all characteristics,
as introduced in~\ref{section:lambda_language}.
For an expansion $\Lang\supseteq\Lval$,
any $\Lang$-theory $T$ of valued fields is in particular an expansion of an $\Lring$-theory of fields,
thus $T_{\lambda}$ denotes its natural $(\Lang\cup\Llambda)$-expansion,
as described in Definition~\ref{def:theories}.

\begin{fact}\label{fact:separable_maps_III}
The analogue of Facts~\ref{fact:separable_maps} and~\ref{fact:separable_maps_II} applies to embeddings of valued fields:
that is,
each valued field $F$ admits a natural expansion $\tilde{F}\models\Th(F)_{\lambda}$ to an $\Lvlambda$-structure,
and an $\Lval$-embedding between $F_{1}$ and $F_{2}$
is in fact
an $\Lvlambda$-embedding between $\tilde{F}_{1}$ and $\tilde{F}_{2}$ 
if and only if it is separable, as an embedding of fields.
\end{fact}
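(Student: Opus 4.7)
The plan is to reduce everything to Fact~\ref{fact:separable_maps_II}, observing that the additional sorted structure of a valued field does not interact with the Lambda symbols, which live purely on the main field sort $\mathbf{K}$. First, given a valued field $F$, let $K$ denote its $\mathbf{K}$-sort. By Fact~\ref{fact:separable_maps_II}, $K$ admits a unique natural expansion $\tilde{K}$ to an $\Lringlambda$-structure that is a model of $\Th(K)_{\lambda}$, and the interpretations of the function symbols $l_{p,I}$ are $\Lring$-definable in $K$. Define $\tilde{F}$ to be the $\Lvlambda$-structure whose $\mathbf{K}$-sort is $\tilde{K}$, whose $\mathbf{k}$- and $\bfGamma$-sorts (together with the valuation and residue maps) are exactly those of $F$. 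Since the axioms of $\Th(F)_{\lambda}$ that involve the new symbols only concern their interpretation on $\mathbf{K}$, and these are automatically satisfied by construction, we have $\tilde{F}\models\Th(F)_{\lambda}$. Uniqueness is immediate from the uniqueness clause in Fact~\ref{fact:separable_maps_II}.

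For the equivalence, let $\varphi\colon F_{1}\rightarrow F_{2}$ be an $\Lval$-embedding, with $\mathbf{K}$-sort component $\varphi_{K}\colon K_{1}\rightarrow K_{2}$. Since the signature of $\Llambda$ consists only of function symbols on the $\mathbf{K}$-sort, $\varphi$ is an $\Lvlambda$-embedding between $\tilde{F}_{1}$ and $\tilde{F}_{2}$ if and only if $\varphi_{K}$ is an $\Lringlambda$-embedding between $\tilde{K}_{1}$ and $\tilde{K}_{2}$; the compatibility of $\varphi$ with the valuation, residue map, and the $\mathbf{k}$- and $\bfGamma$-sorts is handled entirely by the assumption that $\varphi$ is an $\Lval$-embedding. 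Applying Fact~\ref{fact:separable_maps_II} to $\varphi_{K}$, this latter condition is equivalent to $\varphi_{K}$ being separable as a field embedding, which by definition is exactly the separability of $\varphi$ as an embedding of valued fields.

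There is essentially no obstacle: the argument is a routine verification that the new structure is sort-pure and that the underlying field reduct inherits all the relevant properties from the previously established facts. The only subtlety to watch is the case $p=0$, where the lambda symbols are interpreted as constantly zero and every field embedding is automatically separable; here both sides of the equivalence hold trivially, consistent with Definition~\ref{def:language}.
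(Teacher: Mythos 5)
Your proof is correct, and it spells out precisely the routine verification that the paper intends but does not write down (the statement is labelled a Fact and no proof is given, the word ``analogue'' signalling that the argument is the same as for Facts~\ref{fact:separable_maps} and~\ref{fact:separable_maps_II}). The key observation---that the signature of $\Llambda$ lives entirely on the home sort $\mathbf{K}$, so that an $\Lval$-embedding is an $\Lvlambda$-embedding of the natural expansions if and only if its $\mathbf{K}$-sort restriction is an $\Lringlambda$-embedding---is exactly the right reduction, and the appeal to the earlier Facts for the uniqueness of $\tilde{K}$, the $\Lring$-definability of the extra structure, and the separability equivalence is what the author relies on. The remark about characteristic zero, where every $l_{p,I}$ is interpreted as the zero function and every embedding is vacuously separable, correctly handles the uniform-in-$p$ formulation of Definition~\ref{def:language}.

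One small presentational point: you write ``Applying Fact~\ref{fact:separable_maps_II} to $\varphi_K$'' for the separability equivalence, but Fact~\ref{fact:separable_maps_II} as literally stated only asserts the existence and definability of the expansion; the embedding/separability equivalence is stated explicitly only in Fact~\ref{fact:separable_maps} (for $\LringlambdaP$) and carried to $\Llambda$ by the ``analogue'' clause of Fact~\ref{fact:separable_maps_II}. It would be cleaner to cite Fact~\ref{fact:separable_maps} directly for that step, or to make explicit that you are using the analogical reading of Fact~\ref{fact:separable_maps_II}.
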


\begin{definition}\label{def:STVF}
A valued field
$K$
is {\em separably tame}
if it is separably defectless,
has perfect residue field,
and $\hat{p}$-divisible value group.
Let
$\STVF$
be the $\Lval$-theory
of separably tame valued fields.
For $p\in\mathbb{P}\cup\{0\}$,
we let
$\STVF_{p}:=\STVF\cup\Xth_{p}$
be the theory of separably tame valued fields of equal characteristic $p$.
For $(p,\frakI)\in\mathbb{P}\times(\mathbb{N}\cup\{\infty\})$,
we let
$\STVF_{p,\frakI}:=\STVF_{p}\cup\Xth_{p,\frakI}$
be the theory $\STVF_{p}$ extended by axioms
for the elementary imperfection degree to be $\frakI$.
To any of these theories
the superscript "$^{\eq}$" will indicate the addition of axioms to ensure that the valued field is of equal characteristic,
though of course in the case of positive characteristic, which is our main fare, equal characteristic is automatic.
\end{definition}

For example,
$\STVF_{0}^{\eq}$
is the $\Lval$-theory of separably tame valued fields of equal characteristic zero
(which are in fact automatically tame).
We recall the following theorem.

\begin{theorem}[{\cite[Theorem 1.2]{KuhlmannPal}}]
\theoremintroKP
\end{theorem}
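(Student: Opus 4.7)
The plan is to derive all three assertions from a single Embedding Lemma, following the standard Ax--Kochen/Ershov template. The version I would aim for reads: \emph{let $(K,v),(L,w) \models \STVF_{p,\fraki}$ with $(L,w)$ sufficiently saturated (say $|K|^+$-saturated), let $(E,u) \subseteq (K,v) \cap (L,w)$ be a common separably tame substructure, and suppose $Eu \preceq Lw$ in $\Lring$ and $uE \preceq wL$ in $\Loag$; then the inclusion $(E,u) \hookrightarrow (L,w)$ extends to an $\Lval$-embedding $(K,v) \hookrightarrow (L,w)$ over $(E,u)$, and this embedding is moreover an $\LQ$-embedding}. From such a lemma the three conclusions follow by standard manipulations: $\AKE^{\equiv}$ in $\Lval$ comes from a two-sided back-and-forth between saturated models whose residue fields and value groups have been arranged to be isomorphic; $\AKE^{\prec}$ in $\LQ$ comes from applying the Embedding Lemma with $E = K$ to a saturated elementary extension of $(L,w)$; and $\AKE^{\exists}$ in $\LQ$ is obtained similarly but only demanding preservation of quantifier-free types.

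To prove the Embedding Lemma I would extend the partial embedding one element at a time, by recursion along a well-ordering of $K$, dividing into the four classical cases of one-variable valued-field extensions. For a value-transcendental step, match $v(a)$ inside $wL$ using $\preceq$-elementarity of the value groups and pick any lift. For a residue-transcendental step, match the residue $\overline{a} \in Kv$ inside $Lw$ using $\preceq$-elementarity of the residue fields. For an immediate transcendental step, associate to $a$ a pseudo-Cauchy sequence over the current image field and use saturation of $(L,w)$ together with separable-algebraic maximality (a consequence of separable defectlessness and henselianity) to find a pseudo-limit. For an algebraic step, separable-algebraic immediate extensions are trivial by separable defectlessness, value-algebraic and residue-algebraic separable extensions reduce to Hensel's lemma, and purely inseparable extensions are pinned down by $\hat{p}$-divisibility of the value group and perfection of the residue field. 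The $\LQ$-part is maintained throughout by tracking a $p$-basis of the current image subfield, whose cardinality is uniformly bounded by $\fraki$.

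The main obstacle I expect is the last case, purely inseparable extensions combined with preservation of the $Q_n$ predicates. A purely inseparable extension of height one either genuinely enlarges the $p$-closure of the current base (in which case the new generator must be added to its $p$-basis) or is absorbed into the existing $p$-closure. To match the first possibility on the $L$-side one uses the global invariant $\Impdeg(K) = \Impdeg(L) = \fraki$ together with perfection of $Lw$ and $\hat{p}$-divisibility of $wL$ to produce a canonical $\hat{p}$-th root of $\varphi(a^{\hat{p}})$ in $L$ with the required valuation and residue; separable defectlessness is what makes this root both exist and be unique relative to the embedding already constructed. This rigidity is the essential algebraic ingredient beyond the tame setting, and is precisely why $\LQ$ (rather than $\Lval$) is needed to reach $\AKE^{\prec}$ and $\AKE^{\exists}$, whereas plain $\Lval$ already suffices for $\AKE^{\equiv}$.
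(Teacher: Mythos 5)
The statement you are asked to prove is a citation, not a result proven in the paper; Anscombe cites Kuhlmann--Pal's Theorem~1.2 and then proves a strengthening of it (Theorem~\ref{thm:EP} and the results of Section~\ref{section:resplendent_STVF}). So the natural comparison is against the proof of Theorem~\ref{thm:EP}, which follows Kuhlmann--Pal closely. Your global plan --- an Embedding Lemma that simultaneously produces an $\Lval$- and $\LQ$-embedding, plus the standard saturation/back-and-forth manipulations to deduce the three AKE principles --- is exactly the right shape, and your observation that $\LQ$ is needed precisely to control the $p$-independence structure when proving $\AKE^{\prec}$ and $\AKE^{\exists}$ (while $\Lval$ alone suffices for $\AKE^{\equiv}$) is correct.

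The gap is in the proposed proof of the Embedding Lemma. An element-by-element transfinite recursion with the four classical cases (value-transcendental, residue-transcendental, immediate-transcendental, algebraic) does not work in equal positive characteristic: the obstruction is transcendence defect. When you adjoin a transcendental $a$ to a positive-characteristic valued field, the extension $K(a)^{h}/K$ need not split cleanly into one of your four cases; the value group and residue field may gain nothing while the extension remains transcendental, yet the resulting immediate extension is not controlled by a single pseudo-Cauchy sequence over the current base, and naive choices of $a$ can introduce defect that you cannot undo later. The Kuhlmann--Pal argument (and the proof of Theorem~\ref{thm:EP}) avoids this by a global decomposition: first pass to $K_{0}$, the relative algebraic closure in $K_{1}$ of $K$ together with the images of a section and a cross-section, which is \emph{without transcendence defect}, and embed it using \emph{Strong Inertial Generation} (Theorem~\ref{thm:SIG}); only then is the remaining extension $K_{1}/K_{0}$ immediate, and that is handled by \emph{Henselian Rationality} (Theorem~\ref{thm:HR}) to reduce to one-generator extensions, followed by Kaplansky's theory of pseudo-Cauchy sequences of transcendental type, using separable algebraic maximality of $K_{0}$. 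Your sketch names neither of these two structural theorems nor the $K_{0}$-decomposition, and without them the recursion would stall at an immediate transcendental step that lies inside an extension with transcendence defect. (The $p$-basis bookkeeping you describe for the $\LQ$-part is also done at a specific stage of the decomposition, not uniformly throughout; in the paper's strengthened Theorem~\ref{thm:EP} this is where the ``wiggling'' Lemma~\ref{lem:wiggling} is used.)
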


First, a small detail:
we prefer to write $\AKE^{\preceq_{\exists}}$ where Kuhlmann and Kuhlmann--Pal write $\AKE^{\exists}$ because we wish to include principles like $\AKE^{\equiv_{\exists}}$, and the earlier notation risks ambiguity.

We extend this theorem by strengthening the underlying embedding lemma
from \cite{KuhlmannPal},
which is closely based on the one from \cite{Kuh16}.
The AKE principles may then be stated uniformly for the class of separably tame valued fields of equal characteristic.
In particular, this extends the known Ax--Kochen/Ershov phenomena to the case of infinite imperfection degree.

\begin{remark}\label{rem:SCVF}
Let
$\SCVF$
be the theory of
separably closed
valued fields,
in the language
$\Lval$
of valued fields.
It is known since work of Delon
(e.g.~\cite{Delon82})
that the completions of $\SCVF$ are
$\SCVF_{0}$
and
$\SCVF_{p,\frakI}$,
for $(p,\frakI)\in\mathbb{P}\times(\mathbb{N}\cup\{\infty\})$.
Indeed, 
Hong showed in \cite{Hong}
that
$\SCVF$ has QE in $\Lvlambda$, for $p>0$.
\end{remark}

The following two theorems, due to Kuhlmann and Knaf--Kuhlmann, are the most powerful ingredients of the Embedding Lemma, in all of its forms:
those from \cite{Kuh16,KuhlmannPal} and Theorem~\ref{thm:EP}.

\begin{theorem}[{Strong Inertial Generation, {\cite[Theorem 3.4]{KK}, \cite[Theorem 1.9]{Kuh16}}}]\label{thm:SIG}
Let $L/K$ be a function field without transcendence defect, where $K$ is defectless.
Suppose also that $Lv/Kv$ is separable and $vL/vK$ is torsion free.
Then $L/K$ is strongly inertially generated.
\end{theorem}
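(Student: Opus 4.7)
The plan is to present $L$ as $K(\mathbf{x}, \mathbf{y}, \eta)$, where $\mathbf{x} = (x_{1}, \ldots, x_{m})$ lifts a maximal $\mathbb{Z}$-independent family in $vL/vK$, the tuple $\mathbf{y} = (y_{1}, \ldots, y_{n})$ has residues $\overline{\mathbf{y}}$ forming a separating transcendence basis of $Lv/Kv$, and $\eta$ generates an inertial finite extension of $F := K(\mathbf{x}, \mathbf{y})$. The inertial part is then controlled by Hensel's lemma, and the whole argument hinges on checking that no defect appears in the intermediate field $F$ nor in the finite extension $L/F$.

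First I would reduce to the henselian case by replacing $K$ with its henselization inside a chosen henselization of $L$: defectlessness, no transcendence defect, separability of the residue extension, and torsion-freeness of the value-group extension all persist, and the henselization does not alter $vK$ or $Kv$. Then I would apply Mac Lane's criterion (Lemma~\ref{lem:Mac_Lane_I}) to the separable extension $Lv/Kv$ to produce $\overline{\mathbf{y}}$, lift it to a tuple $\mathbf{y}$ in the valuation ring of $L$, and simultaneously pick $\mathbf{x}$ whose values form a $\mathbb{Z}$-basis of the maximal torsion-free quotient of $vL/vK$. The no-transcendence-defect hypothesis forces $m + n = \mathrm{trdeg}(L/K)$, so $\mathbf{x} \cup \mathbf{y}$ is a transcendence basis of $L/K$, and the classical Abhyankar-type calculation on $F$ then identifies $vF = vK \oplus \bigoplus_{i} \mathbb{Z} v x_{i}$ and $Fv = Kv(\overline{\mathbf{y}})$.

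Next, defectlessness of $K$, combined with the torsion-free and separable quotients, propagates to $F$, so the finite algebraic extension $L/F$ is itself defectless. The fundamental equality $[L:F] = (vL:vF)[Lv:Fv]$ in conjunction with $vL/vF$ torsion-free and $Lv/Fv$ separable forces $vL = vF$ and $[L:F] = [Lv:Fv]$; hence $L/F$ is inertial. The primitive element theorem produces $\overline{\eta} \in Lv$ with $Lv = Fv(\overline{\eta})$, and Hensel's lemma in the henselian $F$ lifts $\overline{\eta}$ along a separable lift of its minimal polynomial to an element $\eta \in L$ generating $L$ over $F$, completing the strongly inertial decomposition $L = K(\mathbf{x}, \mathbf{y}, \eta)$.

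The hard part, and the main obstacle, is the propagation of defectlessness from $K$ to $F$: for extensions with transcendental standard bases this is not automatic, and controlling the absence of defect in the mixed piece $F/K$ is the technical heart of the argument, typically handled by analysing immediate subextensions and the stability behaviour of defectless fields under adjoining standard transcendence bases, in the spirit of~\cite{KK}. Without this step, the separability and torsion-freeness hypotheses alone cannot force the clean fundamental equality that underlies strong inertial generation.
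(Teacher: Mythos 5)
This theorem is cited by the paper as \cite[Theorem 3.4]{KK} and \cite[Theorem 1.9]{Kuh16}; the paper itself provides no proof, so there is no ``paper's own proof'' to compare against directly. Judged against what is actually done in those references, your outline reproduces the standard architecture --- henselize, split off a standard valuation transcendence basis, invoke a stability result to preserve defectlessness, then finish the finite part by Hensel's lemma --- and you correctly flag the stability step as the crux. A few remarks.

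First, there is an internal inconsistency in how $\mathbf{x}$ is chosen. You first say the $vx_{i}$ form ``a maximal $\mathbb{Z}$-independent family in $vL/vK$'' and later say they form ``a $\mathbb{Z}$-basis of the maximal torsion-free quotient.'' These are not the same thing, and only the latter works: a maximal rationally independent family gives $(vL:vF) < \infty$ but not $vL = vF$, and $vL/vF$ need \emph{not} inherit torsion-freeness from $vL/vK$ (consider $0 \subset 2\mathbb{Z} \subset \mathbb{Z}$). What saves you is that $vL/vK$ is a finitely generated torsion-free, hence free, abelian group, so you may and must choose the $vx_{i}$ to be an actual $\mathbb{Z}$-basis of $vL/vK$; then the Abhyankar computation gives $vF = vL$ directly, and there is no need for the unsupported ``$vL/vF$ torsion-free'' claim.

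Second, the step you correctly identify as the ``technical heart'' --- that $F = K(\mathbf{x},\mathbf{y})$ is again a defectless field, so that the finite extension $L/F$ satisfies the fundamental equality --- is not something you argue; it is precisely Kuhlmann's Generalized Stability Theorem (the main technical result of \cite{Kuh16}, with an earlier version in \cite{KK}), and it is the deepest input. A sketch that cites this is acceptable as an outline, but you should name it explicitly rather than gesturing at ``analysing immediate subextensions.''

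Third, the conclusion ``strongly inertially generated'' is a precise technical notion: beyond $L$ lying in the absolute inertia field of $K(\mathcal{T})^{h}$, one needs a generator $\eta \in \mathcal{O}_{L}$ such that the reduction of its minimal polynomial over $K(\mathcal{T})^{h}$ is the minimal polynomial of $\eta v$ over $K(\mathcal{T})v$, so that the inertial extension is realized in the specified clean form. Your Hensel's lemma step gestures at this but does not state the required reduction-compatibility, and without it you have only shown that $L/F$ is inertial (unramified), which is the weaker ``inertially generated'' conclusion.

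Finally, reducing to the henselian case by ``replacing $K$ with its henselization'' takes you out of the class of function fields; this can be repaired, since the statements about values and residues descend along henselization, but it should be said rather than left implicit. In sum: the skeleton is right and matches the cited proofs, but the sketch silently appeals to the Stability Theorem, blurs the distinction between a maximal independent family and a basis, and does not verify the ``strong'' part of strong inertial generation.
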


\begin{theorem}[{Henselian Rationality, {\cite[Theorem 1.10]{Kuh16}}}]\label{thm:HR}
Let $L/K$ be an immediate function field (a finitely generated and regular extension) of dimension $1$,
where $K$ is separably tame.
Then $L\subseteq K(b)^{h}$ for some $b\in L^{h}$.
\end{theorem}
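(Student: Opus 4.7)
The plan is to produce the generator $b \in L^h$ by working in henselizations throughout and exploiting the theory of pseudo Cauchy sequences. Since $L/K$ is regular, hence separable, I would first invoke Lemma~\ref{lem:Mac_Lane_I} to choose a separating transcendence basis: a single element $t \in L$ transcendental over $K$ with $L/K(t)$ finite separable. Because $L/K$ has transcendence degree $1$ and is immediate, the subextension $K(t)/K$ must itself be immediate---otherwise either $vt \notin vK$ or $tv$ would be transcendental over $Kv$, and either feature would persist to $L$, contradicting immediacy of $L/K$.

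Second, I would bring in Kaplansky's theory of pseudo Cauchy sequences. Since $K(t)/K$ is a nontrivial immediate extension, there is a pseudo Cauchy sequence $(a_\nu) \subseteq K$ without pseudo limit in $K$, of which $t$ is a pseudo limit. Separable tameness of $K$ implies (via separable defectlessness together with perfectness of $Kv$) that $K^h$ admits no nontrivial immediate separable algebraic extension, which forces $(a_\nu)$ to be of transcendental type in the sense of Kaplansky. This strong control on approximations is what will eventually let us recognize $t$ itself, or a close substitute, as the desired generator.

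Third, the natural guess is to set $b := t$, which reduces the theorem to the claim $L^h = K(t)^h$. Since $L/K(t)$ is finite separable and both $L/K$ and $K(t)/K$ are immediate, the extension $L^h/K(t)^h$ is a finite separable immediate extension. The main obstacle---and the technical heart of the proof---is that $K(t)^h$ is not itself separably tame, so we cannot simply quote separable defectlessness of $K$. Overcoming this requires a careful defect analysis along the tower $L^h/K(t)^h/K^h$: one uses multiplicativity of the defect (Ostrowski's lemma), the transcendental-type character of $(a_\nu)$ to approximate a putative generator of $L^h/K(t)^h$ arbitrarily closely by elements of $K$, and the $\hat p$-divisibility of $vK$ to rule out the appearance of wild ramification. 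This is where every structural hypothesis of separable tameness is genuinely used, and is the step I would expect to be most delicate to write down.
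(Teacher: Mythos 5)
The paper does not prove this theorem: it is quoted verbatim from \cite[Theorem 1.10]{Kuh16} as one of the two external ingredients (alongside Strong Inertial Generation) feeding into the Embedding Lemma, Theorem~\ref{thm:EP}. There is therefore no in-paper proof to compare against, and your proposal has to be judged on its own merits against Kuhlmann's result.

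Your opening moves are sound: a separating transcendence element $t$ exists because $L/K$ is regular; immediacy of $L/K$ forces $K(t)/K$ to be immediate; and separable tameness of $K$ (via separable algebraic maximality) forces the pseudo-Cauchy sequence in $K$ approximating $t$ to be of transcendental type. But then the proposal aims at a target that is simply false. You set $b := t$ and declare the remaining task to be the proof of $L^h = K(t)^h$. That equality is not true in general: $L^h/K(t)^h$ is a finite separable immediate extension that can, and in the interesting cases does, have degree $>1$, precisely because---as you yourself note---$K(t)^h$ is not separably tame and therefore can carry nontrivial immediate separable algebraic extensions. Since $K(t)\subseteq L$, the inclusion $L\subseteq K(b)^h$ with $b=t$ is equivalent to $L^h=K(t)^h$, so the proposed reduction is not a reduction at all but the insertion of a false claim. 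The whole content of Henselian Rationality is that one can construct some \emph{other} element $b\in L^h$ (not $t$, and in general not even in $L$) with $K(b)^h=L^h$; producing $b$ is the conclusion of the theorem, not a parameter to be guessed at the outset. Kuhlmann's proof constructs $b$ by induction on $[L^h:K(t)^h]$, improving the generator step by step, with a separate treatment of the wild (Artin--Schreier) case and a systematic use of Kaplansky's immediate approximation types; invoking ``defect analysis'' and ``Ostrowski's lemma'' does not supply that construction. As written, the proposal breaks down at exactly the step it flags as the technical heart.
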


\subsection{The Lambda relative embedding property of separably tame valued fields}

Let $\Lang$ be an expansion of $\Lval$,
and let $\mathbf{C}$ be a class of $\Lang$-structures
expanding valued fields.

\begin{definition}[Lambda relative embedding property]\label{def:LREP}
We say that $\mathbf{C}$ has the
{\em Lambda relative embedding property}
(\EP)
if
\begin{enumerate}[---]
\item
for all $K_{1},K_{2}\in\mathbf{C}$
that
extend
a
separably tame
$K$
for which
\begin{enumerate}[{\bf(i)}]
\item
$K_{1}/K$ and \ul{$K_{2}/K$} are separable,
\item
\ul{$\impdeg(K_{1}/K)\leq\impdeg(K_{2}/K)$},
\item
$K_{1}$ is $\aleph_{0}$-saturated, $K_{2}$ is $|K_{1}|^{+}$-saturated,
\item
$vK_{1}/vK$ is torsion free and $K_{1}v/Kv$ is separable,
and
\item
$\rho:vK_{1}\underset{vK}{\rightarrow}vK_{2}$
and
$\sigma:K_{1}v\underset{Kv}{\rightarrow}K_{2}v$;
\end{enumerate}
\item
there exists
a \ul{separable} embedding
$\iota:K_{1}\rightarrow K_{2}$
inducing $\rho$ and $\sigma$.
\end{enumerate}
\end{definition}

\begin{remark}
We compare the \EP\ with the SREP, as expressed in \cite[\S 4]{KuhlmannPal}.
The points at which \EP\ differs from SREP are underlined, above,
with the key strengthened conclusion of \EP\ also underlined.
Strictly speaking, the two properties are incomparable:
the hypotheses are stronger,
i.e.~the extension $K_{2}/K$ is separable
and we suppose an inequality between imperfection degrees,
but the conclusion of the \EP\
is also stronger,
i.e.~the embedding $\iota$ is separable.
\end{remark}

\begin{remark}
The hypothesis {\bf(ii)} on imperfection degrees is a natural one,
given that our aim is to separably embed $K_{1}$ into $K_{2}$ over $K$.
Regarding {\bf(iv)}, note that both \EP\ and SREP suppose $K_{1}v/Kv$ to be separable,
but this is redundant in the case that $v$ is nontrivial on $K$, because then $Kv$ is perfect, since $K$ is separably tame.
Similarly, note that $\sigma$ is not assumed to be separable in {\bf(v)},
however this is automatic when $v$ is nontrivial on $K$, for then again $Kv$ is perfect.
\end{remark}

\begin{remark}
The REP,
as expressed in \cite{Kuh16},
appears to have weaker hypotheses
than SREP and \EP\
(aside from the obvious issues around separability),
but this is not a material distinction.
The conjunction of the hypothesis that $K$ is defectless
with the shared hypothesis {\bf(iv)}
is essentially equivalent to our hypothesis that $K$ is separably tame:
whenever REP is to be verified in a class of (separably) tame valued fields,
any common valued subfield $K$ satisfying the hypotheses is necessarily (separably) tame.
\end{remark}

\begin{lemma}[Separable going down, {\cite[Lemma 2.17]{KuhlmannPal}/\cite[Lemma 3.15]{Kuh16}}]\label{lem:going_down2}
Let $L$ be a separably tame valued field,
and let $K\subseteq L$ be a relatively algebraically closed subfield, equipped with the restriction of the valuation on $L$.
If the residue field extension $Lv|Kv$ is algebraic,
then $(K,v)$ is also a separably tame valued field,
and moreover, $vL/vK$ is torsion free and $Lv=Kv$.
\end{lemma}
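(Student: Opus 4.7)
The plan is to establish the conclusions in the order: $K$ henselian; $L/K$ separable; the `moreover' claims $Lv=Kv$ and $vL/vK$ torsion-free; and finally separable tameness of $K$. Step 1 is immediate: separably tame $L$ is henselian, so the henselization $K^{h}$ taken inside $L=L^{h}$ is a separable algebraic extension of $K$ sitting inside $L$, hence $K^{h}=K$ by relative algebraic closedness. Step 2 uses that, in characteristic exponent $\hat{p}>1$, any $p$-basis $B$ of $K$ remains $p$-independent in $L$: if a nontrivial $p$-monomial $m$ in $B$ were a $\hat{p}$-th power in $L$, say $m=\alpha^{\hat{p}}$ with $\alpha\in L$, then $\alpha$ would be purely inseparable algebraic over $K$, hence in $K$, contradicting $m\notin K^{\hat{p}}$. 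By Lemma~\ref{lem:Mac_Lane_II}, $L/K$ is separable.

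The heart of the argument is Step 3, which I would organise by tackling the `tame' parts first and the $\hat{p}$-related parts second. For $\bar{a}\in Lv$ separable algebraic over $Kv$: lift the minimal polynomial to $f\in O_{K}[X]$, apply Hensel's lemma in $L$ to get a root $a\in O_{L}$ with $a\equiv\bar{a}\pmod{\mathfrak{m}_{L}}$, then $a\in K$ by relative algebraic closedness, so $\bar{a}\in Kv$. For $\gamma\in vL$ with $n\gamma\in vK$ and $\gcd(n,\hat{p})=1$: from $\alpha^{n}=\beta u$ with $\beta\in K$ and $u\in O_{L}^{\times}$, use the separable case of $Lv=Kv$ to find $u'\in O_{K}^{\times}$ with $\bar{u}=\bar{u}'$, apply Hensel (the polynomial $X^{n}-u/u'$ has non-zero derivative) to obtain $w\in 1+\mathfrak{m}_{L}$ with $w^{n}=u/u'$, and conclude $(\alpha/w)^{n}=\beta u'\in K$ gives $\alpha/w\in K$ by relative algebraic closedness. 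The subtle remaining cases---the purely inseparable case of $Lv=Kv$, and the $\hat{p}$-torsion part of $vL/vK$---both concern $\hat{p}$-th roots of $K$-elements possibly not in $K$, and a direct Hensel approach fails because $X^{\hat{p}}-c$ has zero derivative. I expect these to be handled by a pseudo-Cauchy argument following the Kuhlmann--Knaf--Kuhlmann strategy: use $\hat{p}$-divisibility of $vL$ and perfectness of $Lv$ to build successive approximations $a_{n}\in L$ whose $\hat{p}$-th powers approach a given $K$-element in valuation, observe that the hypothetical algebraic limit would lie in $K$ by relative algebraic closedness, and combine with separable defectlessness of $L$ to either produce the required lift in $K$ or derive a contradiction.

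Step 4 then deduces separable tameness of $K$ from the established data: $Kv=Lv$ is perfect; for any $\beta\in vK$, $\hat{p}$-divisibility of $vL$ gives $\delta\in vL$ with $\hat{p}\delta=\beta$, and torsion-freeness of $vL/vK$ forces $\delta\in vK$, so $vK$ is $\hat{p}$-divisible; and separable defectlessness of $K$ follows by comparing, for any finite separable algebraic $N/K$, the tower $K\subseteq N\subseteq LN$ with $K\subseteq L\subseteq LN$, using that $LN/L$ is defectless (since $L$ is separably tame), that $[LN:L]=[N:K]$ (by linear disjointness of $N$ and $L$ over $K$, a consequence of $K$ being relatively algebraically closed in $L$ applied to the separable polynomial defining $N$), and the equalities $Lv=Kv$ and $vL/vK$ torsion-free to transfer the defectlessness data down to $N/K$. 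The main obstacle will be the simultaneous treatment of the purely inseparable and $\hat{p}$-torsion cases in Step 3, which together capture the essential content of separable tameness of $L$.
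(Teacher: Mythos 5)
The paper does not itself prove this lemma; it is cited directly from Kuhlmann--Pal and Kuhlmann, so there is no internal proof to compare your proposal against, and I can only assess it on its own merits. Step~1 and the Hensel-based portions of Step~3 are sound (lifting separably algebraic residues and prime-to-$\hat{p}$ value-group torsion via relative algebraic closedness of $K$ in $L$), as are the observations in Step~4 that $Kv=Lv$ is perfect, that $vK$ inherits $\hat{p}$-divisibility from $vL$ via torsion-freeness of $vL/vK$, and that $N$ and $L$ are linearly disjoint over $K$ for $N/K$ finite separable (via the Galois closure of $N$ and relative algebraic closedness). You correctly flag the $\hat{p}$-related cases of Step~3 as not yet worked out; those are genuinely the hard part of the lemma and a pseudo-Cauchy argument using separable algebraic maximality of $L$ does need to be carried through.

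Beyond that sketch, there are two genuine gaps. First, Step~2 is flawed: ruling out that any single $p$-monomial $B^{I}$ is a $\hat{p}$-th power in $L$ does not establish that $B$ stays $p$-independent in $L$, since by Lemma~\ref{lem:p_linearity} $p$-independence of $B$ in $L$ amounts to $L^{\hat{p}}$-linear independence of the entire family $\{B^{I}\}_{I}$, and a nontrivial $L^{\hat{p}}$-linear relation among these need not exhibit any single $B^{I}$ as a $\hat{p}$-th power. Since separability of $L/K$ is neither part of the conclusion nor used downstream, this step is best deleted. Second, and more seriously, the defectlessness ``transfer'' in Step~4 does not close. The natural inequalities one extracts from $Lv=Kv$ and the torsion-freeness of $vL/vK$ are $e(LN/L)\geq e(N/K)$ and $f(LN/L)\geq f(N/K)$ (because $v(LN)\supseteq vN+vL$ with $vN\cap vL=vK$, and $(LN)v\supseteq Nv$); combined with $[N:K]=[LN:L]=e(LN/L)f(LN/L)$, these give only $[N:K]\geq e(N/K)f(N/K)$, which is the fundamental inequality for any finite extension of henselian valued fields and yields $d(N/K)\geq 1$, i.e.~nothing. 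To conclude $d(N/K)=1$ you need the reverse inequalities, equivalently $v(LN)=vN+vL$ and $(LN)v=Nv$, and these do not follow from linear disjointness and $vN\cap vL=vK$ alone: one must show $LN/L$ develops no excess ramification or residue degree, using $\hat{p}$-divisibility of $vL$ plus torsion-freeness to kill new $\hat{p}$-torsion, perfectness of $Lv$ to control the residue side, and Ostrowski's lemma to reduce to prime-degree defect witnesses. As written, Step~4 circles back to the trivial direction of the fundamental inequality and leaves the real content of separable defectlessness of $K$ unaddressed.
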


The following lemma is preparation for the new step in the proof of Theorem~\ref{thm:EP}.
This method,
informally termed ``{\em wiggling}'', 
is applied 
in van de Schaaf's MSc thesis
\cite{vdS}
on separable taming,
and by Soto Moreno
\cite{SM25}
on relative quantifier elimination in separably algebraically maximal Kaplansky valued fields.
Also see the forthcoming paper by Jahnke and van der Schaaf 
\cite{JS25}.

\begin{lemma}\label{lem:wiggling}
Let $U$ be a nonempty open set in a topological field $L$ and let $K\subset L$ be a proper subfield.
Then $U\setminus K$ is not empty.
\end{lemma}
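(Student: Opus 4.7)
The plan is to argue by contradiction, reducing to the case where $U$ is a neighbourhood of $0$ and then exploiting continuity of multiplication by a fixed element of $L\setminus K$. So suppose $U\subseteq K$. First I would translate: picking any $u_{0}\in U\subseteq K$, the set $U-u_{0}$ is open (translation is a homeomorphism of the topological additive group $L$), contains $0$, and is contained in $K-u_{0}=K$. So I may replace $U$ by $U-u_{0}$ and assume $0\in U\subseteq K$.

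Next, since $K\subsetneq L$, I pick $\ell\in L\setminus K$; note that $\ell\neq 0$ because $0\in K$. Multiplication by $\ell$ is a homeomorphism of $L$ (its inverse being multiplication by $\ell^{-1}$), so $\ell U$ is open and contains $\ell\cdot 0=0$. The intersection $W:=U\cap\ell U$ is then an open neighbourhood of $0$. For any $x\in W$ with $x\neq 0$, on the one hand $x\in U\subseteq K$, and on the other hand $x=\ell y$ for some $y\in U\subseteq K$; since $x\neq 0$ and $\ell\neq 0$ we have $y\neq 0$, whence $\ell=x/y\in K$, contradicting the choice of $\ell$. Therefore $W=\{0\}$, so $\{0\}$ is open in $L$, i.e.\ the topology on $L$ is discrete.

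The main obstacle is exactly this last step: the argument above really only shows that a counterexample to the lemma forces the topology on $L$ to be discrete. In the intended setting of the paper, $L$ carries a V-topology (in particular a henselian topology, or the valuation topology of a nontrivially valued field), which by definition is non-discrete, so the conclusion ``$\{0\}$ is open'' produces the required contradiction. I would therefore state and use the lemma under the tacit assumption that $L$ is non-discrete, which is automatic in all the applications to separably tame valued fields that follow; alternatively one can phrase the lemma as ``if $L$ is not discrete, then $U\setminus K\neq\emptyset$'', since in the discrete case the statement fails trivially (take $U=\{0\}$).
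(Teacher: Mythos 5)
Your proof is correct (modulo the non-discreteness caveat, which you rightly flag and which the paper is a bit loose about), but the route is somewhat different from the paper's. The paper's proof is more direct: it asserts that the subfield of $L$ generated by any nontrivial open set is all of $L$, and justifies this with the explicit identity
$$L=(U-U)\cdot\big((U-U)\setminus\{0\}\big)^{-1},$$
from which the lemma is immediate (a set generating all of $L$ cannot lie inside a proper subfield). Under the hood the two arguments use exactly the same continuity facts: $U-U$ is an open neighbourhood of $0$, and for fixed $x\in L^{\times}$ the preimage of $U-U$ under $b\mapsto xb$ is another such neighbourhood, whose intersection with $U-U$ must contain a nonzero element --- which is precisely where non-discreteness (equivalently, that $U$ is not a singleton) enters in both proofs. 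Your version packages this as a contradiction using one element $\ell\in L\setminus K$, concluding that $\{0\}$ would be open; the paper's version is a constructive one-liner that exhibits the full field as a ratio set of a neighbourhood of $0$ and hence shows slightly more. Your observation that the statement as written fails literally in the discrete topology is a fair criticism: the paper speaks of a ``nontrivial open set'' in the proof while the statement only says ``nonempty'', and the intended reading is that $L$ is a topological field in the sense of carrying a non-discrete field topology (as is automatic for the V-topologies and henselian topologies where the lemma is applied). Either your explicit hypothesis or the implicit one would fix the wording.
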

\begin{proof}
The subfield generated by any nontrivial open set $B$ in a field topology is the entire field,
since
$L=(U-U)\cdot((U-U)\setminus\{0\})^{-1}$.
\end{proof}

\begin{lemma}\label{lem:automatic_separability}
Let $K_{1},K_{2}$ be two separable field extensions of $K$,
and suppose that $K_{1}/K$ is separated.
Then every embedding $\iota:K_{1}\rightarrow K_{2}$ over $K$
is separable.
\end{lemma}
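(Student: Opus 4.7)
The plan is a quick application of Mac Lane's characterizations of separability and separatedness via $p$-bases, namely Lemmas~\ref{lem:Mac_Lane_II} and~\ref{lem:Mac_Lane_IIa}. In characteristic zero there is nothing to prove, so one may assume positive characteristic $p$ throughout.

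First, I would pick a $p$-basis $c \in \pB[K]$. Since $K_{1}/K$ is separated, Lemma~\ref{lem:Mac_Lane_IIa} tells us that $c$ is a $p$-basis of $K_{1}$, i.e.~$c \in \pB[K_{1}]$. Because $\iota$ fixes $K$ pointwise, $\iota(c) = c$, and via the field isomorphism $K_{1} \to \iota(K_{1})$ this shows that $c$ is a $p$-basis of $\iota(K_{1})$ as well.

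Next, I would use that $K_{2}/K$ is separable. By Lemma~\ref{lem:Mac_Lane_II}~\textbf{(iv)}, every $p$-basis of $K$ is $p$-independent in $K_{2}$; in particular $c \in \pI[K_{2}]$. Hence $c$ is simultaneously a $p$-basis of $\iota(K_{1})$ and $p$-independent in $K_{2}$, so $\pB[\iota(K_{1})] \cap \pI[K_{2}] \neq \emptyset$. Applying criterion~\textbf{(v)} of Lemma~\ref{lem:Mac_Lane_II} to the extension $K_{2}/\iota(K_{1})$ yields that $K_{2}/\iota(K_{1})$ is separable, which is exactly the statement that $\iota$ is a separable embedding.

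There is no real obstacle here; the only thing to verify carefully is that $\iota$ being over $K$ transports a $p$-basis of $K$ intact into $\iota(K_{1})$, which is immediate since $\iota$ restricts to the identity on $K$. The hypothesis that $K_{1}/K$ is separated is doing all the work: it is exactly what guarantees that a $p$-basis of $K$ remains a $p$-basis of $K_{1}$, thereby shifting the problem of separability of $K_{2}/\iota(K_{1})$ onto the known separability of $K_{2}/K$.
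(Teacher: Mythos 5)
Your proof is correct and coincides with the paper's own argument: both pick a $p$-basis $c$ of $K$, use Lemma~\ref{lem:Mac_Lane_IIa} (separatedness of $K_{1}/K$) to promote $c$ to a $p$-basis of $K_{1}$ and hence of $\iota(K_{1})$, use Lemma~\ref{lem:Mac_Lane_II} (separability of $K_{2}/K$) to see $c$ is $p$-independent in $K_{2}$, and conclude via criterion \textbf{(v)}. You merely spell out the final invocation of \textbf{(v)}$\Rightarrow$\textbf{(i)}, which the paper leaves implicit.
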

\begin{proof}
Let $c$ be a $p$-basis of $K$.
Since $K_{1}/K$ is separated,
by
Lemma~\ref{lem:Mac_Lane_IIa},
$c$ is a $p$-basis of $K_{1}$.
Since $K_{2}/K$ is separable,
by
Lemma~\ref{lem:Mac_Lane_II},
$c$ is $p$-independent in $K_{2}$.
Since $\iota$ is the identity on $K$,
$\iota(c)=c$,
which shows that $c$ is already a $p$-basis of the image of $\iota$.
\end{proof}

The following two lemmas provide cross-sections and sections (respectively)
in sufficiently saturated henselian valued fields.
Such maps give a additional structure to a valued field, and cross-sections especially have been part of standard approaches to Ax--Kochen/Ershov phenomena since the first papers.

\begin{lemma}[{\cite[Proposition 5.4]{vdD14}}]\label{lem:cross-section}
Let $s_{0}:\Delta\rightarrow K^{\times}$ be a partial cross-section of $v$
such that $\Delta$ is pure in $\Gamma_{v}$.
Then there is an elementary extension
$K\preceq K^{*}$
of valued fields,
with a cross-section
$s:vK^{*}\rightarrow K^{*\times}$
of the valuation on $K^{*}$
that extends $s_{0}$.
\end{lemma}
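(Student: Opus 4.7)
The plan is to pass to a sufficiently saturated elementary extension of $K$ and then extend $s_0$ to the whole value group by a Zorn's lemma argument, maintaining purity of the domain throughout. Fix $K \preceq K^*$ a $\kappa$-saturated elementary extension in $\Lval$, with $\kappa$ much larger than $|K|$. Because $K \preceq K^*$, the value group $\Gamma = vK$ embeds elementarily into $vK^*$, which forces $\Gamma$ to be pure in $vK^*$ (no nontrivial torsion in the quotient, by elementarity). Combined with the hypothesis that $\Delta$ is pure in $\Gamma$, transitivity of purity yields that $\Delta$ is pure in $vK^*$.

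I then apply Zorn's lemma to the poset of pairs $(H, s)$, where $\Delta \subseteq H \subseteq vK^*$ is a pure subgroup and $s \colon H \to K^{*\times}$ is a cross-section of $v$ extending $s_0$, ordered by extension. A maximal element $(H, s)$ must have $H = vK^*$: otherwise, pick $\gamma \in vK^* \setminus H$, and let $H' \subseteq vK^*$ be the pure closure of $H + \mathbb{Z}\gamma$, so that $H'/H$ is torsion-free of rank one, hence isomorphic to a subgroup of $\mathbb{Q}$. To extend $s$ to $H'$ I consider the partial $\Lval$-type $\Pi$, in variables $(x_{\gamma'})_{\gamma' \in H'}$ over parameters $\{s(h) \mid h \in H\} \cup H'$, asserting that the $x_{\gamma'}$ define a cross-section of $v$ on $H'$ extending $s$.

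Finite satisfiability of $\Pi$ is the main algebraic step. Any finite fragment involves only elements $\gamma'_1, \ldots, \gamma'_n \in H'$ (together with finitely many $h \in H$ appearing as parameters); let $H_F \leq H'$ denote the subgroup they generate. Then $H_F/(H_F \cap H)$ embeds into the torsion-free group $H'/H$ and is finitely generated, hence free abelian, so the exact sequence
\begin{align*}
0 \to H_F \cap H \to H_F \to H_F/(H_F \cap H) \to 0
\end{align*}
splits. I choose a free complement $C \leq H_F$ with basis $\beta_1, \ldots, \beta_r$, pick arbitrary elements $a_k \in K^{*\times}$ with $v(a_k) = \beta_k$, and combine $s|_{H_F \cap H}$ with the assignments $\beta_k \mapsto a_k$ to produce a cross-section on $H_F$; this realizes the finite fragment in $K^*$. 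Provided $\kappa > |H'|$, saturation realizes $\Pi$ itself, extending $s$ to a cross-section on $H'$ and contradicting the maximality of $(H, s)$.

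The main obstacle is the slightly self-referential cardinality requirement that $\kappa$ exceed $|vK^*|$. A standard workaround is to iterate through an elementary chain: start with $K_0 = K$, take $K_1 \succeq K_0$ that is $|vK_0|^+$-saturated, extend $s_0$ inside $K_1$ as far as Zorn permits; if the resulting cross-section does not cover $vK_1$, pass to a larger saturated $K_2 \succeq K_1$, and iterate. The union of the elementary chain gives the desired $K^*$, equipped with a cross-section of its valuation extending $s_0$.
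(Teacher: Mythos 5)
The paper does not prove this lemma; it cites it directly from van den Dries' lecture notes \cite[Proposition~5.4]{vdD14}, so there is no ``in-house'' proof to compare against. Your argument is essentially the standard one from that reference: pass to a saturated elementary extension, use purity of $\Delta$ to split off free complements of finitely generated subgroups (which is exactly the finite-satisfiability step), and then handle the global extension via an $\omega$-indexed elementary chain.

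One remark on the exposition: the Zorn argument of your first three paragraphs does not actually close on its own, for the reason you yourself identify --- the pure closure $H'$ of $H+\mathbb{Z}\gamma$ can be as large as $vK^{*}$, which outstrips any fixed saturation degree $\kappa$. In the chain, the cleaner move is to skip Zorn entirely at each stage: given $s_{n}$ with domain $\Delta_{n}$ pure in $vK_{n}$ (at stage $0$ this is $\Delta\subseteq vK_{0}$; afterwards it is $vK_{n-1}\subseteq vK_{n}$, which is pure by elementarity), take $K_{n+1}\succeq K_{n}$ that is $|vK_{n}|^{+}$-saturated and directly realize the type in variables $(x_{\gamma})_{\gamma\in vK_{n}}$ asserting that $\gamma\mapsto x_{\gamma}$ is a cross-section extending $s_{n}$. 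Finite satisfiability is exactly your splitting argument with $H_{F}\leq vK_{n}$, and the type has only $|vK_{n}|$ variables, so saturation applies with no self-reference. The union of the chain then carries a cross-section on all of $vK^{*}=\bigcup_{n}vK_{n}$. Phrasing it this way makes it transparent that at stage $n+1$ one covers $vK_{n}$, rather than hoping that Zorn happens to produce a domain containing $vK_{n}$.
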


\begin{lemma}[{\cite[Proposition 4.5]{ADF23}}]\label{lem:section}
Let $K$ be a henselian valued field.
For every partial section
$\zeta_{0}:k_{0}\rightarrow K$ 
of
$\res_{v}$
with $Kv/k_{0}$ separable
there exists an elementary extension
$K\preceq K^{*}$
of valued fields,
with a section
$\zeta:K^{*}v^{*}\rightarrow K^{*}$
of
the residue map
$\res_{v^{*}}$
on $K^{*}$
that extends $\zeta_{0}$.
\end{lemma}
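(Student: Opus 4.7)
The plan is an iterated saturation-and-extension argument. If $v$ is trivial on $K$ then $\res_{v}:K\to Kv$ is an isomorphism, and for any elementary extension $K\preceq K^{*}$ the map $\zeta:=\res_{v^{*}}^{-1}$ is a section extending $\zeta_{0}$. So assume $v$ is nontrivial. The goal is to build an $\omega$-chain of elementary extensions $K=K^{(0)}\preceq K^{(1)}\preceq\cdots$ together with sections $\zeta_{n}:K^{(n)}v^{(n)}\to K^{(n+1)}$, each extending the previous (and $\zeta_{0}$); the union $K^{*}:=\bigcup_{n}K^{(n)}$ then has $K^{*}v^{*}=\bigcup_{n}K^{(n)}v^{(n)}$, on which $\zeta:=\bigcup_{n}\zeta_{n}$ is the required section.

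For the inductive step, choose $K^{(n+1)}\succeq K^{(n)}$ to be $|K^{(n)}|^{+}$-saturated, and extend $\zeta_{n-1}:K^{(n-1)}v^{(n-1)}\to K^{(n)}$ to a section defined on all of $K^{(n)}v^{(n)}$ inside $K^{(n+1)}$, via Zorn's lemma on the poset of pairs $(L,\zeta)$ where $K^{(n-1)}v^{(n-1)}\subseteq L\subseteq K^{(n)}v^{(n)}$ is a subfield and $\zeta:L\to K^{(n+1)}$ is a section extending $\zeta_{n-1}$. Any intermediate $L$ has $|L|\leq|K^{(n)}v^{(n)}|\leq|K^{(n)}|$, so the saturation of $K^{(n+1)}$ is always available. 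A key preliminary for the case split below is that $K^{(n)}v^{(n)}/L$ is separable: since $p$-independence of a finite tuple is first-order $\Lring$-definable (indeed universal), a $p$-basis of $Kv$ remains $p$-independent in any elementary extension, so by Lemma~\ref{lem:Mac_Lane_II} one has $K^{(n)}v^{(n)}/Kv$ separable; combining with the hypothesis $Kv/k_{0}$ separable and transitivity gives $K^{(n)}v^{(n)}/k_{0}$ separable, hence $K^{(n)}v^{(n)}/L$ separable.

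For the Zorn maximum $(L,\zeta)$, I argue that $L=K^{(n)}v^{(n)}$ by contradicting maximality. Suppose $\bar a\in K^{(n)}v^{(n)}\setminus L$. If $\bar a$ is algebraic over $L$, it is separably algebraic, with minimal polynomial $f\in L[X]$; its image $f^{\zeta}\in\zeta(L)[X]$ reduces modulo the maximal ideal of $K^{(n+1)}$ to $f$, with $\bar a$ a simple root, so Hensel's lemma in the henselian field $K^{(n+1)}$ produces a root $a\in K^{(n+1)}$ with $\res_{v^{(n+1)}}(a)=\bar a$; then $\zeta(\bar a):=a$ extends $\zeta$ on $L(\bar a)$. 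If $\bar a$ is transcendental over $L$, pick any lift $a_{0}\in K^{(n+1)}$ of $\bar a$; if $a_{0}$ is algebraic over $\zeta(L)$, use the $|K^{(n)}|^{+}$-saturation of $K^{(n+1)}$ to find $\varepsilon$ with $v^{(n+1)}(\varepsilon)>0$ and transcendental over $\zeta(L)(a_{0})$, and replace $a_{0}$ by $a_{0}+\varepsilon$. Then $\zeta(\bar a):=a_{0}$ extends $\zeta$ on $L(\bar a)$, and remains a section by the computation $\res_{v^{(n+1)}}(h^{\zeta}(a_{0}))=h(\bar a)$ for $h\in L(X)$ with $h(\bar a)$ defined.

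I expect the main subtlety to be the cardinality bookkeeping that forces the $\omega$-iteration: a single saturated elementary extension cannot support the whole Zorn process directly, since its saturation would need to exceed its own cardinality. Splitting the construction across $\omega$ stages with saturation $|K^{(n)}|^{+}$ at each step is designed precisely to circumvent this. The remaining ingredients --- the separability transfer through elementary extensions and Hensel's lemma --- are by now standard, and the overall shape of the argument parallels the cross-section construction in Lemma~\ref{lem:cross-section}.
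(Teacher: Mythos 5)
The overall architecture — an $\omega$-chain of saturated elementary extensions, with a Zorn argument at each stage split between a Hensel step for separably algebraic residues and a ``generic lift'' step for transcendental ones — is a sensible shape for this lemma, and the observation that $K^{(n)}v^{(n)}/Kv$ is separable (via universal definability of $p$-independence and Lemma~\ref{lem:Mac_Lane_II}) is correct. But the Zorn argument has a genuine gap: you infer ``$K^{(n)}v^{(n)}/k_{0}$ separable, hence $K^{(n)}v^{(n)}/L$ separable'' for the intermediate field $L$ in a maximal pair. Separability does \emph{not} descend to arbitrary intermediate subfields. The standard counterexample is $F=\mathbb{F}_{p}(t)$, $C=\mathbb{F}_{p}$, $D=\mathbb{F}_{p}(t^{p})$: here $F/C$ and $D/C$ are separable while $F/D$ is purely inseparable. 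Concretely, in your poset nothing prevents Zorn from producing a maximal pair $(L,\zeta)$ in which, say, $L$ is generated over $k_{0}$ by a transcendental $\bar{a}^{p}$ lifted to an element of $K^{(n+1)}$ that is not a $p$-th power; then $\bar{a}$ is purely inseparable over $L$, neither of your two extension moves applies, and the contradiction with maximality evaporates. The Hensel step genuinely needs $\bar{a}$ separably algebraic over $L$, and that is exactly what fails.

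To repair this you must restrict the poset to pairs $(L,\zeta)$ for which $K^{(n)}v^{(n)}/L$ is separable, and then the extension step becomes delicate: when $K^{(n)}v^{(n)}/L$ is separated and $L$ is relatively algebraically closed (e.g.\ $L=\mathbb{F}_{p}$ inside $\mathbb{F}_{p}(t^{p^{-\infty}})$), adjoining any single new element $\bar{a}$ already destroys separability of the remainder. One must instead adjoin the entire local Lambda closure $\lambda_{K^{(n)}v^{(n)}/c}(\bar{a})$, equivalently $\rmLambda_{K^{(n)}v^{(n)}}L(\bar{a})$; Proposition~\ref{prp:chain_3} is exactly what guarantees that $K^{(n)}v^{(n)}/L'$ is again separable for the resulting $L'$. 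Lifting that (countable) $p$-power-compatible family in one go is an infinitary condition, so the saturation of $K^{(n+1)}$ is doing essential work there, not merely in the finite Hensel and transcendence steps. In other words, the $\Lambda$-closure machinery from section~\ref{section:Lambda} (or an equivalent device) is a load-bearing ingredient of this lemma, not a luxury your proof can skip.
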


In the following, when we speak of embeddings we mean embeddings of valued fields.

\begin{theorem}[]\label{thm:EP}
$\Mod(\STVF^{\eq})$ has the \EP. 
\end{theorem}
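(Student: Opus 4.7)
My plan is to follow the standard Zorn's lemma strategy used in \cite{Kuh16,KuhlmannPal}, building the embedding $\iota:K_1\to K_2$ as the maximal element of the poset of separable partial embeddings over $K$ that induce $\rho$ and $\sigma$, and then showing any such maximum must have domain $K_1$. Concretely, I consider pairs $(L,\iota_L)$ with $K\subseteq L\subseteq K_1$, where $\iota_L:L\to K_2$ is a separable valued-field embedding over $K$ inducing the restrictions of $\rho$ and $\sigma$. By $|K_1|^{+}$-saturation of $K_2$ and the separability transfer from Fact~\ref{fact:separable_maps_III}, the union of an ascending chain of such embeddings is again a separable embedding, so Zorn yields a maximal $(L,\iota_L)$. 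The heart of the proof is then a five-case extension argument showing $L=K_1$.

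After a preliminary reduction using Lemma~\ref{lem:going_down2}, I may assume that $L$ is relatively algebraically closed in $K_1$, henselian, separably tame, and that $vL/vK$ is torsion free with $Lv/Kv$ separable. The cases to dispatch are then: (a) $K_1v\supsetneq Lv$, handled by picking a transcendental $\bar a\in K_1v\setminus Lv$, lifting via Lemma~\ref{lem:section} to $a\in K_1$ and picking a suitable preimage in $K_2$, which yields a separated and hence separable extension; (b) $vK_1\supsetneq vL$, handled dually by Lemma~\ref{lem:cross-section}; (c) the immediate case $L\subsetneq K_1\cap L^h$, where I reduce to a one-dimensional immediate extension $L(b)$ and apply Henselian Rationality (Theorem~\ref{thm:HR}) to find an image; and (d) the remaining step where $L$ is relatively algebraically closed and $K_1$ contains an element transcendental over $L$ without transcendence defect, which is handled by Strong Inertial Generation (Theorem~\ref{thm:SIG}) applied to a function subfield $L(a)/L$. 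In each case saturation of $K_2$ provides the required image, and the induced value-group/residue-field maps are extensions of $\rho,\sigma$ by construction.

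The new ingredient, which is the main obstacle, is guaranteeing that each extension step produces a \emph{separable} embedding even when $\impdeg$ may be infinite. Here is where I use Lemma~\ref{lem:automatic_separability} together with the ``wiggling'' principle of Lemma~\ref{lem:wiggling}: whenever I must pick an image $a'\in K_2$ of a newly adjoined transcendental $a\in K_1$, the constraints on $a'$ define a nonempty $v$-topological open set in $K_2$, and by wiggling I may choose $a'$ outside the image of the countable (or small) subfield $\iota_L(L^\sim)$ that would otherwise obstruct separability --- in particular outside of $\ppower[K_2]\iota_L(L)(a')$ to keep $a'$ $\hat p$-independent from $\iota_L(L)$ when needed. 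Combined with the hypothesis $\impdeg(K_1/K)\leq\impdeg(K_2/K)$, which ensures there is enough ``$p$-basis room'' in $K_2$ to accommodate all new $p$-independent elements arising from $K_1$, this lets me maintain the condition that the extension $\iota_{L(a)}(L(a))/\iota_L(L)$ is separated, and then Lemma~\ref{lem:automatic_separability} upgrades it to a separable embedding. For the algebraic step of case (d), separability of the newly constructed embedding is automatic from Strong Inertial Generation, since the extension is generated by inertial generators over an already-separable base.

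In the immediate and in the algebraic cases I also need to verify that the Lambda-closure structure is preserved; this is where the material of Section~\ref{section:Lambda} enters, via the local Lambda closure $\lambda_{K_1/c}a$ of a chosen generator $a$ with respect to a $p$-basis $c$ of $L$. Since $\lambda_{K_1/c}a\subseteq\rmLambda_{K_1}L(a)$ and each of its elements is a closed $\Llambda$-term in $c\cup\{a\}$ by Theorem~\ref{thm:intro_1}, the image $\iota_{L(a)}$ extends canonically to $\rmLambda_{K_2}\iota_L(L)(\iota_{L(a)}(a))$ and remains a valued-field embedding by henselianity of $K_2$. I expect the chief technical difficulty to be the careful bookkeeping in case (d): ensuring simultaneously that $\iota_{L(a)}(a)$ is transcendental over $\iota_L(L)$, that it realises the right value-group and residue-field data, and that the extension stays separated --- the latter requiring wiggling inside the $V$-topology on $K_2$ to dodge both the relatively algebraic closure of $\iota_L(L)$ and the perfect hull of $\iota_L(L)$ in the saturated target.
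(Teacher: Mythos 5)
Your proposal correctly pinpoints the two genuinely new ideas: the ``wiggling'' principle (Lemma~\ref{lem:wiggling}) to keep adjoined elements $p$-independent, and automatic separability of an embedding when the domain extension is separated (Lemma~\ref{lem:automatic_separability}); you also correctly tie the $\impdeg$-hypothesis to the need to know $K_2^{(p)}\iota(L)\subsetneq K_2$. However, the paper does not organize the proof by Zorn's lemma and a case split at a maximal partial embedding. It is a three-stage construction: after introducing (cross-)sections via Lemmas~\ref{lem:cross-section} and~\ref{lem:section}, it embeds $K_0:=K(\zeta_1(K_1v),\chi_1(vK_1))^{\rac}$ using Strong Inertial Generation (Theorem~\ref{thm:SIG}), which is automatically separable because $K_0/K$ is separated; then it transfinitely adjoins a $p$-basis $(b_\mu)_{\mu<\nu}$ of $K_1/K_0$, applying Henselian Rationality (Theorem~\ref{thm:HR}) and Kaplansky's pseudo-Cauchy analysis at each successor step and wiggling inside the resulting ball to preserve $p$-independence of the images; then it extends to $K_1$, whose extension over $K_{0,\nu}$ is again separated so separability is again automatic. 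This isolates the one place where wiggling is required. Your proposal spreads wiggling across all transcendental steps, but the residual and valuational cases already yield separated extensions, and the open set you invoke in the immediate case is not generic: it has to be the ball of realisations of a transcendental pseudo-Cauchy type, supplied by Kaplansky's theorem and the separably-algebraically-maximal hypothesis, which your sketch does not mention.

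Two concrete problems with the proposal as written. First, your case (c), ``$L\subsetneq K_1\cap L^h$'', cannot arise after the preliminary reduction you make (once $L$ is henselian, $L^h\cap K_1=L$), and your case (d) overlaps with (a) and (b); the immediate transcendental case --- precisely where Henselian Rationality and wiggling are needed --- is not cleanly isolated in your case split. Second, the appeal to the local Lambda closure $\lambda_{K_1/c}a$ and to Theorem~\ref{thm:intro_1} is unnecessary and reflects a misconception: by Fact~\ref{fact:separable_maps_III}, a separable valued-field embedding is automatically an $\Lvlambda$-embedding, so there is nothing further about the ``Lambda-closure structure'' to preserve. The material of Section~\ref{section:Lambda} plays no role in the paper's proof of the embedding lemma itself; once separability of each extension step is secured, the $\Llambda$-structure comes for free.
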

{\bf Caveat}: the following is not intended to be a complete proof.
We follow very closely the embedding arguments in \cite{Kuh16,KuhlmannPal},
skipping lighting over the parts that are unchanged, 
but giving full details and making changes where necessary, around the new part of the argument.
Thus the reader might want to read this proof alongside those others.

\begin{proof}[Proof (modulo caveat)]
We work only in equal positive characteristic, since in equal characteristic zero
(and even in mixed characteristic)
every separably tame valued field is tame,
and the \EP\ becomes equivalent to the REP.
Suppose that we have
$K,K_{1},K_{2}\in\Mod(\STVF^{\eq})$,
where $K$ is a common valued field of $K_{1}$ and $K_{2}$,
satisfying the hypotheses~{\bf(i)--(v)} of the \EP.

By saturating the triple $(K,K_{1},K_{2})$,
if necessary,
we may assume by
Lemma~\ref{lem:cross-section}
that there is
a cross-section
$\chi:vK\rightarrow K^{\times}$,
and by
Lemma~\ref{lem:section}
that there is
a section
$\zeta:Kv\rightarrow K$.
By hypothesis~{\bf(iv)},
$v_{1}K_{1}/vK$ is torsion-free
and
$K_{1}v_{1}/Kv$ is separable.
Since
$\rho$ and $\sigma$ are embeddings,
also
$\rho(v_{1}K_{1})/vK$ is torsion-free
and
$\sigma(K_{1}v_{1})/Kv$ is separable.
Thus, 
by further saturating $K_{1}$ and $K_{2}$,
if necessary, 
again by
Lemmas~\ref{lem:cross-section}
and~\ref{lem:section},
there are extensions
$\chi_{1}:vK_{1}\rightarrow K_{1}^{\times}$
and $\chi_{2}:\rho(vK_{1})\rightarrow K_{2}^{\times}$
of $\chi$,
and extensions
$\zeta_{1}:K_{1}v\rightarrow K_{1}$
and $\zeta_{2}:\sigma(K_{1}v)\rightarrow K_{2}$
of $\zeta$.
Note that these saturation steps preserve the hypotheses of the \EP,
so these reductions are without loss of generality.
% \footnote{This explains why we don't need to assume separability of $\sigma$ itself in {\bf(v)}.}

Let $K_{0}:=K(\zeta_{1}(K_{1}v),\chi_{1}(vK_{1}))^{\rac}$
be the relative algebraic closure in $K_{1}$ of the field generated over $K$
by the images of the section and cross-section.
As argued in \cite{KuhlmannPal},
$K_{0}/K$ is without transcendence defect,
and so
every finitely generated subextension $F/K$ of $K_{0}/K$
is strongly inertially generated,
by Theorem~\ref{thm:SIG}.
By compactness,
as in \cite{Kuh16},
there is an $\Lvlambda$-embedding
$\iota_{0}:K_{0}\rightarrow K_{2}$
such that
$\iota_{0}\circ\chi_{1}=\chi_{2}\circ\rho$
and
$\iota_{0}\circ\zeta_{1}=\zeta_{2}\circ\sigma$.
Thus $\iota_{0}$ induces $\rho$ and $\sigma$.

Note that $K_{0}/K$ is separated,
since $K_{1}v$ is perfect and $vK_{1}$ is $p$-divisible.
Moreover $K_{2}/K$ is separable by hypothesis~{\bf(i)},
so $\iota_{0}$ is automatically separable,
by Lemma~\ref{lem:automatic_separability},
i.e.~$K_{2}/\iota_{0}(K_{0})$ is separable.
By Lemma~\ref{lem:going_down2},
$K_{0}$ is also separably tame,
and $K_{1}/K_{0}$ is immediate.

Let $b=(b_{\mu})_{\mu<\nu}\subseteq K_{1}$
be a $p$-basis of $K_{1}$ over $K_{0}$.
For $\mu\leq\nu$,
let $K_{0,\mu}:=K_{0}(b_{\kappa})_{\kappa<\mu}^{\rac}$ be the relative algebraic closure of
$K_{0}(b_{\kappa})_{\kappa<\mu}$ in $K_{1}$.
Note that each $K_{0,\mu}$ is separably tame,
by Lemma~\ref{lem:going_down2}.
Since $K_{0,\nu}$ is the relative algebraic closure in $K_{1}$ of $K_{0}(b)$,
and $b$ is a $p$-basis of $K_{0,\nu}$ over $K_{0}$,
thus $K_{1}/K_{0,\nu}$ is separated,
using hypothesis~{\bf(i)}.
We will prove the following claim.
\begin{claim}\label{claim:1}
There is a separable $\Lvlambda$-embedding
$\iota_{0,\nu}:K_{0,\nu}\rightarrow K_{2}$
extending $\iota_{0}$.
\end{claim}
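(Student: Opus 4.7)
The plan is to build, by transfinite induction on $\mu\leq\nu$, a coherent chain of separable $\Lvlambda$-embeddings $\iota_{0,\mu}:K_{0,\mu}\to K_{2}$ extending $\iota_{0}$, with $\iota_{0,\mu}\subseteq\iota_{0,\mu'}$ when $\mu\leq\mu'$; the case $\mu=\nu$ then proves the claim. For the base case, set $\iota_{0,0}:=\iota_{0}$. At a limit $\mu$, take $\iota_{<\mu}:=\bigcup_{\kappa<\mu}\iota_{0,\kappa}$ and extend it to the relative algebraic closure $K_{0,\mu}$ in $K_{1}$ using $|K_{1}|^{+}$-saturation of $K_{2}$; since $K_{0,\mu}/\bigcup_{\kappa<\mu}K_{0,\kappa}$ is a separable algebraic extension (being a subextension of $K_{1}/K$, which is separable by hypothesis~(i)), the resulting embedding remains a separable $\Lvlambda$-embedding by Fact~\ref{fact:separable_maps_III}.

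The heart of the argument is the successor step $\mu=\kappa+1$. Since $K_{0,\kappa}$ is relatively algebraically closed in $K_{1}$, the element $b_{\kappa}$ is transcendental over $K_{0,\kappa}$, and $K_{0,\kappa}(b_{\kappa})/K_{0,\kappa}$ is immediate of transcendence degree $1$ (because $K_{1}/K_{0}$ is). The goal is to select $c\in K_{2}$ such that (a) $c$ realizes the same quantifier-free $\Lval$-type over $\iota_{0,\kappa}(K_{0,\kappa})$ as $b_{\kappa}$ does over $K_{0,\kappa}$, and (b) $c$ is $p$-independent over $\iota_{0,\kappa}(K_{0,\kappa})$ in $K_{2}$. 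Condition (a) is the standard Kuhlmann--Pal selection and yields a valued-field embedding $b_{\kappa}\mapsto c$; condition (b) upgrades this to a separable, hence $\Lvlambda$-, embedding by Fact~\ref{fact:separable_maps_III}. One then extends through the algebraic closure $K_{0,\kappa+1}$ in $K_{1}$ via Theorem~\ref{thm:HR} (Henselian Rationality), the universal property of henselization inside the henselian $K_{2}$, and further $|K_{1}|^{+}$-saturation.

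To achieve (a) and (b) simultaneously I apply the wiggling principle, Lemma~\ref{lem:wiggling}, to the topological field $(K_{2},\tau_{v_{2}})$. The set $U$ of candidates satisfying (a) is the set of pseudo-limits in $K_{2}$ of the $\iota_{0,\kappa}$-image of a pseudo-Cauchy sequence in $K_{0,\kappa}$ having $b_{\kappa}$ as a pseudo-limit; this sequence is of transcendental type (since $b_{\kappa}$ is transcendental over $K_{0,\kappa}$ in an immediate extension), so its set of pseudo-limits in $K_{2}$ is a non-empty $\tau_{v_{2}}$-open ball (with non-emptiness coming from $|K_{1}|^{+}$-saturation). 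The candidates failing (b) lie in the subfield $E:=\pspan[K_{2}]{\iota_{0,\kappa}(K_{0,\kappa})}$, and $E\subsetneq K_{2}$ follows from additivity of imperfection degree along the separable tower $K\subseteq\iota_{0,\kappa}(K_{0,\kappa})\subseteq K_{2}$, using hypothesis~(ii) and $\impdeg(\iota_{0,\kappa}(K_{0,\kappa})/K)=|\kappa|<\impdeg(K_{1}/K)$. Lemma~\ref{lem:wiggling} then supplies $c\in U\setminus E$ as required. I expect the hardest point to be precisely this openness-plus-imperfection count: showing that $U$ is $\tau_{v_{2}}$-open requires the pseudo-Cauchy analysis of immediate transcendental extensions, while $E\subsetneq K_{2}$ depends crucially on having maintained separability at every earlier stage, so that $\impdeg$ is additive across the tower.
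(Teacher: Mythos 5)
The overall architecture (transfinite induction, wiggling at successors) matches the paper's, but the successor step reverses the order of operations in a way that introduces a real gap.

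You place the image of $b_{\kappa}$ \emph{first}, by selecting $c\in K_{2}$ realising the quantifier-free type of $b_{\kappa}$ and being $p$-independent, and only \emph{afterwards} propose to ``extend through the algebraic closure $K_{0,\kappa+1}$ via Henselian Rationality, the universal property of henselization, and saturation.'' This does not go through. Once $b_{\kappa}\mapsto c$ is committed, the universal property of henselization determines the image of $K_{0,\kappa}(b_{\kappa})^{h}$ in $K_{2}$, but $K_{0,\kappa+1}$ contains elements $a$ that are separably algebraic over $K_{0,\kappa}(b_{\kappa})$ and lie \emph{outside} $K_{0,\kappa}(b_{\kappa})^{h}$. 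For such $a$, Theorem~\ref{thm:HR} produces a \emph{different} generator $d$ with $K_{0,\kappa}(b_{\kappa},a)^{h}=K_{0,\kappa}(d)^{h}$, and now $b_{\kappa}$ is expressed as a henselian--arithmetic term $t(d)$ in $d$ over $K_{0,\kappa}$. The image $d'$ of $d$ must then satisfy \emph{both} the quantifier-free type over $\iota_{0,\kappa}(K_{0,\kappa})$ \emph{and} the additional constraint $t(d')=c$; the latter is a nontrivial algebraic condition, and there is no argument that a suitable $d'$ exists in $K_{2}$. The paper avoids this exactly by applying Henselian Rationality \emph{before} choosing any image: for each finite piece it first produces the ``right'' generator $d$, wiggles $d$ into a $p$-independent realisation, extends by the universal property of henselization, and only handles coherence across the different pieces (which may place $b_{\mu}$ differently) by a final compactness argument using $|K_{1}|^{+}$-saturation of $K_{2}$. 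The order matters: $b_{\mu}$ is never wiggled directly.

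Two further, smaller points. First, your justification that the pseudo-Cauchy sequence is of transcendental type --- ``since $b_{\kappa}$ is transcendental over $K_{0,\kappa}$ in an immediate extension'' --- is not sufficient: transcendence of the pseudo-limit does not force the sequence to be of transcendental type. The paper invokes \cite[Lemma 3.11]{KuhlmannPal} together with the fact that $K_{0,\mu}$, being separably tame, is separably algebraically maximal. Second, the imperfection-degree count ``$|\kappa|<\impdeg(K_{1}/K)$'' can fail when $\kappa<\nu$ are infinite ordinals of the same cardinality, and even when it holds, cardinal additivity does not yield strictness of $\pspan[K_{2}]{\iota_{0,\kappa}(K_{0,\kappa})}\subsetneq K_{2}$ for infinite imperfection degree; the paper gets the strictness from hypothesis~{\bf(iii)} (saturation) together with {\bf(ii)}. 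Finally, the limit stage is actually trivial ($K_{0,\mu}=\bigcup_{\kappa<\mu}K_{0,\kappa}$ at limits, so the chain union suffices); invoking saturation there is unnecessary.
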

\begin{claimproof}
We will build a chain of separable $\Lvlambda$-embeddings
$\iota_{0,\mu}:K_{0,\mu}\rightarrow K_{2}$
for $\mu\leq\nu$.
We proceed inductively, noting that the base case is trivial since
$K_{0,0}=K_{0}$.
The limit stage is also easy:
a union of a chain of $\Lvlambda$-embeddings is an $\Lvlambda$-embedding.
We assume as an inductive hypothesis
that $\iota_{0}$ is already extended to a separable $\Lvlambda$-embedding
$\iota_{0,\mu}:K_{0,\mu}\rightarrow K_{2}$.
Let $c\in K_{0,\mu+1}$.
Then $c$ is separably algebraic over $K_{0,\mu}(b_{\mu})$.
By 
Theorem~\ref{thm:HR}
(Henselian Rationality),
there exists $d\in K_{0,\mu}(b_{\mu},c)$ such that $K_{0,\mu}(b_{\mu},c)^{h}=K_{0,\mu}(d)^{h}$.
It is clear that $d$ is inter-$p$-dependent with $b_{\mu}$ in $K_{1}$ over $K_{0,\mu}$.
In particular, $d$ is $p$-independent in $K_{1}$ over $K_{0,\mu}$.

Let $(d_{\delta})_{\delta<\alpha}$ be a pseudo-Cauchy sequence in $K_{0,\mu}$,
without pseudo-limit there,
of which $d$ is a pseudo-limit.
By Kuhlmann--Pal
(specifically by \cite[Lemma 3.11]{KuhlmannPal}),
and since $K_{0,\mu}$ is separably tame (so in particular separably algebraically maximal),
$(d_{\delta})_{\delta<\alpha}$ is of trancendental type.
By Kaplansky's second theorem,
\cite[Theorem 2]{Kaplansky},
its quantifier-free $\Lval$-type $q(x)$
over $K_{0,\mu}$
is implied by
formulas of the form $v(x-d_{\delta})\geq\gamma_{\delta}$,
where $\gamma_{\delta}=v(d_{\delta+1}-d_{\delta})$.
Any finitely many such formulas are already realised in $K_{0,\mu}$.
Let $q_{\iota}(x)$ be the image of $q(x)$ by translating the parameters in each formula by $\iota_{0,\mu}$.
Then $q_{\iota}(x)$ is implied by formulas of the form
$v(x-\iota_{0,\mu}(d_{\delta}))\geq\rho(\gamma_{\delta})$.
Any finitely many such formulas are realised in $\iota_{0,\mu}(K_{0,\mu})$,
and in particular $q_{\iota}(x)$ is consistent.
By saturation of $K_{2}$,
there is even a nontrivial ball $B$ in $K_{2}$ which is the set of realisations of $q_{\iota}(x)$.
Since $\impdeg(K_{1}/K)\leq\impdeg(K_{2}/K)$,
and by saturation,
i.e.~by hypotheses~{\bf(ii,iii)},
we have that
$K_{2}^{(p)}\iota_{0,\mu}(K_{0,\mu})$
is a proper subfield of $K_{2}$.
{Now comes the {\em wiggling}:}
there exists $d'\in B\setminus K_{2}^{(p)}\iota_{0,\mu}(K_{0,\mu})$,
by Lemma~\ref{lem:wiggling}.
Then $d'$ is $p$-independent in $K_{2}$ over $\iota_{0,\mu}(K_{0,\mu})$
and also realises $q_{\iota}(x)$.
Via the assignment $d\mapsto d'$ we extend $\iota_{0,\mu}$ to a separable $\Lvlambda$-embedding
$K_{0,\mu}(d)^{h}\rightarrow K_{2}$.

By the Primitive Element Theorem,
this already shows how to extend $\iota_{0,\mu}$ to a separable $\Lvlambda$-embedding into $K_{2}$
of any finite separably algebraic extension of $K_{0,\mu}(b_{\mu})$ inside $K_{0,\mu+1}$ .
By the Compactness Theorem, we extend $\iota_{0,\mu}$ to a separable embedding
$\iota_{0,\mu+1}:K_{0,\mu+1}\rightarrow K_{2}$,
as required for the inductive step.
By induction, there is a separable $\Lvlambda$-embedding
$\iota_{0,\nu}:K_{0,\nu}\rightarrow K_{2}$
extending $\iota_{0}$.
\end{claimproof}
The remaining extension of $\iota_{0,\nu}$ to
$\iota_{1}:K_{1}\rightarrow K_{2}$
is almost the same.
We construct an $\Lval$-embedding $\iota_{1}:K_{1}\rightarrow K_{2}$,
extending $\iota_{0,\nu}$,
by following the analogous arguments in \cite{Kuh16,KuhlmannPal},
that is by Henselian Rationality and Kaplansky's theory,
but without the ``{\em wiggling}'' argument.
Finally, we see that $\iota_{1}$ is automatically separable
since $K_{1}/K_{0,\nu}$ is separated,
by Lemma~\ref{lem:automatic_separability},
so $\iota_{1}$ is automatically an $\Lvlambda$-embedding.
\end{proof}

\begin{remark}
A similar Embedding Lemma
is applied in the case of separably algebraically maximal Kaplansky fields by Soto Moreno
(\cite{SM25})
to yield a relative quantifier elimination.
\end{remark}

\begin{remark}\label{rem:3.11}
During my discussions with Soto Moreno regarding his QE argument for separably algebraically maximal Kaplansky ("SAMK") valued fields,
we heard from F.~V.~Kuhlmann about a small error in the statement of \cite[Lemma 3.11]{KuhlmannPal}.
There is a hypothesis missing: the pseudo-Cauchy sequence should be supposed to not have a pseudo-limit in M.
This causes no significant problem for the above argument, which in any case is only a sketch.
Many thanks to Franz-Viktor Kuhlmann for this communication.
\end{remark}

\subsection{The resplendent model theory of separably tame valued fields}
\label{section:resplendent_STVF}

The Embedding Lemma yields 
model theoretic results,
specifically Ax--Kochen/Ershov principles and a transfer of decidability.
Moreover these results are resplendent
over the sorts $\mathbf{k}$ for the residue field, and $\bfGamma$ for the value group,
as we explain in this final subsection.

For any expansion $\Lang_{0}$ of $\Lval$, an {\em $(\mathbf{k},\bfGamma)$-expansion} of $\Lang_{0}$ is any expansion in which
\begin{itemize}
\item[---]
the residue field sort $\mathbf{k}$ is expanded to a language $\Lk\supseteq\Lring$,
and
\item[---]
the value group sort $\bfGamma$
is expanded to a language,
$\LGamma\supseteq\Loag$.
\end{itemize}
We emphasise that such a language is simply $\Lang_{0}$ expanded {\em by and only by} $\Lk$ on the residue field sort and $\LGamma$ on the value group sort.
Such an expansion will be denoted $\Lang_{0}(\Lk,\LGamma)$.
Usually $\Lang_{0}$ is either $\Lval$ or $\Lvlambda$.

Recall from \cite[\S2]{AF25_AE}
the notion of an {\em $\Lang$-fragment}%
\footnote{What are here called $\Lang$-fragments are also discussed in \cite{AF25_frag},
though in that paper they are just called ``fragments''.}%
, for a language $\Lang$:
a set $\Frag$ of $\Lang$-formulas that contains $\top$ and $\bot$,
that is closed under (finite) conjunctions and disjunctions,
and is closed under the substitution of one free variable for another.
For an $\Lang$-theory $T$ and an $\Lang$-fragment $\Frag$, we let $T_{\Frag}$ denote the intersection of the deductive closure $T^{\vdash}$ with $\Frag$.

A {\em fragment} is then a functor $\FFrag$ from a subcategory $\mathbb{L}$ of the category
of languages with inclusion
to the category
of sets with functions,
such that $\FFrag(\Lang)\subseteq\Form(\Lang)$,
for each $\Lang\in\mathbb{L}$.
If $T$ is an $\Lang$-theory where $\Lang\in\mathbb{L}$, we write $T_{\FFrag}=T_{\FFrag(\Lang)}$,
similarly if $M$ is an $\Lang$-structure we write $\Th_{\FFrag}(M)=\Th_{\FFrag(\Lang)}(M)=\Th(M)\cap\FFrag(\Lang)$.
For any language $\Lang$, for any fragment $\FFrag$,
and for any $\Lang$-structures $M_{1},M_{2}$ with common substructure $M$,
we write
$$M_{1}\Rrightarrow_{M}M_{2}\text{ in }\FFrag(\Lang)$$
to mean that $\FFrag$ is defined on both $\Lang$ and $\Lang(M)$, and moreover that
$\mathrm{Th}_{\FFrag}(M_{1,M})\subseteq\mathrm{Th}_{\FFrag}(M_{2,M})$,
where $M_{i,M}$ denotes the $\Lang(M)$ expansion of $M_{i}$ in which we interpret each new constant symbol by its corresponding element from $M$.

\begin{theorem}[{Main theorem for Separably Tame Fields}]\label{thm:STVF_main}
Let $\Lang=\Lvlambda(\Lk,\LGamma)$ be a $(\mathbf{k},\bfGamma)$-expansion of $\Lvlambda$.
Let $K_{1},K_{2}\in\Mod_{\Lang}(\STVF^{\eq})$ have common $\Lang$-substructure $K_{0}$
which as a valued field is
defectless,
and $v_{1}K_{1}/v_{0}K_{0}$ is torsion-free
and $K_{1}v_{1}/K_{0}v_{0}$ is relatively algebraically closed.
\begin{enumerate}[{\bf(I)}]
\item
$K_{1}\Rrightarrow_{K_{0}}K_{2}$
in $\Sent_{\exists}(\Lang)$
if and only if
\begin{enumerate}[{\bf(i)}]
\item
$k_{1}\Rrightarrow_{k_{0}}k_{2}$
in $\Sent_{\exists}(\Lang_{\mathbf{k}})$,
\item
$\Gamma_{1}\Rrightarrow_{\Gamma_{0}}\Gamma_{2}$
in $\Sent_{\exists}(\Lang_{\bfGamma})$,
and
\item
$\Impdeg(K_{1}/K_{0})\leq\Impdeg(K_{2}/K_{0})$.
\end{enumerate}
\item
$K_{1}\Rrightarrow_{K_{0}}K_{2}$
in $\Sent(\Lang)$
if and only if
\begin{enumerate}[{\bf(i)}]
\item
$k_{1}\Rrightarrow_{k_{0}}k_{2}$
in $\Sent(\Lang_{\mathbf{k}})$,
\item
$\Gamma_{1}\Rrightarrow_{\Gamma_{0}}\Gamma_{2}$
in $\Sent(\Lang_{\bfGamma})$,
and
\item
$\Impdeg(K_{1}/K_{0})=\Impdeg(K_{2}/K_{0})$.
\end{enumerate}
\end{enumerate}
\end{theorem}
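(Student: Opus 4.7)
The plan is to deploy Theorem~\ref{thm:EP} (the \EP\ for $\Mod(\STVF^{\eq})$) together with saturation and, for part~{\bf(II)}, a back-and-forth argument. For the setup: since $\Lang\supseteq\Lvlambda$ and $K_{0}$ is a common $\Lang$-substructure, Fact~\ref{fact:separable_maps_III} forces $K_{1}/K_{0}$ and $K_{2}/K_{0}$ to be separable extensions of valued fields. Moreover, because $K_{1}$ is separably tame, $K_{1}v_{1}$ is perfect and $v_{1}K_{1}$ is $\hat{p}$-divisible; combined with the hypotheses that $v_{1}K_{1}/v_{0}K_{0}$ is torsion-free and $K_{1}v_{1}/K_{0}v_{0}$ is relatively algebraically closed, one deduces that $K_{0}v_{0}$ is itself perfect (take $p$-th roots in $K_{1}v_{1}$; they are algebraic over $K_{0}v_{0}$, hence in $K_{0}v_{0}$) and that $v_{0}K_{0}$ is $\hat{p}$-divisible (divide in $v_{1}K_{1}$; the result maps to a torsion element in the torsion-free quotient, and so was already in $v_{0}K_{0}$). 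Together with defectlessness (which we read as including henselianity), this makes $K_{0}$ separably tame, so Theorem~\ref{thm:EP} is available over $K_{0}$.

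The forward directions are routine. Existential (resp.~arbitrary) formulas in $\Lk$ or $\LGamma$ are formulas of the same complexity in $\Lang$ on the appropriate sort, so~{\bf(i)} and~{\bf(ii)} in both parts follow from ordinary substructure-preservation. For~{\bf(iii)}, the point is that absolute $p$-independence of an $n$-tuple is quantifier-free in $\Lvlambda$, by Definition~\ref{def:language} together with Lemma~\ref{lem:p_linearity}, so ``$\impdeg(K)\geq n$'' is an existential $\Lvlambda$-sentence; since $\Impdeg(K_{i}/K_{0})$ is governed by $\impdeg(K_{i})$ with $\impdeg(K_{0})$ fixed on both sides, the desired inequality in part~{\bf(I)} follows by $\Sent_{\exists}$-transfer, and in part~{\bf(II)} the same argument run in both directions yields equality.

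For the backward direction of~{\bf(I)}, pass to sufficiently saturated elementary extensions $K_{i}\preceq K_{i}^{*}$. Using hypotheses~{\bf(i),(ii)} together with saturation of $k_{2}^{*}$ and $\Gamma_{2}^{*}$, construct a partial existentially-elementary $\Lk$-embedding $\sigma:k_{1}^{*}\to k_{2}^{*}$ over $k_{0}$ and a partial existentially-elementary $\LGamma$-embedding $\rho:\Gamma_{1}^{*}\to\Gamma_{2}^{*}$ over $\Gamma_{0}$. Further saturation of $K_{2}^{*}$, combined with~{\bf(iii)}, arranges $\impdeg(K_{2}^{*}/K_{0})\geq\impdeg(K_{1}^{*}/K_{0})$ as cardinals; combined with the separability, torsion-freeness, and residue-field-separability conditions already established, we are in the situation of the \EP. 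Theorem~\ref{thm:EP} then produces a separable $\Lvlambda$-embedding $\iota:K_{1}^{*}\to K_{2}^{*}$ over $K_{0}$ inducing $\sigma$ and $\rho$. Because $\iota$ preserves the existential $\Lvlambda$-theory over $K_{0}$ and $\sigma,\rho$ preserve the existential $\Lk$- and $\LGamma$-theories, the map $\iota$ viewed as an $\Lang$-embedding preserves the existential $\Lang$-theory over $K_{0}$; descending via $K_{1}\preceq K_{1}^{*}$ and $K_{2}\preceq K_{2}^{*}$ completes part~{\bf(I)}.

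The backward direction of~{\bf(II)} requires an iterated zig-zag. Using part~{\bf(I)} in both directions (permissible thanks to the equality of the $\Impdeg$'s), one alternately embeds $K_{1}^{*}\hookrightarrow K_{2}^{**}\hookrightarrow K_{1}^{***}\hookrightarrow\cdots$ inside successively more saturated $\Lang$-elementary extensions of $K_{1}$ and $K_{2}$, fusing at the limit into a common $\Lang$-elementary extension that realizes both sides; this yields $K_{1}\equiv_{K_{0}}K_{2}$ in $\Lang$ and hence $K_{1}\Rrightarrow_{K_{0}}K_{2}$ in $\Sent(\Lang)$. The main obstacle is verifying that the hypotheses of Theorem~\ref{thm:EP} persist at every stage of this zig-zag: the evolving common substructure must remain separably tame, for which Lemma~\ref{lem:going_down2} is the key tool; the torsion-freeness of the value-group extension and the relative-algebraic-closedness of the residue-field extension must be maintained; and the $\Impdeg$-equality must propagate consistently under the chosen saturation bounds. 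Reconciling the fixed equality $\Impdeg(K_{1}/K_{0})=\Impdeg(K_{2}/K_{0})$ with the potentially growing absolute imperfection degrees of the intermediate elementary extensions is where the bookkeeping concentrates.
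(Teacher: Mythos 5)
For part \textbf{(I)} and the forward direction of \textbf{(II)} your argument matches the paper's exactly: the forward implications are by interpretation-preservation on the sorts together with the observation that the $\Impdeg$ bounds are expressible by existential $\Lvlambda$-sentences (the paper uses the concrete sentence $\exists b\,\lambda^{cb}_{\mathbf{0}}(1)=1$ detecting $p$-independence of $c{}^\frown b$, which is the same observation you make), and the backward direction of \textbf{(I)} is by saturation plus Theorem~\ref{thm:EP}. Your preliminary reduction showing $K_{0}$ is separably tame is also correct and consistent with the paper's surrounding remarks.

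For the backward direction of \textbf{(II)}, however, your ``iterated zig-zag'' is not the argument the paper invokes, and as described it has a genuine gap. You correctly flag that ``verifying that the hypotheses of Theorem~\ref{thm:EP} persist at every stage'' is the obstacle, but you do not resolve it, and in fact the naive chain fails: at the second step you would need to apply the \EP\ over $K:=\iota(K_{1}^{*})$, the image of the first model inside $K_{2}^{**}$, but the hypothesis (\textbf{iv}) of the \EP\ that $v K_{2}^{**}/vK$ be torsion-free is \emph{not} automatic. Torsion-freeness of both $vK/vK_{0}$ and $vK_{2}^{**}/vK_{0}$ does not yield torsion-freeness of $vK_{2}^{**}/vK$ (consider $0\subseteq 2\mathbb{Z}\subseteq\mathbb{Z}$), and nothing in the \EP\ controls where the image $\iota(K_{1}^{*})$ sits inside $K_{2}^{**}$, so you cannot simply restart the \EP\ over it. Likewise the relative-algebraic-closedness of the residue extension need not survive. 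The paper sidesteps this by citing the back-and-forth arguments of \cite[Lemma~6.1]{Kuh16} and \cite[Lemma~4.1]{KuhlmannPal}, which build a genuine family of partial isomorphisms between carefully chosen \emph{small} substructures (taken to be relatively algebraically closed, so that Lemma~\ref{lem:going_down2} applies at each stage), rather than stacking full embeddings between increasingly saturated models. That careful choice of intermediate substructures is precisely what maintains the \EP\ hypotheses and is the technical content your sketch leaves unaddressed.
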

\begin{proof}
For {\bf(I)},
the direction $\Rightarrow$ is almost trivial:
the interpretations of both $\mathbf{k}$ and $\bfGamma$ map
existential formulas to existential formulas.
Moreover,
if $\Impdeg(K_{0})=\infty$,
then certainly $\Impdeg(K_{1}/K_{0})=\Impdeg(K_{2}/K_{0})=\infty$.
Otherwise, suppose that $\Impdeg(K_{0})=m$ and let $c\in\pB[(K_{0})]$ be a $p$-basis of $K_{0}$.
If $\Impdeg(K_{1}/K_{0})\geq n$ then
$K_{1}\models\exists b=(b_{0},\ldots,b_{n-1})\;\lambda^{cb}_{\mathbf{0}}(1)=1$,
where $\mathbf{0}$ is the multi-index that is constantly zero.
By hypothesis, $K_{2}$ also models this sentence.
Therefore $\Impdeg(K_{2}/K_{0})\geq n$.
For the converse direction we suppose that {\bf(I:i,ii,iii)} hold.
Let $K_{1}^{*}\succeq K_{1}$ be an $\aleph_{0}$-saturated elementary extension,
and let $K_{2}^{*}\succeq K_{2}$ be an $|K_{1}^{*}|^{+}$-saturated elementary extension.
By saturation hypotheses,
there is
an $\Lk$-embedding $k^{*}_{1}\rightarrow k^{*}_{2}$ over $k_{0}$
and
an $\LGamma$-embedding $\Gamma^{*}_{1}\rightarrow\Gamma^{*}_{2}$ over $\Gamma_{0}$.
Then the three valued fields $K_{1}^{*},K_{2}^{*}$, with common valued subfield $K_{0}$,
satisfy the hypotheses of \EP.
The proof of {\bf(II)} is a standard back-and-forth argument,
making use of Theorem~\ref{thm:EP},
for example following the proof of \cite[Lemma 6.1]{Kuh16} or \cite[Lemma 4.1]{KuhlmannPal}.
\end{proof}

In Theorem~\ref{thm:sAKE} we will deduce that the class
$\Mod(\STVF^{\eq})$
satisfies the separable AKE principles $\sAKE^{\square}$,
for $\square\in\{\equiv,\equiv_{\exists},\preceq,\preceq_{\exists}\}$,
resplendently.
These principles are defined as follows.

\begin{definition}\label{def:sAKE}
Let $\Lang$ be
an expansion of a $(\mathbf{k},\bfGamma)$-expansion $\Lvlambda(\Lk,\LGamma)$ of $\Lvlambda$,
and let $\Lang_{0}\subseteq\Lang$.
Let $\mathbf{C}$ be a class of $\Lang$-structures
and let $\square\in\{\equiv,\equiv_{\exists},\preceq,\preceq_{\exists}\}$.
We say that $\mathbf{C}$
is an
{\em $\sAKE^{\square}$-class}
for the triple of languages $(\Lang_{0},\Lk,\LGamma)$,
if
for all $K_{1},K_{2}\in\mathbf{C}$
(where we additionally suppose $K_{1}\subseteq K_{2}$ in case $\square$ is either $\preceq$ or $\preceq_{\exists}$)
we have
\begin{itemize}
\item
$K_{1}\square K_{2}$
in $\Lang_{0}$
\end{itemize}
if and only if
\begin{itemize}
\item
$k_{1}\square k_{2}$
in $\Lk$,
\item
$\Gamma_{1}\square\Gamma_{2}$
in $\LGamma$,
and
\item
$\Impdeg(K_{1})=\Impdeg(K_{2})$.
\end{itemize}
In this case we say that $\mathbf{C}$
satisfies the {\em separable Ax--Kochen/Ershov principle}
$\sAKE^{\square}$
for the languages $\Lang_{0}$, $\Lk$, and $\LGamma$.
\end{definition}

\begin{theorem}[Resplendent Ax--Kochen/Ershov]\label{thm:sAKE}
Let $\Lang=\Lvlambda(\Lk,\LGamma)$ be a $(\mathbf{k},\bfGamma)$-expansion of $\Lvlambda$.
Let $\square\in\{\equiv,\equiv_{\exists},\preceq,\preceq_{\exists}\}$.
The class $\Mod_{\Lang}(\STVF^{\eq})$ of all $\Lang$-structures which expand separably tame valued fields of equal characteristic is
an
$\sAKE^{\square}$-class
for $(\Lang,\Lk,\LGamma)$.
\end{theorem}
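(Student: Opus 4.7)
The plan is to deduce each of the four $\sAKE^{\square}$ principles from the transfer statement of Theorem~\ref{thm:STVF_main}, by constructing in each case a common defectless $\Lang$-substructure $K_{0}$ of (saturated elementary extensions of) $K_{1}$ and $K_{2}$ satisfying the hypotheses of that theorem. The direction in which $K_{1}\square K_{2}$ in $\Lang$ forces the residue field, value group, and imperfection degree conditions is routine: both sorts $\mathbf{k}$ and $\bfGamma$ are uniformly existentially (and universally) interpretable in $\Lang$, and the value of $\Impdeg(K)$ is pinned down by the $\Lring$-sentences $\iota_{p,\leq\fraki}$ of Definition~\ref{def:XF}. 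So I focus on the converse direction.

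Take first $\square=\equiv$. Assume $k_{1}\equiv k_{2}$, $\Gamma_{1}\equiv\Gamma_{2}$, and $\Impdeg(K_{1})=\Impdeg(K_{2})$. Pass to sufficiently saturated elementary extensions $K_{i}^{*}\succeq K_{i}$ so that, by Lemmas~\ref{lem:cross-section} and~\ref{lem:section}, there exist cross-sections $\chi_{i}\colon v_{i}K_{i}^{*}\to K_{i}^{*\times}$ and sections $\zeta_{i}\colon K_{i}^{*}v_{i}\to K_{i}^{*}$, and by saturation there are an $\Lk$-isomorphism $\varphi_{k}\colon K_{1}^{*}v_{1}\to K_{2}^{*}v_{2}$ and an $\LGamma$-isomorphism $\varphi_{\Gamma}\colon v_{1}K_{1}^{*}\to v_{2}K_{2}^{*}$. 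Form $K_{0}^{(i)}:=\bbF_{p}(\zeta_{i}(K_{i}^{*}v_{i}),\chi_{i}(v_{i}K_{i}^{*}))^{\rac}$, taking relative algebraic closure in $K_{i}^{*}$. Arguing as in the proof of Theorem~\ref{thm:EP}, each $K_{0}^{(i)}/\bbF_{p}$ is without transcendence defect; by Lemma~\ref{lem:going_down2} each $K_{0}^{(i)}$ is separably tame; and since $K_{i}^{*}v_{i}$ is perfect, each $K_{0}^{(i)}$ is in fact tame, and hence defectless. The isomorphisms $\varphi_{k}$ and $\varphi_{\Gamma}$ transport through $\zeta_{i}$ and $\chi_{i}$ to yield an $\Lang$-isomorphism $K_{0}^{(1)}\cong K_{0}^{(2)}$, automatically an $\Lvlambda$-isomorphism by Fact~\ref{fact:separable_maps_III}; fixing such an identification gives a common $\Lang$-substructure $K_{0}$ of $K_{1}^{*}$ and $K_{2}^{*}$. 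The extensions $K_{i}^{*}/K_{0}$ are immediate, so the hypotheses on torsion-free value group extension and relatively algebraically closed residue field extension hold trivially, while additivity of imperfection degree in separable extensions reduces $\Impdeg(K_{1}^{*})=\Impdeg(K_{2}^{*})$ to $\Impdeg(K_{1}^{*}/K_{0})=\Impdeg(K_{2}^{*}/K_{0})$. Theorem~\ref{thm:STVF_main}~{\bf(II)} then gives $K_{1}^{*}\Rrightarrow_{K_{0}}K_{2}^{*}$ in $\Sent(\Lang)$, and by symmetry also the converse, so $K_{1}\equiv K_{2}$.

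For $\square=\equiv_{\exists}$, the same construction yields existential $\Lk$- and $\LGamma$-embeddings in place of isomorphisms (via saturation), and Theorem~\ref{thm:STVF_main}~{\bf(I)} provides the corresponding existential transfer. For $\square\in\{\preceq,\preceq_{\exists}\}$, where $K_{1}\subseteq K_{2}$, the approach is to verify the Tarski--Vaught test one formula and parameter tuple at a time: given $\bar{a}\in K_{1}$, work in a suitably saturated $K_{2}^{*}\succeq K_{2}$ together with an elementary extension $K_{1}^{*}\preceq K_{2}^{*}$ of $K_{1}$ inside $K_{2}^{*}$, and build $K_{0}\subseteq K_{1}^{*}\cap K_{2}^{*}$ by the same section/cross-section construction but additionally incorporating $\bar{a}$ into the generating set, so that $\bar{a}\in K_{0}$; then the relevant part of Theorem~\ref{thm:STVF_main} gives $K_{1}^{*}\equiv_{K_{0}}K_{2}^{*}$ (or the existential analogue), which transfers back to $K_{1}$ and $K_{2}$ since $K_{i}\preceq K_{i}^{*}$. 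The main obstacle is the compatibility bookkeeping --- in particular, confirming that the $\Lang$-isomorphism $K_{0}^{(1)}\cong K_{0}^{(2)}$ is compatible with the full $(\mathbf{k},\bfGamma)$-expansion, which follows from construction since its action on the sorts $\mathbf{k}$ and $\bfGamma$ is precisely $\varphi_{k}$ and $\varphi_{\Gamma}$, and handling the imperfection degree condition cleanly when $\Impdeg=\infty$.
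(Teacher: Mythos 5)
Your proof is essentially correct in outcome, but it takes a substantially more laborious route than the paper's, and in one place it misses the simplest choice available. The paper's entire proof is four one-line applications of Theorem~\ref{thm:STVF_main}: for $\square\in\{\equiv,\equiv_{\exists}\}$ one takes $K_{0}=\bbF_{p}$ trivially valued, and for $\square\in\{\preceq,\preceq_{\exists}\}$ one takes $K_{0}=K_{1}$. In each case the hypotheses of Theorem~\ref{thm:STVF_main} on $K_{0}$ are verified essentially for free: $\bbF_{p}$ trivially valued is defectless with trivial value group and perfect residue field, so the torsion-freeness and residue-extension conditions and $\Impdeg(K_{i}/K_{0})=\Impdeg(K_{i})$ are immediate; and for $K_{0}=K_{1}$ all three conditions are vacuous.

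Your construction of a common $K_{0}$ via sections, cross-sections, and relative algebraic closure inside saturated elementary extensions is precisely the work done \emph{inside} the proofs of Theorem~\ref{thm:EP} and Theorem~\ref{thm:STVF_main}. Theorem~\ref{thm:STVF_main} is stated over an arbitrary suitable $K_{0}$ exactly so that its user need not re-do that work; duplicating it here is harmless but pointless, and it introduces bookkeeping you then have to discharge (checking that the two fields $K_{0}^{(i)}$ really are $\Lang$-isomorphic including the $\Llambda$- and $(\Lk,\LGamma)$-structure, that the imperfection degree conditions transport, and that the saturated extensions you introduce do not conflict with the saturation performed internally by Theorem~\ref{thm:STVF_main}). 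For the $\preceq$ and $\preceq_{\exists}$ cases your Tarski--Vaught scheme is strictly more complicated than what is needed: since $K_{1}\subseteq K_{2}$ is given, taking $K_{0}=K_{1}$ is already an eligible common $\Lang$-substructure, and Theorem~\ref{thm:STVF_main}~{\bf(II)} (resp.~{\bf(I)}) then gives $K_{1}\preceq K_{2}$ (resp.~$K_{1}\preceq_{\exists}K_{2}$) directly, with no need to incorporate parameter tuples one at a time. The one place where your extra care is genuinely well placed is the forward direction: you are right that one must check that $\Impdeg(K)$ is recoverable from $\Th_{\Lang}(K)$; the paper buries this inside the forward direction of Theorem~\ref{thm:STVF_main}~{\bf(I)}, where it is established via the $\Lvlambda$-sentences $\exists b\;\lambda^{cb}_{\mathbf{0}}(1)=1$.
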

\begin{proof}
Firstly, if $\square$ is $\equiv$,
the result follows from Theorem~\ref{thm:STVF_main}~{\bf(II)}
applied with $K_{0}=\mathbb{F}_{p}$ trivially valued.
Secondly, if $\square$ is $\equiv_{\exists}$,
the result follows from Theorem~\ref{thm:STVF_main}~{\bf(I)}
applied twice, with $K_{0}=\mathbb{F}_{p}$ trivially valued.
Thirdly, if $\square$ is $\preceq$,
the result follows from Theorem~\ref{thm:STVF_main}~{\bf(II)}
applied to $K_{0}=K_{1}$.
Finally, if $\square$ is $\preceq_{\exists}$,
the result follows from Theorem~\ref{thm:STVF_main}~{\bf(I)}
applied to $K_{0}=K_{1}$.
\end{proof}

\begin{proof}[Proof of Theorem~\ref{thm:intro_2}]
This is the special case of Theorem~\ref{thm:sAKE} in which
$\Lk=\Lring$,
$\LGamma=\Loag$,
and (thus)
$\Lang=\Lvlambda$.
\end{proof}

\begin{corollary}\label{cor:WEAK}
For $p\in\mathbb{P}$ and $\frakI\in\mathbb{N}\cup\{\infty\}$.
\begin{enumerate}[{\bf(i)}]
\item
$\STVF^{\eq}_{p,\frakI}$
is
resplendently complete
relative to the value group and residue field.
\item
$\STVF^{\eq}_{\lambda,p,\frakI}$
is
resplendently model complete 
relative to the value group and residue field.
\end{enumerate}
\end{corollary}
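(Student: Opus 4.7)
The plan is to derive both parts of Corollary~\ref{cor:WEAK} as direct specializations of Theorem~\ref{thm:sAKE}, choosing the parameter $\square$ appropriately in each case.

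For (i), I fix an arbitrary $(\mathbf{k},\bfGamma)$-expansion $\Lang_{0}=\Lval(\Lk,\LGamma)$ of $\Lval$, any complete $\Lk$-theory $T_{\mathbf{k}}$ extending the theory of fields, and any complete $\LGamma$-theory $T_{\bfGamma}$ extending the theory of ordered abelian groups. My goal is to show that the combined theory $\STVF^{\eq}_{p,\frakI}\cup T_{\mathbf{k}}\cup T_{\bfGamma}$ is $\Lang_{0}$-complete. Given two models $K_{1},K_{2}$, I first pass to their canonical $\Lvlambda(\Lk,\LGamma)$-expansions $\tilde{K}_{1},\tilde{K}_{2}$ of Fact~\ref{fact:separable_maps_III}. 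Since the $\Llambda$-structure is $\Lval$-definable on any field, we have $K_{1}\equiv K_{2}$ in $\Lang_{0}$ if and only if $\tilde{K}_{1}\equiv\tilde{K}_{2}$ in $\Lvlambda(\Lk,\LGamma)$. Then Theorem~\ref{thm:sAKE} applied with $\square=\equiv$ delivers the conclusion: both residue fields satisfy $T_{\mathbf{k}}$ hence are $\Lk$-elementarily equivalent, both value groups satisfy $T_{\bfGamma}$ hence are $\LGamma$-elementarily equivalent, and $\Impdeg(K_{1})=\Impdeg(K_{2})=\frakI$ by construction.

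For (ii), I fix an arbitrary $(\mathbf{k},\bfGamma)$-expansion $\Lang=\Lvlambda(\Lk,\LGamma)$ of $\Lvlambda$ and an inclusion $K_{1}\subseteq K_{2}$ in $\Mod_{\Lang}(\STVF^{\eq}_{\lambda,p,\frakI})$ satisfying $k_{1}\preceq k_{2}$ in $\Lk$ and $\Gamma_{1}\preceq\Gamma_{2}$ in $\LGamma$. The hypothesis $\Impdeg(K_{1})=\Impdeg(K_{2})=\frakI$ is automatic from membership in the theory, and the $\Lvlambda$-inclusion guarantees that $K_{2}/K_{1}$ is a separable field extension, again by Fact~\ref{fact:separable_maps_III}. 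Theorem~\ref{thm:sAKE} with $\square=\preceq$ then yields $K_{1}\preceq K_{2}$ in $\Lang$, which is what is required for relative model completeness in the given expansion.

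Neither part presents any real obstacle, since both are immediate consequences of the resplendent principle of Theorem~\ref{thm:sAKE}; the only subtlety worth underlining is the role of the language choice. In (i) we must bridge $\Lval$ and $\Lvlambda$ via the $\Lval$-definability of the $\Llambda$-structure, which is what permits a statement about $\Lval$-completeness to follow from an AKE principle stated in $\Lvlambda$. In (ii) the passage to $\Lvlambda$ is indispensable in a different way: it is precisely the presence of the Lambda function symbols in the language that forces substructure inclusions to be separable field extensions, which is the hypothesis needed to invoke the Embedding Lemma (Theorem~\ref{thm:EP}) underlying Theorem~\ref{thm:sAKE}.
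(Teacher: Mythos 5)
Your proof is correct and takes the same route as the paper's, which simply states that {\bf(i)} is a particular case of the $\sAKE^{\equiv}$ principle and {\bf(ii)} of the $\sAKE^{\preceq}$ principle from Theorem~\ref{thm:sAKE}. You have merely unfolded the terse reference, correctly identifying the role of the $\Lval$-definability of the $\Llambda$-structure in {\bf(i)} and of Fact~\ref{fact:separable_maps_III} in guaranteeing separability of $\Lvlambda$-substructure inclusions in {\bf(ii)}.
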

\begin{proof}
{\bf(i)} is just a particular case of the
$\sAKE^{\equiv}$ principle,
while ${\bf(ii)}$ is a particular case of the
$\sAKE^{\preceq}$ principle.
\end{proof}

Recall the sentences
$\chi_{p}$,
$\iota_{p,\leq\fraki}$,
and
$\iota_{p,\fraki}$,
and the theories
$\Fth_{p}$
and
$\Xth_{p,\frakI}$
introduced in Definition~\ref{def:XF}.
For $\Lang=\Lvlambda(\Lk,\LGamma)$,
let $\iota_{\mathbf{k}}$
denote the ``standard'' interpretation
$\Form(\Lk)\rightarrow\Form(\Lang)$
of the residue field
in a valued field,
the latter viewed as an $\Lval$-structure,
by relativising each $\Form(\Lk)$ to the sort $\mathbf{k}$.
Likewise let 
$\iota_{\bfGamma}$
denote the standard interpretation
$\Form(\LGamma)\rightarrow\Form(\Lang)$
for the value group on the sort $\bfGamma$.

\begin{definition}\label{def:IMP}
Let $\IMP$
(for ``imperfection'') be the
$\Lring$-fragment
consisting of all Boolean combinations of the $\Lring$-sentences
$\chi_{p}$ and $\iota_{p,\leq\fraki}$.
For $\Lang=\Lvlambda(\Lk,\LGamma)$ a $(\mathbf{k},\bfGamma)$-expansion of $\Lvlambda$,
let $\VFIMP(\Lang)$ be the $\Lang$-fragment generated by
$\iota_{\mathbf{k}}\mathrm{Sent}(\Lk)$
and
$\iota_{\bfGamma}\mathrm{Sent}(\LGamma)$,
and $\IMP$.
Then $\VFIMP$ is the fragment thus defined on the full subcategory of languages that are $(\mathbf{k},\bfGamma)$-expansions of $\Lvlambda$.
\end{definition}

\begin{theorem}\label{thm:monotone}
Let $\Lang=\Lvlambda(\Lk,\LGamma)$
be a $(\mathbf{k},\bfGamma)$-expansion of $\Lvlambda$,
and
let $K,L\in\Mod_{\Lang}(\STVF^{\eq})$.
If $\mathrm{Th}_{\VFIMP}(K)=\mathrm{Th}_{\VFIMP}(L)$
then $\mathrm{Th}(K)=\mathrm{Th}(L)$.
\end{theorem}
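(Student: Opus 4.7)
The plan is to reduce Theorem~\ref{thm:monotone} directly to the $\sAKE^{\equiv}$ principle supplied by Theorem~\ref{thm:sAKE}. The point is that the three data points appearing on the right-hand side of the $\sAKE^{\equiv}$ equivalence---the $\Lk$-theory of the residue field, the $\LGamma$-theory of the value group, and the elementary imperfection degree---are each captured by a sub-collection of $\VFIMP(\Lang)$. So the hypothesis $\mathrm{Th}_{\VFIMP}(K)=\mathrm{Th}_{\VFIMP}(L)$ suffices to trigger Theorem~\ref{thm:sAKE}, and the latter then delivers the desired conclusion.

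First I would verify that the hypothesis yields $k_{K}\equiv k_{L}$ in $\Lk$ and $\Gamma_{K}\equiv\Gamma_{L}$ in $\LGamma$. For any $\Lk$-sentence $\varphi$, its image $\iota_{\mathbf{k}}\varphi$ lies in $\VFIMP(\Lang)$ by definition, and standard interpretations satisfy $K\models\iota_{\mathbf{k}}\varphi$ iff $k_{K}\models\varphi$. Therefore equality of the $\VFIMP$-theories forces agreement of the residue fields on every $\Lk$-sentence, whence $k_{K}\equiv k_{L}$. An identical argument using $\iota_{\bfGamma}$ gives $\Gamma_{K}\equiv\Gamma_{L}$ in $\LGamma$.

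Second, I would check that the sub-fragment $\IMP\subseteq\VFIMP(\Lang)$ completely pins down both characteristic and elementary imperfection degree. Since $\IMP$ contains every $\chi_{p}$ and every $\iota_{p,\leq\fraki}$, and is closed under Boolean combinations, the characteristic of $K$ is identified by which single $\chi_{p}$ (or none) holds, and then in that fixed characteristic $p$ the elementary imperfection degree is identified by the pattern of truth values of the sentences $\iota_{p,\leq\fraki}$ for $\fraki\in\mathbb{N}$: finite degree $\fraki$ corresponds to $\iota_{p,\fraki}=\iota_{p,\leq\fraki}\wedge\neg\iota_{p,\leq\fraki-1}$, and infinite degree corresponds to the family $\{\neg\iota_{p,\leq\fraki}\mid\fraki\in\mathbb{N}\}$. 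Since $\mathrm{Th}_{\VFIMP}(K)=\mathrm{Th}_{\VFIMP}(L)$ forces $K$ and $L$ to agree on all these sentences individually, we deduce that $K$ and $L$ have the same characteristic and $\Impdeg(K)=\Impdeg(L)$.

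Having assembled the three bullet conditions appearing in the definition of an $\sAKE^{\equiv}$-class (Definition~\ref{def:sAKE}), I would invoke Theorem~\ref{thm:sAKE} (applied to $\square=\equiv$) to conclude $K\equiv L$ in $\Lang$, which is precisely $\mathrm{Th}(K)=\mathrm{Th}(L)$. There is no genuine obstacle here: the proof is pure bookkeeping, asserting that the very spartan-looking fragment $\VFIMP$ nevertheless houses all the information that the resplendent Ax--Kochen/Ershov theorem requires. The only point deserving attention is the correct verification that the Boolean-closed collection of $\chi_{p}$'s and $\iota_{p,\leq\fraki}$'s really does separate the full range of characteristics and elementary imperfection degrees---including the infinite case, which is handled by the full infinite family $\{\neg\iota_{p,\leq\fraki}\}_{\fraki<\omega}$ rather than any single sentence.
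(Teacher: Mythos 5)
Your proof is correct and takes the same route as the paper, which simply states that the theorem ``is a reformulation'' of the $\sAKE^{\equiv}$ principle from Theorem~\ref{thm:sAKE}; you have supplied exactly the bookkeeping that the paper leaves implicit, namely that $\iota_{\mathbf{k}}\Sent(\Lk)$, $\iota_{\bfGamma}\Sent(\LGamma)$, and $\IMP$ live inside $\VFIMP(\Lang)$ and between them determine the residue field theory, value group theory, and elementary imperfection degree appearing in Definition~\ref{def:sAKE}.
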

\begin{proof}
This is a reformulation of the
$\sAKE^{\equiv}$ principle
for $(\Lang,\Lk,\LGamma)$
from
Theorem~\ref{thm:sAKE}.
\end{proof}

The Hahn series fields
$k(\!(t^{\Gamma})\!)$, equipped with the $t$-adic valuation,
are natural examples of tame valued fields of equal characteristic,
with any given ``suitable'' pair of residue field $k$ and value group $\Gamma$.
By contrast, we lack such natural examples of separably tame valued fields with positive elementary imperfection degree $\frakI>0$.
Nevertheless, the following lemma justifies the existence of some example, for each suitable pair $k$ and $\Gamma)$.

\begin{lemma}\label{lem:surjective}
Let $(p,\frakI)\in\mathbb{P}\times(\mathbb{N}\cup\{\infty\})$,
let $k$ be any perfect field of characteristic $p$,
and
let $\Gamma$ be a $p$-divisible ordered abelian group.
There exists $K\models\STVF^{\eq}_{p,\frakI}$ with $Kv=k$ and $vK=\Gamma$.
\end{lemma}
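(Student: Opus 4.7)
The plan is to realize the desired $K$ as a subfield of the Hahn series field $F := k(\!(t^{\Gamma})\!)$. By classical theory $F$ is maximally complete (spherically complete) and in particular tame, with $Fv = k$ and $vF = \Gamma$; and since $k$ is perfect and $\Gamma$ is $p$-divisible, $F$ is itself perfect (take $p$-th roots coordinate-wise). This already handles $\frakI = 0$ by setting $K := F$, which is then tame of imperfection degree $0$.

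For $\frakI > 0$ (which forces $\Gamma \neq 0$, since otherwise $K = Kv = k$ would be perfect), I would choose a family $\{c_{i}\}_{i<\frakI} \subseteq F$ that is algebraically independent over $k(t^{\Gamma})$ and satisfies $v(c_{i}) > 0$ for every $i$: such a family exists because, whenever $\Gamma$ is non-trivial, $F$ has transcendence degree at least $2^{\aleph_{0}}$ over $k(t^{\Gamma})$, and $\frakI = \infty$ in the sense of Definition~\ref{def:XF} merely requires some infinite imperfection degree. Set $K_{0} := k(t^{\Gamma})(c_{i} : i<\frakI)$ and let $K$ be the relative separable-algebraic closure of $K_{0}$ inside $F$. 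The chain $k(t^{\Gamma}) \subseteq K \subseteq F$ forces $vK = \Gamma$; the positive values $v(c_{i}) > 0$ make the $c_{i}$ residue to zero, so $Kv = k$; and $K$ is henselian because its henselization is separable algebraic over $K$ and embeds into $F^{h} = F$, hence lies inside $K$.

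For the imperfection degree: the subfield $k(t^{\Gamma})$ is perfect (as $k$ is perfect and $\Gamma$ is $p$-divisible), so the algebraically independent family $\{c_{i}\}$ is in fact a $p$-basis of $K_{0}$, giving $\Impdeg(K_{0}) = \frakI$. The extension $K/K_{0}$ is separable algebraic, hence separated (for any separable $\alpha$ one has $K_{0}(\alpha) = K_{0}(\alpha^{p})$, the infinite case following by direct limit), so Lemma~\ref{lem:Mac_Lane_IIa} yields that $\{c_{i}\}$ remains a $p$-basis of $K$ and $\Impdeg(K) = \frakI$.

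The main obstacle is proving that $K$ is separably defectless. I would first pass to the perfect hull: $K^{\perf}$ is relatively algebraically closed in $F$, because any $\alpha \in F$ algebraic over the perfect field $K^{\perf}$ is separable over $K^{\perf}$, so $\alpha^{p^{n}}$ is separable algebraic over $K$ for some $n$, hence lies in $K$, forcing $\alpha \in K^{\perf}$. Since $F$ is tame and $Fv = k = K^{\perf}v$ is trivially algebraic over itself, Lemma~\ref{lem:going_down2} gives that $K^{\perf}$ is separably tame, so (being perfect) tame. To descend, for any finite separable $L/K$ form the compositum $L \cdot K^{\perf}$: linear disjointness of $L/K$ and $K^{\perf}/K$ gives $[L \cdot K^{\perf} : K^{\perf}] = [L:K]$, and tameness of $K^{\perf}$ expresses this degree as $(v(L \cdot K^{\perf}) : vK^{\perf}) \cdot ((L \cdot K^{\perf})v : K^{\perf}v)$. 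These two indices coincide with $(vL:vK)$ and $(Lv:Kv)$ respectively: the $p$-divisibility of $vK = \Gamma$ forces the finite group $vL/vK$ to be of order coprime to $p$ (any $\delta \in vL$ with $p\delta \in vK$ already lies in $vK$, since $vL$ is torsion-free), so passing to $p$-divisible hulls preserves the index; and perfectness of $Kv = k$ makes $Lv/Kv$ separable with $Lv$ already perfect. Thus $[L:K] = (vL:vK)(Lv:Kv)$, so $K$ is separably defectless and $K \models \STVF^{\eq}_{p,\frakI}$.
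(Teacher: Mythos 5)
Your proof is correct, but it takes a genuinely different route from the paper's.

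The paper's own argument is very short: after forming $L_{0}=k(t^{\Gamma},B_{0})$ exactly as you do, it passes directly to a \emph{separable tamification} of $L_{0}$ inside $F=k(\!(t^{\Gamma})\!)$, that is, a fixed field of a complement of the ramification group inside $\mathrm{Gal}(L_{0}^{\mathrm{sep}}/L_{0})$, and then asserts that the resulting field is separably tame with the prescribed residue field, value group, and imperfection degree. This pushes all the work into the ramification-theoretic machinery of tamification.

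You instead take $K$ to be the relative separable algebraic closure of $K_{0}=k(t^{\Gamma})(c_{i}:i)$ inside $F$ and verify each axiom of $\STVF^{\eq}_{p,\frakI}$ directly. The interesting part is your verification of separable defectlessness: pass to $K^{\mathrm{perf}}$, show it is relatively algebraically closed in $F$, invoke the going-down Lemma~\ref{lem:going_down2} to see $K^{\mathrm{perf}}$ is tame, and then descend to $K$ using linear disjointness together with the observations that $v(LK^{\mathrm{perf}})=vL$ (because $vL$ is itself $p$-divisible, being a torsion-free extension of $\Gamma$ of index prime to $p$) and $(LK^{\mathrm{perf}})v=Lv$ (because $Lv$ is perfect). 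This is a longer but considerably more elementary and self-contained argument; it avoids the tamification construction entirely and only uses material already in the paper plus standard valuation theory. The imperfection-degree computation via Lemma~\ref{lem:Mac_Lane_IIa} is also correct and parallels the paper's observation that $B_{0}$ is a $p$-basis of $L_{0}$.

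Two small remarks. First, the constraint $v(c_{i})>0$ is harmless but unnecessary: since $k(t^{\Gamma})\subseteq K\subseteq F$ you automatically get $Kv=k$ and $vK=\Gamma$ by squeezing. Second, the notation $\{c_{i}\}_{i<\frakI}$ should be read as a family of cardinality $\frakI$ when $\frakI<\infty$ and of cardinality $\aleph_{0}$ when $\frakI=\infty$; that is what the paper does, and your argument needs the same interpretation for the $\Impdeg$ computation at the end. Your observation that the lemma's statement is vacuous or false in the corner case $\Gamma=0$, $\frakI>0$ is correct; the paper's proof has the same implicit restriction, since a transcendence basis of $k/k$ is empty.
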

\begin{proof}
We consider the immediate extension $k(\!(t^{\Gamma})\!)/k(t^{\Gamma})$,
both equipped with the $t$-adic valuation.
Let $B$ be a transcendence basis of this field extension, 
let $B_{0}\subseteq B$ be a subset of cardinality $\frakI$ if $\frakI<\infty$,
or of cardinality $\aleph_{0}$ if $\frakI=\infty$.
We notice that $B_{0}$ is a $p$-basis of $L_{0}:=k(t^{\Gamma},B_{0})$.
Let $L$ be a separable tamification of $L_{0}$
taken inside $k(\!(t^{\Gamma})\!)$,
i.e.~$L$ is a fixed field inside the separable closure of $L_{0}$ of a complement of the ramification group inside the absolute Galois group of $L_{0}$.
Then $L$ is separably tame,
with residue field $k$ and value group $G$,
and of elementary imperfection degree $\frakI$.
\end{proof}

\begin{theorem}\label{thm:elimination}
Let $\Lang=\Lvlambda(\Lk,\LGamma)$
be a $(\mathbf{k},\bfGamma)$-expansion of $\Lvlambda$.
There is an
``elimination''
function
$\epsilon:\mathrm{Sent}(\Lang)\rightarrow\VFIMP(\Lang)$
such that
$\STVF^{\eq}\models(\varphi\leftrightarrow\epsilon\varphi)$,
for all $\varphi\in\mathrm{Sent}(\Lang)$.
Moreover, if $\Lang$ is computable then $\epsilon$ may also be chosen to be computable.
\end{theorem}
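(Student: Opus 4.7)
The plan is a standard compactness argument on top of Theorem~\ref{thm:monotone}. Write $T=\STVF^{\eq}$ and fix $\varphi\in\Sent(\Lang)$; the goal is to produce a $T$-equivalent $\VFIMP(\Lang)$-sentence $\epsilon\varphi$. A preliminary observation I would record is that $\VFIMP(\Lang)$ is closed under negation up to logical equivalence, because its generators $\iota_{\mathbf{k}}\Sent(\Lk)$, $\iota_{\bfGamma}\Sent(\LGamma)$, and $\IMP$ are each closed under negation, and De Morgan lets us push negations inside the $\wedge,\vee$-closure.

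The main step is to set $\Phi:=\{\psi\in\VFIMP(\Lang)\mid T\cup\{\varphi\}\models\psi\}$ and show that $T\cup\Phi\models\varphi$. Take $K\models T\cup\Phi$ and argue first that $T\cup\{\varphi\}\cup\Th_{\VFIMP}(K)$ is consistent: otherwise compactness yields a finite $\Psi\subseteq\Th_{\VFIMP}(K)$ with $T\cup\{\varphi\}\models\neg\bigwedge\Psi$, putting the (equivalent) $\VFIMP(\Lang)$-sentence $\neg\bigwedge\Psi$ into $\Phi$ and contradicting $K\models\Psi$. Any model $L$ of the consistent set then satisfies $\Th_{\VFIMP}(K)\subseteq\Th_{\VFIMP}(L)$, and closure under negation promotes this to equality; Theorem~\ref{thm:monotone} then gives $\Th(K)=\Th(L)$, and since $L\models\varphi$ we conclude $K\models\varphi$. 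Once $T\cup\Phi\models\varphi$ is established, a second application of compactness produces a finite $\Phi_0\subseteq\Phi$ with $T\cup\Phi_0\models\varphi$, and $\epsilon\varphi:=\bigwedge\Phi_0\in\VFIMP(\Lang)$ does the job, the converse direction of the equivalence being immediate from the definition of $\Phi$.

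For the computable version, if $\Lang$ is computable then $T$ is recursively axiomatised, the set $\VFIMP(\Lang)$ is recursively enumerable, and so is the relation ``$T\vdash\varphi\leftrightarrow\psi$''. The algorithm dovetails through candidate pairs $(\psi,\pi)$ with $\psi\in\VFIMP(\Lang)$ and $\pi$ a formal $T$-proof of $\varphi\leftrightarrow\psi$, halting on the first success and outputting that $\psi$; termination is guaranteed by the existence argument above.

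The main obstacle I anticipate is the closure-under-negation step. The definition of ``fragment'' in the excerpt requires only closure under $\wedge$, $\vee$, $\top$, $\bot$, and variable renaming, so $\VFIMP(\Lang)$ need not be literally closed under $\neg$; the whole argument depends on its being closed up to $T$-equivalence, which in turn rests on the generators being effectively closed under negation. For $\IMP$ this uses its explicit presentation as Boolean combinations of the sentences $\chi_p,\iota_{p,\leq\fraki}$, and for the other two generators it is immediate since $\iota_{\mathbf{k}}$ and $\iota_{\bfGamma}$ interpret full sentence-sets of $\Lk$ and $\LGamma$.
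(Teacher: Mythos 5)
Your proof is correct, and the content is essentially the same as the paper's, but the routes differ in presentation: the paper invokes the abstract ``bridge'' framework of \cite{AF25_frag}, setting up the bridge $B=((\VFIMP(\Lang),\STVF^{\eq}),(\Form(\Lang),\STVF^{\eq}),\mathrm{id})$, noting that the inclusion is an interpretation and that the monotonicity property ``(mon)'' holds by Theorem~\ref{thm:monotone}, and then citing \cite[Proposition 2.18]{AF25_frag} for both the existence and the computability of $\epsilon$; you instead unfold the compactness argument that this proposition encapsulates. Both proofs funnel through the same key input (Theorem~\ref{thm:monotone}) and the same compactness mechanism, and your dovetailing construction for the effective version matches the computable direction of the cited proposition. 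One genuine contribution of your write-up that the paper leaves implicit (hidden inside the cited machinery) is the observation that $\VFIMP(\Lang)$ is closed under negation only up to equivalence, and that this suffices to promote the one-sided inclusion $\Th_{\VFIMP}(K)\subseteq\Th_{\VFIMP}(L)$, which is what the compactness step actually delivers, to the equality that Theorem~\ref{thm:monotone} requires as a hypothesis; you correctly reduce this to the negation-closure of the three generating sets $\iota_{\mathbf{k}}\Sent(\Lk)$, $\iota_{\bfGamma}\Sent(\LGamma)$ and $\IMP$ and a De Morgan rewrite. What the paper's citation buys is brevity and reuse of a general-purpose result; what your self-contained version buys is that the reader can see exactly where negation-closure of the fragment is used, which is a genuine and slightly delicate point given that the definition of ``fragment'' does not require closure under $\neg$.
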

\begin{proof}
We adopt the terminology of \cite{AF25_frag},
and consider the bridge
$$B=((\VFIMP(\Lang),\STVF^{\eq}),(\Form(\Lang),\STVF^{\eq}),\mathrm{id}).$$
First observe that the inclusion map
$\iota:\VFIMP(\Lang)\rightarrow\Form(\Lang)$
is an interpretation for $B$.
Moreover $B$
satisfies the monotonicity property
``$\mathrm{(mon)}$''
by Theorem~\ref{thm:monotone}.
By
\cite[Proposition 2.18]{AF25_frag},
therefore,
the required elimination $\epsilon$ exists.

Suppose now that $\Lang$ is computable,
then $\VFIMP(\Lang)$ and $\Form(\Lang)$ are computable $\Lang$-fragments,
and $\iota$ is computable.
Moreover $\STVF^{\eq}$ is computably enumerable (even computable),
in any case.
Thus, again by
\cite[Proposition 2.18]{AF25_frag},
the elimination $\epsilon$ may be chosen to be computable.
\end{proof}

\subsection{Computability-theoretic reductions}

Recall our Convention~\ref{convention:characteristic_exponent}
that $\hat{p}=p$ if $p\in\mathbb{P}$, and $\hat{p}=1$ if $p=0$.

Define
$\STVF^{\eq}(R,G,X):=(\STVF^{\eq}\cup\iota_{\mathbf{k}}R\cup\iota_{\bfGamma}G\cup X)_{\VFIMP}$.

\begin{theorem}[{Fixed characteristic, uniform in imperfection degree}]\label{thm:fixed_char_uniform_imp_decidability}
Let $\Lang=\Lvlambda(\Lk,\LGamma)$
be a $(\mathbf{k},\bfGamma)$-expansion of $\Lvlambda$,
let $p\in\mathbb{P}\cup\{0\}$,
let $R$ be an $\Lk$-theory of fields of characteristic $p$,
let $G$ be an $\LGamma$-theory of $\hat{p}$-divisible ordered abelian groups,
and
let $X$ be an $\IMP$-theory extending $\Xth_{p}$.
Suppose that $\Lang$ is computable.
Then 
\begin{enumerate}[{\bf(i)}]
\item
$\STVF^{\eq}(R,G,X)^{\vdash}\Teq R^{\vdash}\Toplus G^{\vdash}\Toplus(\Fth\cup X)_{\FIMP}$,
and
\item
$\STVF^{\eq}(R,G,X)^{\vdash}$ is decidable
if and only if
$R^{\vdash}$, $G^{\vdash}$, and $(\Fth\cup X)_{\FIMP}$ are decidable.
\end{enumerate}
\end{theorem}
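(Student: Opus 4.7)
The plan is to convert the semantic elimination of Theorem~\ref{thm:elimination} into effective Turing reductions. Composing with the computable elimination $\epsilon:\mathrm{Sent}(\Lang)\to\VFIMP(\Lang)$ provided by that theorem, deciding $\STVF^{\eq}(R,G,X)^{\vdash}$ reduces computably to deciding its restriction to $\VFIMP(\Lang)$, the $\Lang$-fragment Boolean-generated by the three families $\iota_{\mathbf{k}}\mathrm{Sent}(\Lk)$, $\iota_{\bfGamma}\mathrm{Sent}(\LGamma)$, and $\IMP$. Each sentence in $\VFIMP(\Lang)$ can be computably normalised to a disjunction of conjunctions $\alpha_i\wedge\beta_i\wedge\gamma_i$ with the $\alpha_i,\beta_i,\gamma_i$ drawn from the respective three families (each of which is individually closed under Boolean operations, since $\iota_{\mathbf{k}}$ and $\iota_{\bfGamma}$ cover the full sentence sets of their source languages and $\IMP$ is defined as a Boolean-closed fragment above $\Xth_{p}$).

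The heart of the argument is a splitting claim: for $\alpha=\iota_{\mathbf{k}}\alpha_{0}$, $\beta=\iota_{\bfGamma}\beta_{0}$, and $\gamma\in\IMP$, the conjunction $\alpha\wedge\beta\wedge\gamma$ is consistent with $\STVF^{\eq}(R,G,X)$ if and only if $\alpha_{0}$ is consistent with $R$ (together with the perfection axiom forced by $\STVF^{\eq}$ on the residue field), $\beta_{0}$ is consistent with $G$ (together with $\hat{p}$-divisibility), and $\gamma$ is consistent with $\Fth\cup X$. The ``only if'' direction follows by restricting a model to residue field and value group. For the ``if'' direction, compactness in each sublanguage provides a perfect $k\models R\cup\{\alpha_{0}\}$, a $\hat{p}$-divisible $\Gamma\models G\cup\{\beta_{0}\}$, and an elementary imperfection degree $\frakI\in\mathbb{N}\cup\{\infty\}$ compatible with $\gamma$ and $\Fth\cup X$; Lemma~\ref{lem:surjective} then glues these into a single $K\models\STVF^{\eq}_{p,\frakI}$ with $Kv=k$ and $vK=\Gamma$, realising all three sentences simultaneously. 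In equal characteristic zero the Hahn series $k(\!(t^{\Gamma})\!)$ serves in place of Lemma~\ref{lem:surjective}.

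From the splitting claim both Turing reductions in (i) are straightforward. To decide $\varphi\in\STVF^{\eq}(R,G,X)^{\vdash}$ using oracles for $R^{\vdash}$, $G^{\vdash}$, $(\Fth\cup X)_{\FIMP}$: compute $\epsilon\varphi$, put $\neg\epsilon\varphi$ in disjunctive normal form over the three families, and check via the three oracles that every DNF-clause is component-wise inconsistent (folding in the fixed finite axioms of perfection and $\hat{p}$-divisibility, which are decidable from $R^{\vdash}$ and $G^{\vdash}$ respectively). Conversely, using an oracle for $\STVF^{\eq}(R,G,X)^{\vdash}$: the splitting claim, applied with the two other components taken as $\top$, gives the equivalences $\alpha_{0}\in R^{\vdash}$ iff $\iota_{\mathbf{k}}\alpha_{0}\in\STVF^{\eq}(R,G,X)^{\vdash}$, and symmetrically for $G^{\vdash}$, while for $(\Fth\cup X)_{\FIMP}$ the inclusion $\IMP\hookrightarrow\VFIMP(\Lang)$ directly gives the reduction. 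Part (ii) follows immediately from (i), since Turing equivalence preserves decidability and the effective join of three decidable sets is decidable.

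The main obstacle is the ``if'' direction of the splitting claim: one must realise three independently chosen component sentences in a single separably tame valued field of equal characteristic. This is exactly the content of Lemma~\ref{lem:surjective} in positive characteristic; once combined with compactness to pick the components themselves, the gluing is automatic. The remaining bookkeeping--computable Boolean normalisation, handling of the fixed ``perfection'' and ``$\hat{p}$-divisibility'' side-conditions which are finite-axiom extensions of $R$ and $G$, and the trivial edge cases where $\STVF^{\eq}(R,G,X)$ (or some component theory) is inconsistent and all relevant deductive closures become the full sentence set--adds no essential difficulty.
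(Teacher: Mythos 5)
Your proposal takes a genuinely different route from the paper: the paper routes the whole argument through the abstract ``bridge'' and ``arch'' machinery of \cite{AF25_frag}, invoking \cite[Corollary 2.23]{AF25_frag} directly after verifying surjectivity (Lemma~\ref{lem:surjective}) and monotonicity (Theorem~\ref{thm:monotone}), whereas you unpack that machinery by hand: compose the computable elimination of Theorem~\ref{thm:elimination} with a DNF normalisation over the three Boolean-closed families, and then use a ``splitting claim'' (consistency of $\alpha\wedge\beta\wedge\gamma$ decomposes component-wise) whose nontrivial direction is the gluing provided by Lemma~\ref{lem:surjective}. This is more elementary and makes visible exactly what the abstract corollary is doing; the key ingredients used are the same.

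Two points deserve care, though, and the second is a real gap. First, as the paper has defined things, $\STVF^{\eq}(R,G,X)$ is $T_{\VFIMP}$ (the $\VFIMP$-fragment of the full theory $T=\STVF^{\eq}\cup\iota_{\mathbf{k}}R\cup\iota_{\bfGamma}G\cup X$), so $\STVF^{\eq}(R,G,X)^{\vdash}$ is $(T_{\VFIMP})^{\vdash}$, not $T^{\vdash}$. Your reduction via $\epsilon$ uses $\varphi\in S^{\vdash}\Leftrightarrow\epsilon\varphi\in S^{\vdash}$, which requires $\STVF^{\eq}\subseteq S$; this holds for $S=T$ but not for $S=T_{\VFIMP}$. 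If one reads the paper as really meaning $T^{\vdash}$ (which the later deductions, e.g.~Corollary~\ref{cor:intro_a}, seem to require), your step is fine; otherwise it needs the further fact $(T_{\VFIMP})^{\vdash}\meq T_{\VFIMP}$, which is exactly what \cite[Corollary 2.23]{AF25_frag} supplies and which your argument does not reproduce. Second, and more substantively, your splitting claim's ``if'' direction asserts that a field $K$ produced by Lemma~\ref{lem:surjective} (which only controls the characteristic $p$ and the \emph{elementary imperfection degree} $\frakI$, in addition to $Kv$ and $vK$) will satisfy the arbitrary $\gamma\in\IMP$. But $\IMP$ is defined to contain all Boolean combinations of $\chi_{q}$ and $\iota_{q,\leq\fraki}$ over \emph{all} primes $q$ and all $\fraki$, and for $q\neq p$ the truth of $\iota_{q,\leq\fraki}$ in a field of characteristic $p$ is not determined by $(p,\frakI)$ (compare $\mathbb{F}_{5}$ and $\mathbb{F}_{25}$: both have $\Impdeg=0$, but $\iota_{3,\leq 0}$ holds in the first and fails in the second). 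So the phrase ``realising all three sentences simultaneously'' is not justified by Lemma~\ref{lem:surjective} alone; some further argument is needed to control the $\iota_{q,\leq\fraki}$-behaviour for $q\neq p$, or one must restrict $\IMP$ so that those sentences carry no extra information. The paper's own verification of ``surjectivity'' relies on Lemma~\ref{lem:surjective} in the same way, so this may be a shared delicacy rather than something unique to your proposal, but since you make the gluing step explicit, the gap is visible in your version and should be addressed.

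Smaller remarks: the ``perfection'' and ``$\hat p$-divisibility'' side-conditions you fold in are each single sentences in fixed characteristic, so that bookkeeping is sound; your handling of the characteristic-zero case via Hahn series is correct; and the chain of Turing reductions you describe, granting the two points above, is essentially the same chain the paper obtains via steps (II), the ``rather clear'' $\meq$, and the final weakening to $\Teq$.
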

\begin{proof}
We begin just like in the proof of Theorem~\ref{thm:elimination}.
Consider the bridge
$$B_{p}=((\VFIMP(\Lang),\STVF^{\eq}),(\Form(\Lang),\STVF^{\eq}),\mathrm{id}).$$
Observe that
$\Lang$ is computable,
so $\VFIMP(\Lang)$ and $\Form(\Lang)$ are computable,
$\iota$ is computable,
and
$\STVF^{\eq}$ is computably enumerable (even computable).
The bridge $B_{p}$ satisfies ``surjectivity'' by Lemma~\ref{lem:surjective},
and $B_{p}$ satisfies ``monotonicity'' by Theorem~\ref{thm:monotone}.
We have verified the hypotheses of \cite[Corollary 2.23]{AF25_frag}
for the arch $A=(B_{p},B_{p},\iota)$.
Applying that result,
we obtain
\begin{enumerate}[{\bf(I)}]
\item
	$B_{p}$ admits a computable elimination (this already follows from Theorem~\ref{thm:elimination}).
\item
	$\STVF^{\eq}(R,G,X)^{\vdash}\meq\STVF^{\eq}(R,G,X)_{\VFIMP}$
\end{enumerate}
It's also rather clear that
$\STVF^{\eq}(R,G,X)_{\VFIMP}\meq(R\sqcup G\sqcup X)_{\VFIMP}$,
and weaking our sense of equivalence to that of Turing equivalence we have
$(R\sqcup G\sqcup X)_{\VFIMP}\Teq R^{\vdash}\Toplus G^{\vdash}\Toplus(\Fth\cup X)_{\FIMP}$.
Combining these with {\bf(II)},
we obtain {\bf(i)}.
Finally,
{\bf(ii)} follows immediately from {\bf(i)}.
\end{proof}

\begin{corollary}[{Fixed characteristic and fixed/arbitrary imperfection degree}]\label{cor:fixed_char_and_fixed_imp_decidability}
Let $\Lang=\Lvlambda(\Lk,\LGamma)$
be a $(\mathbf{k},\bfGamma)$-expansion of $\Lvlambda$,
let $(p,\frakI)\in\{(0,0)\}\cup(\mathbb{P}\times(\mathbb{N}\cup\{\infty\}))$,
let $R$ be an $\Lk$-theory of fields of characteristic $p$,
and
let $G$ be an $\LGamma$-theory of $\hat{p}$-divisible ordered abelian groups.
Suppose that $\Lang$ is computable.
Then 
\begin{enumerate}[{\bf(I)}]
\item
	\begin{enumerate}[{\bf(i)}]
	\item
	$\STVF^{\eq}(R,G,\Xth_{p,\frakI})^{\vdash}\Teq R^{\vdash}\Toplus G^{\vdash}$,
	and
	\item
	$\STVF^{\eq}(R,G,\Xth_{p,\frakI})^{\vdash}$ is decidable
	if and only if
	$R^{\vdash}$ and $G^{\vdash}$ are decidable.
	\end{enumerate}
\item
	\begin{enumerate}[{\bf(i)}]
	\item
	$\STVF^{\eq}(R,G,\Xth_{p})^{\vdash}\Teq R^{\vdash}\Toplus G^{\vdash}$,
	and
	\item
	$\STVF^{\eq}(R,G,\Xth_{p})^{\vdash}$ is decidable
	if and only if
	$R^{\vdash}$ and $G^{\vdash}$ are decidable.
	\end{enumerate}
\end{enumerate}
\end{corollary}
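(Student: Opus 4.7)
The plan is to deduce both parts from Theorem~\ref{thm:fixed_char_uniform_imp_decidability} by choosing $X$ appropriately and checking that, in each case, the summand $(\Fth\cup X)_{\FIMP}$ of the Turing-join is in fact computable, so that it disappears from the reduction.

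For part (I), I would apply Theorem~\ref{thm:fixed_char_uniform_imp_decidability} with $X:=\Xth_{p,\frakI}$ (reading $\Xth_{0,0}$ as $\Xth_{0}$). Any $\IMP$-sentence $\varphi$ is a Boolean combination of atoms of the form $\chi_q$ (for $q\in\mathbb{P}$) and $\iota_{q',\leq\fraki}$ (for $q'\in\mathbb{P}$, $\fraki<\omega$), and the truth value of each such atom in a field is entirely determined by the pair (characteristic, elementary imperfection degree). Since $\Fth\cup\Xth_{p,\frakI}$ pins this pair down as $(p,\frakI)$, the set $(\Fth\cup\Xth_{p,\frakI})_{\FIMP}$ is computable: one syntactically evaluates each atom under the rules $\chi_q\mapsto(q=p)$ and $\iota_{p',\leq\fraki}\mapsto(p'=p\wedge\frakI\leq\fraki)$, interpreting $\frakI=\infty$ in the obvious way. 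Hence $(\Fth\cup\Xth_{p,\frakI})_{\FIMP}\Teq\emptyset$, Theorem~\ref{thm:fixed_char_uniform_imp_decidability}(i) yields (I)(i), and (I)(ii) follows at once.

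For part (II), I would apply the same theorem with $X:=\Xth_{p}$, so that the characteristic is fixed to be $p$ but the imperfection degree $\frakJ$ may be any element of $\mathbb{N}\cup\{\infty\}$. A given $\IMP$-sentence $\varphi$ mentions only finitely many indices $\fraki$, say bounded by some $\fraki_n$, and I claim that $\varphi\in(\Fth\cup\Xth_p)^{\vdash}$ iff, for every $\frakJ\in\{0,1,\ldots,\fraki_n,\infty\}$, the syntactic evaluation of $\varphi$ from case (I), with $\frakI$ replaced by $\frakJ$, yields true. The ``only if'' direction uses that each such $\frakJ$ is realised as the elementary imperfection degree of some field of characteristic $p$, as guaranteed by Lemma~\ref{lem:surjective} (take any perfect residue field of characteristic $p$ and any $\hat{p}$-divisible value group). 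The ``if'' direction uses that any $\frakJ'\in\mathbb{N}\cup\{\infty\}$ induces the same truth values on the thresholds $\iota_{p,\leq\fraki}$ for $\fraki\leq\fraki_n$ as one of the finitely many representatives $\frakJ\in\{0,1,\ldots,\fraki_n,\infty\}$. Since this finite case-analysis is effective in $\varphi$, $(\Fth\cup\Xth_p)_{\FIMP}$ is computable, and Theorem~\ref{thm:fixed_char_uniform_imp_decidability}(i) gives (II)(i); (II)(ii) follows.

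I expect the only real point of care to be the ``only if'' direction of the claim in (II), which requires the existence of a field of characteristic $p$ of each imperfection degree $\frakJ\in\mathbb{N}\cup\{\infty\}$; this is covered by Lemma~\ref{lem:surjective}. Apart from this, the argument is a direct specialization of Theorem~\ref{thm:fixed_char_uniform_imp_decidability}, so the substantive mathematical content already sits there.
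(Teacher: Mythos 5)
Your overall strategy coincides with the paper's: both proofs deduce the corollary from Theorem~\ref{thm:fixed_char_uniform_imp_decidability} by observing that the summand $(\Fth\cup X)_{\FIMP}$ is computable (hence Turing-trivial) when $X$ is $\Xth_{p,\frakI}$ or $\Xth_{p}$. The paper's entire proof is the single sentence that both of these sets are decidable, so you are genuinely trying to supply the detail it leaves implicit. The problem is that your justification for this decidability is not correct.

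Specifically, you assert that the truth value of every atom $\iota_{q',\leq\fraki}$ in a field is determined by the pair (characteristic, elementary imperfection degree), and hence that $\varphi\in(\Fth\cup\Xth_{p,\frakI})_{\FIMP}$ can be decided by syntactically evaluating $\iota_{q',\leq\fraki}\mapsto(q'=p\wedge\frakI\leq\fraki)$. This is false for the atoms with $q'$ different from the characteristic $p$. Unwinding the definition, $\iota_{q',\leq 0}$ is the sentence $\forall a\,\exists y\,(a=y^{q'})$, i.e.~surjectivity of the $q'$-th power map, and this is not decided by characteristic and imperfection degree alone. For instance take $p=5$ and $q'=2$: both $\mathbb{F}_5$ and $\overline{\mathbb{F}_5}$ have characteristic $5$ and imperfection degree $0$, but $\iota_{2,\leq 0}$ fails in $\mathbb{F}_5$ (since $2$ is not a square) and holds in $\overline{\mathbb{F}_5}$. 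Under your evaluation rule $\iota_{2,\leq0}$ evaluates to false, so $\neg\iota_{2,\leq0}$ evaluates to true; yet $\neg\iota_{2,\leq0}$ is not a consequence of $\Fth_{5,0}$ because it fails in $\overline{\mathbb{F}_5}$. So your algorithm misclassifies $\neg\iota_{q',\leq\fraki}$ for $q'\neq p$. Since part (II) of your argument routes through the same syntactic evaluation, it inherits the same gap. A correct argument must handle the atoms $\iota_{q',\leq\fraki}$ for $q'\neq p$ by characterizing which truth-value patterns on them are realizable over fields of the given characteristic and imperfection degree (e.g.\ showing every pattern monotone in $\fraki$ is realized), which is more than a one-line syntactic evaluation; alternatively one can argue that, modulo the determined atoms, $\varphi\in(\Fth_{p,\frakI})_{\FIMP}$ iff the residual Boolean combination of the off-characteristic atoms is a propositional tautology over the realizable patterns. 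This is the content the paper is leaving to the reader, and it is precisely what your evaluation rule skips.
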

\begin{proof}
Both $(\Fth_{p,\frakI})_{\FIMP}$ and $(\Fth_{p})_{\FIMP}$ are decidable.
\end{proof}

This immediately implies Corollary~\ref{cor:intro_a}.

\begin{question}
How may we adapt Theorem~\ref{thm:fixed_char_uniform_imp_decidability}
so that it is uniform in the characteristic $p$?
\end{question}

\section*{Acknowledgements}

The material of sections 2 and 3 in this paper was drawn from the author's doctoral thesis,
completed in 2013 at the University of Oxford.
The author would like to thank her doctoral supervisor Jochen Koenigsmann for his support and guidance,
and
Arno Fehm,
Franziska Jahnke,
Franz--Viktor Kuhlmann,
and
Silvain Rideau-Kikuchi
for comments on an earlier version.
Thanks are due to
Paulo Andrés Soto Moreno
and
Jonas van der Schaaf,
both of whom have been applying these kinds of ``{\em wiggling}'' arguments.
Discussions with all of them have greatly improved section~\ref{section:STVF} --- many thanks.

The manuscript was completed while the author was a Young Research Fellow at the Mathematics Münster Cluster of Excellence,
at the University of Münster.
She would like to extend warm thanks to the mathematical community of Münster,
especially to the Cluster of Excellence and her host Franziska Jahnke:
Funded by the Deutsche Forschungsgemeinschaft
(DFG, German Research Foundation)
under Germany's Excellence Strategy
EXC 2044-390685587, Mathematics Münster: Dynamics - Geometry - Structure.
This sabbatical was made possible by the award of a
{Congés pour Recherches ou Conversions Thématiques}
by the
{Conseil National des Universités},
to whom her thanks go also.

% \vfill
%%%Bibliography%%%
\def\bibfont{\footnotesize}
\bibliographystyle{plain}
% \bibliography{bibliography}

%%%%%%%%%%%%%%%%%%%%%%%%%%%%%%%%%%%%%%%%%%%%%%
\end{document}